\documentclass[journal,onecolumn,draftclsnofoot,]{IEEEtran}
\usepackage{setspace}
\usepackage{enumerate}
\usepackage[dvipsnames]{xcolor}
\usepackage{url}
\usepackage{graphicx}
\usepackage{epstopdf}
\usepackage{csquotes}
\usepackage{subfig}
\usepackage{mathtools}
\usepackage{amssymb, amsmath,amsthm, amsfonts}
\usepackage{ifthen}
\usepackage{tikz}
\usepackage{cite}
\usepackage{enumerate}
\usepackage{verbatim}
\usepackage{pifont}
\usepackage{bm}  
\usepackage{stackengine}
\usepackage{bbm}

\allowdisplaybreaks
\usepackage{accents}
\patchcmd{\subsubsection}{\itshape}{\bfseries}{}{}
\newcommand\numberthis{\addtocounter{equation}{1}\tag{\theequation}}
\usepackage{xpatch}
\makeatletter
\AtBeginDocument{\xpatchcmd{\@thm}{\thm@headpunct{.}}{\thm@headpunct{}}{}{}}
\makeatother

\vfuzz2pt 
\hfuzz2pt 

\DeclareSymbolFont{bbold}{U}{bbold}{m}{n}
\DeclareSymbolFontAlphabet{\mathbbold}{bbold}

\newtheorem{theorem}{Theorem}
\newtheorem{cor}{Corollary}
\newtheorem{lemma}{Lemma}
\newtheorem{prop}{Proposition}

\newtheorem{remark}{Remark}

\newtheorem{definition}{Definition}
\newtheorem{example}{Example}

\newtheorem{assump}{Assumption}

\newcommand{\norm}[1]{\left\Vert#1\right\Vert}
\newcommand{\abs}[1]{\left\vert#1\right\vert}

\newcommand{\cA}{\mathcal{A}}
\newcommand{\cB}{\mathcal{B}}
\newcommand{\cC}{\mathcal{C}}
\newcommand{\cD}{\mathcal{D}}
\newcommand{\cE}{\mathcal{E}}

\newcommand{\cI}{\mathcal{I}}

\newcommand{\cN}{\mathcal{N}}

\newcommand{\cP}{\mathcal{P}}
\newcommand{\cQ}{\mathcal{Q}}

\newcommand{\cS}{\mathcal{S}}
\newcommand{\cT}{\mathcal{T}}

\newcommand{\cX}{\mathcal{X}}

\newcommand{\EE}{\mathbb{E}}

\newcommand{\NN}{\mathbb{N}}

\newcommand{\PP}{\mathbb{P}}

\newcommand{\RR}{\mathbb{R}}

\newcommand{\E}{\mathbb E}

\newcommand{\supp}{\mathrm{spt}}

\newcommand{\ind}{\mathbbm 1}

\newcommand{\kl}[2]{\mathsf{D}_\mathsf{KL}\left(#1\middle\|#2\right)}

\newcommand{\tv}[2]{\left\|#1-#2\right\|_\mathsf{TV}}
\newcommand{\chisq}[2]{\chi^2\left(#1\middle\|#2\right)}
\newcommand{\helsq}[2]{\mathsf{H}^2\left(#1,#2\right)}
\newcommand{\fdiv}[2]{\mathsf{D}_{f}\left(#1\middle\|#2\right)}

\newcommand{\trightarrow}[1]{\overset{#1}{\longrightarrow}}

\newcommand\blfootnote[1]{%
	\begingroup
	\renewcommand\thefootnote{}\footnote{#1}%
	\addtocounter{footnote}{-1}%
	\endgroup
}

\definecolor{cblue}{rgb}{0.16, 0.32, 0.75}


\def\h2{\tilde h}

\def\hm1{\hat h_{-1}}

\begin{document}
\title{Limit Distribution Theory for $f$-divergences}
\author{Sreejith Sreekumar, Ziv Goldfeld,  Kengo Kato}
\maketitle
\begin{abstract}
 $f$-divergences, which quantify discrepancy between probability distributions, are ubiquitous in information theory, machine learning, and statistics. While there are numerous methods for estimating $f$-divergences from data, a limit distribution theory, which quantifies fluctuations of the estimation error, is largely obscure. As limit theorems are pivotal for valid statistical inference, to close this gap, we develop a general methodology for deriving distributional limits for $f$-divergences based on the functional delta method and Hadamard directional differentiability. Focusing on four prominent $f$-divergences---Kullback-Leibler divergence, $\chi^2$ divergence, squared Hellinger distance, and total variation distance---we identify sufficient conditions on the population distributions for the existence of distributional limits and characterize the limiting variables. These results are used to derive one- and two-sample limit theorems for Gaussian-smoothed $f$-divergences, both under the null and the alternative. Finally, an application of the limit distribution theory to auditing  differential privacy is proposed and analyzed for significance level and power against local alternatives. 
\end{abstract}

\begin{IEEEkeywords}
$f$-divergence, limit theorem, Hadamard differentiability, functional delta method, auditing differential privacy
\end{IEEEkeywords}

\section{Introduction}
\blfootnote{S. Sreekumar is with the Institute for Quantum Information at RWTH Aachen University, Aachen 52074, Germany, (email: sreekumar@physik.rwth-aachen.de). He was with the School of Electrical and Computer Engineering, Cornell University, Ithaca, NY 14850, USA, at the time of this work. Z. Goldfeld and K. Kato are with the School of Electrical and Computer Engineering and Department of Statistics and Data Science, respectively, at Cornell University (e-mail: goldfeld@cornell.edu; kk976@cornell.edu). S. Sreekumar was partially supported by the TRIPODS Center for Data Science NSF grant CCF-1740822. Z. Goldfeld is supported by  NSF grants CCF-1947801,  CCF-2046018, DMS-2210368, and the 2020 IBM Academic Award. K. Kato is supported by the NSF grants DMS-1952306, DMS-2014636, and DMS-2210368. Parts of this work were presented at the International Symposium on Information Theory (ISIT) in 2020 \cite{Goldfeld-Kato-2020} and 2023 \cite{SGK-ISIT-2023}.}

Statistical inference often boils down to estimation of certain functional of underlying probability measures. Discrepancy measures between probability distributions, also known as statistical divergences, such as $f$-divergences \cite{csiszar1967information,Ali-Silvey-1966}, R\'{e}nyi divergences \cite{Renyi-60,vanEvren_Reyni_Div2014}, integral probability metrics \cite{zolotarev1983probability,muller1997integral}, Wasserstein distances \cite{villani2008optimal,santambrogio2015}, etc., form an important class of such functionals. They play a fundamental role in information theory, signal processing, and statistics, with some arising naturally as operational quantities characterizing the fundamental limits of data compression, hypothesis testing, and communication \cite{CoverThomas,Csiszar-Shields-2004}. Moreover, statistical divergences are potent tools for modeling, analysis, and design of machine learning algorithms, encompassing generative modeling \cite{kingma2013auto,nowozin2016f,arjovsky2017wasserstein,tolstikhin2018wasserstein,Goldfeld2020limit_wass},  homogeneity/goodness-of-fit/independence testing \cite{Kac-1955,QZhang-2018,Hallin-2021}, anomaly detection\cite{Afgani-2008,Tajer-2011}, to name a few. 
 
In data-driven applications, one only has samples from the population distributions, which necessitates estimating $f$-divergences. While there is an abundance of consistent estimators with known convergence rates (see the literature review in Section \ref{subsec:related_work}), a limit distribution theory for the empirical estimation error has remained partial and premature. For $\mu,\nu\in\cP(\RR^d)$ and an $f$-divergence $\mathsf{D}_f(\cdot\|\cdot)$, limit theorems seek to identify the scaling rate $r_n\to\infty$ and the limiting variable $G$, such that the following convergence in distribution holds\footnote{The two-sample problem, i.e., when the first divergence term is $\mathsf{D}_f(\mu_n\|\nu_n)$, is also of interest.}
\begin{equation}
    r_n\big(\mathsf{D}_f(\mu_n\|\nu)-\mathsf{D}_f(\mu\|\nu)\big) \trightarrow{d} G,\label{eq:limit_thm_intro}
\end{equation}
where $\mu_n$ is an estimate of $\mu$ from samples. As such, these results characterize the probability laws governing the random fluctuations of the error and serve as a central constituent for valid statistical inference. Indeed, distributional limits enable constructing confidence intervals, devising consistent resampling methods, proving guarantees for applications of hypothesis testing, and more.

To address the aforementioned gap, we develop a unified methodology for deriving limit distributions for $f$-divergences under general regularity conditions. Our approach relies on the functional delta method over normed vector spaces \cite{Shapiro-1990,Romisch-2004} and Hadamard directional differentiability of $f$-divergence over a certain class of probability distributions. The Hadamard differentiability analysis captures how the $f$-divergence functional changes due to small perturbations of the considered distributions within the said class. However, $f$-divergence functionals (e.g., KL divergence) are generally non-smooth and highly sensitive to support mismatch, which may cause them to degenerate or even blow up. This irregular behaviour also carries over to their derivatives. It is therefore pivotal to identify appropriate regularity conditions under which the Hadamard directional derivatives and the corresponding distributional limits exist and can be characterized. In particular, there is a trade-off between how strict the imposed regularity is and the class of distributions that the theory accounts for. Consequently, a key technical challenge is to discern the right normed space in which the densities of the considered distributions should reside, so as to obtain a limit distribution theory that accounts for the 
largest possible class of distributions.    

Existing approaches for deriving limit distributions for $f$-divergences are mostly limited to discrete distributions or compactly supported continuous distributions with smooth densities bounded away from zero, in which case the $f$-divergence functional as well as its derivatives become smooth.  
Our approach disposes of such restrictive assumptions and extends these results to the general case. We leverage the Taylor expansion of the considered $f$-divergences to ascertain minimal primitive regularity conditions on the population distributions that guarantee the existence of the Hadamard  derivative. In particular, we identify a certain $L^2(\eta)$ space, where the measure $\eta$ is defined in terms of the populations $(\mu,\nu)$, in which the Hadamard directional derivatives exist and can be characterized.  
Having that, the functional delta method enables lifting weak convergence of the estimates of the underlying distributions to convergence of the $f$-divergence between them, with the limiting variable identified in terms of this derivative.

The general framework is instantiated to obtain the one- and two-sample distributional limits, under both the null ($\mu=\nu$) and the alternative ($\mu\neq\nu$), of four popular $f$-divergences---Kullback-Leibler (KL) divergence, chi-squared ($\chi^2$) divergence, squared Hellinger ($\mathsf{H}^2$) distance, and total variation (TV) distance. These results hold under the high-level weak convergence assumptions on the empirical estimates of $\mu,\nu$ with a given scaling law $r_n$. To obtain limit theorems under basic conditions on the population distributions with explicit rates, we consider Gaussian-smoothed $f$-divergences, i.e., $\mathsf{D}_f(\mu*\gamma_\sigma\|\nu*\gamma_\sigma)$ where $\gamma_\sigma=\mathcal{N}(0,\sigma^2I_d)$, and estimate $\mu,\nu$ by the empirical measures $\hat{\mu}_n=n^{-1}\sum_{i=1}^n\delta_{X_i}$ and $\hat{\nu}_n=n^{-1}\sum_{i=1}^n\delta_{Y_i}$, respectively. Under this setup, we derive primitive conditions\footnote{The primitive conditions shown here are  sharp in the one-sample null case, see e.g., Proposition \ref{Prop:GS-KL-limdist}$(i)$.} on $\mu,\nu$ that guarantees weak convergence of the smooth empirical measures $\hat{\mu}_n*\gamma_\sigma,\hat{\nu}_n*\gamma_\sigma$, utilizing the central limit theorem (CLT) in $L^2$ spaces \cite[Proposition 2.1.11]{AVDV-book}. For  KL divergence, $\chi^2$ divergence, and $H^2$ distance under the null, we identify the scaling law as $r_n=n$ and the limiting variable as a weighted sum of independent and identically distributed (i.i.d.) $\chi^2$ random variables. Under the alternative, we show that $r_n=\sqrt{n}$ and the limit is a centered Gaussian. The TV distance behaves slightly differently, with $r_n=\sqrt{n}$ in both cases and the limiting variables having a certain integral form. By virtue of our Hadamard differentiability analysis, we automatically obtain consistency of the bootstrap, which yields a computationally tractable resampling method for estimating the  distributional limits.

As an application of our limit distribution theory, we consider auditing  $\epsilon$-differential privacy (DP). An audit of a black-box privacy mechanism seeks to certify whether it satisfies a promised DP guarantee. While existing auditing methods are heuristic \cite{ding2018detecting,jagielski2020auditing} or lack in formal guarantees \cite{domingo2022auditing}, we propose a principled hypothesis testing pipeline for DP auditing with a full (asymptotic) analysis of  significance level and power against local alternatives. The key idea is to relax the $\epsilon$-DP constraint\footnote{$\epsilon$-DP corresponds to an $\epsilon$ bound on the infinite order R\'{e}nyi divergence between the output distribution of the mechanism applied to two neighboring databases.} to a KL divergence bound, which is further relaxed to the Gaussian-smoothed KL divergence via the data-processing inequality\cite[Theorem 2.15]{polyanskiy2022codingbook}.  We then test for the  smooth KL divergence value 
 and leverage our limit theorems for the significance and power analysis. 
We also establish a stability lemma that bounds the gap due to smoothing, namely $\big|\kl{\mu*\gamma_\sigma}{\nu*\gamma_\sigma}-\kl{\mu}{\nu}\big|$. This enables lifting the audit to test for the  KL divergence value itself, 
for which we show that any non-zero significance level along with power 1 can be achieved asymptotically.

\subsection{Related Work}\label{subsec:related_work}
Statistical analysis of divergence estimators has been an active area of research in recent years. Convergence rates for various estimators, which subsumes entropy and mutual information as special cases, have been studied in \cite{Wang-2005,Perez-2008,kandasamy2015nonparametric, Singh-Poczos-2016,Noshad-2017,belghazi2018,Moon-2018,Nguyen-2010,berrett2019efficient,berrett2019-efficientfunctional,Yanjun-2020,SS-2021-aistats,sreekumar2021neural} (see also references therein). Literature on limit distributions for $f$-divergences mainly focused on analyzing specific estimators on a case-by-case basis. In \cite{SALICRU-1994}, limit distributions for $f$-divergences between maximum likelihood estimates of probability distributions over a certain parametric class is established, with the limit variable shown to be either normal or $\chi^2$. The authors of \cite{kandasamy2015nonparametric} study plugin methods of kernel density estimator and show asymptotic normality subject to high H\"{o}lder smoothness and compact support of the densities. The case when the density estimates are constructed using $k$-nearest neighbour technique is treated in \cite{Moon-2014}. One-sample null distributional limits of Gaussian-smoothed TV distance and $\chi^2$ divergence have been derived in \cite{Goldfeld-Kato-2020} by invoking the CLT in $L^1(\RR^d)$ and $L^2(\RR^d)$, respectively. Limit distributions for plug-in estimators of entropy and mutual information in the discrete setting have been  considered in \cite{Antos-Ioannis-2001,Ioannis-Skoularidou-2016}. Differing from these, the unified methodology developed herein enables obtaining distributional limits of $f$-divergences based on plugin of arbitrary estimators under general (oftentimes sufficient and necessary) conditions. In particular, our results subsume those of \cite{Goldfeld-Kato-2020} and considerably generalize the scope of this preliminary work via markedly different proof techniques.

Apart from the $f$-divergence class, limit distribution theory for other divergences such as Wasserstein distances has been extensively studied \cite{Barrio-1999,DBGU-2005,Sommerfeld2016Inference,del-barrio-loubes-2019,Tameling-2019,DBSL-21,delBarrio-2022,MBWW-2022,HKSM-22}. There is also a surge of interest in regularized optimal transport distances driven by computational and statistical gains, encompassing techniques like smoothing, slicing, and entropic penalization. Limit theorems under these frameworks for the optimal transport cost, plan, map, and dual potentials can be found in \cite{Bigot-2019,Mena-2019,Klatt-2020,HLP-20,Goldfeld2020limit_wass,GX-21,delBarrio-2022,Gonzalez-2022,Sadhu-2022,GKNR-smooth-p-2022,GKRS-2022-entropicOT,GKRS-22}.

\subsection{Organization}The rest of the paper is organized as follows. Section \ref{Sec:background}  collects  background material on Hadamard differentiability and the functional delta method. Section \ref{Sec:Techframework} formalizes the Hadamard differentiability framework for $f$-divergences. Section \ref{Sec:limittheorems} obtains limit distributions for $f$-divergences between random probability measures under general regularity conditions, with Section \ref{Sec:GS limit theorems} focusing specifically on Gaussian-smoothed divergences. Section \ref{Sec:appkldiffprivacy} studies the application of the limit distribution theory to auditing DP. Proofs are provided in Section \ref{Sec:proofs}, while Section \ref{Sec:conclusion} provides concluding remarks and discusses future research directions.

\section{Background and Preliminaries}\label{Sec:background}
\subsection{Notation}
Let $(\Omega, \cA,\PP)$ be a sufficiently rich probability space on which all random variables are defined. Let $(\mathfrak{S},\cS)$ be a separable measurable space equipped with a $\sigma$-finite measure~$\rho$. When $\mathfrak{S}$ is a topological space, we use $\cB(\mathfrak{S})$ to denote the Borel $\sigma$-field on $\mathfrak{S}$. In the sequel, we adapt $\rho$ on a case-by-case basis, but given $\rho$, all considered measures are assumed to be absolutely continuous w.r.t. it. For $\eta \ll \rho$, we write $p_{\eta}=d \eta/d \rho$ for the Radon-Nikodym derivative of $\eta$ w.r.t. $\rho$.  $\eta^{\otimes n}$ stands for the $n$-fold product measure, and $\delta_x$ represents the Dirac measure at $x$. $\ind_{\cE}$ denotes the indicator of an event $\cE$.

We use $\cP(\mathfrak{S})$ to denote the space of probability measures on $(\mathfrak{S},\cS)$, leaving the $\sigma$-field implicit. When $\mathfrak{S}=\RR^d$, we always take $\cS=\cB(\RR^d)$ and $\cP(\RR^d)$ as the set of Borel probability measures.   For $\mu,\nu \in \cP(\RR^d)$,  $\mu*\nu$ denotes the convolution of $\mu$ and $\nu$; likewise, $f*g$ represents convolution of two measurable functions $f,g: \RR^d \to \RR$. We write $\gamma_{\sigma} = N(0,\sigma^2 I_d)$ for the centered Gaussian distribution on $\RR^d$ with covariance matrix $\sigma^2 I_d$, and use $\varphi_{\sigma}(x) = (2\pi\sigma^2)^{-d/2}e^{-\norm{x}^2/(2\sigma^2)}$ ($x \in \RR^d$) for the corresponding density.  We say that $\mu \in \cP\big(\RR^d\big)$ is $\beta$-sub-Gaussian for $\beta \geq 0$, if $X \sim \mu$ satisfies $\EE \left[e^{\alpha \cdot(X-\EE[X])}\right] \leq \exp\big(\beta^2\norm{\alpha}^2/2\big)$, for all $\alpha \in \RR^d$. Let $\trightarrow{w}$ and $\trightarrow{d}$ denote weak convergence\footnote{A sequence of Borel measurable maps $X_n$ converges weakly to a Borel measurable map $X$ if $\EE[f(X_n)] \rightarrow \EE[f(X)]$ for all bounded continuous functions $f$. This is denoted by $X_n \trightarrow{w} X$. } of Borel measurable maps (or their laws) and convergence in distribution of random variables,  respectively.

For $1 \leq r \leq \infty$, let $L^r(\rho)=L^r(\mathfrak{S},\cS,\rho)$ be the space of all real-valued measurable functions $f$ on $\mathfrak{S}$ such that $\norm{f}_{r,\rho}:=\big(\int_{\mathfrak{S}} \abs{f}^r d \rho\big)^{1/r} <\infty$, with the usual identification of functions that are equal $\rho$-almost everywhere (a.e.). For $1 \leq r < \infty$, the space $(L^r,\norm{\cdot}_{r,\rho})$ is a separable Banach (and hence, Polish) space.  When $\rho$ is the Lebesgue measure
$\lambda$ on $\RR^d$, we use  $\| \cdot \|_{r}$ to denote the corresponding $L^r$ norm. $\|\cdot\|$ designates Euclidean norm. $L_+^r(\rho)$ denotes the subset of positive functions in $L^r(\rho)$.  The support of a measurable function $f: \mathfrak{S} \rightarrow \bar{\RR}$, i.e., $\{s \in \mathfrak{S}: f(s) \neq 0\}$ is denoted by $\supp(f)$. 
For a multi-index $\alpha=(\alpha_1,\dots,\alpha_d) \in \mathbb{N}_0^d$ with $|\alpha| = \sum_{j=1}^ d \alpha_j$ ($\NN_0 = \NN \cup \{ 0 \}$), we use  $D^\alpha$ for the differential operator  $D^\alpha = \frac{\partial^{|\alpha|}}{\partial x_1^{\alpha_1} \cdots \partial x_{d}^{\alpha_d}}$ with $D^0 f = f$, and employ the shorthand $x^{\alpha}=\prod_{i=1}^d x_i^{\alpha_i}$. The shorthand $a\lesssim b$  designates $a\leq cb$ for a universal constant~$c>0$. The values of constants 
may change from line to line of a certain derivation. Lastly, we adopt the convention $0/0=0$, $c/0=\infty$ for $c>0$, $\infty \cdot 0=0$,  and $0 \log (c/0)=0$ for $c \geq 0$.

\subsection{{$f$}-Divergences}

$f$-divergences form a broad class of discrepancy measures between probability distributions, as defined next.
\begin{definition}[$f$-divergence]\label{def:f divergence}
Let $f:[0,\infty] \rightarrow (-\infty,\infty]$ be a convex function such that $f(1)=0$ and $f(0)= f(0^+)$. For $\mu,\nu \in \cP(\mathfrak{S})$, the $f$-divergence of $\mu$ from  $\nu$ is 
\begin{align}
\fdiv{\mu}{\nu}:=\int_{\mathfrak{S}} f \circ \left(\frac{p_{\mu}}{p_{\nu}}\right) d\nu. \label{eq:fdivdef}
\end{align}
\end{definition}

The class of $f$-divergences includes several popular dissimilarity measures, such as KL divergence, $\chi^2$ divergence, $\mathsf{H}^2$ distance, TV distance, and many more. Every $f$-divergence satisfies nonnegativity $(\fdiv{\mu}{\nu} \geq 0$, $\forall \mu,\nu$) and  joint convexity in the pair $(\mu,\nu)$, with additional properties holding for specific instances, as mentioned below.

\subsubsection{KL divergence}
Setting $f(x)=f_{\mathsf{KL}}(x):=x \log x$ in \eqref{eq:fdivdef} yields KL divergence,  $\kl{\mu}{\nu}:=\int_{\mathfrak{S}}  \log\left(p_{\mu}/p_{\nu} \right)d\mu$ for $\mu\ll \nu $, and $\kl{\mu}{\nu}=\infty$, otherwise.

\subsubsection{$\bm{\chi^2}$ divergence}
Setting $f(x)=f_{\chi^2}(x):=(x-1)^2$ in \eqref{eq:fdivdef} leads to  $\chi^2$ divergence,  $\chisq{\mu}{\nu}:=\int_{\mathfrak{S}} (p_{\mu}/p_{\nu}-1)^2d\nu$ for $\mu\ll \nu$, and $\chisq{\mu}{\nu}=\infty$, otherwise.

\subsubsection{$\bm{\mathsf{H}^2}$ distance}
Setting $f(x)=f_{\mathsf{H}^2}(x):=(\sqrt{x}-1)^2$ in \eqref{eq:fdivdef} leads to  $\mathsf{H}^2$ distance,  $\helsq{\mu}{\nu}:=\int_{\mathfrak{S}} \big(\sqrt{p_{\mu}}-\sqrt{p_{\nu}}\big)^2 d\rho$. $\mathsf{H}^2$ distance is symmetric in its arguments, and $0 \leq \helsq{\mu}{\nu} \leq 2$ for all $\mu,\nu \in \cP(\cS)$.

\subsubsection{TV distance}
Setting $f(x)\mspace{-2 mu}=\mspace{-2 mu}f_{\mathsf{TV}}(x)\mspace{-2 mu}:=\mspace{-2 mu}\abs{x-1}/2$ in \eqref{eq:fdivdef} yields the  TV distance, $\tv{\mu}{\nu}\mspace{-3 mu}:= \mspace{-3 mu} \frac 12\int_{\mathfrak{S}} \abs{p_{\mu}-p_{\nu}} d\rho$ $=\sup_{A \in \cS} \abs{\mu(A)-\nu(A)} $. TV distance is symmetric in its arguments, and $0 \leq \tv{\mu}{\nu} \leq 1$ for all $\mu,\nu \in \cP(\cS)$.

\subsection{Functional delta method} We derive  limit distribution theory for $f$-divergences via the functional delta method, which relies on the concept of Hadamard directional differentiability. The ideas are introduced below.
\begin{definition}[Hadamard directional differentiability \cite{Shapiro-1990,Romisch-2004,AVDV-book}]
Let $\mathfrak{D}$ and $\mathfrak{E}$ be linear topological spaces, and consider a function $\Phi:\Theta\subseteq \mathfrak{D}\to \mathfrak{E}$.
\begin{enumerate}[(i)]
    \item $\Phi$ is first order Hadamard directionally differentiable at $\theta\in \Theta$ tangentially to $\Theta$ if there exists a map $\Phi_{\theta}':\mathfrak{T}_{\Theta}(\theta) \rightarrow \mathfrak{E}$ such that 
 \begin{align}
     \lim_{n \rightarrow \infty} \frac{\Phi(\theta+t_nh_n)-\Phi(\theta)}{t_n}=\Phi_{\theta}'(h), 
 \end{align}
for any $h \in \mathfrak{T}_{\Theta}(\theta)$, $t_n \downarrow 0^+$ and $h_n \rightarrow h$ in $\mathfrak{D}$ such that $\theta+t_nh_n \in \Theta$, where 
\[
     \mathfrak{T}_{\Theta}(\theta):=\bigg\{h \in \mathfrak{D}:h=\lim_{n \rightarrow \infty} \frac{\theta_n-\theta}{t_n} \mbox{ for some }\theta_n \rightarrow \theta \mbox{ with } \theta_n \in \Theta \mbox{ and }t_n \downarrow 0^+\bigg\}
\]
is the \textit{tangent cone} to $\Theta$ at $\theta$.

\item For convex $\Theta$,\footnote{When $\Theta$ is convex, we have $\mathfrak{T}_{\Theta}(\theta)=\mathrm{cl}\big(\big\{(\tilde \theta-\theta)/t:\tilde \theta \in \Theta, ~t>0\big\}\big)$  for all $\theta \in \mathrm{cl}(\Theta)$; see \cite{Romisch-2004}.} we say that $\Phi$ is second order Hadamard directionally  differentiable at $\theta \in \Theta$ tangentially to $\Theta$ if it is first order Hadamard directionally differentiable at $\theta$ and there exists a map $\Phi_{\theta}'':\mathfrak{T}_{\Theta}(\theta) \rightarrow \mathfrak{E}$ such that 
 \begin{align}
    \Phi_{\theta}''(h)= \lim_{n \rightarrow \infty} \frac{\Phi(\theta+t_nh_n)-\Phi(\theta)-t_n\Phi_{\theta}'(h_n)}{\frac{1}{2}t_n^2}, 
 \end{align}
 for any $t_n \downarrow 0^+$ and $h_n \rightarrow h$ in $\mathfrak{D}$ (note: the convexity of $\Theta$ implies that $h_n\in\mathfrak{T}_\Theta(\theta)$, so that $\Phi'_\theta(h_n)$ is well-defined).
\end{enumerate}
The functions $ \Phi_{\theta}'$ and $\Phi_{\theta}''$ are called the first and second order Hadamard directional derivatives of $\Phi$ at $\theta$.
\end{definition}

For Hadamard directionally differentiable maps we have the following adaptation of the functional delta method \cite{Shapiro-1990,Romisch-2004,AVDV-book}. Let $\mathsf{int}(\cC)$ and $\partial \cC$ denote the interior and  boundary of a set $\cC$, respectively.
\begin{lemma}[Functional delta method] \label{Lem:extfuncdelta}
Let $\mathfrak{D}$ and $\mathfrak{E}$ be metrizable linear topological spaces,  $\Theta  \subseteq  \mathfrak{D}_{\Phi} \subseteq \mathfrak{D}$, and $\Phi:\mathfrak{D}_{\Phi}  \rightarrow \mathfrak{E}$ be Hadamard directionally differentiable at $\theta \in \Theta $ tangentially to $\Theta$ with derivative~$\Phi_{\theta}': \mathfrak{T}_{\Theta}(\theta) \rightarrow \mathfrak{E}$. Let $Z_n:\Omega \rightarrow \mathfrak{D}_{\Phi}$, $n\in\NN$,  be measurable maps such that  $r_n(Z_n-\theta) \trightarrow{w} Z$ for some sequence $r_n \rightarrow \infty$, where $Z$ is a random variable  that takes values in $\mathfrak{T}_{\Theta}(\theta)$.
\begin{enumerate}[(i)]
    \item If $\mathfrak{D}_{\Phi}=\Theta$, then  we have 
    \[r_n\big(\Phi(Z_n)-\Phi(\theta)\big) \trightarrow{d} \Phi_{\theta}'(Z),\]
    and if $\Theta$ is convex then $r_n\big(\Phi(Z_n)-\Phi(\theta)\big)=\phi'_\theta\big(r_n(Z_n-\theta)\big) +o_\PP(1)$. 
    \vspace{2mm}
    \item If $\mathfrak{D}_{\Phi}=\Theta$ is convex and $\Phi$ is also second order Hadamard directionally differentiable at $\theta \in \Theta$ tangentially to $\Theta$, with derivative $\Phi_{\theta}''$, then 
\[
    r_n^2\big(\Phi(Z_n)-\Phi(\theta)-\Phi_{\theta}'(Z_n-\theta)\big) \trightarrow{d} \frac{1}{2} \Phi_{\theta}''(Z),
\]
and $r_n^2\big(\Phi(Z_n)-\Phi(\theta)-\Phi_{\theta}'(Z_n-\theta)\big)=\frac 12 \Phi_{\theta}''\big(r_n(Z_n-\theta)\big)+o_\PP(1)$.
\item  For a convex $\Theta \subseteq \mathfrak{D}_{\Phi}$,  suppose that $Z_n$ and $Z$ satisfy $\PP^*(Z_n \notin \Theta) \rightarrow 0$ and $\PP\big(Z \in \partial \mathfrak{T}_{\Theta}(\theta)\big)=0$, where $\PP^*$ denotes outer probability (see \cite{AVDV-book}). Then, Part $(i)$ and $(ii)$ above holds. 
\end{enumerate}
\end{lemma}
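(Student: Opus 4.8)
The plan is to reduce all three parts to the extended continuous mapping theorem \cite{AVDV-book}, applied to a family of difference-quotient maps, with Hadamard directional differentiability supplying exactly the sequential-limit hypothesis that theorem requires. For part $(i)$, I would put $t_n:=r_n^{-1}\downarrow 0^+$ and, on the set $\{h\in\mathfrak{D}:\theta+t_nh\in\Theta\}=r_n(\Theta-\theta)$ (here using $\mathfrak{D}_\Phi=\Theta$), define $g_n(h):=t_n^{-1}\big(\Phi(\theta+t_nh)-\Phi(\theta)\big)$, together with $g_0:=\Phi_\theta'$ on $\mathfrak{T}_\Theta(\theta)$. First order Hadamard directional differentiability says precisely that $g_n(h_n)\to g_0(h)$ whenever $h_n\to h$ with $\theta+t_nh_n\in\Theta$ and $h\in\mathfrak{T}_\Theta(\theta)$. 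Since $W_n:=r_n(Z_n-\theta)$ takes values in $r_n(\Theta-\theta)$ and $W_n\trightarrow{w}Z$ with $Z$ Borel measurable, separable, and taking values in $\mathfrak{T}_\Theta(\theta)$, the extended continuous mapping theorem yields $g_n(W_n)=r_n\big(\Phi(Z_n)-\Phi(\theta)\big)\trightarrow{w}\Phi_\theta'(Z)$, which is the asserted convergence in distribution. For the linearization when $\Theta$ is convex, note $r_n(\Theta-\theta)\subseteq\mathfrak{T}_\Theta(\theta)$, so $\Phi_\theta'(W_n)$ is defined, and $\Phi_\theta'$ is continuous on $\mathfrak{T}_\Theta(\theta)$ (a standard consequence of Hadamard directional differentiability over convex sets \cite{Shapiro-1990,Romisch-2004}); applying the extended continuous mapping theorem to $h\mapsto g_n(h)-\Phi_\theta'(h)$, whose sequential limit into $\mathfrak{T}_\Theta(\theta)$ is $\Phi_\theta'(h)-\Phi_\theta'(h)=0$, gives the $o_\PP(1)$ remainder.

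Part $(ii)$ follows the same template after one algebraic reduction. Positive homogeneity of directional derivatives (immediate from the definition) gives $\Phi_\theta'(Z_n-\theta)=t_n\Phi_\theta'(W_n)$, hence
\[
r_n^2\big(\Phi(Z_n)-\Phi(\theta)-\Phi_\theta'(Z_n-\theta)\big)\;=\;\frac12\cdot\frac{\Phi(\theta+t_nW_n)-\Phi(\theta)-t_n\Phi_\theta'(W_n)}{\frac12 t_n^2}\;=:\;\frac12\,\tilde g_n(W_n).
\]
The definition of second order Hadamard directional differentiability says $\tilde g_n(h_n)\to\Phi_\theta''(h)$ along the relevant sequences — convexity of $\Theta$ makes $h_n\in r_n(\Theta-\theta)\subseteq\mathfrak{T}_\Theta(\theta)$ admissible, so $\Phi_\theta'(h_n)$ is well defined — whence the extended continuous mapping theorem gives $\tilde g_n(W_n)\trightarrow{w}\Phi_\theta''(Z)$ and the stated limit $\tfrac12\Phi_\theta''(Z)$. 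The $o_\PP(1)$ linearization is obtained exactly as in part $(i)$, now using continuity of $\Phi_\theta''$.

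Part $(iii)$ drops the requirement $\mathfrak{D}_\Phi=\Theta$, so $Z_n$ may take values in $\mathfrak{D}_\Phi\setminus\Theta$; the difference quotients are now defined on $r_n(\mathfrak{D}_\Phi-\theta)$ but converge only along sequences $h_n\to h$ for which $\theta+t_nh_n$ eventually re-enters $\Theta$. The plan is to invoke the standard convex-geometry fact that if $h$ lies in the interior of $\mathfrak{T}_\Theta(\theta)$, then $\theta+t_nh_n\in\Theta$ for all large $n$, for any $h_n\to h$ and $t_n\downarrow 0^+$; consequently the extended continuous mapping theorem applies with limit domain $\mathsf{int}\,\mathfrak{T}_\Theta(\theta)$, and this restriction is legitimate precisely because $\PP\big(Z\in\partial\mathfrak{T}_\Theta(\theta)\big)=0$ makes $Z$ supported on that interior, while $\PP^*(Z_n\notin\Theta)\to 0$ renders the event $\{Z_n\notin\Theta\}$ asymptotically negligible in the outer-probability bookkeeping of the theorem. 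The conclusions of $(i)$ and $(ii)$ then transfer verbatim.

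\textbf{Main obstacle.} I expect the routine part to be the outer-probability/outer-expectation bookkeeping forced by the fact that the maps $g_n$ need not be Borel measurable; this is absorbed by using the \cite{AVDV-book} notion of weak convergence throughout. The genuinely delicate ingredients are: (a) the continuity of $\Phi_\theta'$ and $\Phi_\theta''$ on the tangent cone, on which every $o_\PP(1)$ assertion rests; and (b) the convex-geometry lemma underpinning part $(iii)$ — that rescaled sequences converging into the interior of the tangent cone re-enter $\Theta$ — where, in infinite-dimensional $\mathfrak{D}$, the notions of ``interior'' and ``boundary'' of the tangent cone must be read relative to its closed affine hull, and it is exactly this point that fixes the precise form of the boundary-avoidance hypothesis on $Z$.
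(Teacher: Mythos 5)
Your Parts $(i)$ and $(ii)$ follow the standard route — apply the extended continuous mapping theorem to the difference-quotient maps, reading (second-order) Hadamard directional differentiability as the required sequential convergence — which matches what the paper leans on by citation (Shapiro, R\"omisch, van der Vaart--Wellner), so no difference there. Your Part $(iii)$ diverges from the paper's proof in a genuine way: you keep the \emph{classical} extended continuous mapping theorem, choosing the limit domain $\mathfrak{D}_0=\mathsf{int}\,\mathfrak{T}_\Theta(\theta)$ and appealing to a convex-geometry fact (sequences $h_n\to h\in\mathsf{int}\,\mathfrak{T}_\Theta(\theta)$ scaled by $t_n\downarrow 0^+$ eventually re-enter $\Theta$). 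The paper instead proves a new \emph{generalized} extended continuous mapping theorem (Theorem \ref{thm:extcontmapthm}), in which the sequential hypothesis is only required along sequences eventually in a sub-domain $\mathfrak{D}_n^\rightarrow\subseteq\mathfrak{D}_n$ and is compensated by the conditions $\PP(H\in\mathfrak{D}_*)=0$, $\PP(H\in\mathfrak{D}_\infty)=1$; it then takes $\mathfrak{D}_n^\rightarrow=r_n(\Theta-\theta)$. Both routes ultimately rest on the same geometric ingredient — a single neighborhood $U$ of $h\in\mathsf{int}\,\mathfrak{T}_\Theta(\theta)$ with $\theta+tU\subseteq\Theta$ for all small $t>0$, which tacitly needs $\mathsf{int}(\Theta)\neq\emptyset$ — and reach the same conclusion. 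Your version is more direct since it avoids proving the additional theorem; the paper's version isolates a reusable statement.

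Two points to tighten. First, you describe the hypothesis $\PP^*(Z_n\notin\Theta)\to 0$ as absorbed by ``outer-probability bookkeeping,'' but your outlined argument does not actually use it: the classical extended continuous mapping theorem only needs $H_n=r_n(Z_n-\theta)$ to take values in $\mathfrak{D}_n=r_n(\mathfrak{D}_\Phi-\theta)$, which is automatic from $Z_n:\Omega\to\mathfrak{D}_\Phi$, and the convex-geometry fact then handles the passage to $r_n(\Theta-\theta)$ along convergent sequences. You should either say concretely where that hypothesis enters your argument or flag that your route appears to drop it; in the paper's proof it is invoked (via the Portmanteau theorem) to re-establish $\PP(Z\in\mathfrak{T}_\Theta(\theta))=1$. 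Second, reading interior and boundary of the tangent cone ``relative to its closed affine hull,'' as you suggest, would break the convex-geometry fact: with $\Theta=[0,1]\times\{0\}$ in $\RR^2$ and $\theta=(0,0)$, the point $h=(1,0)$ lies in the relative interior of $\mathfrak{T}_\Theta(\theta)=[0,\infty)\times\{0\}$, yet $h_n=(1,1/n)\to h$ with $t_n=1/n$ has $\theta+t_nh_n\notin\Theta$ for every $n$. The paper's $\partial\mathfrak{T}_\Theta(\theta)$ is the ambient topological boundary, and that is the reading your argument requires as well; it is the (unaddressed, in both the paper and your sketch) requirement $\mathsf{int}(\Theta)\neq\emptyset$, not a switch to relative topology, that makes the geometric fact go through.
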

Part $(i)$ and $(ii)$ above are standard in the literature, while Part $(iii)$ which incorporates probabilistic constraints is new and is useful for our purposes to derive limit distribution results under more general conditions. To establish this claim, we prove an adaptation of the extended continuous mapping theorem\cite[Theorem 1.11.1]{AVDV-book}  as stated next, which may be of independent interest.
\begin{theorem}[Generalized extended continuous mapping theorem]\label{thm:extcontmapthm}
 Let $\mathfrak{D}$ and $\mathfrak{E}$ be metrizable linear topological spaces, $\mathfrak{D}_n, \mathfrak{D}_g \subseteq \mathfrak{D}$, and $\mathfrak{D}_n^{\rightarrow}\subseteq \mathfrak{D}_n$, for all $n\in\NN$. Suppose  measurable functions $g_n:\mathfrak{D}_n \rightarrow \mathfrak E$ and  $g: \mathfrak{D}_g \rightarrow \mathfrak{E}$ satisfy the following: if $h_n \rightarrow h $ with $h_n \in  \mathfrak{D}_n^{\rightarrow}$ for all sufficiently large $n$ and $h \in \mathfrak{D}_g$, then $g_n(h_n) \rightarrow g(h)$. Let $H_n:\Omega \rightarrow \mathfrak{D}_n$ and $H:\Omega \rightarrow \mathfrak{D}_g$ be  measurable maps such that $H_n \trightarrow{w} H$,  $\PP(H \in \mathfrak{D}_*)=0$ and $\PP(H \in \mathfrak{D}_{\infty})=1$, where $\mathfrak{D}_{\infty}=\{h \in \mathfrak{D}: \exists  (h_n)_{n \in \NN},~ h_n \rightarrow h \mbox{ and } h_n \in\mathfrak{D}_n^{\rightarrow} ~\forall~n \mbox{ sufficiently large}\}$ and $\mathfrak{D}_* =\cap_{m=1}^{\infty} \overline{\cup_{n=m}^{\infty}  (\mathfrak{D} \setminus \mathfrak{D}_{n}^{\rightarrow})} $. Then,   $g_n(H_n) \trightarrow{w} g(H)$.
\end{theorem}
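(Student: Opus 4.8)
The plan is to adapt the proof of the classical extended continuous mapping theorem \cite[Theorem 1.11.1]{AVDV-book} to the present setting, where the domains $\mathfrak{D}_n^{\rightarrow}$ shrink/vary with $n$ and the limit map $H$ is only assumed to land in the ``good'' set $\mathfrak{D}_\infty$ almost surely (with the ``bad'' set $\mathfrak{D}_*$ having probability zero). First I would invoke the almost-sure representation theorem (the Skorokhod--Dudley construction, as in \cite[Theorem 1.10.4]{AVDV-book}): since $H_n \trightarrow{w} H$ on a metrizable space, there exist random elements $\tilde H_n, \tilde H$ on a common probability space with $\tilde H_n \overset{d}{=} H_n$, $\tilde H \overset{d}{=} H$, and $\tilde H_n \to \tilde H$ almost surely (in the appropriate outer sense). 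The goal then reduces to showing $g_n(\tilde H_n) \to g(\tilde H)$ almost surely, which by another application of the (reverse direction of the) weak-convergence/a.s.-representation equivalence yields $g_n(H_n) \trightarrow{w} g(H)$.

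The core step is the pointwise convergence argument. Fix $\omega$ in the almost-sure event on which $\tilde H_n(\omega) \to \tilde H(\omega)$; by hypothesis I may also assume $\tilde H(\omega) \in \mathfrak{D}_\infty \setminus \mathfrak{D}_*$, since this holds with probability one. I want to show $g_n(\tilde H_n(\omega)) \to g(\tilde H(\omega))$. Set $h = \tilde H(\omega)$ and $h_n = \tilde H_n(\omega)$, so $h_n \to h$. The subtlety compared to the classical statement is that to apply the hypothesis on $(g_n, g)$ I need $h_n \in \mathfrak{D}_n^{\rightarrow}$ for all large $n$. This is exactly where the condition $h \notin \mathfrak{D}_* = \bigcap_{m} \overline{\bigcup_{n \geq m} (\mathfrak{D} \setminus \mathfrak{D}_n^{\rightarrow})}$ enters: $h \notin \mathfrak{D}_*$ means there is some $m$ with $h \notin \overline{\bigcup_{n \geq m}(\mathfrak{D}\setminus\mathfrak{D}_n^{\rightarrow})}$, i.e., an open neighborhood $U$ of $h$ disjoint from $\bigcup_{n \geq m}(\mathfrak{D}\setminus\mathfrak{D}_n^{\rightarrow})$; since $h_n \to h$, eventually $h_n \in U$, hence $h_n \notin \mathfrak{D}\setminus\mathfrak{D}_n^{\rightarrow}$, i.e., $h_n \in \mathfrak{D}_n^{\rightarrow}$ for all large $n$. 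Combined with $h \in \mathfrak{D}_g$ (which follows from $h \in \mathfrak{D}_\infty$ and the fact that $H$ takes values in $\mathfrak{D}_g$ by assumption), the hypothesis gives $g_n(h_n) \to g(h)$, as desired.

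A couple of technical points need care. One must handle measurability and the outer-probability bookkeeping throughout — the a.s.\ representation theorem and the ``portmanteau for nets/sequences of maps'' in \cite{AVDV-book} are stated with outer expectations/probabilities precisely to accommodate possibly non-measurable $g_n, g$ or compositions, so I would mirror that machinery rather than re-derive it, citing \cite[Theorem 1.10.4 and Addendum 1.10.5]{AVDV-book}. One should also double-check that $\mathfrak{D}_\infty$ and $\mathfrak{D}_*$ are themselves suitably measurable (or at least that the relevant probabilities $\PP(H \in \mathfrak{D}_\infty)$, $\PP(H \in \mathfrak{D}_*)$ make sense as stated, using outer/inner measure if needed) — but since these are given as hypotheses in the statement, I can take them at face value.

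The main obstacle, and the only genuinely new content relative to the textbook proof, is the $\mathfrak{D}_*$/$\mathfrak{D}_n^{\rightarrow}$ neighborhood argument in the middle paragraph: verifying that the $\limsup$-type exceptional set $\mathfrak{D}_*$ is precisely the right object to exclude so that convergent sequences eventually stay inside the shrinking domains. Everything else is a faithful transcription of the classical Skorokhod-representation-based proof. Once Theorem \ref{thm:extcontmapthm} is in hand, Lemma \ref{Lem:extfuncdelta}$(iii)$ follows by taking $g_n = \Phi$ on $\Theta$ (and, say, $g_n = $ anything measurable off $\Theta$), $\mathfrak{D}_n^{\rightarrow} = \Theta$, $H_n = r_n(Z_n - \theta)$, $H = Z$, so that $\mathfrak{D}_* = \mathfrak{D}\setminus\Theta$ up to closure and $\mathfrak{D}_\infty \supseteq \mathfrak{T}_\Theta(\theta) \setminus \partial\mathfrak{T}_\Theta(\theta)$, matching the stated conditions $\PP^*(Z_n \notin \Theta) \to 0$ and $\PP(Z \in \partial \mathfrak{T}_\Theta(\theta)) = 0$.
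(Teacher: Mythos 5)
Your proof takes a genuinely different route from the paper's. The paper adapts the Portmanteau argument of \cite[Theorem 1.11.1]{AVDV-book}: for a closed set $\cC$ it establishes the set inclusion $\cap_{n}\overline{\cup_{m\geq n}g_m^{-1}(\cC)}\subseteq g^{-1}(\cC)\cup\big(\mathfrak{D}\setminus(\mathfrak{D}_\infty\cap\mathfrak{D}_g)\big)\cup\mathfrak{D}_*$ and then bounds $\limsup_n\PP(g_n(H_n)\in\cC)$ via two applications of the Portmanteau theorem. In that route $\mathfrak{D}_\infty$ is genuinely needed: taking $h$ in the left-hand set supplies only a \emph{subsequence} $h_{n_k}\to h$ with $h_{n_k}\in\mathfrak{D}_{n_k}^{\rightarrow}$, and $h\in\mathfrak{D}_\infty$ is what lets one pad that subsequence out to a full sequence living in the sets $\mathfrak{D}_n^{\rightarrow}$ so that the hypothesis on $(g_n,g)$---which is stated over the full index set---applies. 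Your Skorokhod-representation argument sidesteps the subsequence issue entirely: after almost-sure representation you already have a full sequence $h_n=\tilde H_n(\omega)\to h=\tilde H(\omega)$, and your $\mathfrak{D}_*$-neighborhood argument (which is correct, and is indeed the one genuinely new step relative to the textbook proof) puts $h_n\in\mathfrak{D}_n^{\rightarrow}$ for all large $n$, so the hypothesis applies directly. A byproduct is that $\PP(H\in\mathfrak{D}_\infty)=1$ is never actually invoked in your argument---you include $\mathfrak{D}_\infty$ in the a.s.\ event but then do not use it, and your parenthetical that $h\in\mathfrak{D}_g$ ``follows from $h\in\mathfrak{D}_\infty$'' is off: $h\in\mathfrak{D}_g$ should come from $H$ being $\mathfrak{D}_g$-valued, not from membership in $\mathfrak{D}_\infty$. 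The trade-offs of your route are two: the a.s.\ representation of \cite[Theorem 1.10.4]{AVDV-book} requires separability of the limit law, an assumption absent from the theorem's hypotheses and not needed by the Portmanteau route; and one must verify that the representation $\tilde H$ can be taken $\mathfrak{D}_g$-valued, a real piece of outer-measure bookkeeping when $\mathfrak{D}_g$ is not Borel. These caveats are harmless in the paper's $L^2$-space applications, but the Portmanteau proof is the more faithful, assumption-free argument for the theorem exactly as stated.
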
 
 The proofs of Part $(iii)$ of Lemma \ref{Lem:extfuncdelta} and Theorem \ref{thm:extcontmapthm} are given in Appendices \ref{Lem:extfuncdelta-proof} and \ref{thm:extcontmapthm-proof}, respectively.

\section{Hadamard Differentiability Framework for $f$-Divergences} \label{Sec:Techframework}
We specialize the Hadamard differentiability framework to treat $f$-divergences. The framework is first described in abstract terms, after which we instantiate it to the case of KL divergence for concreteness (see Example \ref{ex:KL_HDD} below). The key idea is to find the right normed space over which $\mathsf{D}_f(\cdot\|\cdot)$ can be set up as a first and second order Hadamard directionally differentiable functional w.r.t. that norm. The construction is as follows.

\medskip

Let $\phi: \bar{\RR}_{\geq 0}\times \bar{\RR}_{\geq 0} \to \bar{\RR}$, where $\bar{\RR}$ is the extended reals and $\bar{\RR}_{\geq 0}$ is its  nonnegative~part.
 
\begin{assump} \label{Assump1}
$\phi(1,1)=0$, $\phi(0,0)=\lim_{y \downarrow 0} \phi(0,y)$, $\phi$ is continuous at $(0,c)$ for $c>0$, and all its partial derivatives of order two exists and are continuous in  $(0,\mspace{-2 mu}\infty) \mspace{-2 mu}\times \mspace{-2 mu}(0,\mspace{-2 mu}\infty)$,  possibly taking the values $\pm \infty$ only when at least one of its arguments is $0$ or $\infty$.
 \end{assump}
Let $g_1^{\star},g_2^{\star} \in  L_+^1(\rho)$ be such that   $\norm{g_1^{\star}}_{1,\rho} \vee \norm{g_2^{\star}}_{1,\rho} \leq \mspace{2 mu} 1$. 
For a multi-index $\alpha\in\NN_0^2$ with $|\alpha|=2$, let $\psi_{\alpha}:\RR_{\geq 0} \times \RR_{\geq 0} \rightarrow \RR_{\geq 0}$  be measurable functions, and  $\eta_1,\eta_2\ll \rho$  be positive measures on $\mathfrak{S}$ defined via their relative densities w.r.t. $\rho$ as
\begin{equation}\label{eq:normetadef}
\begin{split}
  p_{\eta_1}&:=1+ \big| \psi_{2,0}\circ(g_1^{\star},g_2^{\star})\big| +\big| \psi_{1,1}\circ(g_1^{\star},g_2^{\star})\big|,\\
    p_{\eta_2}&:=1+ \big| \psi_{0,2}\circ(g_1^{\star},g_2^{\star})\big| +\big| \psi_{1,1}\circ(g_1^{\star},g_2^{\star})\big|.
\end{split}
\end{equation}
Given $g_1^{\star}$, $g_2^{\star}$, and $\psi:=(\psi_{2,0},\psi_{0,2},\psi_{1,1})$ as above, we define the normed space
\begin{align}
     \mathfrak{D}_{g_1^{\star},g_2^{\star},\psi}:=\left\{(g_1-g_1^{\star},g_2-g_2^{\star}):\,g_1,g_2 \in L^{1}(\rho),~ \norm{(g_1-g_1^{\star},g_2-g_2^{\star})}_{\mathfrak{D}_{g_1^{\star},g_2^{\star},\psi}}\mspace{-2 mu}<\infty \right\}, \label{eq:normedspaceDdef}
\end{align}
where $\norm{(g,\tilde g)}_{\mathfrak{D}_{g_1^{\star},g_2^{\star},\psi}}:=\norm{g}_{2,\eta_1}+\norm{\tilde g}_{2,\eta_2}$. We henceforth use the shorthands $\mathfrak{D}$ and $\|\cdot\|_\mathfrak{D}$ for the space $\mathfrak{D}_{g_1^{\star},g_2^{\star},\psi}$ and its norm.

Setting $\Theta':=  \big\{(g_1-g_1^{\star},g_2-g_2^{\star}) \in \mathfrak{D}:g_1 \geq 0,g_2 \geq 0, \supp(g_1) \subseteq \supp(g_2),\,   \norm{g_1}_{1,\rho} \vee \norm{g_2}_{1,\rho} \leq 1\big\}$ and $\Theta'' :=\big\{(g_1-g_1^{\star},g_2-g_2^{\star}) \in \mathfrak{D}:g_1 \geq 0,g_2  \geq  0,\,   \norm{g_1}_{1,\rho} \vee \norm{g_2}_{1,\rho} \leq 1\big\}$, let $\Theta$ be a convex subset of $\Theta'$ or $\Theta''$ that  contains $(0,0)$, and consider the functional    
$\Phi:\Theta \rightarrow \bar{\RR}$ given by
\begin{align}
    \Phi(\theta_1,\theta_2):=\int_{\mathfrak{S}} \phi(g_1^{\star}+\theta_1,g_2^{\star}+\theta_2) d\rho.\label{eq:defPhifunc}
\end{align}
In addition to Assumption \ref{Assump1}, the following assumptions are needed to state our Hadamard differentiability result. 
\begin{assump} \label{Assump2}
At least one of the following conditions  hold:\\
  $(i)$ $ D^{(1,0)}\phi\circ (g_1^{\star},g_2^{\star}),D^{(0,1)}\phi\circ (g_1^{\star},g_2^{\star})\in L^2(\rho)$. \\
  $(ii)$ $ D^{(1,0)}\phi\circ (g_1^{\star},g_2^{\star})\in L^2(\rho)$ and $\Theta$ is such that $\theta_2=0$ for all $(\theta_1,\theta_2) \in \Theta$.\\
  $(iii)$  $ D^{(0,1)}\phi\circ (g_1^{\star},g_2^{\star})\in L^2(\rho)$   and $\Theta$ is such that $\theta_1=0$ for all $(\theta_1,\theta_2) \in \Theta$.
    \end{assump}
\begin{assump} \label{Assump3}  $\Phi$ is well-defined\,\footnote{When $\phi$ is convex and $\rho$ is a finite measure, Jensen's inequality and Assumption~\ref{Assump1} automatically imply that $\Phi$ is well-defined and nonnegative.} on $\Theta$ and $\abs{\Phi(\theta_1,\theta_2)}<\infty$ for all $(\theta_1,\theta_2) \in \Theta$. 
\end{assump}

\begin{assump} \label{Assump4}
For any $\alpha\in\NN_0^2$ with $|\alpha|=2$, there exists $v_{\alpha} \in L^1([0,1],\lambda)$, such that for all $\theta=(\theta_1,\theta_2) \in \Theta$ and $\tau \in [0,1]$, we have
    \begin{align}
    (1-\tau) \abs{\theta^{\alpha}D^{\alpha}\,\phi \circ\big((g_1^{\star},g_2^{\star})+\tau (\theta_1,\theta_2)\big)} \lesssim \abs{\theta^{\alpha}\psi_{\alpha}\circ(g_1^{\star},g_2^{\star})}v_{ \alpha}(\tau)\quad \rho- a.e.\label{eq:fdiv-integcond}
    \end{align}
\end{assump}
Under these assumptions, we have the desired Hadamard differentiability result.
\begin{prop}\label{Prop:Hadamarddiff-gen}
\begin{enumerate}[(i)]
    \item 
If Assumptions \ref{Assump1}-\ref{Assump4} hold for $\Theta \subseteq \Theta'$, then $\Phi$, as defined in \eqref{eq:defPhifunc}, is second order Hadamard differentiable  at $\theta^{\star}:=(0,0) \in \Theta$ tangentially to $\Theta$ with 
\begin{align}
 \Phi_{\theta^{\star}}'(h_1,h_2)&=\int_{\mathfrak{S}}\mspace{-2mu} \big(h_{1}\,D^{(1,0)}\phi\mspace{-2mu}\circ\mspace{-2mu}(g_1^{\star},g_2^{\star})\mspace{-2mu}+\mspace{-2mu}h_{2}\,D^{(0,1)}\phi\mspace{-2mu}\circ\mspace{-2mu}(g_1^{\star},g_2^{\star})\big) d \rho, \label{eq:Hadmardfirstderiv}\\
 \Phi_{\theta^{\star}}''(h_1,h_2)&=\int_{\mathfrak{S}} \big(h_{1}^2\,D^{(2,0)}\phi\mspace{-2mu}\circ\mspace{-2mu}(g_1^{\star},g_2^{\star})\mspace{-2mu}+\mspace{-2mu}h_2^2\,D^{(0,2)}\phi\circ(g_1^{\star},g_2^{\star})+2h_1h_2\,D^{(1,1)}\phi\circ(g_1^{\star},g_2^{\star})\big) d \rho, \label{eq:Hadmardsecderiv}
\end{align}
for all $(h_1,h_2) \in \mathfrak{T}_{\Theta}(\theta^{\star})=\mathrm{cl}\big(\big\{\theta/t:\theta=(\theta_1,\theta_2) \in \Theta, ~t>0\big\}\big)$.
\item The above claim further extends to $\Theta \subseteq \Theta''$ provided $\phi$ is also continuous at $(0,0)$.
\end{enumerate}
\end{prop}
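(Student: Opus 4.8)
The plan is to prove the Hadamard differentiability of $\Phi$ via a second-order Taylor expansion of the integrand $\phi$ along the line segment from $(g_1^\star, g_2^\star)$ to $(g_1^\star+\theta_1, g_2^\star+\theta_2)$, and then to pass the limit inside the integral using the dominating functions supplied by Assumption \ref{Assump4}. Concretely, fix $h=(h_1,h_2)\in\mathfrak{T}_\Theta(\theta^\star)$, and take arbitrary sequences $t_n\downarrow 0^+$ and $h^{(n)}=(h_1^{(n)},h_2^{(n)})\to h$ in $\mathfrak{D}$ with $t_nh^{(n)}\in\Theta$. Write $\theta^{(n)}:=t_nh^{(n)}$. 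For the first-order claim I would apply the mean value form of Taylor's theorem pointwise in $s\in\mathfrak{S}$: $\phi(g_1^\star+\theta_1^{(n)},g_2^\star+\theta_2^{(n)})-\phi(g_1^\star,g_2^\star)=\theta_1^{(n)}D^{(1,0)}\phi\circ(g_1^\star,g_2^\star)+\theta_2^{(n)}D^{(0,1)}\phi\circ(g_1^\star,g_2^\star)+R_n$, where the remainder $R_n$ is a sum over $|\alpha|=2$ of terms $(\theta^{(n)})^\alpha D^\alpha\phi$ evaluated at an intermediate point $(g_1^\star,g_2^\star)+\tau_n(\theta_1^{(n)},\theta_2^{(n)})$ times $(1-\tau_n)$ (integral remainder form, $\tau_n=\tau_n(s)\in[0,1]$). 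Dividing by $t_n$, the linear part converges to $\Phi'_{\theta^\star}(h)$ by Cauchy--Schwarz in $L^2(\rho)$ together with $D^{(1,0)}\phi\circ(g_1^\star,g_2^\star),D^{(0,1)}\phi\circ(g_1^\star,g_2^\star)\in L^2(\rho)$ (Assumption \ref{Assump2}; here one needs $h_j^{(n)}\to h_j$ in $\|\cdot\|_{2,\eta_j}$, hence in $\|\cdot\|_{2,\rho}$ since $p_{\eta_j}\ge 1$) and the $R_n/t_n$ term is controlled by Assumption \ref{Assump4}: $|R_n|/t_n\lesssim t_n\sum_\alpha |h^{(n)}|^\alpha|\psi_\alpha\circ(g_1^\star,g_2^\star)|\|v_\alpha\|_{1}$, whose $L^1(\rho)$ norm is $O(t_n)$ by Cauchy--Schwarz against the weight $p_{\eta_j}$ in the definition \eqref{eq:normetadef} (the products $|h_1^{(n)}|^2|\psi_{2,0}\circ(g_1^\star,g_2^\star)|$, $|h_1^{(n)}h_2^{(n)}||\psi_{1,1}\circ(g_1^\star,g_2^\star)|$, $|h_2^{(n)}|^2|\psi_{0,2}\circ(g_1^\star,g_2^\star)|$ are each integrable and bounded uniformly in $n$ because $\|h^{(n)}\|_\mathfrak{D}$ is bounded). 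Hence the first-order difference quotient converges to $\Phi'_{\theta^\star}(h)$.

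For the second-order claim I would expand one order further: use the first-order Taylor expansion with integral remainder,
\[
\phi(g_1^\star+\theta_1^{(n)},g_2^\star+\theta_2^{(n)})-\phi(g_1^\star,g_2^\star)-\sum_{|\alpha|=1}(\theta^{(n)})^\alpha D^\alpha\phi\circ(g_1^\star,g_2^\star)
= \sum_{|\alpha|=2}\binom{2}{\alpha}\int_0^1(1-\tau)(\theta^{(n)})^\alpha D^\alpha\phi\circ\big((g_1^\star,g_2^\star)+\tau\theta^{(n)}\big)\,d\tau ,
\]
divide by $\tfrac12 t_n^2$, and note that the integrand on the right, for fixed $s$, converges pointwise as $n\to\infty$ to $2(1-\tau)\sum_{|\alpha|=2}\binom{2}{\alpha}h^\alpha D^\alpha\phi\circ(g_1^\star,g_2^\star)$ (here I use continuity of the second partials of $\phi$ on $(0,\infty)^2$ from Assumption \ref{Assump1}, the convergence $h^{(n)}\to h$ $\rho$-a.e.\ along a subsequence, and the fact that $(g_1^\star,g_2^\star)+\tau\theta^{(n)}\to(g_1^\star,g_2^\star)$; points where a coordinate of $(g_1^\star,g_2^\star)$ is $0$ or $\infty$ need separate care, handled because there $\psi_\alpha$ and the bound in Assumption \ref{Assump4} force the relevant contributions to vanish, or — in part $(ii)$ — by the extra continuity of $\phi$ at $(0,0)$). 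Integrating over $\tau$ kills the factor $2(1-\tau)$ and yields exactly $\Phi''_{\theta^\star}(h)$. Dominated convergence is justified, again, by Assumption \ref{Assump4} and Cauchy--Schwarz against $p_{\eta_1},p_{\eta_2}$: the $\tau$-averaged absolute integrand is dominated by $\sum_\alpha|h^{(n)}|^\alpha|\psi_\alpha\circ(g_1^\star,g_2^\star)|v_\alpha(\tau)$, which is uniformly integrable in $(s,\tau)$ by the $L^2(\eta_j)$-convergence of $h_j^{(n)}$; a Vitali/dominated-convergence argument then gives convergence of the integrals.

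I would also need to verify the identification of the tangent cone $\mathfrak{T}_\Theta(\theta^\star)=\mathrm{cl}\{\theta/t:\theta\in\Theta,\ t>0\}$, which is immediate from the quoted characterization for convex $\Theta$ (footnote after the Hadamard differentiability definition) since $\theta^\star=(0,0)\in\Theta$, so $\tilde\theta-\theta^\star=\tilde\theta$; and to check that $\Phi'_{\theta^\star}$ and $\Phi''_{\theta^\star}$ are well-defined finite maps on this cone — finiteness of $\Phi'_{\theta^\star}(h)$ follows from Cauchy--Schwarz and Assumption \ref{Assump2}, while finiteness of $\Phi''_{\theta^\star}(h)$ follows from Assumption \ref{Assump4} evaluated at $\tau=0$ (which gives $|h^\alpha D^\alpha\phi\circ(g_1^\star,g_2^\star)|\lesssim|h^\alpha\psi_\alpha\circ(g_1^\star,g_2^\star)|v_\alpha(0)$, integrable against $p_{\eta_j}$). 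Part $(ii)$ of the proposition, where $\Theta\subseteq\Theta''$ drops the support-containment constraint, is handled by the same argument with the added hypothesis that $\phi$ is continuous at $(0,0)$: this is precisely what is needed to get pointwise convergence of the Taylor integrand at points $s$ where $g_2^\star(s)=0$ (so the support-mismatch set), where otherwise $\phi$ could blow up. The main obstacle, and the step requiring the most care, is the pointwise convergence of the second-order remainder integrand at the boundary points where a coordinate of $(g_1^\star,g_2^\star)$ equals $0$ or $\infty$ — that is, reconciling the possibly-infinite values of $D^\alpha\phi$ there with the requirement that everything be dominated in $L^1(\rho)$; this is exactly what Assumptions \ref{Assump1} and \ref{Assump4} (and the $\psi_\alpha$-weighted construction of $\eta_1,\eta_2$) are engineered to handle, so the proof amounts to carefully threading those hypotheses through the Taylor-expansion-plus-dominated-convergence scheme, taking subsequences to upgrade $\mathfrak{D}$-convergence to $\rho$-a.e.\ convergence where needed and arguing that the limit is subsequence-independent.
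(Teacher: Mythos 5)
Your proposal mirrors the paper's architecture: second-order Taylor expansion of $\phi$ with integral remainder, dominated convergence via Assumption~\ref{Assump4} and the $\psi_\alpha$-weighted measures $\eta_1,\eta_2$, and a subsequence argument to upgrade $L^2(\eta_j)$-convergence to $\rho$-a.e.\ convergence for the second-order claim. Your $O(t_n)$ bound on the $L^1(\rho)$ norm of the first-order remainder is in fact a slight simplification over the paper's Vitali-type subsequence argument for the first-order step. However, you misidentify the delicate boundary set: you say it consists of points where a coordinate of $(g_1^\star,g_2^\star)$ is $0$ or $\infty$ (in particular, where $g_2^\star(s)=0$). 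This cannot be right: $g_1^\star,g_2^\star\in L_+^1(\rho)$ are strictly positive $\rho$-a.e.\ by construction, so that set is null. The boundary issue arises because the \emph{perturbed} densities $g_j=g_j^\star+\theta_j^{(n)}$ can vanish (in $\Theta'$, $g_1(s)=0<g_2(s)$; in $\Theta''$, also $g_1(s)=g_2(s)=0$), so the endpoint of the Taylor segment $(g_1^\star,g_2^\star)\to(g_1,g_2)$ may lie on $\partial\RR_{\geq 0}^2$. The paper therefore first derives the pointwise Taylor identity for strictly positive arguments and then extends it to these boundary cases by taking limits $g_1(s)\downarrow 0$, $g_2(s)\downarrow 0$, invoking the continuity stipulations of Assumption~\ref{Assump1} (continuity at $(0,c)$ and $\phi(0,0)=\lim_{y\downarrow 0}\phi(0,y)$; continuity at $(0,0)$ in part~(ii)) together with dominated convergence via Assumption~\ref{Assump4}. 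You need this preliminary extension step, and it is not at points with $g_j^\star(s)=0$.

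Separately, your shortcut for the finiteness of $\Phi''_{\theta^\star}(h)$ — ``evaluate Assumption~\ref{Assump4} at $\tau=0$'' — does not work in general: e.g.\ for $\phi_{\mathsf{H}^2}$ the paper takes $v_{2,0}(\tau)=v_{0,2}(\tau)=\tau^{1/2}(1-\tau)^{-1/2}$, which vanishes at $\tau=0$, so the bound at $\tau=0$ is trivial. The correct route, as in the paper's display~\eqref{eq:intermbnd}, is to integrate the bound over $\tau$ (using $v_\alpha\in L^1([0,1],\lambda)$) and then close with a Cauchy--Schwarz estimate against $p_{\eta_1},p_{\eta_2}$; your second-order dominated-convergence step already contains those ingredients, so reuse them.
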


The following example instantiates the above framework to the case of KL divergence. 

\begin{example}[KL divergence]\label{ex:KL_HDD}
Consider the KL divergence $\kl{\mu}{\nu}$ between probability  measures $\mu \ll \nu \in \cP(\mathfrak{S})$, in which case we may take $ \rho=\nu$. Assume $p_{\mu}>0$. The setup above specializes the KL divergence case with $\phi(x,y)=x\log  x$, $g_1^{\star}=p_{\mu}$, and $g_2^{\star}=1$, whereby $\kl{\mu}{\nu}=\Phi(0,0)$ (see \eqref{eq:defPhifunc}). 
Further set $\psi_{2,0}(x,y)=D^{(2,0)}\phi(x,y)=1/x$, $\psi_{1,1}(x,y)=D^{(1,1)}\phi(x,y)=0$, and  $\psi_{0,2}(x,y)=D^{(0,2)}\phi(x,y)$ $=0 $. The measures $\eta_1$ and $\eta_2$ are defined through the densities $p_{\eta_1}=1+\psi_{2,0}\circ(p_{\mu},1)=1+1/p_{\mu}$ and $p_{\eta_2}=1$ (note that $\eta_2=\nu$). The spaces of interest are taken as $\mathfrak{D}=\big\{(g_1-p_{\mu},g_2-1):\,g_1,g_2 \in L^1(\rho),~\norm{g_1-p_{\mu}}_{\eta_1}+\norm{g_2-1}_{\nu}<\infty \big\}$, $\Theta=\{(g_1-p_{\mu},0) \in \mathfrak{D}:g_1 \geq 0,~\norm{g_1}_{1,\nu} = 1\}$. The function $v_\alpha$, $\alpha\in\NN_0^2$, to satisfy \eqref{eq:fdiv-integcond} is chosen as $v_{0,2}=v_{1,1}=0$, and $v_{2,0}=1$. Indeed, the former null values are sufficient since $D^{(2,0)}\phi=D^{(1,1)}\phi=0$, while for the latter, with $\theta=(\theta_1,\theta_2)=(g_1-g_1^{\star},g_2-g_2^{\star})$, we have 
\[\mspace{-2.5mu}
       (1-\tau) \mspace{-2mu}\abs{\theta_1^2D^{(2,0)}\phi\mspace{-2mu} \circ\big((g_1^{\star},g_2^{\star})\mspace{-2mu}+\mspace{-2mu}\tau (\theta_1,\theta_2)\big)}\mspace{-1mu} =\mspace{-1mu} \frac{(1\mspace{-2mu}-\mspace{-2mu}\tau) \theta_1^2}{(1\mspace{-2mu}-\mspace{-2mu}\tau)g_1^{\star}\mspace{-2mu}+\mspace{-2mu}\tau g_1}\leq \frac{\theta_1^2}{g_1^{\star}}\mspace{-1mu}=\mspace{-1mu}\abs{\theta_1^2\psi_{2,0} \circ\mspace{-2mu} (g_1^{\star},g_2^{\star})}\mspace{-2mu}.
\]
Here, the inequality holds  since $g_1 \geq  0$. Note that    $\mathfrak{T}_{\Theta}(\theta^{\star})=\mathrm{cl}\big(\big\{\big((g_1-p_{\mu})/t,0\big):(g_1-p_{\mu},0) \in \Theta, ~t>0\big\}\big)$. Corresponding choices for other $f$-divergences can be found in Lemma \ref{Lem:assumpfdiv} (in  Section \ref{Sec:limittheorems-proof}) below.
\end{example}

\section{Limit Distribution for $f$-Divergences} \label{Sec:limittheorems}

We instantiate the above unified framework to derive a flexible limit distribution theory under general regularity conditions for several popular $f$-divergences---KL divergence, $\chi^2$~divergence, $\mathsf{H}^2$ distance, and TV distance. To maintain versatility, we model generic estimators of population probability distribution as random probability measures. These can be substituted with a specific estimator depending on the application.

\begin{definition}[Random probability measure]
A random probability measure on $\mathfrak{S}$ is a map $\zeta: \Omega \times  \cS \rightarrow [0,1]$ satisfying
\begin{enumerate}[(i)]
    \item  for every $\cC \in \cS$, $\omega \rightarrow \zeta(\omega, \cC)$ is measurable from $(\Omega,\cA)$ to $(\RR,\cB(\RR))$;
    \item for every  $\omega \in \Omega$,  $\zeta(\omega, \cdot) \in \cP(\mathfrak{S})$.
\end{enumerate}
\end{definition}

Let $\mu,\nu\in\cP(\mathfrak{S})$. Consider a sequence $(\mu_n,\nu_n)_{n \in \NN}$ of random probability measures on $\mathfrak{S}$ such that   
$(\mu_n,\nu_n)$ converges weakly to $(\mu,\nu)$. Accordingly,  $(\mu_n,\nu_n)$
can be viewed as an instance of weakly convergent 
estimators of the population distribution $(\mu,\nu)$. Below, the one- and two-sample settings refer to when only $\mu$ or both $(\mu,\nu)$ are approximated by $\mu_n$ or $(\mu_n,\nu_n)$, respectively. We also use the terms `null' and `alternative' for when $\mu=\nu$ or $\mu\neq \nu$, respectively. In the following results, $(r_n)_{n \in \NN}$ denotes a diverging sequence, $q$, $q_1$, and $q_2$ represent measurable functions. Also,  inequalities involving relative densities, e.g. $p_{\mu}>0$, are to be interpreted as holding $\rho$ a.e., and  regularity  conditions involving random measures, e.g. $\kl{\mu_n}{\nu}<\infty$, are required to hold only for sufficiently large $n$. 

 \subsection{KL divergence} \label{sec:KLdiverggenres}
\begin{theorem}
[Limit distribution for KL divergence]\label{Thm:KLdiv-limdist}
 The following hold: 
\begin{enumerate}[(i)]
    \item (One-sample null) Let $\mu_n \ll \mu=\rho$ be such that   $\kl{\mu_n}{\mu}<\infty$ almost surely  (a.s.). If $r_n(p_{\mu_n}-1) \trightarrow{w} B$ in $L^2(\mu)$,  then
    \begin{align}
    r_n^2 \kl{\mu_n}{\mu} \trightarrow{d} \frac{1}{2} \int_{\mathfrak{S}} B^2 d \mu.  \label{eq:KL-onesample-null}
\end{align}
 \item (One-sample alternative) Let  $\mu_n \ll \mu \ll \nu=\rho$  satisfy  $\ind_{\supp(p_{\mu})}\log p_{\mu}  \in L^2(\nu)$, 
 $\kl{\mu}{\nu}<\infty$, and $\kl{\mu_n}{\nu}<\infty$  a.s.  If $r_n(p_{\mu_n}-p_{\mu}) \trightarrow{w} B$ in $L^2(\eta)$,  where $\eta$ has relative density $p_{\eta}=1+(\ind_{\supp(p_{\mu})}/p_{\mu})$, then
 \begin{align}
    r_n \big(\kl{\mu_n}{\nu}-\kl{\mu}{\nu}\big) \trightarrow{d} \int_{\supp(p_{\mu})} B \log p_{\mu}   d \nu. \label{eq:KL-onesample-alt}
\end{align}
\item (Two-sample null) Let $\mu_n \ll \nu_n \ll \mu=\rho$ be such that   $\kl{\mu_n}{\nu_n}<\infty$ and $p_{\mu_n}/p_{\nu_n} \leq q$ a.s. Let  
$\eta_1=\mu$ and $\eta_2$ be the measure with relative density $p_{\eta_2}=1+q$. If $\big(r_n(p_{\mu_n}-1),r_n(p_{\nu_n}-1)\big) \trightarrow{w} (B_1,B_2)$ in $L^2(\eta_1) \times L^2(\eta_2)$, then
    \begin{align}
    r_n^2 \kl{\mu_n}{\nu_n} \trightarrow{d} \frac{1}{2} \int_{\mathfrak{S}} (B_1-B_2)^2 d \mu.  \label{eq:KL-twosample-null}
\end{align}
 \item (Two-sample alternative)
 Let  $\mu_n \ll \nu_n  \ll  \nu=\rho$ and $\mu_n \ll \mu  \ll  \nu$ satisfy    $p_{\mu}, \ind_{\supp(p_{\mu})}\log p_{\mu}  \in L^2(\nu)$, $\kl{\mu}{\nu}<\infty$,  $\kl{\mu_n}{\nu_n}<\infty$ and $p_{\mu_n}/p_{\nu_n} \leq q$ a.s. Let  $\eta_1$ and $\eta_2$ be measures with relative densities $p_{\eta_1}=1+(\ind_{\supp(p_{\mu})}/p_{\mu})$ and $p_{\eta_2}=1+p_{\mu}+q$, respectively.  
 If $\big(r_n(p_{\mu_n}-p_{\mu}),r_n(p_{\nu_n}-1)\big) \trightarrow{w} (B_1,B_2)$ in  $L^2(\eta_1) \times L^2(\eta_2)$,  then
 \begin{align}
    r_n \big(\kl{\mu_n}{\nu_n}-\kl{\mu}{\nu}\big) \trightarrow{d} \int_{\supp(p_{\mu})} B_1 \log p_{\mu}   d \nu-\int_{\supp(p_{\mu})} B_2    d \mu. \label{eq:KL-twosample-alt}
\end{align}
\end{enumerate}
\end{theorem}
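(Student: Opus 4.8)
The plan is to derive all four parts of Theorem~\ref{Thm:KLdiv-limdist} as consequences of the abstract machinery already set up: the Hadamard differentiability result in Proposition~\ref{Prop:Hadamarddiff-gen}, the functional delta method in Lemma~\ref{Lem:extfuncdelta}, and the KL specialization worked out in Example~\ref{ex:KL_HDD} (with its two-sample analogue recorded in Lemma~\ref{Lem:assumpfdiv}). For each case I would proceed in three steps: (1) cast $\kl{\cdot}{\cdot}$ as the functional $\Phi$ of \eqref{eq:defPhifunc} with the appropriate choice of $(\phi,g_1^\star,g_2^\star,\psi)$; (2) verify Assumptions~\ref{Assump1}--\ref{Assump4} so that $\Phi$ is first and second order Hadamard directionally differentiable at $\theta^\star=(0,0)$ tangentially to the relevant $\Theta$, reading off $\Phi'_{\theta^\star}$ and $\Phi''_{\theta^\star}$ from \eqref{eq:Hadmardfirstderiv}--\eqref{eq:Hadmardsecderiv}; (3) feed the weak-convergence hypothesis $r_n(\,\cdot\,-\theta^\star)\trightarrow{w} B$ (in the prescribed $L^2(\eta)$ or product space, which is exactly $\mathfrak{D}$) into Lemma~\ref{Lem:extfuncdelta} and simplify the resulting limit variable.

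Concretely, for part~$(i)$ take $\phi(x,y)=x\log x$, $g_1^\star=1$, $g_2^\star=1$ (here $\rho=\mu$), and $\Theta=\{(g_1-1,0):g_1\ge 0,\ \|g_1\|_{1,\mu}=1,\ (g_1-1,0)\in\mathfrak{D}\}$; then $D^{(1,0)}\phi\circ(1,1)=1+\log 1=1\in L^2(\mu)$ since $\mu$ is a probability measure, $\psi_{2,0}=1/x$ gives $p_{\eta_1}=1+1/1=2$ (so $L^2(\eta_1)=L^2(\mu)$ up to the harmless constant), and Assumption~\ref{Assump4} holds with $v_{2,0}\equiv 1$ exactly as in Example~\ref{ex:KL_HDD}. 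Note $\Phi'_{\theta^\star}(h,0)=\int h\,d\mu=0$ for $h\in\mathfrak{T}_\Theta(\theta^\star)$ because every $g_1$ integrates to $1$, so the first-order term vanishes and Lemma~\ref{Lem:extfuncdelta}$(ii)$ yields $r_n^2\kl{\mu_n}{\mu}\trightarrow{d}\tfrac12\Phi''_{\theta^\star}(B,0)=\tfrac12\int B^2\,d\mu$ since $D^{(2,0)}\phi\circ(1,1)=1$. For part~$(ii)$ take $g_1^\star=p_\mu$, $g_2^\star=1$, $\rho=\nu$; the first-order derivative is $\Phi'_{\theta^\star}(h,0)=\int_{\supp(p_\mu)} h\,(1+\log p_\mu)\,d\nu=\int_{\supp(p_\mu)} h\log p_\mu\,d\nu$ (the $\int h\,d\nu$ piece again vanishing on the tangent cone), and Lemma~\ref{Lem:extfuncdelta}$(i)$ gives \eqref{eq:KL-onesample-alt}; the hypotheses $\ind_{\supp(p_\mu)}\log p_\mu\in L^2(\nu)$ and the form of $p_\eta$ are precisely what make Assumption~\ref{Assump2}$(ii)$ and the definition of $\mathfrak{D}$ match. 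The two-sample parts $(iii)$ and $(iv)$ are the same with $g_2^\star=1$ replaced by a genuine second coordinate, $\phi(x,y)=x\log(x/y)$, so that $D^{(0,1)}\phi=-x/y$, $D^{(0,2)}\phi=x/y^2$, $D^{(1,1)}\phi=-1/y$; the bound $p_{\mu_n}/p_{\nu_n}\le q$ a.s.\ is what controls these mixed derivatives and drives the choices $\psi_{0,2},\psi_{1,1}$ and hence $p_{\eta_2}=1+q$ (null) or $1+p_\mu+q$ (alternative). In the null two-sample case the second-order form $\Phi''_{\theta^\star}(h_1,h_2)=\int\!\big(h_1^2/1-2h_1h_2/1+h_2^2/1\big)d\mu=\int(h_1-h_2)^2 d\mu$ produces \eqref{eq:KL-twosample-null}; in the alternative case the first-order form $\int h_1\log p_\mu\,d\nu+\int h_2\,(-p_\mu)\,d\nu=\int_{\supp(p_\mu)}h_1\log p_\mu\,d\nu-\int_{\supp(p_\mu)}h_2\,d\mu$ gives \eqref{eq:KL-twosample-alt}.

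A point requiring care in every part is verifying that the stated weak-convergence hypothesis delivers a limit $B$ (or $(B_1,B_2)$) that lives in the tangent cone $\mathfrak{T}_\Theta(\theta^\star)$, and that $\Theta$ is a convex subset of $\Theta'$ (null) or $\Theta''$ (alternative) containing $(0,0)$ — the normalization $\|g_1\|_{1,\rho}=1$ and support condition $\supp(g_1)\subseteq\supp(g_2)$ are exactly $\Theta'$, and convexity along the segment from $\theta^\star$ to any admissible $\theta$ is immediate. Since $\mathfrak{D}_\Phi=\Theta$ in these KL instances, the ``clean'' parts $(i)$ and $(ii)$ of Lemma~\ref{Lem:extfuncdelta} suffice and one does not need the probabilistic-constraint refinement in part~$(iii)$ of that lemma. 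The main obstacle, and the step I would spend the most effort on, is the verification of Assumption~\ref{Assump4} in the two-sample cases: one must produce integrable dominating functions $v_\alpha$ controlling $(1-\tau)|\theta^\alpha D^\alpha\phi\circ((g_1^\star,g_2^\star)+\tau\theta)|$ uniformly over $\theta\in\Theta$ and $\tau\in[0,1]$, and because $D^{(0,2)}\phi$ and $D^{(1,1)}\phi$ involve negative powers of $y=g_2^\star+\tau\theta_2$, the estimate $g_{2,n}=p_{\nu_n}$ approaching $0$ is the danger zone; the a.s.\ bound $p_{\mu_n}/p_{\nu_n}\le q$ together with $g_1\le q\,g_2$ is what tames $|\theta_1\theta_2/((1-\tau)g_2^\star+\tau g_2)|$ and $|\theta_2^2 g_1/((1-\tau)g_2^\star+\tau g_2)^2|$, and pushing these through cleanly (rather than the one-line bound available in the one-sample case) is the real work. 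I would handle this once as a general lemma (Lemma~\ref{Lem:assumpfdiv}) and then simply invoke it, so the proof of Theorem~\ref{Thm:KLdiv-limdist} itself reduces to bookkeeping: match notation, cite Proposition~\ref{Prop:Hadamarddiff-gen}, apply Lemma~\ref{Lem:extfuncdelta}, and read off the integrals.
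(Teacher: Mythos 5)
Your proposal follows the paper's proof essentially line for line: you identify the same Hadamard framework (Proposition~\ref{Prop:Hadamarddiff-gen}, Lemma~\ref{Lem:extfuncdelta}, Lemma~\ref{Lem:assumpfdiv}), the same instantiations $(\phi_{\mathsf{KL}},g_1^\star,g_2^\star,\psi,\Theta)$, the same vanishing of the first-order term on the tangent cone via $\int h\,d\rho=0$, and the same reading-off of $\Phi'_{\theta^\star},\Phi''_{\theta^\star}$ from \eqref{eq:Hadmardfirstderiv}--\eqref{eq:Hadmardsecderiv}; the paper likewise treats \eqref{eq:fdiv-integcond} once in Lemma~\ref{Lem:assumpfdiv}(ii)(a) and then invokes Lemma~\ref{Lem:extfuncdelta}(i) (alternative) and (ii) (null). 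So the architecture is right.

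There is one genuine gap in the alternative cases, parts $(ii)$ and $(iv)$: your plan sets $g_1^\star=p_\mu$ and then applies Proposition~\ref{Prop:Hadamarddiff-gen}, but that proposition is stated under Assumption~\ref{Assump1}, which only gives control of the partial derivatives of $\phi$ on $(0,\infty)\times(0,\infty)$; consequently the Taylor expansion driving the whole argument requires $g_1^\star>0$ $\rho$-a.e. The theorem statement, however, allows $p_\mu$ to vanish on a set of positive $\nu$-measure --- this is precisely why the theorem carries the indicator in $p_\eta=1+(\ind_{\supp(p_\mu)}/p_\mu)$ and integrates only over $\supp(p_\mu)$ in the limit. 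If $p_\mu\not>0$, the density $1+1/p_\mu$ you would be using in the construction of $\mathfrak{D}$ is infinite off $\supp(p_\mu)$ and the argument breaks. The paper's fix is to restrict all measures to $\mathfrak{S}=\supp(p_\mu)$ (taking $\rho=\tilde\nu=\nu|_{\supp(p_\mu)}$, now a sub-probability, and the correspondingly restricted $\tilde\mu_n,\tilde\nu_n$), run the Hadamard argument on the restricted space, and then observe that $\kl{\tilde\mu_n}{\tilde\nu_n}=\kl{\mu_n}{\nu_n}$ and $\kl{\tilde\mu}{\tilde\nu}=\kl{\mu}{\nu}$ because $\mu_n\ll\mu$. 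Without this reduction your argument only proves the theorem under the extra hypothesis $p_\mu>0$ $\nu$-a.e.

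A smaller point: you attach $\Theta'$ to the null cases and $\Theta''$ to the alternative, but that dichotomy is not what $\Theta'$ versus $\Theta''$ encodes. The difference between them is whether $\supp(g_1)\subseteq\supp(g_2)$ is imposed (and the $\Theta''$ extension in Proposition~\ref{Prop:Hadamarddiff-gen}(ii) requires $\phi$ to be continuous at $(0,0)$, which $\phi_{\mathsf{KL}}$ is not without further constraints). All four KL cases in fact use $\Theta$'s lying inside $\Theta'$ --- the two-sample cases via $\bar\Theta(q)$ of \eqref{eq:thetbarqset} --- so part (i) of Proposition~\ref{Prop:Hadamarddiff-gen} is the operative tool throughout, with part (ii) only entering in the restriction step above. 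This mislabeling does not damage the proof, but it suggests a small misreading of what the $\Theta''$ extension is for.
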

The proof of Theorem \ref{Thm:KLdiv-limdist} utilizes the  functional delta method in Lemma \ref{Lem:extfuncdelta} and Proposition \ref{Prop:Hadamarddiff-gen}. For the purpose of applying the latter result, we note that the KL divergence functional  can be written in the form \eqref{eq:defPhifunc} with an appropriate $\phi$ in each of the cases above such that Assumptions \ref{Assump1}-\ref{Assump4} are satisfied under the conditions therein (see Example~\ref{ex:KL_HDD}). 
\begin{remark}[Limit distribution under probabilistic constraints]\label{rem:relaxeddeltmethod}
Let $\cQ_n=\{\omega \in \Omega: p_{\mu_n}(\omega,\cdot)/p_{\nu_n}(\omega,\cdot) \leq q(\cdot) \}$.  
It is possible to replace the constraints $p_{\mu_n}/p_{\nu_n} \leq q$ a.s. by the more relaxed constraint $\PP^*(\Omega \setminus \cQ_n) \rightarrow 0$  if $(B_1,B_2)$ is continuous. Here, $\PP^*$ denotes outer probability  which is needed since $\cQ_n$ may be non-measurable, in general. The continuity of $(B_1,B_2)$ means that for every Borel set $\cC 
 \in L^2(\eta_1) \times L^2(\eta_2)$,  $\PP\big( (B_1,B_2) \in \partial \cC\big )=0$, where $\partial \cC $ denotes the boundary of $\cC$. This holds, for instance, if $(B_1,B_2)$ is  Gaussian. The proof of the claim (see Appendix \ref{Sec:remarkproof}) follows similarly to that of Theorem \ref{Thm:KLdiv-limdist} by using Part $(iii)$ of Lemma \ref{Lem:extfuncdelta} in place of Part $(i)$ and $(ii)$. Similar remarks apply to Theorem \ref{Thm:chisqdiv-limdist} and \ref{Thm:Helsqdiv-limdist} which are omitted for brevity.
\end{remark}
A few remarks about the  regularity conditions in Theorem \ref{Thm:KLdiv-limdist} are in order. In the one-sample null case, the condition $r_n(p_{\mu_n}-1) \trightarrow{w} B$ is the weak convergence of the (centered) density of  $\mu_n$  in $L^2(\mu)$, which is a natural requirement for the existence of the KL limit. In the one-sample alternative case, we require the integrability condition $\ind_{\supp(p_{\mu})}\log p_{\mu}  \in L^2(\nu)$ along with $r_n(p_{\mu_n}-p_{\mu}) \trightarrow{w} B$ in $L^2(\eta)$. Using $\abs{\log x} \leq  \abs{x-1} \vee \abs{1-x^{-1}}$, the former condition is holds if $p_{\mu}, \ind_{\supp(p_{\mu})}/p_{\mu} \in L^2(\nu)$. For the latter (weak convergence) condition to hold it is sufficient that $r_n(p_{\mu_n}-p_{\mu}) \trightarrow{w} B$ in $L^2(\nu)$, so long as $\ind_{\supp(p_{\mu})}/p_{\mu} \in L^{\infty}(\nu)$. The corresponding two-sample results hold under similar conditions plus a requirement of existence of a function $q$ that dominates the ratio of the (random) densities of $\mu_n$ and $\nu_n$, i.e., $p_{\mu_n}/p_{\nu_n} \leq q$ a.s., which can be relaxed to a probabilistic constraint given in Remark \ref{rem:relaxeddeltmethod} provided the limit variables are continuous. We emphasize that this additional regularity condition necessitates that the one- and two-sample cases are stated separately, even though the one-sample limit distribution may be obtained from the two-sample result by setting $(B_1,B_2)=(B,0)$.

To gain further insight into the applicability of Theorem \ref{Thm:KLdiv-limdist}, we next consider several important examples.  
\begin{enumerate}[1)]
    \item \underline{Finite support:} Let $\mathfrak{S}$ be a finite set,  $\mu \ll  \nu$, and $(X_1,\ldots,X_n) $ and $(Y_1,\ldots,Y_n) $ be i.i.d. samples from $\mu$ and $\nu$, respectively. Set the empirical distributions  $\mu_n=n^{-1}\sum_{i=1}^n \delta_{X_i}$ and $\nu_n=(n^{-1}-n^{-2}) \sum_{i=1}^n \delta_{Y_i}+n^{-1} \mu_n$ as the random measures, where we add a vanishing regularization term $n^{-1} \mu_n$ so that $\mu_n \ll \nu_n $ and $\kl{\mu_n}{\nu_n}<\infty$.   Clearly, $\mu_n \ll \nu_n \ll \nu $ and $ \mu_n \ll \mu \ll \nu$. Notice also that all the other regularity conditions in Theorem  \ref{Thm:KLdiv-limdist} are satisfied. In particular,  the relevant weak convergences of random measures to Gaussian limits follow from the multivariate CLT. Moreover, by Hoeffding's inequality, there exists a constant $c$ such that $\PP(\norm{p_{\mu_n}/p_{\nu_n}}_{\infty} > c) \rightarrow 0$.  Hence, Remark \ref{rem:relaxeddeltmethod} applies with $q=c$ implying that   \eqref{eq:KL-onesample-null}-\eqref{eq:KL-twosample-alt} hold  with $r_n=n^{1/2}$ and $B$,  $(B_1,B_2)$ as  Gaussian vectors (and, of course, integral replaced by summation). 
 
    \item \underline{Compact support with smoothed empirical measures:} Consider compactly supported and continuous distributions $\mu \ll \nu $ on $\RR^d$ with Lebesgue densities bounded from above and away from zero on the support. In this case, the vanilla empirical distributions as defined above are not absolutely continuous w.r.t. $\mu$ and $\nu$, respectively. To resolve this and obtain a well-posed empirical approximation setting, we convolve the empirical distributions with a mollifier\footnote{A mollifier is a function 
 $f:\mathbb{R}^d \rightarrow \mathbb{R}$  which is both smooth (i.e., has continuous derivatives of all orders) and compactly supported, e.g., $\xi(x)=c e^{-1/(1-\norm{x}^2)}$ for $\norm{x} <1$ and zero elsewhere for some normalizing constant $c$.  } (or bump function). Let   $\mu_n$ and $\nu_n$ denote the smoothed empirical distributions with Lebesgue densities $p_{\mu_n}=n^{-1}\sum_{i=1}^n \delta_{X_i} * \xi $ and $p_{\nu_n}=(n^{-1}-n^{-2}) \sum_{i=1}^n \delta_{Y_i} * \xi+n^{-1} p_{\mu_n}$, respectively, where $\xi$ denotes a non-negative mollifier with $\norm{\xi}_{1}=1$. It follows by CLT in $L^r$ spaces (see Theorem \ref{Thm:CLT-in-Lp} below) that the weak-convergence requirements in Theorem \ref{Thm:KLdiv-limdist} hold with $\mu*\xi,\nu * \xi$ in place of $\mu,\nu$ and with $r_n=n^{1/2}$ and $B$, $(B_1,B_2)$ as appropriate  Gaussian processes indexed by a compact subset $\mathfrak{S}$ of $\RR^d$. Moreover, using concentration bounds for separable sub-Gaussian processes (see e.g.,\cite[Theorem 5.29]{VanHandel-book}) indexed by compact $\mathfrak{S}$, one can show that there exists a constant $c$ such that $\PP(\norm{p_{\mu_n}/p_{\nu_n} }_{\infty}> c) \rightarrow 0$.  It can be verified that the other regularity conditions in Theorem  \ref{Thm:KLdiv-limdist} also hold, and consequently the result applies via Remark \ref{rem:relaxeddeltmethod}.
    \item \underline{Unbounded support with Gaussian smoothing:} In Proposition \ref{Prop:GS-KL-limdist} below, we further specialize  Theorem \ref{Thm:KLdiv-limdist} to Gaussian-smoothed empirical distributions on $\RR^d$. This provides an instance of smooth distributions with unbounded support, for which we characterize the limit laws and derive primitive conditions in terms of $\mu$ and $\nu$ for their existence. 
\end{enumerate}
We mention here that while Theorem \ref{Thm:KLdiv-limdist} provides  general regularity conditions for existence of  limit distributions for KL divergence, certain assumptions therein are arguably stronger than what is necessary. For instance, the requirement $\ind_{\supp(p_{\mu})}\log p_{\mu}  \in L^2(\nu)$ for \eqref{eq:KL-onesample-alt} to hold arises from Assumption \ref{Assump2}, which furnishes sufficient conditions for the first order Hadamard derivatives to exist. However, from the proof of Proposition \ref{Prop:Hadamarddiff-gen}, it is evident that alternative sufficient conditions in lieu of Assumption \ref{Assump2} are plausible by setting up the function space $\mathfrak{D}$ of perturbations via the norm $\norm{(g,\tilde g)}_{\mathfrak{D}}:=\norm{g}_{r,\eta_1}+\norm{\tilde g}_{r,\eta_2}$ for some  $r \geq 1$. Then, the relevant condition 
for existence of limit distribution on account of  Lemma \ref{Lem:contmapLp}  would be $\ind_{\supp(p_{\mu})}\log p_{\mu}  \in L^{r'}(\nu)$, where $r'$ is the H\"{o}lder conjugate of $r$. Thus, there is some flexibility possible in the regularity conditions required for \eqref{eq:KL-onesample-alt} to hold. That said, we do not delve into this aspect  further within this paper.

 \subsection{$\chi^2$ divergence}
\begin{theorem}
[Limit distribution for $\chi^2$ divergence]\label{Thm:chisqdiv-limdist}
The following hold:
\begin{enumerate}[(i)]
    \item (One-sample null)  Let $\mu_n \ll \mu=\rho$ satisfy $\chisq{\mu_n}{\mu}<\infty$  a.s. If $r_n(p_{\mu_n}-1) \trightarrow{w} B$ in $L^2(\mu)$, then
    \begin{align}
    r_n^2 \chisq{\mu_n}{\mu} \trightarrow{d}  \int_{\mathfrak{S}} B^2 d \mu.  \label{eq:chisq-onesample-null}
\end{align}
 \item (One-sample  alternative) Let $\mu,\mu_n \ll \nu=\rho $ satisfy $\chisq{\mu}{\nu}<\infty$ and $\chisq{\mu_n}{\nu}<\infty$  a.s.  If $r_n(p_{\mu_n}-p_{\mu}) \trightarrow{w} B$ in $L^2(\nu)$, then
 \begin{align}
    r_n \big(\chisq{\mu_n}{\nu}-\chisq{\mu}{\nu}\big) \trightarrow{d} 2\int_{\mathfrak{S}} B    d \mu. \label{eq:chisq-onesample-alt}
\end{align}
 \item (Two-sample null)
 Let $\mu_n \ll \nu_n \ll \mu=\rho$ be such that  $\chisq{\mu_n}{\nu_n}<\infty$   and $p_{\mu_n}/p_{\nu_n} \leq q$ a.s. Let  $\eta_1$ and $\eta_2$ be measures with relative densities $p_{\eta_1}=1+q$ and $p_{\eta_2}=p_{\eta_1}+q^2$, respectively. 
 If $\big(r_n(p_{\mu_n}-1),r_n(p_{\nu_n}-1)\big) \trightarrow{w} (B_1,B_2)$ in  $L^2(\eta_1) \times L^2(\eta_2)$,   then
    \begin{align}
    r_n^2 \chisq{\mu_n}{\nu_n} \trightarrow{d}  \int_{\mathfrak{S}} (B_1-B_2)^2 d \mu. \label{eq:chisq-twosample-null}
\end{align}
 \item (Two-sample alternative) Let $\mu_n \ll \nu_n \ll   \nu=\rho$ and $\mu_n \ll \mu \ll   \nu$ satisfy  $p_{\mu}  \in L^4(\nu)$,  $\chisq{\mu}{\nu}<\infty$,  $\chisq{\mu_n}{\nu_n}<\infty$ and $p_{\mu_n}/p_{\nu_n} \leq q$ a.s.  Let $\eta_1$ and $\eta_2$ be measures with relative densities $p_{\eta_1}=1+p_{\mu}+q$ and $p_{\eta_2}=p_{\eta_1}+q^2+p_{\mu}^2$, respectively.  If $\big(r_n(p_{\mu_n}-p_{\mu}),r_n(p_{\nu_n}-1)\big) \trightarrow{w} (B_1,B_2)$ in  $L^2(\eta_1) \times L^2(\eta_2)$, then
 \begin{align}
    r_n \big(\chisq{\mu_n}{\nu_n}-\chisq{\mu}{\nu}\big) \trightarrow{d} 2\int_{\mathfrak{S}} B_1   d \mu-\int_{\mathfrak{S}} B_2 p_{\mu}   d \mu. \label{eq:chisq-twosample-alt}
\end{align}
\end{enumerate}
\end{theorem}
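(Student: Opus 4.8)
The plan is to follow the blueprint of Theorem~\ref{Thm:KLdiv-limdist}: realize $\chi^2(\cdot\|\cdot)$ as the functional $\Phi$ of \eqref{eq:defPhifunc} for $\phi(x,y)=(x-y)^2/y=x^2/y-2x+y$, verify Assumptions~\ref{Assump1}--\ref{Assump4} in each of the four cases, apply Proposition~\ref{Prop:Hadamarddiff-gen} to extract the Hadamard derivatives, and conclude via the functional delta method of Lemma~\ref{Lem:extfuncdelta}. The case-specific data are: one-sample null---$\rho=\mu$, $g_1^\star=g_2^\star=1$; one-sample alternative---$\rho=\nu$, $g_1^\star=p_\mu$, $g_2^\star=1$; two-sample null---$\rho=\mu$, $g_1^\star=g_2^\star=1$; two-sample alternative---$\rho=\nu$, $g_1^\star=p_\mu$, $g_2^\star=1$; in each case $\Theta$ is the natural convex set of admissible perturbations through $(0,0)$, and in the two-sample cases it incorporates the density-ratio constraint $p_{\mu_n}/p_{\nu_n}\le q$. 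The relevant partials are $D^{(1,0)}\phi(x,y)=2(x-y)/y$, $D^{(0,1)}\phi(x,y)=1-x^2/y^2$, $D^{(2,0)}\phi(x,y)=2/y$, $D^{(0,2)}\phi(x,y)=2x^2/y^3$ and $D^{(1,1)}\phi(x,y)=-2x/y^2$, and the certifying choices of $(\psi_\alpha,v_\alpha,\eta_1,\eta_2)$ are those recorded in Lemma~\ref{Lem:assumpfdiv}---in particular $\eta_1,\eta_2$ have exactly the densities displayed in the statement.

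First I would dispense with the easy assumptions. Assumption~\ref{Assump1} is immediate ($\phi(1,1)=0$; $\phi(0,y)=y$ is continuous at $(0,c)$ for $c>0$; $\phi$ is $C^2$ off the coordinate axes), and Assumption~\ref{Assump3} follows since $\Phi(0,0)=\chi^2(\mu\|\nu)<\infty$ by hypothesis and $\phi$ is jointly convex with $\rho$ a probability measure. The integrability hypotheses on $p_\mu$ are there to serve Assumption~\ref{Assump2}: in the one-sample cases only the first coordinate is perturbed, so case~$(ii)$ applies and one needs only $D^{(1,0)}\phi(p_\mu,1)=2(p_\mu-1)\in L^2(\nu)$, i.e.\ $\chi^2(\mu\|\nu)<\infty$ (trivial in the null case, where $g_1^\star=1$); in the two-sample alternative both coordinates move, so case~$(i)$ is invoked, which additionally demands $D^{(0,1)}\phi(p_\mu,1)=1-p_\mu^2\in L^2(\nu)$, i.e.\ $p_\mu\in L^4(\nu)$---exactly the extra hypothesis in part~$(iv)$. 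Since $\phi(x,y)=x^2/y-2x+y$ is \emph{not} continuous at $(0,0)$, in the two-sample cases I would work with $\Theta\subseteq\Theta'$ and part~$(i)$ of Proposition~\ref{Prop:Hadamarddiff-gen}; this is permissible because $\mu_n\ll\nu_n$ furnishes the support condition $\supp(p_{\mu_n})\subseteq\supp(p_{\nu_n})$.

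The crux is Assumption~\ref{Assump4}. Writing the interpolation point as $(1-\tau)(g_1^\star,g_2^\star)+\tau(g_1,g_2)$ with $g_1,g_2\ge0$, the inequality $g_2^\star+\tau\theta_2\ge(1-\tau)g_2^\star$ controls $D^{(2,0)}\phi$ there, while the ratio bound $g_1/g_2\le q$ built into $\Theta$ yields, by the mediant inequality, $(g_1^\star+\tau\theta_1)/(g_2^\star+\tau\theta_2)\le(g_1^\star/g_2^\star)\vee q$, which uniformly controls $D^{(0,2)}\phi$ and $D^{(1,1)}\phi$ at the interpolation point. This is precisely what pushes the factors $q,q^2$ (and $p_\mu,p_\mu^2$) into $p_{\eta_1},p_{\eta_2}$, and simultaneously renders $\Phi'_{\theta^\star},\Phi''_{\theta^\star}$ of \eqref{eq:Hadmardfirstderiv}--\eqref{eq:Hadmardsecderiv} continuous on $\mathfrak D=L^2(\eta_1)\times L^2(\eta_2)$ (via Hölder's inequality, cf.\ Lemma~\ref{Lem:contmapLp}). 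Granting this, Proposition~\ref{Prop:Hadamarddiff-gen} gives $\Phi'_{\theta^\star}\equiv0$ and $\Phi''_{\theta^\star}(h_1,h_2)=2\int(h_1-h_2)^2\,d\mu$ in the null cases (with $h_2\equiv0$ in the one-sample case), and $\Phi'_{\theta^\star}(h_1,h_2)=\int\big(2h_1(p_\mu-1)+h_2(1-p_\mu^2)\big)\,d\nu$ in the alternative. Feeding the weak-convergence hypotheses into Lemma~\ref{Lem:extfuncdelta}---part~$(ii)$ (rate $r_n^2$) for the null, part~$(i)$ (rate $r_n$) for the alternative, with tangent-cone membership of the limit checked as in the proof of Theorem~\ref{Thm:KLdiv-limdist}---reads off \eqref{eq:chisq-onesample-null}, \eqref{eq:chisq-twosample-null} directly from $\tfrac12\Phi''_{\theta^\star}$, and for the alternative one uses that every $h\in\mathfrak T_\Theta(\theta^\star)$ has $\int h\,d\nu=0$ (resp.\ $\int h\,d\mu=0$)---the $p_{\mu_n},p_{\nu_n}$ being $\rho$-densities of probability measures, $g\mapsto\int g\,d\rho$ annihilates each centered perturbation, and since $\|\cdot\|_{2,\eta_i}\ge\|\cdot\|_{2,\rho}$ this passes to the closure and so holds a.s.\ for $B,(B_1,B_2)$. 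Hence $\Phi'_{\theta^\star}(B,0)=2\int Bp_\mu\,d\nu-2\int B\,d\nu=2\int B\,d\mu$ gives \eqref{eq:chisq-onesample-alt}, and $\Phi'_{\theta^\star}(B_1,B_2)=2\int B_1p_\mu\,d\nu-\int B_2p_\mu^2\,d\nu=2\int B_1\,d\mu-\int B_2p_\mu\,d\mu$ gives \eqref{eq:chisq-twosample-alt}. The a.s.\ constraint $p_{\mu_n}/p_{\nu_n}\le q$ relaxes to $\PP^*(\Omega\setminus\cQ_n)\to0$ under continuity of $(B_1,B_2)$ (e.g.\ Gaussian) exactly as in Remark~\ref{rem:relaxeddeltmethod}, using part~$(iii)$ of Lemma~\ref{Lem:extfuncdelta}.

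I expect Assumption~\ref{Assump4} to be the main obstacle: one must exhibit a \emph{single} dominating pair $(\psi_\alpha,v_\alpha)$ with $v_\alpha\in L^1([0,1])$ that bounds the interpolated second derivatives uniformly over \emph{all} admissible perturbations $\theta\in\Theta$, and this is exactly where the density-ratio bound $q$ is indispensable and where the choice $\rho=\nu$ (so $g_2^\star=1$) keeps $2/y$, $2x^2/y^3$, $-2x/y^2$ manageable. Everything else mirrors the KL case of Example~\ref{ex:KL_HDD} and Theorem~\ref{Thm:KLdiv-limdist}.
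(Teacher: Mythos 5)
Your treatment of parts $(iii)$ and $(iv)$ matches the paper's: both instantiate the Hadamard framework with $\rho=\mu$ (resp.\ $\nu$), verify Assumptions~\ref{Assump1}--\ref{Assump4} via Lemma~\ref{Lem:assumpfdiv}$(ii)(b)$, extract $\Phi'_{\theta^\star}$ and $\Phi''_{\theta^\star}$ from Proposition~\ref{Prop:Hadamarddiff-gen}, and conclude through Lemma~\ref{Lem:extfuncdelta} (using the centering identity $\int h_i\,d\rho=0$ on the tangent cone for part $(iv)$, and the restriction-to-$\supp(p_\mu)$ device when $p_\mu\not>0$). For parts $(i)$ and $(ii)$, however, the paper deliberately \emph{bypasses} the Hadamard machinery and uses a direct continuous-mapping argument that exploits the quadratic structure of $\chi^2$: for $(i)$, simply $r_n^2\chi^2(\mu_n\|\mu)=\bigl\|r_n(p_{\mu_n}-1)\bigr\|_{2,\mu}^2$ and $\|\cdot\|_{2,\mu}^2$ is continuous on $L^2(\mu)$; for $(ii)$, the bilinear expansion
\begin{align}
r_n\bigl(\chisq{\mu_n}{\nu}-\chisq{\mu}{\nu}\bigr)
= \int_{\mathfrak S}\Bigl(r_n^{1/2}(p_{\mu_n}-p_\mu)\Bigr)^2 d\nu
+ 2\int_{\mathfrak S} r_n(p_{\mu_n}-p_\mu)\, p_\mu\, d\nu\notag
\end{align}
is handled term by term (Slutsky for the first, Lemma~\ref{Lem:contmapLp}$(ii)$ for the second). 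Your route for $(i)$ and $(ii)$ is viable, but it is heavier, and it also requires some additional care that you do not flag: the Taylor-expansion step underlying Proposition~\ref{Prop:Hadamarddiff-gen} is stated for $g_1^\star>0$, whereas part $(ii)$ imposes only $\mu\ll\nu$ (so $p_\mu$ may vanish). One must either exploit that $\phi(\cdot,1)=(x-1)^2$ is a polynomial---so the exact two-term Taylor identity holds at $g_1^\star=0$ as well---or use a restriction argument, which here is delicate because part $(ii)$ does not assume $\mu_n\ll\mu$. The paper's direct computation sidesteps both issues entirely; it needs no positivity of $p_\mu$ and no auxiliary measure $\eta$. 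Everything else in your write-up, including how $p_\mu\in L^4(\nu)$ enters through $D^{(0,1)}\phi(p_\mu,1)=1-p_\mu^2\in L^2(\nu)$ and how the density-ratio bound $q$ feeds the weights $q,q^2$ into $p_{\eta_1},p_{\eta_2}$, agrees with the paper (up to immaterial additive constants in the $p_{\eta_i}$, which the paper's proof writes as $3+q$, etc.).
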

Theorem \ref{Thm:chisqdiv-limdist} utilizes similar proof methodology as in Theorem \ref{Thm:KLdiv-limdist} via the machinery from Section \ref{Sec:Techframework}, except that Parts $(i)$ and $(ii)$ follow in a simpler manner via the continuous mapping theorem \cite[Theorem 1.3.6]{AVDV-book} owing to the specific structure of the $\chi^2$ divergence functional. Also note that the one- and two-sample null limits of the $\chi^2$ and KL divergences are the same up to a factor of 0.5, which is in line with the fact that  KL divergence is locally~$\chi^2$ (see \cite[Theorem 7.18]{polyanskiy2022codingbook}). Via an argument similar to that in Remark \ref{rem:relaxeddeltmethod}, we observe that  Theorem \ref{Thm:chisqdiv-limdist} applies to discrete and smoothed compactly supported distributions (as discussed after Theorem \ref{Thm:KLdiv-limdist} above), with  $r_n=n^{1/2}$ and Gaussian $B$, $(B_1,B_2)$. An application  to Gaussian-smoothed empirical distributions will be given in Proposition \ref{Prop:GS-chisq-limdist} in Appendix \ref{App:GSfdiv-chisq}.

\subsection{ $\mathsf{H}^2$ distance}
\begin{theorem}
[Limit distribution for $\mathsf{H}^2$ distance]\label{Thm:Helsqdiv-limdist}
The following hold:
\begin{enumerate}[(i)]
    \item (One-sample null)  Let $\mu_n, \mu \ll \rho$ for some finite measure $\rho$ such that $p_{\mu} >0$. If $r_n(p_{\mu_n}-p_{\mu}) \trightarrow{w} B$ in $L^2(\eta)$, where  $\eta$ has relative density $p_{\eta}=1+(1/p_{\mu})$,  then
    \begin{align}
    r_n^2 \helsq{\mu_n}{\mu} \trightarrow{d}  \frac 14 \int_{\mathfrak{S}} \frac{B^2}{p_{\mu}}d \rho. \label{eq:Hel-onesample-null}
\end{align}
 \item (One-sample alternative) Let $\mu_n, \mu,\nu \ll \rho$ for some finite measure $\rho$ such that $p_{\mu},p_{\nu}>0$, and suppose that  $p_{\nu}/p_{\mu} \in L^1(\rho)$. If $r_n(p_{\mu_n}-p_{\mu}) \trightarrow{w} B$ in $L^2(\eta)$, where $\eta$  has relative  density $p_{\eta}=1+\big(p_{\nu}^{1/2}/p_{\mu}^{3/2}\big)$, then
 \begin{align}
    r_n \big(\helsq{\mu_n}{\nu}-\helsq{\mu}{\nu}\big) \trightarrow{d} -\int_{\mathfrak{S}} \left(\frac{p_{\nu}}{p_\mu}\right)^{\frac 12}  B d\rho. \label{eq:Helsq-onesample-alt}
\end{align}
\item (Two-sample null) Let $\mu_n,\nu_n, \mu\ll \rho$ for some finite measure $\rho$ such that $ p_{\mu}>0 $,  $p_{\mu_n} \leq q_1$ and $ p_{\nu_n}\leq q_2$ a.s. Let $\eta_1$ and $\eta_2$ be measures with relative densities $p_{\eta_1}=1+(1/p_{\mu})+\big(q_2^{1/2}/p_{\mu}^{3/2}\big)$ and $p_{\eta_2}=1+(1/p_{\mu})+\big(q_1^{1/2}/p_{\mu}^{3/2}\big)$, respectively. If $\big(r_n(p_{\mu_n}-p_{\mu}),r_n(p_{\nu_n}-p_{\mu})\big) \trightarrow{w} (B_1,B_2)$ in $L^2(\eta_1) \times L^2(\eta_2) $, then
 \begin{align}
    r_n^2 \helsq{\mu_n}{\nu_n} \trightarrow{d}  \frac 14 \int_{\mathfrak{S}} \frac{(B_1-B_2)^2}{p_{\mu}} d \rho. \label{eq:Helsq-twosample-null}
\end{align}
 \item (Two-sample alternative) Let $\mu_n, \nu_n, \mu,\nu \ll \rho$ for some finite measure $\rho$ such that  $ p_{\mu},p_{\nu}>0 $,  $p_{\nu}/p_{\mu},p_{\mu}/p_{\nu} \in L^1(\rho)$,   $p_{\mu_n} \leq q_1$ and $p_{\nu_n} \leq q_2$ a.s. Let $\eta_1$ and $\eta_2$ be measures with  relative densities 
\begin{align}
   p_{\eta_1}&=1+\Big(p_{\nu}^{\frac 12}+q_2^{\frac 12}\Big)p_{\mu}^{-\frac 32}+p_{\mu}^{-\frac 12}p_{\nu}^{-\frac 12},  \notag \\
   p_{\eta_2}&=1+\Big(p_{\mu}^{\frac 12}+q_1^{\frac 12}\Big)p_{\nu}^{-\frac 32}+p_{\mu}^{-\frac 12}p_{\nu}^{-\frac 12},\notag
\end{align} 
 respectively. 
 If $\big(r_n(p_{\mu_n}-p_{\mu}),r_n(p_{\nu_n}-p_{\nu})\big) \trightarrow{w} (B_1,B_2)$ in in $L^2(\eta_1) \times L^2(\eta_2) $, then
 \begin{align}
    r_n \big(\helsq{\mu_n}{\nu_n}-\helsq{\mu}{\nu}\big) \trightarrow{d} -\int_{\mathfrak{S}} \left(\frac{p_{\nu}}{p_\mu}\right)^{\frac 12}  B_1 d\rho -\int_{\mathfrak{S}} \left(\frac{p_{\mu}}{p_\nu}\right)^{\frac 12}  B_2 d\rho. \label{eq:Helsq-twosample-alt}
\end{align}
\end{enumerate}
\end{theorem}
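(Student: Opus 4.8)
The plan is to cast each of the four $\mathsf{H}^2$ statements as an instance of the abstract functional $\Phi(\theta_1,\theta_2)=\int_{\mathfrak S}\phi(g_1^\star+\theta_1,g_2^\star+\theta_2)\,d\rho$ from \eqref{eq:defPhifunc} with generator $\phi(x,y)=(\sqrt x-\sqrt y)^2=x-2\sqrt{xy}+y$, to establish second order Hadamard directional differentiability at $\theta^\star=(0,0)$ via Proposition~\ref{Prop:Hadamarddiff-gen}, and then to conclude by the functional delta method of Lemma~\ref{Lem:extfuncdelta}. In the one-sample cases one takes $g_1^\star=p_\mu$, $\theta_1=p_{\mu_n}-p_\mu$, $\theta_2=0$, with $g_2^\star=p_\mu$ under the null and $g_2^\star=p_\nu$ under the alternative; in the two-sample cases one additionally takes $\theta_2=p_{\nu_n}-g_2^\star$, with $g_2^\star=p_\mu$ (null) and $g_2^\star=p_\nu$ (alternative). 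With these choices $\Phi(\theta^\star)=\helsq{\mu}{\nu}$ ($=0$ under the null) and $\Phi(\theta_1,\theta_2)$ is the empirical divergence of interest. The domain $\Theta$ is the convex slice of $\Theta''$ (no absolute-continuity constraint is needed since $\mathsf{H}^2$ is finite for all pairs) cut out by $\norm{g_1}_{1,\rho}=1$ and, in the two-sample case, $\norm{g_2}_{1,\rho}=1$; it is convex (nonnegativity makes the $L^1(\rho)$ constraints affine) and contains $(0,0)$. The explicit $\psi=(\psi_{2,0},\psi_{0,2},\psi_{1,1})$ and the auxiliary $v_\alpha$ needed to apply Proposition~\ref{Prop:Hadamarddiff-gen} are those recorded in Lemma~\ref{Lem:assumpfdiv}, read off from the computation below.

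Computing $D^{(1,0)}\phi(x,y)=1-\sqrt{y/x}$, $D^{(0,1)}\phi(x,y)=1-\sqrt{x/y}$, $D^{(2,0)}\phi(x,y)=\tfrac12 y^{1/2}x^{-3/2}$, $D^{(0,2)}\phi(x,y)=\tfrac12 x^{1/2}y^{-3/2}$, $D^{(1,1)}\phi(x,y)=-\tfrac12 x^{-1/2}y^{-1/2}$ and substituting into \eqref{eq:Hadmardfirstderiv}--\eqref{eq:Hadmardsecderiv} reveals the key dichotomy. Under the \emph{null} ($g_1^\star=g_2^\star=p_\mu$) one has $D^{(1,0)}\phi(p_\mu,p_\mu)=D^{(0,1)}\phi(p_\mu,p_\mu)=0$, so $\Phi'_{\theta^\star}\equiv 0$, while $\Phi''_{\theta^\star}(h_1,h_2)=\tfrac12\int(h_1-h_2)^2/p_\mu\,d\rho$; part $(ii)$ of Lemma~\ref{Lem:extfuncdelta} then yields the $r_n^2$ rate and limit $\tfrac14\int(B_1-B_2)^2/p_\mu\,d\rho$ (with $B_2\equiv 0$ in the one-sample case), i.e.\ \eqref{eq:Hel-onesample-null} and \eqref{eq:Helsq-twosample-null}. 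Under the \emph{alternative}, $\Phi'_{\theta^\star}(h_1,h_2)=\int\big(h_1(1-\sqrt{p_\nu/p_\mu})+h_2(1-\sqrt{p_\mu/p_\nu})\big)d\rho$; every $h\in\mathfrak{T}_\Theta(\theta^\star)$ satisfies $\int h_i\,d\rho=0$ (the constraints $\norm{g_i}_{1,\rho}=1$ force $\rho$-mean-zero increments, and since $p_{\eta_i}\ge 1$ and $\rho$ is finite, $\norm{\cdot}_{2,\eta_i}$-convergence entails $L^1(\rho)$-convergence, so vanishing mean passes to tangent limits), whence $\Phi'_{\theta^\star}(h_1,h_2)=-\int(p_\nu/p_\mu)^{1/2}h_1\,d\rho-\int(p_\mu/p_\nu)^{1/2}h_2\,d\rho$; part $(i)$ of Lemma~\ref{Lem:extfuncdelta} gives the $r_n$ rate and the linear limits \eqref{eq:Helsq-onesample-alt} and \eqref{eq:Helsq-twosample-alt}.

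The remaining work is to verify Assumptions~\ref{Assump1}--\ref{Assump4}, and the crux is Assumption~\ref{Assump4} since it fixes the weighting measures $\eta_1,\eta_2$. Assumption~\ref{Assump1} is immediate from the closed form of $\phi$; in particular $\phi$ extends continuously to $(0,0)$ with $\phi(0,0)=0$, which is exactly what licenses part $(ii)$ of Proposition~\ref{Prop:Hadamarddiff-gen} for $\Theta\subseteq\Theta''$. Assumption~\ref{Assump3} holds because $0\le\helsq{\cdot}{\cdot}\le 2$. Assumption~\ref{Assump2} holds because $|D^{(1,0)}\phi\circ(p_\mu,p_\nu)|^2\lesssim 1+p_\nu/p_\mu$ and $|D^{(0,1)}\phi\circ(p_\mu,p_\nu)|^2\lesssim 1+p_\mu/p_\nu$ are $\rho$-integrable under the imposed $L^1(\rho)$ hypotheses (and under the null the relevant derivative is the zero function), using part $(ii)$ of Assumption~\ref{Assump2} in the one-sample alternative where $\theta_2\equiv 0$. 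For Assumption~\ref{Assump4} one bounds, along the segment $\big((1-\tau)g_1^\star+\tau p_{\mu_n},\,(1-\tau)g_2^\star+\tau p_{\nu_n}\big)$, the denominators from below by strict positivity, e.g.\ $\big((1-\tau)g_1^\star+\tau p_{\mu_n}\big)^{3/2}\ge (1-\tau)^{3/2}(g_1^\star)^{3/2}$, and the numerators from above by the a.s.\ density bounds, e.g.\ $\big((1-\tau)g_2^\star+\tau p_{\nu_n}\big)^{1/2}\le (g_2^\star+q_2)^{1/2}\lesssim (g_2^\star)^{1/2}+q_2^{1/2}$. For the index $(2,0)$ this gives $(1-\tau)\,|\theta_1^2 D^{(2,0)}\phi(\cdot)|\lesssim \theta_1^2\big((g_1^\star)^{-1}+q_2^{1/2}(g_1^\star)^{-3/2}\big)(1-\tau)^{-1/2}$, with $v_{2,0}(\tau)=(1-\tau)^{-1/2}\in L^1([0,1],\lambda)$; symmetrically for $(0,2)$ with the roles of $g_1^\star,g_2^\star$ and of $q_1,q_2$ exchanged; and the index $(1,1)$ yields the factor $(g_1^\star)^{-1/2}(g_2^\star)^{-1/2}$ with a bounded $v_{1,1}$. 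Feeding these $\psi_\alpha\circ(g_1^\star,g_2^\star)$ into \eqref{eq:normetadef} reproduces (up to the norm equivalence that is all that matters for the weak-convergence hypothesis) exactly the densities $p_{\eta_1},p_{\eta_2}$ declared in each part, with $g_1^\star=g_2^\star=p_\mu$ under the null, $(g_1^\star,g_2^\star)=(p_\mu,p_\nu)$ under the alternative, the $q$'s disappearing in the one-sample case where the second argument is deterministic, and the $p_\mu^{-1/2}p_\nu^{-1/2}$ term in the two-sample alternative being the $(1,1)$ contribution.

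Finally one assembles the pieces. The a.s.\ density bounds (together with the weak-convergence hypotheses, which presuppose finite $\mathfrak{D}$-norms) guarantee $Z_n:=(p_{\mu_n}-g_1^\star,p_{\nu_n}-g_2^\star)\in\Theta$ for large $n$, so $r_n Z_n\in\mathfrak{T}_\Theta(\theta^\star)$; the hypothesis $r_n(p_{\mu_n}-g_1^\star,p_{\nu_n}-g_2^\star)\trightarrow{w}(B_1,B_2)$ in $L^2(\eta_1)\times L^2(\eta_2)$ is precisely $r_n(Z_n-\theta^\star)\trightarrow{w}Z$ in $\mathfrak{D}$, and since $\mathfrak{T}_\Theta(\theta^\star)$ is closed and carries every $r_n Z_n$, the limit $Z$ is supported there. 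Invoking Lemma~\ref{Lem:extfuncdelta}$(i)$ (alternative) or $(ii)$ (null) with the derivative formulas above yields \eqref{eq:Hel-onesample-null}--\eqref{eq:Helsq-twosample-alt}; if one only assumes the probabilistic relaxation $\PP^*(Z_n\notin\Theta)\to 0$ in the spirit of Remark~\ref{rem:relaxeddeltmethod}, one uses part $(iii)$ of Lemma~\ref{Lem:extfuncdelta} instead, legitimate when $(B_1,B_2)$ is continuous (e.g.\ Gaussian). The main obstacle throughout is Assumption~\ref{Assump4}: closing the segment bounds with an $L^1([0,1],\lambda)$ function $v_\alpha$ is precisely where strict positivity of $p_\mu$ (and $p_\nu$), the a.s.\ bounds $p_{\mu_n}\le q_1$, $p_{\nu_n}\le q_2$, and the $L^1(\rho)$-integrability of the density ratios all have to cooperate, and it is this step that dictates the precise form of the limiting variances.
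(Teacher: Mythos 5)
Your proposal follows essentially the same route as the paper's proof: cast $\helsq{\mu_n}{\nu_n}$ as $\Phi$ with $\phi_{\mathsf{H}^2}(x,y)=(\sqrt x-\sqrt y)^2$, verify Assumptions~\ref{Assump1}--\ref{Assump4} (with Assumption~\ref{Assump4} handled via the same segment bounds that appear in Lemma~\ref{Lem:assumpfdiv}$(ii)(c)$, and $\Theta\subseteq\Theta''$ admissible because $\phi_{\mathsf{H}^2}$ is continuous at $(0,0)$), compute $\Phi'_{\theta^\star}$ and $\Phi''_{\theta^\star}$ from \eqref{eq:helsq2dervs}, use $\int h_i\,d\rho=0$ on the tangent cone to simplify the first derivative under the alternative, and conclude via Lemma~\ref{Lem:extfuncdelta}$(i)$ resp.\ $(ii)$. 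One small slip: in your $(2,0)$ bound you wrote $\psi_{2,0}\lesssim (g_1^\star)^{-1}+q_2^{1/2}(g_1^\star)^{-3/2}$, which holds only when $g_1^\star=g_2^\star$; in general it should read $((g_2^\star)^{1/2}+q_2^{1/2})(g_1^\star)^{-3/2}$, which is what you correctly feed into $p_{\eta_1}$ two lines later (your cruder $v_{2,0}=(1-\tau)^{-1/2}$ in place of the paper's $\tau^{1/2}(1-\tau)^{-1/2}$ is immaterial since both are in $L^1[0,1]$).
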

The proof of Theorem \ref{Thm:Helsqdiv-limdist} is similar to that of Theorem \ref{Thm:KLdiv-limdist}, and is given in Section \ref{Thm:Helsqdiv-limdist-proof}. Note that in contrast to Theorem \ref{Thm:KLdiv-limdist} and \ref{Thm:chisqdiv-limdist}, the two-sample alternative limit distribution is symmetric in $\mu$ and $\nu$, which is to be expected given the symmetry  of $\helsq{\mu}{\nu}$ itself. Also, observe that  the regularity conditions in Theorem \ref{Thm:Helsqdiv-limdist} are satisfied for the case of discrete (when $\mu \ll \gg \nu$) and compactly supported smoothed empirical distributions considered in Section \ref{sec:KLdiverggenres}. Consequently, \eqref{eq:Hel-onesample-null}-\eqref{eq:Helsq-twosample-alt} hold with $r_n=n^{1/2}$  and Gaussian $B$,  $(B_1,B_2)$. In Proposition \ref{Prop:GS-helsq-limdist} in Appendix \ref{App:GSfdiv-helsq}, we will further discuss the  application of Theorem \ref{Thm:Helsqdiv-limdist} to Gaussian-smoothed empirical distributions.
\subsection{TV distance}
\begin{theorem}
[Limit distribution for TV distance]\label{Thm:TVdiv-limdist}
Let $\mathrm{sgn}(x)=x/|x|$ for $x \neq 0$, and  for given  $\mu,\nu \ll \rho$, let $\cQ:=\{s \in \mathfrak{S}: p_{\mu}(s)=p_{\nu}(s)\}$.
 Then, the following hold:
\begin{enumerate}[(i)]
 \item (One-sample null and alternative) Let $\mu_n,\mu,\nu \ll \rho$ for some measure $\rho$. If $r_n(p_{\mu_n}-p_{\mu}) \trightarrow{w} B$ in $L^1(\rho)$,  then
    \begin{align}
    r_n \big(\tv{\mu_n}{\nu}-\tv{\mu}{\nu}\big) \trightarrow{d} &\frac 12 \int_{\cQ}\abs{B} d\rho +\frac 12 \int_{\mathfrak{S}\setminus \cQ} \mathrm{sgn}\big(p_{\mu}-p_{\nu}\big)Bd\rho. \label{eq:TV-onesample-alt}
\end{align}
 \item (Two-sample null and alternative)
 Let $\mu_n, \nu_n,\mu,\nu \ll \rho$ for some measure $\rho$. If$\mspace{5 mu}$\footnote{Alternatively, we may state the weak convergence requirement in terms of the stronger condition $\big(r_n(p_{\mu_n}-p_{\mu}), r_n(p_{\nu_n}-p_{\nu})\big) \trightarrow{w} (B_1,B_2)$ which implies $r_n(p_{\mu_n}-p_{\nu_n}-p_{\mu}+p_{\nu}) \trightarrow{w} B$ with $B=B_1-B_2$. } $r_n(p_{\mu_n}-p_{\nu_n}-p_{\mu}+p_{\nu}) \trightarrow{w} B$ in $L^1(\rho)$,  then
    \begin{align}
    r_n \big(\tv{\mu_n}{\nu_n}-\tv{\mu}{\nu} \big)\trightarrow{d} \frac 12 \int_{\cQ}\abs{B} d\rho +\frac 12 \int_{\mathfrak{S}\setminus \cQ} \mathrm{sgn}\big(p_{\mu}-p_{\nu}\big)Bd\rho. \label{eq:TV-twosample-alt}
\end{align}
\end{enumerate}
\end{theorem}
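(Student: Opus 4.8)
Unlike the other three divergences, the generator $\phi(x,y)=|x-y|/2$ underlying the TV distance (i.e.\ $\phi(x,y)=yf_{\mathsf{TV}}(x/y)$) fails the second-order smoothness in Assumption~\ref{Assump1}, being non-differentiable on the diagonal $\{x=y\}\subseteq(0,\infty)\times(0,\infty)$, so Proposition~\ref{Prop:Hadamarddiff-gen} does not apply and a direct argument is needed. The plan is to realize the TV functional as one half of the $L^1(\rho)$-norm precomposed with an affine shift, establish Hadamard directional differentiability of the norm itself, and then invoke the functional delta method (Lemma~\ref{Lem:extfuncdelta}$(i)$). Write $h:=p_\mu-p_\nu\in L^1(\rho)$ and $\Psi:L^1(\rho)\to\RR$, $\Psi(g):=\tfrac12\int_{\mathfrak{S}}|g|\,d\rho$, so that $\tv{\mu}{\nu}=\Psi(h)$, while $\tv{\mu_n}{\nu}=\Psi(Z_n)$ with $Z_n:=p_{\mu_n}-p_\nu$ in the one-sample case and $\tv{\mu_n}{\nu_n}=\Psi(Z_n)$ with $Z_n:=p_{\mu_n}-p_{\nu_n}$ in the two-sample case. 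Each $Z_n$ lies in $L^1(\rho)$ because $p_{\mu_n},p_{\nu_n}$ integrate to $1$ against $\rho$, and the hypotheses state exactly that $r_n(Z_n-h)\trightarrow{w}B$ in $L^1(\rho)$ (in the two-sample case $Z_n-h=p_{\mu_n}-p_{\nu_n}-p_\mu+p_\nu$). Since $\Psi$ is defined on the whole convex space $\mathfrak{D}_\Phi=\Theta=L^1(\rho)$, the tangent cone at $h$ is all of $L^1(\rho)$ and $B$ trivially takes values in it.

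The crux is to show that $\Psi$ is Hadamard directionally differentiable at every $h\in L^1(\rho)$ tangentially to $L^1(\rho)$, with
\[
\Psi_h'(k)=\tfrac12\int_{\{h=0\}}|k|\,d\rho+\tfrac12\int_{\{h\neq 0\}}\mathrm{sgn}(h)\,k\,d\rho .
\]
Fix $k\in L^1(\rho)$, $t_n\downarrow 0^+$, $k_n\to k$ in $L^1(\rho)$, and set $q_n:=t_n^{-1}\big(|h+t_nk_n|-|h|\big)$. The reverse triangle inequality gives $|q_n|\le|k_n|$ pointwise, and $\int|k_n|\,d\rho\to\int|k|\,d\rho$. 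Given an arbitrary subsequence, pass to a further one along which $k_n\to k$ $\rho$-a.e.; then $q_n\to\ind_{\{h\neq 0\}}\mathrm{sgn}(h)k+\ind_{\{h=0\}}|k|$ pointwise $\rho$-a.e.\ (on $\{h\neq 0\}$, $t_nk_n\to 0$ forces $h+t_nk_n$ to eventually share the sign of $h$, so $q_n=\mathrm{sgn}(h)k_n$ there; on $\{h=0\}$, $q_n=|k_n|$). By Pratt's lemma (generalized dominated convergence with the $L^1$-convergent envelopes $|k_n|$), $\int q_n\,d\rho$ converges to the integral of the pointwise limit along the sub-subsequence; since every subsequence admits such a sub-subsequence with the same limit, the full sequence $\int q_n\,d\rho$ converges to it, which multiplied by $\tfrac12$ equals $\Psi_h'(k)$. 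Finally, $\Psi_h'$ is $L^1(\rho)$-continuous (both integral terms are), hence Borel measurable, so $\Psi_h'(B)$ is a genuine random variable.

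Applying Lemma~\ref{Lem:extfuncdelta}$(i)$ with $\mathfrak{D}=L^1(\rho)$, $\mathfrak{E}=\RR$, $\Theta=\mathfrak{D}_\Phi=L^1(\rho)$, $\Phi=\Psi$, $\theta=h$, $Z_n$ as above and $Z=B$ yields $r_n\big(\Psi(Z_n)-\Psi(h)\big)\trightarrow{d}\Psi_h'(B)$. Identifying $\{h=0\}$ with $\cQ=\{p_\mu=p_\nu\}$ and $\{h\neq 0\}$ with $\mathfrak{S}\setminus\cQ$, where $\mathrm{sgn}(h)=\mathrm{sgn}(p_\mu-p_\nu)$, turns this into \eqref{eq:TV-onesample-alt} in case $(i)$ and \eqref{eq:TV-twosample-alt} in case $(ii)$. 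The one genuine obstacle is the norm's Hadamard differentiability — more precisely, pushing the pointwise limit of the difference quotients through the integral when $k_n$ converges only in $L^1(\rho)$, with neither a.e.\ convergence nor a fixed dominating function available; the Pratt-lemma-plus-subsequence device is what makes this go through. Everything else (membership $Z_n\in L^1(\rho)$, measurability of $\Psi_h'$, the set identifications) is routine bookkeeping.
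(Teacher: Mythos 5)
Your proof is correct, and it reaches the theorem by a genuinely different route than the paper. The paper first dispatches the null cases by the continuous mapping theorem and then, for the alternative, restricts the functional $\Upsilon$ to the convex set $\hat\Theta$ of differences of densities, shows $\Upsilon$ is globally $\tfrac12$-Lipschitz on $L^1(\rho)$, invokes the Shapiro--R\"omisch equivalence that for locally Lipschitz maps Hadamard directional differentiability reduces to G\^ateaux directional differentiability, and computes the G\^ateaux derivative by dominated convergence with the fixed dominant $|h|$. You instead verify Hadamard directional differentiability of the $L^1$-norm functional directly at an arbitrary $h\in L^1(\rho)$, tangentially to all of $L^1(\rho)$: you bound the difference quotients pointwise by $|k_n|$, pass to an a.e.\ convergent sub-subsequence, apply Pratt's generalized dominated convergence (the envelopes $|k_n|$ converge in $L^1$ but need not be uniformly dominated), and close with the every-subsequence-has-a-convergent-sub-subsequence device. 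What the paper's route buys is brevity once one accepts the external Lipschitz-to-Hadamard equivalence, since the G\^ateaux computation uses only the classical DCT with a single dominating function. What your route buys is self-containment (no appeal to that equivalence), a unified treatment of null and alternative in a single computation (when $h\equiv 0$ your formula degenerates to $\tfrac12\int|k|\,d\rho$, recovering the continuous-mapping case), and the convenience that with $\Theta=L^1(\rho)$ the tangent cone is the whole space, so no separate argument is needed that the limiting variable $B$ and the linearizations $r_n(Z_n-h)$ land in the tangent cone.
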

The proof for the one-sample and two-sample null cases above  follow via an application of the continuous mapping theorem. The proof of the alternative requires an argument different from the divergences considered until now. To see why, recall from  \eqref{eq:fdivdef} that TV distance corresponds to an $f$-divergence with $f=\abs{x-1}/2$, which is not differentiable at $x=1$. Hence, Proposition \ref{Prop:Hadamarddiff-gen} does not apply  for computing the Hadamard derivative since Assumption \ref{Assump1} is violated. However, utilizing the equivalence of Hadamard differentiability to  G\^{a}teaux differentiability under  Lipschitzness of the functional (see \cite{Shapiro-1990}), we instead compute the latter simpler quantity via a direct argument. From there, the claims follow as usual via an application of the functional delta method. We note that  Theorem \ref{Thm:TVdiv-limdist} applies to the case of discrete and compactly supported smoothed empirical distributions  in  Section \ref{sec:KLdiverggenres}. Its application to Gaussian-smoothed case is discussed in Appendix \ref{App:GSfdiv-TV} (Proposition \ref{Prop:GS-TV-limdist}).

\section{Limit Distribution Theory for Gaussian-Smoothed $f$-Divergences}\label{Sec:GS limit theorems}
We study distributional limits of Gaussian-smoothed $f$-divergence, i.e., the population objective is $\mathsf{D}_f(\mu*\gamma_\sigma\|\nu*\gamma_\sigma)$, where $\gamma_\sigma=N(0,\sigma^2 I_d)$ is the isotropic Gaussian kernel \cite{Goldfeld-2020}. Our goal is to approximate $\mu$ (or both $\mu$ and $\nu$) from samples, while assuming that the Gaussian kernel is known. The Gaussian smoothing alleviates mismatched support issues that $f$-divergences often suffer from and gives rise to a well-posed empirical approximation setting. 
This setup was studied in our preliminary work \cite{Goldfeld-Kato-2020} for the $\chi^2$ divergence and the TV distance under the one-sample null setting. The results herein significantly generalize and broaden those of \cite{Goldfeld-Kato-2020}. We focus on the Gaussian-smoothed KL divergence in this section and relegate analogous results for $\chi^2 $ divergence, $\mathsf{H}^2$ distance and TV distance to Appendix \ref{App:GSfdiv} for brevity. Throughout this section, we assume $\mathfrak{S}=\RR^d$ and $\cS=\cB\big(\RR^d\big)$. Some preliminaries are due before stating the results.

\medskip
\noindent \textbf{Empirical measures:} In defining the empirical measures of $\mu$ and $\nu$ we allow arbitrary correlation between their samples, which is necessary for the application to auditing DP studied in Section \ref{Sec:appkldiffprivacy}. 
Let $(X,Y) \sim \pi\in \cP(\RR^d \times \RR^d)$ with $X,Y$ marginals $\mu,\nu$, respectively. Set $\hat{\mu}_n = n^{-1}\sum_{i=1}^n \delta_{X_i}$ as the empirical distribution of  $(X_1,\ldots,$ $X_n) $ and  $\hat{\nu}_n = n^{-1}\sum_{i=1}^n \delta_{Y_i}$ be that of $(Y_1,\ldots,Y_n)$, where $(X_i,Y_i) \sim \pi$, $1 \leq i \leq n$, are i.i.d. Recalling that $\varphi_{\sigma}$ denotes the density of $\gamma_{\sigma}$, the Lebesgue densities of $\hat \mu_n*\gamma_\sigma$ and $\hat \nu_n*\gamma_\sigma$ are $\hat \mu_n*\varphi_{\sigma}$ and $\hat \nu_n*\varphi_{\sigma}$, respectively. We study distributional limits of $\mathsf{D}_f(\hat{\mu}_n*\gamma_\sigma\|\nu*\gamma_\sigma)$ as well as its two-sample analogues, under the null ($\mu=\nu$) and the alternative ($\mu\neq \nu$). 

\medskip
\noindent \textbf{Gaussian process:} Our limit variables are characterized as integral forms of a certain Gaussian process, which is introduced next. Consider the 2-dimensional centered Gaussian process $(G_{\mu,\sigma},G_{\nu,\sigma})$ $\mspace{-4 mu}:=\mspace{-4 mu}\big(G_{\mu,\sigma}(x), G_{\nu,\sigma}(y)\big)_{(x,y) \in \RR^d \times \RR^d}$ with covariance function  $\Sigma_{\mu,\nu,\sigma}: (\RR^d \times \RR^d) \times (\RR^d \times \RR^d) \rightarrow \RR^{2 \times 2}$ given by
\begin{align}
\Sigma_{\mu,\nu,\sigma}\big((x,y),(\tilde x,\tilde y)\big):&=
 \Bigg[ {\begin{array}{cc}
   \EE\big[G_{\mu,\sigma}(x)G_{\mu,\sigma}(\tilde x)\big] & \EE\big[G_{\mu,\sigma}(x)G_{\nu,\sigma}(\tilde y)\big] \\[2 pt]
   \EE\big[G_{\nu,\sigma}(y)G_{\mu,\sigma}(\tilde x)\big] & \EE\big[G_{\nu,\sigma}(y)G_{\nu,\sigma}(\tilde y)\big] \\
  \end{array} } \Bigg] \label{eq:covfuncGP} \\
  &=\Bigg[ {\begin{array}{cc}
  \mathsf{cov}\big(\varphi_{\sigma}(x-X),\varphi_{\sigma}(\tilde x-X)\big) & \mathsf{cov}\big(\varphi_{\sigma}(x-X),\varphi_{\sigma}(\tilde y-Y)\big) \\[2 pt]
   \mathsf{cov}\big(\varphi_{\sigma}(y-Y),\varphi_{\sigma}(\tilde x-X)\big) & \mathsf{cov}\big(\varphi_{\sigma}(y-Y),\varphi_{\sigma}(\tilde y-Y)\big)  \notag
  \end{array} } \Bigg].
\end{align}
For $i,j \in \{1,2\}$, denote the $(i,j)$-th entry of $\Sigma_{\mu,\nu,\sigma}$ by $\Sigma^{(i,j)}_{\mu,\nu,\sigma}$. Note that each such entry depends only on two coordinates among $\big((x,y),(\tilde x,\tilde y)\big)$.  Hence, by some abuse of notation, we omit the redundant coordinates and use the remaining coordinates in the same order they appear, e.g.,   $\Sigma^{(2,1)}_{\mu,\nu,\sigma}\big(y,\tilde x\big)$ for $\Sigma^{(2,1)}_{\mu,\nu,\sigma}\big((x,y),(\tilde x,\tilde y)\big)$. Further, when $\nu=\mu$ (and consequently $X\stackrel{d}{=}Y$), we denote $G_{\nu,\sigma}$ by $\tilde G_{\mu,\sigma}$ to avoid confusion with $G_{\mu,\sigma}$. 

\medskip
\noindent \textbf{Asymptotic variance:} When the limiting variable is Gaussian, the asymptotic variance for different $f$-divergences can be expressed in a unified form, using the following definitions. For a function $f$ corresponding to an $f$-divergence (see Definition \ref{def:f divergence}), let 
\begin{align} \notag
\begin{split}
    v_{1,f}^2(\mu,\nu,\sigma)&:=\int_{\RR^d}\int_{\RR^d} \Sigma^{(1,1)}_{\mu,\nu,\sigma}(x,y) L_{1,f}(x) L_{1,f}(y) dx\, dy,  \\
    v_{2,f}^2(\mu,\nu,\sigma)&:= \sum_{1 \leq i,j \leq 2} \int_{\RR^d}\int_{\RR^d}  \Sigma^{(i,j)}_{\mu,\nu,\sigma}(x,y) L_{i,f}(x)L_{j,f}(y) d x\, d y,
    \end{split}
\end{align}
where
\begin{align}
L_{1,f}(x)&:=L_{1,f,\mu,\nu,\sigma}(x):= f' \left(\frac{\mu*\varphi_{\sigma}(x)}{\nu*\varphi_{\sigma}(x)}\right), \notag \\
    L_{2,f}(x)&:= L_{2,f,\mu,\nu,\sigma}(x):= f \left(\frac{\mu*\varphi_{\sigma}(x)}{\nu*\varphi_{\sigma}(x)}\right)-\frac{\mu*\varphi_{\sigma}(x)}{\nu*\varphi_{\sigma}(x)}f'\left(\frac{\mu*\varphi_{\sigma}(x)}{\nu*\varphi_{\sigma}(x)}\right). \notag
\end{align}
Here, $f'$ denotes the derivative of $f$ and $v_{1,f}^2(\mu,\nu,\sigma)$, $v_{2,f}^2(\mu,\nu,\sigma)$ will be well-defined and finite in the settings we consider below. The quantities give the asymptotic variance of  Gaussian-smoothed $f$-divergence (except TV distance) in the one-sample and two-sample alternative cases, respectively.

\subsection{KL Divergence}
The following proposition characterizes the limit distribution for Gaussian-smoothed KL divergence. 
\begin{prop}[Limit distribution for Gaussian-smoothed KL divergence] \label{Prop:GS-KL-limdist} The following hold:
\begin{enumerate}[(i)]
    \item  (One-sample null) If 
    \begin{equation}
       \int_{\RR^d}\frac{\mathsf{Var}_{\mu}\big(\varphi_{\sigma}(x-\cdot)\big)}{\mu*\varphi_{\sigma}(x)}\, dx <\infty,\label{ASSUM:KL_null}
    \end{equation} 
then there exists a version of $G_{\mu,\sigma}$ such that $G_{\mu,\sigma}/\sqrt{\mu*\varphi_{\sigma}}$ is $L^2(\RR^d)$-valued, and 
\begin{align}
  n \kl{\hat \mu_n*\gamma_\sigma}{\mu*\gamma_\sigma} \trightarrow{d}   \frac 12  \int_{\RR^d} \frac{G_{\mu,\sigma}^2(x)}{\mu*\varphi_{\sigma}(x)}d x, \label{eq:KL-GS-onesamp-null}
\end{align}
where the limit can be represented as a weighted sum of i.i.d. $\chi^2$ random variables with 1 degree of freedom (see Remark \ref{Rem:variance-alt}).
In particular, \eqref{ASSUM:KL_null} and \eqref{eq:KL-GS-onesamp-null} holds for $\beta$-sub-Gaussian $\mu$  with $\beta<\sigma$. Conversely, if  \eqref{ASSUM:KL_null} is violated, then we have $\liminf_{n \rightarrow \infty}n\EE\big[\kl{\hat \mu_n*\gamma_\sigma}{\mu*\gamma_\sigma}\big]= \infty$.
\medskip
\item  (One-sample alternative)  If \eqref{ASSUM:KL_null} holds, $\norm{(\nu*\varphi_{\sigma})^2/\mu*\varphi_{\sigma}}_{\infty}<\infty$,  $\log \big(\mu*\varphi_{\sigma}/\nu*\varphi_{\sigma}\big) \in L^2(\nu*\varphi_{\sigma})$ and 
    \begin{equation}
      \int_{\RR^d}\frac{\mathsf{Var}_{\mu}\big(\varphi_{\sigma}(x-\cdot)\big)}{\nu*\varphi_{\sigma}(x)}\, dx <\infty,\label{ASSUM:KL_alt}
    \end{equation} 
then 
\begin{align}
   n^{\frac 12}\big(\kl{\hat \mu_n*\gamma_\sigma}{\nu*\gamma_{\sigma}}-\kl{\mu*\gamma_{\sigma}}{\nu*\gamma_{\sigma}}\big) \trightarrow{d} & N\big(0,v_{1,f_{\mathsf{KL}}}^2(\mu,\nu,\sigma)\big),
   \label{eq:KL-GS-onesamp-alt} 
   \end{align}
   where  $v_{1,f_{\mathsf{KL}}}^2(\mu,\nu,\sigma)$ is  given in \eqref{eq:var-smoothedKL1samp}. 
 In particular,  \eqref{eq:KL-GS-onesamp-alt} holds for $\beta$-sub-Gaussian $\mu$  with $\beta<\sigma$ such that  $\nu \ll \mu \ll \nu$ and $\norm{d \mu/d \nu}_{\infty} \vee \norm{d \nu/d \mu}_{\infty} <\infty$.
\item (Two-sample null) If 
$\mu$ has compact support, then  there exists a version of $G_{\mu,\sigma}$,  $\tilde G_{\mu,\sigma}$ such that $G_{\mu,\sigma}/\sqrt{\mu*\varphi_{\sigma}}$ and $\tilde G_{\mu,\sigma}/\sqrt{\mu*\varphi_{\sigma}}$ are $L^2(\RR^d)$-valued, and
\begin{align}
  n \kl{\hat \mu_n*\gamma_\sigma}{\hat \nu_n*\gamma_\sigma} \trightarrow{d}    \frac 12 \int_{\RR^d} \frac{\big(G_{\mu,\sigma}(x)-\tilde G_{\mu,\sigma}(x)\big)^2}{\mu*\varphi_{\sigma}(x)}d x, \label{eq:KL-GS-twosamp-null}
\end{align}
where the limit can be represented as a weighted sum of i.i.d. $\chi^2$ random variables with 1 degree of freedom.
\medskip
\item (Two-sample alternative) If  $\mu,\nu$ have compact supports, then 
\begin{align}
   & n^{\frac 12}\big(\kl{\hat \mu_n*\gamma_\sigma}{\hat \nu_n*\gamma_{\sigma}}-\kl{\mu*\gamma_{\sigma}}{\nu*\gamma_{\sigma}}\big)\trightarrow{d} N\big(0,v_{2,f_{\mathsf{KL}}}^2(\mu,\nu,\sigma)\big), \label{eq:KL-GS-twosamp-alt}
\end{align}
where  $v_{2,f_{\mathsf{KL}}}^2(\mu,\nu,\sigma)$ is given in \eqref{eq:var-smoothedKL2samp}.
\end{enumerate}
\end{prop}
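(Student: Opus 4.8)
The plan is to obtain all four parts by specializing the corresponding case of Theorem~\ref{Thm:KLdiv-limdist}, with the substitutions $\mu\mapsto\mu*\gamma_\sigma$, $\nu\mapsto\nu*\gamma_\sigma$, $\mu_n\mapsto\hat\mu_n*\gamma_\sigma$, $\nu_n\mapsto\hat\nu_n*\gamma_\sigma$, $\rho=\mu*\gamma_\sigma$ in the null parts $(i)$, $(iii)$ and $\rho=\nu*\gamma_\sigma$ in the alternative parts $(ii)$, $(iv)$, and $r_n=\sqrt n$; the null parts invoke the second-order (weighted-$\chi^2$) statements of Theorem~\ref{Thm:KLdiv-limdist}, the alternative parts the first-order (Gaussian) ones. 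The first task is to verify the deterministic and almost-sure hypotheses of Theorem~\ref{Thm:KLdiv-limdist}. These reduce to Gaussian-tail bookkeeping: for any $\kappa\in\cP(\RR^d)$ with $\supp(\kappa)\subseteq B(0,R)$ one has $(2\pi\sigma^2)^{-d/2}e^{-(\|x\|+R)^2/(2\sigma^2)}\le\kappa*\varphi_\sigma(x)\le(2\pi\sigma^2)^{-d/2}e^{-(\|x\|-R)_{+}^{2}/(2\sigma^2)}$. Under compact support of $\mu,\nu$ this gives $\log\big(\mu*\varphi_\sigma/\nu*\varphi_\sigma\big)(x)=O(\|x\|)$ (the $-\|x\|^2/(2\sigma^2)$ terms cancel), whence $p_{\mu*\gamma_\sigma}\in L^2(\nu*\gamma_\sigma)$, $\ind_{\RR^d}\log p_{\mu*\gamma_\sigma}\in L^2(\nu*\gamma_\sigma)$ and $\kl{\mu*\gamma_\sigma}{\nu*\gamma_\sigma}<\infty$; and since $X_i\in B(0,R_\mu)$, $Y_i\in B(0,R_\nu)$ a.s., it yields the required domination $p_{\hat\mu_n*\gamma_\sigma}/p_{\hat\nu_n*\gamma_\sigma}\le q$ a.s.\ with the $n$-independent choice $q(x):=c_0\,e^{(R_\mu+R_\nu)\|x\|/\sigma^2}$, as well as a.s.\ finiteness of $\kl{\hat\mu_n*\gamma_\sigma}{\hat\nu_n*\gamma_\sigma}$. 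Strict positivity of all densities on $\RR^d$ disposes of the absolute-continuity chains.

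\textbf{The $L^2$-CLT (main obstacle).} The one probabilistic ingredient feeding Theorem~\ref{Thm:KLdiv-limdist} is weak convergence of the (pair of) centered smoothed empirical relative densities in the relevant $L^2$ space. Translated to Lebesgue densities, this is a CLT for $\sqrt n\big((\hat\mu_n*\varphi_\sigma-\mu*\varphi_\sigma),(\hat\nu_n*\varphi_\sigma-\nu*\varphi_\sigma)\big)$ in $L^2(w_1\,dx)\times L^2(w_2\,dx)$, where $w_1,w_2$ are read off from $p_{\eta_1},p_{\eta_2}$ in Theorem~\ref{Thm:KLdiv-limdist}; e.g.\ in the one-sample null case $w_1=(\mu*\varphi_\sigma)^{-1}$, while in the two-sample alternative case $w_1=(\mu*\varphi_\sigma)^{-1}+(\nu*\varphi_\sigma)^{-1}$ and $w_2=(\nu*\varphi_\sigma)^{-1}+(\mu*\varphi_\sigma)(\nu*\varphi_\sigma)^{-2}+q(\nu*\varphi_\sigma)^{-1}$. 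The summands $\big(\varphi_\sigma(\cdot-X_i)-\mu*\varphi_\sigma,\,\varphi_\sigma(\cdot-Y_i)-\nu*\varphi_\sigma\big)$ are i.i.d.\ centered elements of this product Hilbert space, so by the $L^2$-CLT \cite[Proposition 2.1.11]{AVDV-book} it suffices that $\int\EE[\varphi_\sigma(x-X)^2]w_1(x)\,dx<\infty$ and $\int\EE[\varphi_\sigma(y-Y)^2]w_2(y)\,dy<\infty$. In the one-sample null case this is exactly condition \eqref{ASSUM:KL_null} (since $\EE\big[(\varphi_\sigma(x-X)-\mu*\varphi_\sigma(x))^2\big]=\mathsf{Var}_\mu\big(\varphi_\sigma(x-\cdot)\big)$), and using $\EE[\varphi_\sigma(x-X)^2]=c_{d,\sigma}(\mu*\varphi_{\sigma/\sqrt 2})(x)$ a tail comparison shows it holds when $\mu$ is $\beta$-sub-Gaussian with $\beta<\sigma$; likewise the one-sample alternative condition amounts to \eqref{ASSUM:KL_null}, \eqref{ASSUM:KL_alt} and $\norm{(\nu*\varphi_\sigma)^2/\mu*\varphi_\sigma}_\infty<\infty$. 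The delicate case is the two-sample one, where $w_2$ carries the heavy factor $q(\nu*\varphi_\sigma)^{-1}\lesssim e^{\|y\|^2/(2\sigma^2)+O(\|y\|)}$: here compactness of $\supp(\nu)$ forces $\EE[\varphi_\sigma(y-Y)^2]\lesssim e^{-\|y\|^2/\sigma^2+O(\|y\|)}$, so the product is $e^{-\|y\|^2/(2\sigma^2)+O(\|y\|)}$ and integrates. Balancing this $e^{c\|y\|}$-growth of $q$ against the $e^{-\|y\|^2/\sigma^2}$-decay of $\EE[\varphi_\sigma^2]$ is the step I expect to demand the most care, and it is precisely why compact supports are imposed in $(iii)$, $(iv)$. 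The $L^2$-limit is a centered Gaussian with covariance operator having kernel $\Sigma_{\mu,\nu,\sigma}$; since the expected squared $L^2(\eta_i)$-norms are finite, the pointwise process $(G_{\mu,\sigma},G_{\nu,\sigma})$ is a version of it, which identifies $B$, $(B_1,B_2)$ in Theorem~\ref{Thm:KLdiv-limdist} (e.g.\ $B=G_{\mu,\sigma}/(\mu*\varphi_\sigma)$ in $(i)$, and $B_1=G_{\mu,\sigma}/(\nu*\varphi_\sigma)$, $B_2=G_{\nu,\sigma}/(\nu*\varphi_\sigma)$ in $(iv)$).

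\textbf{Extracting the limits.} For the null parts, Theorem~\ref{Thm:KLdiv-limdist}$(i)$/$(iii)$ yields $n\kl{\hat\mu_n*\gamma_\sigma}{\cdot}\trightarrow{d}\tfrac12\int B^2\,d(\mu*\gamma_\sigma)=\tfrac12\int G_{\mu,\sigma}^2/(\mu*\varphi_\sigma)\,dx$, resp.\ $\tfrac12\int(G_{\mu,\sigma}-\tilde G_{\mu,\sigma})^2/(\mu*\varphi_\sigma)\,dx$; since $G_{\mu,\sigma}/\sqrt{\mu*\varphi_\sigma}$ (and $\tilde G_{\mu,\sigma}/\sqrt{\mu*\varphi_\sigma}$) is a centered $L^2(\RR^d)$-valued Gaussian with trace-class covariance (trace $=$ \eqref{ASSUM:KL_null}), its Karhunen--Lo\`eve expansion represents the limit as a weighted sum of i.i.d.\ $\chi^2_1$ variables. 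For the two-sample alternative, after the change of variables $d(\nu*\gamma_\sigma)=\nu*\varphi_\sigma\,dx$, $d(\mu*\gamma_\sigma)=\mu*\varphi_\sigma\,dx$ and with $\supp(p_{\mu*\gamma_\sigma})=\RR^d$, Theorem~\ref{Thm:KLdiv-limdist}$(iv)$ gives
\[
\sqrt n\big(\kl{\hat\mu_n*\gamma_\sigma}{\hat\nu_n*\gamma_\sigma}-\kl{\mu*\gamma_\sigma}{\nu*\gamma_\sigma}\big)\ \trightarrow{d}\ \int_{\RR^d}G_{\mu,\sigma}(x)\,\log\tfrac{\mu*\varphi_\sigma(x)}{\nu*\varphi_\sigma(x)}\,dx-\int_{\RR^d}G_{\nu,\sigma}(x)\,\tfrac{\mu*\varphi_\sigma(x)}{\nu*\varphi_\sigma(x)}\,dx .
\]
Since $f_{\mathsf{KL}}'(t)=\log t+1$ and $f_{\mathsf{KL}}(t)-tf_{\mathsf{KL}}'(t)=-t$, we have $L_{1,f_{\mathsf{KL}}}=1+\log(\mu*\varphi_\sigma/\nu*\varphi_\sigma)$ and $L_{2,f_{\mathsf{KL}}}=-\mu*\varphi_\sigma/\nu*\varphi_\sigma$; and since $\int_{\RR^d}G_{\mu,\sigma}(x)\,dx=0$ a.s.\ (its variance is $\mathsf{Var}_\mu\big(\int\varphi_\sigma(x-X)\,dx\big)=\mathsf{Var}_\mu(1)=0$), the right-hand side above equals $\int G_{\mu,\sigma}L_{1,f_{\mathsf{KL}}}\,dx+\int G_{\nu,\sigma}L_{2,f_{\mathsf{KL}}}\,dx$. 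Expanding the variance of this centered Gaussian with the kernel $\Sigma_{\mu,\nu,\sigma}$ and using $\Sigma^{(1,2)}_{\mu,\nu,\sigma}(x,y)=\Sigma^{(2,1)}_{\mu,\nu,\sigma}(y,x)$ gives $\sum_{1\le i,j\le2}\int_{\RR^d}\int_{\RR^d}\Sigma^{(i,j)}_{\mu,\nu,\sigma}(x,y)L_{i,f_{\mathsf{KL}}}(x)L_{j,f_{\mathsf{KL}}}(y)\,dx\,dy=v_{2,f_{\mathsf{KL}}}^2(\mu,\nu,\sigma)$, i.e.\ \eqref{eq:KL-GS-twosamp-alt}; the one-sample alternative $(ii)$ is identical, yielding variance $v_{1,f_{\mathsf{KL}}}^2(\mu,\nu,\sigma)$.

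\textbf{The converse in $(i)$.} For the ``conversely'' clause, write $\kl{\hat\mu_n*\gamma_\sigma}{\mu*\gamma_\sigma}=\int_{\RR^d}\mu*\varphi_\sigma(x)\,g\big(\hat\mu_n*\varphi_\sigma(x)/\mu*\varphi_\sigma(x)\big)\,dx$ with $g(t):=t\log t-t+1\ge0$. For fixed $x$, $\hat\mu_n*\varphi_\sigma(x)/\mu*\varphi_\sigma(x)$ is a sample mean, bounded by $\varphi_\sigma(0)/\mu*\varphi_\sigma(x)$ and converging a.s.\ to $1$, so a Taylor expansion of $g$ at $1$ combined with uniform integrability of $n\big(\hat\mu_n*\varphi_\sigma(x)-\mu*\varphi_\sigma(x)\big)^2$ (bounded summands) gives $n\,\EE\big[g\big(\hat\mu_n*\varphi_\sigma(x)/\mu*\varphi_\sigma(x)\big)\big]\to\mathsf{Var}_\mu\big(\varphi_\sigma(x-\cdot)\big)/\big(2(\mu*\varphi_\sigma(x))^2\big)$. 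By Tonelli and Fatou, $\liminf_n n\,\EE\big[\kl{\hat\mu_n*\gamma_\sigma}{\mu*\gamma_\sigma}\big]\ge\tfrac12\int_{\RR^d}\mathsf{Var}_\mu\big(\varphi_\sigma(x-\cdot)\big)/(\mu*\varphi_\sigma(x))\,dx=\infty$ whenever \eqref{ASSUM:KL_null} fails. Thus, together with the two-sample $L^2$-CLT moment bound of the second step, this is where the sharpness of \eqref{ASSUM:KL_null} manifests; everything else is furnished by the machinery of Sections~\ref{Sec:Techframework}--\ref{Sec:limittheorems}.
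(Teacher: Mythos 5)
Your proposal is correct and follows essentially the same route as the paper: each part is obtained by specializing Theorem~\ref{Thm:KLdiv-limdist} with the same choices of $\rho$, the probabilistic input is the $L^2$-CLT of Theorem~\ref{Thm:CLT-in-Lp} applied to the centered smoothed empirical density weighted by the appropriate $p_{\eta_i}$, compact supports enter only through the Gaussian-tail bound $q(x)\lesssim e^{c\|x\|}$ and the consequent integrability of $q\cdot\mathsf{Var}_\nu(\varphi_\sigma(\cdot-\cdot))/\mu*\varphi_\sigma$, and the weighted-$\chi^2$ representation in the null cases is Karhunen--Lo\`eve (Remark~\ref{Rem:variance-alt}). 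Your device of dropping the ``$+1$'' in $L_{1,f_{\mathsf{KL}}}$ by noting $\int G_{\mu,\sigma}\,dx=0$ a.s.\ is exactly equivalent to the paper's use of $\int\int\Sigma^{(1,1)}(x,y)\,dx\,dy=0$. The one place where you take a somewhat less careful shortcut is the converse in $(i)$: the paper (via Lemma~\ref{lem:fdivnullonesamplb}) first takes the CLT of the \emph{truncated} quantity $f''(\cdot)\vartheta_n^2(x)\wedge k$ via the extended CMT, then lets $k\to\infty$ by monotone convergence, whereas you assert pointwise convergence of $n\,\EE[g(R_n(x))]$ directly from Taylor plus uniform integrability of $n(R_n(x)-1)^2$. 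That UI claim is in fact true (Hoeffding's inequality gives sub-Gaussian tails for $\sqrt n(R_n(x)-1)$ uniformly in $n$, since the summands are bounded), but it is worth noting that $g$ is not quadratically bounded near $t=\infty$, so the Taylor argument needs the sub-Gaussian tail control, not merely the second-moment bound; the paper's truncation sidesteps this. In short: identical strategy, one minor technical difference in the converse that requires a slightly stronger justification than ``bounded summands.''
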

The proof of bulk of the claims in Proposition \ref{Prop:GS-KL-limdist} hinges on  Theorem \ref{Thm:KLdiv-limdist} by identifying primitive conditions in terms of $\mu,\nu$, and $\sigma$ that guarantee the regularity assumptions in Theorem \ref{Thm:KLdiv-limdist}. The proof of the final claim in Part $(i)$ above, i.e., the necessity of Condition \eqref{ASSUM:KL_null}, relies on the following lemma, which is also applicable to other $f$-divergences  with a twice differentiable $f$.

\begin{lemma}[Lower bound on expected $f$-divergence limit] \label{lem:fdivnullonesamplb}
Consider the $f$-divergence in Definition \ref{def:f divergence}. Assume that $f$ is continuously twice differentiable in $(0,\infty)$ with a nonnegative second  derivative $f''$. Then,
\begin{align}
 \liminf_{n \rightarrow \infty} n \EE\big[\fdiv{\hat \mu_n*\gamma_\sigma} {\mu*\gamma_\sigma}\big] &\geq  \frac{f''(1)}{2}\int_{\RR^d}\frac{\mathsf{Var}_{\mu}\big(\varphi_{\sigma}(x-\cdot)\big)}{\mu*\varphi_{\sigma}(x)}\, dx. \label{eq:lowerbndfdiv}
\end{align}
\end{lemma}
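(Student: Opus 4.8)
The plan is to reduce the bound to a pointwise-in-$x$ statement via Tonelli and Fatou, and then, at each fixed $x$, to combine a local quadratic lower bound on $f$ near $1$ with the exact variance of the empirical density.

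First I would clean up the functional. Write $p_n:=\hat \mu_n*\varphi_\sigma$ and $p:=\mu*\varphi_\sigma$; since $\varphi_\sigma>0$ we have $p>0$ and $p_n>0$ everywhere, so $\fdiv{\hat \mu_n*\gamma_\sigma}{\mu*\gamma_\sigma}=\int_{\RR^d} f\big(p_n(x)/p(x)\big)p(x)\,dx$ with no boundary conventions in play, and $\tau^2_x:=\mathsf{Var}_\mu\big(\varphi_\sigma(x-\cdot)\big)/p(x)^2<\infty$ for every $x$ (the bounded integrand $\varphi_\sigma(x-\cdot)$ has finite variance). Setting $g(t):=f(t)-f'(1)(t-1)$, we have $g(1)=g'(1)=0$ and $g''=f''\ge 0$, so $g$ is convex with a global minimum at $1$, hence $g\ge 0$. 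Because $\int p_n\,dx=\int p\,dx=1$ pathwise, the linear term integrates to zero and $\fdiv{\hat \mu_n*\gamma_\sigma}{\mu*\gamma_\sigma}=\int g\big(p_n(x)/p(x)\big)p(x)\,dx\ge 0$. By Tonelli and then Fatou (both justified by nonnegativity),
\[
\liminf_{n\to\infty}n\,\EE\big[\fdiv{\hat \mu_n*\gamma_\sigma}{\mu*\gamma_\sigma}\big]\;\ge\;\int_{\RR^d}\Big(\liminf_{n\to\infty}n\,\EE\big[g\big(p_n(x)/p(x)\big)\big]\Big)p(x)\,dx,
\]
so it suffices to prove $\liminf_n n\,\EE\big[g(p_n(x)/p(x))\big]\ge \tfrac{f''(1)}{2}\tau_x^2$ for a.e.\ $x$; integrating that against $p(x)\,dx$ yields exactly the right-hand side of \eqref{eq:lowerbndfdiv}.

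Next, fix $x$ and put $Z_i:=\varphi_\sigma(x-X_i)/p(x)$, which are i.i.d., bounded, with $\EE[Z_i]=1$ and $\mathsf{Var}(Z_i)=\tau_x^2$, and $\bar Z_n:=n^{-1}\sum_{i=1}^nZ_i=p_n(x)/p(x)$. Given $\eps>0$, continuity of $f''$ at $1$ yields $\delta>0$ with $f''(s)\ge f''(1)-\eps$ for $|s-1|<\delta$; a second-order Taylor expansion of $g$ about $1$ (legitimate on $(0,\infty)$) then gives $g(t)\ge\tfrac12(f''(1)-\eps)(t-1)^2$ whenever $|t-1|<\delta$, and since $g\ge 0$ this upgrades to the pointwise inequality $g(t)\ge \tfrac12(f''(1)-\eps)(t-1)^2\ind_{\{|t-1|<\delta\}}$ valid for all $t>0$ (regardless of the sign of $f''(1)-\eps$). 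Taking expectations and multiplying by $n$, $n\,\EE[g(\bar Z_n)]\ge\tfrac12(f''(1)-\eps)\,a_n$ with $a_n:=n\,\EE\big[(\bar Z_n-1)^2\ind_{\{|\bar Z_n-1|<\delta\}}\big]$. Now $n\,\EE[(\bar Z_n-1)^2]=n\,\mathsf{Var}(\bar Z_n)=\tau_x^2$ exactly, while the complementary tail obeys $n\,\EE\big[(\bar Z_n-1)^2\ind_{\{|\bar Z_n-1|\ge\delta\}}\big]\le \tfrac{n}{\delta^2}\EE[(\bar Z_n-1)^4]=O(n^{-1})\to 0$, using $\EE[(\bar Z_n-1)^4]=O(n^{-2})$ for i.i.d.\ bounded (hence fourth-moment) summands. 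Hence $a_n\to\tau_x^2$, so $n\,\EE[g(\bar Z_n)]\ge\tfrac12(f''(1)-\eps)a_n\to\tfrac12(f''(1)-\eps)\tau_x^2$ (the limit passes through even if $f''(1)-\eps<0$, since $a_n$ actually converges), and letting $\eps\downarrow 0$ gives $\liminf_n n\,\EE[g(\bar Z_n)]\ge\tfrac{f''(1)}{2}\tau_x^2$, as required.

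The Tonelli/Fatou bookkeeping and the moment estimates are routine; the one place that needs care is the Taylor step, which is valid only on a neighbourhood of $1$ where $f''$ is finite and continuous. This is precisely why the quadratic lower bound must be localized through the indicator $\ind_{\{|t-1|<\delta\}}$ and why nonnegativity of $g$ is essential to discard the complementary region. That $f$ need only be twice differentiable on $(0,\infty)$ and not at $0$ causes no issue, because $\bar Z_n$ concentrates around $1$.
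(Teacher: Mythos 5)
Your argument is correct, and it reaches the conclusion by a genuinely different (and somewhat more elementary) route than the paper's. Both proofs begin by discarding the linear term in the Taylor expansion of $f$ at $1$ and applying Fatou's lemma to reduce matters to a pointwise-in-$x$ lower bound. After that they diverge. The paper keeps the \emph{exact} integral-form Taylor remainder $\int_0^1 (1-\tau)f''(\cdot)\,d\tau$, writes the expected divergence as a double integral in $(\tau,x)$, applies Fatou, and then passes to the limit in $n$ at each fixed $(\tau,x)$ by invoking the CLT for $\vartheta_n(x)$, the extended continuous mapping theorem (to handle the $n$-dependent function $g_{n,x}$), and a truncation-plus-monotone-convergence device to go from $\wedge\,k$ to the unbounded limit. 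Your proof replaces this machinery with a local quadratic lower bound: you form the nonnegative convex function $g=f-f'(1)(\cdot-1)$, use continuity of $f''$ at $1$ to get $g(t)\ge\tfrac12(f''(1)-\eps)(t-1)^2$ on a $\delta$-neighborhood of $1$, extend it to all $t>0$ via the indicator (legitimate because $g\ge0$), and then evaluate the truncated second moment directly: $n\,\EE[(\bar Z_n-1)^2]=\tau_x^2$ exactly, while the tail indicator is killed by a fourth-moment Markov bound giving $O(n^{-1})$. This avoids the CLT and extended CMT entirely and also makes the preliminary ``WLOG there is a subsequence with finite expectation'' step in the paper unnecessary, since with $g\ge0$ Fatou applies unconditionally. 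Your proof is thus self-contained at the level of elementary moment bounds; the paper's proof fits more naturally with the Taylor-remainder machinery deployed throughout the rest of the manuscript and identifies the pointwise limit with $f''(1)\,\EE[G^2_{\mu,\sigma}(x)]$, which is slightly more informative than the $\liminf$ bound you establish, though neither extra feature is needed for the stated inequality.
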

The proof of Lemma \ref{lem:fdivnullonesamplb} is given in Section \ref{lem:fdivnullonesamplb-proof}, and is based on Taylor's theorem.

\begin{remark}[Simplified null limit distribution]\label{Rem:variance-alt}
The RHS of \eqref{eq:KL-GS-onesamp-null} and \eqref{eq:KL-GS-twosamp-null}  admit a representation as a weighted sum of i.i.d. $\chi^2$ random variables (with 1 degree of freedom). This follows since centered $L^2(\RR^d)$-valued Gaussian random variables admit a representation of the form
$\sum_{i \in \NN} W_i b_i$, where $(W_i)_{n \in \NN}$ is an i.i.d. sequence of standard Gaussian (real) random variables, and $(b_i)_{i \in \NN}$ is an orthonormal basis of the  reproducing kernel Hilbert space (see \cite[Theorem 4.3]{VanderVaart-2008}) associated with the $L^2(\RR^d)$-valued Gaussian variable. Hence, the RHS of \eqref{eq:KL-GS-onesamp-null} and \eqref{eq:KL-GS-twosamp-null}  can be represented as  $\sum_{i \in \NN}\norm{b_i}_{2}^2W_i^2/2$, from which the claim follows. The same conclusion also applies to the other divergences, except for  TV distance.   
\end{remark}
\begin{remark}[Unequal sample sizes]
While we consider equal number of samples from both $\mu$ and $\nu$ for simplicity, the results readily extend to the mismatched scenario. Suppose $n$ and $m$ denote the number of samples from $\mu$ and $\nu$, respectively, such that $m/(n+m)\rightarrow \tau$ for some $\tau \in (0,1)$ as $m,n \rightarrow \infty$. By minor modifications to the proof of \eqref{eq:KL-GS-onesamp-alt} and \eqref{eq:KL-GS-twosamp-alt}, one may verify that under the same respective conditions, we have
\begin{align}
  \frac{nm}{n+m} \kl{\hat \mu_n*\gamma_\sigma}{\hat \nu_m*\gamma_\sigma} \trightarrow{d}   \frac 12 \int_{\RR^d} \frac{\big(\sqrt{\tau} G_{\mu,\sigma}(x)-\sqrt{1-\tau}\tilde G_{\mu,\sigma}(x)\big)^2}{\mu*\varphi_{\sigma}(x)}d x, \label{eq:limdistnullmism}
\end{align}
and 
\begin{align}
   & \left(\frac{nm}{n+m}\right)^{\frac 12}\big(\kl{\hat \mu_n*\gamma_\sigma}{\hat \nu_m*\gamma_{\sigma}}-\kl{\mu*\gamma_{\sigma}}{\nu*\gamma_{\sigma}}\big) \trightarrow{d} N\big(0,  \bar v_{2,f_{\mathsf{KL}}}^2(\tau,\mu,\nu,\sigma)\big), \notag
\end{align}
where $ \bar v_{2,f_{\mathsf{KL}}}^2(\tau,\mu,\nu,\sigma):=\sum_{1 \leq i,j \leq 2} \int_{\RR^d}\int_{\RR^d} c_{i,j}(\tau) \Sigma^{(i,j)}_{\mu,\nu,\sigma}(x,y) L_{i,f_{\mathsf{KL}}}(x)L_{j,f_{\mathsf{KL}}}(y) d x\, d y$, with $c_{1,1}(\tau)$ $=\tau$, $c_{1,2}(\tau)=c_{2,1}(\tau)=\sqrt{\tau(1-\tau)}$, and $c_{2,2}(\tau)=1-\tau$. 
Similar claims hold for other $f$-divergences. Also, note that Remark~\ref{Rem:variance-alt} applies to~\eqref{eq:limdistnullmism} as $\big(\sqrt{\tau} G_{\mu,\sigma}-\sqrt{1-\tau}\tilde G_{\mu,\sigma}\big)/\sqrt{\mu*\varphi_{\sigma}}$ is an $L^2(\RR^d)$-valued Gaussian random variable.
\end{remark}

 \subsubsection{Bootstrap consistency}
The limit distribution from Theorem \ref{Prop:GS-KL-limdist} are non-pivotal in the sense that they depend on the population distributions $\mu$ and $\nu$, which are unknown in practice. To circumvent this difficulty and enable inference (e.g., construction of confidence intervals) we apply the bootstrap, which is a computationally tractable resampling method for estimating distributional limits. This section establishes consistency of the bootstrap for the Gaussian-smoothed KL divergence. 
\medskip

Given the data $(X_1,Y_1),\ldots,(X_n,Y_n)$, let $(X_1^B,\ldots,X_n^B) \sim \hat{\mu}_n^{\otimes n}$ and $(Y_1^B,\ldots,Y_n^B)\sim\hat{\nu}_n^{\otimes n}$ denote bootstrap samples drawn independently from  empirical distributions $\hat{\mu}_n$ and $\hat{\nu}_n$, respectively. Set $\hat{\mu}_n^B = n^{-1}\sum_{i=1}^n \delta_{X_i^B}$ and $\hat{\nu}_n^B = n^{-1}\sum_{i=1}^n \delta_{Y_i^B}$ as the bootstrap empirical distributions. Denoting by $\PP_{B}$ the conditional probability given $(X_1,Y_1),\ldots,(X_n,$ $Y_n)$, we have the following bootstrap consistency claim for the Gaussian-smoothed KL divergence limit distribution from Proposition  \ref{Prop:GS-KL-limdist}.

\begin{cor}[Bootstrap consistency for KL divergence limit distribution] \label{Cor:GS-KL-bs} Consider the setting of Proposition \ref{Prop:GS-KL-limdist}, and let $v_{1,f_{\mathsf{KL}}}^2(\mu,\nu,\sigma)$ and $v_{2,f_{\mathsf{KL}}}^2(\mu,\nu,\sigma)$ be as given in  \eqref{eq:var-smoothedKL1samp} and~\eqref{eq:var-smoothedKL2samp}, respectively. Then, 
the following hold:
\begin{enumerate}[(i)]
 
\item  (One-sample null and alternative)  Under the conditions of Proposition \ref{Prop:GS-KL-limdist}$(ii)$,  
\begin{align}
\sup_{t \in \RR}\abs{\PP_{B}\left(n^{\frac 12}\left(\mathsf{D}_{\mathsf{KL}}\big(\hat \mu_n^B*\gamma_\sigma \|\nu*\gamma_{\sigma}\big)-\kl{\hat \mu_n*\gamma_{\sigma}}{\nu*\gamma_{\sigma}} \right)\leq t\right)-\PP(W_1 \leq t)} =o_{\PP}(1),   \notag
\end{align}
where  $W_1 \sim N\big(0,v_{1,f_{\mathsf{KL}}}^2(\mu,\nu,\sigma)\big)$.
\medskip
 \item (Two-sample null and alternative) If  $\mu,\nu$ have compact supports, then 
 \begin{align}
     \sup_{t \in \RR}\abs{\PP_{B}\left(n^{\frac 12}\left(\mathsf{D}_{\mathsf{KL}}\big(\hat \mu_n^B*\gamma_\sigma \|\hat \nu_n^B*\gamma_{\sigma}\big)-\kl{\hat \mu_n*\gamma_{\sigma}}{\nu_n*\gamma_{\sigma}}\right) \leq t\right)-\PP(W_2 \leq t)}=o_{\PP}(1), \notag
 \end{align}
where  $ W_2 \sim N\big(0,$ $ v_{2,f_{\mathsf{KL}}}^2(\mu,\nu,\sigma)\big)$.
\end{enumerate}
\end{cor}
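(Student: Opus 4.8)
The plan is to deduce both claims from the functional delta method for the bootstrap, the key point being that---unlike the second-order null regime behind \eqref{eq:KL-GS-onesamp-null} and \eqref{eq:KL-GS-twosamp-null}---under the hypotheses of Proposition~\ref{Prop:GS-KL-limdist}$(ii)$ and $(iv)$ the operative object is the \emph{first}-order Hadamard derivative of the KL functional at the smoothed population pair, which is the \emph{linear} bounded map $\Phi'_{\theta^\star}$ of \eqref{eq:Hadmardfirstderiv} (instantiated with $\phi=f_{\mathsf{KL}}$, $g_1^\star$ the density of $\mu*\gamma_\sigma$ w.r.t.\ $\nu*\gamma_\sigma$, and $g_2^\star\equiv 1$; boundedness on the pertinent $L^2(\eta)$ space follows from Cauchy--Schwarz and the integrability assumptions in force). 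Linearity of the derivative is exactly the condition that legitimizes the naive (empirical) bootstrap here. So the proof reduces to: (a) a conditional bootstrap CLT for the smoothed empirical densities in $L^2(\eta)$, with the same Gaussian limit as the CLT used to prove Proposition~\ref{Prop:GS-KL-limdist}; (b) the bootstrap delta method; and (c) converting weak convergence of conditional laws into convergence in the Kolmogorov metric.

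For step (a), in the one-sample case write
\[
\sqrt{n}\big(\hat\mu_n^B*\varphi_\sigma-\hat\mu_n*\varphi_\sigma\big)=n^{-1/2}\sum_{i=1}^n\big(\xi_i^B-\bar\xi_n\big),\qquad \xi_i^B:=\varphi_\sigma(\cdot-X_i^B),\quad \bar\xi_n:=n^{-1}\sum_{j=1}^n\varphi_\sigma(\cdot-X_j),
\]
where, conditionally on the data, $\xi_1^B,\dots,\xi_n^B$ are i.i.d.\ copies of $\varphi_\sigma(\cdot-X)$ with $X\sim\hat\mu_n$, viewed as elements of the separable Hilbert space $L^2(\eta)$, $\eta$ as in Proposition~\ref{Prop:GS-KL-limdist}$(ii)$. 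The hypotheses guarantee $\EE_\mu\|\varphi_\sigma(\cdot-X)\|_{L^2(\eta)}^2<\infty$ (this second-moment bound is equivalent to finiteness of an integral of the form \eqref{ASSUM:KL_null}/\eqref{ASSUM:KL_alt}), so the empirical-bootstrap CLT for i.i.d.\ elements of a separable Hilbert space with finite second moment (the bootstrap counterpart of \cite[Proposition~2.1.11]{AVDV-book}, cf.\ Theorem~\ref{Thm:CLT-in-Lp}) applies: conditionally on the data, $n^{-1/2}\sum_{i}(\xi_i^B-\bar\xi_n)$ converges weakly, almost surely (hence in probability), to the centered Gaussian element of $L^2(\eta)$ whose covariance operator is the a.s.\ limit (by the SLLN) of the conditional covariance $n^{-1}\sum_i(\xi_i-\bar\xi_n)\otimes(\xi_i-\bar\xi_n)$, namely $\mathrm{Cov}_\mu\big(\varphi_\sigma(\cdot-X)\big)$---the covariance of $G_{\mu,\sigma}$. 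Thus the conditional limit is a version of $G_{\mu,\sigma}$, matching the CLT behind Proposition~\ref{Prop:GS-KL-limdist}$(ii)$. In the two-sample case one runs the same argument jointly on the $X$- and $Y$-resamples; the conditional covariance of the stacked pair converges by the SLLN to $\Sigma_{\mu,\nu,\sigma}$, so the conditional limit is $(G_{\mu,\sigma},G_{\nu,\sigma})$, resp.\ $(G_{\mu,\sigma},\tilde G_{\mu,\sigma})$ under the null.

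For steps (b)--(c): combining the conditional weak convergence from (a), the unconditional weak convergence of $\sqrt n(p_{\hat\mu_n*\gamma_\sigma}-p_{\mu*\gamma_\sigma})$ (resp.\ of the pair) established in the proof of Proposition~\ref{Prop:GS-KL-limdist}, and the linearity and continuity of $\Phi'_{\theta^\star}$, the functional delta method for the bootstrap (e.g.\ \cite[Theorem~23.9]{AVDV-book}, or a bootstrap version of Lemma~\ref{Lem:extfuncdelta}) gives, conditionally on the data and in probability,
\[
\sqrt{n}\Big(\mathsf{D}_{\mathsf{KL}}\big(\hat\mu_n^B*\gamma_\sigma\,\big\|\,\nu*\gamma_\sigma\big)-\kl{\hat\mu_n*\gamma_\sigma}{\nu*\gamma_\sigma}\Big)\ \trightarrow{w}\ \Phi'_{\theta^\star}(G_{\mu,\sigma}),
\]
and the analogue in the two-sample case. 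By the same computation already carried out for Proposition~\ref{Prop:GS-KL-limdist}$(ii)$ and $(iv)$, $\Phi'_{\theta^\star}(G_{\mu,\sigma})$ is a linear image of a Gaussian process, hence equals $W_1\sim N\big(0,v_{1,f_{\mathsf{KL}}}^2(\mu,\nu,\sigma)\big)$ (resp.\ $W_2\sim N\big(0,v_{2,f_{\mathsf{KL}}}^2(\mu,\nu,\sigma)\big)$). Since the limit law is Gaussian with continuous distribution function, P\'olya's theorem in its conditional form---weak convergence of conditional laws to a continuous limit forces $\sup_{t\in\RR}|\PP_B(T_n\le t)-F(t)|=o_\PP(1)$ for the statistic $T_n$ at hand---upgrades this to the stated bound. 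In the null subcase ($\mu=\nu$) the variance vanishes ($v_{1,f_{\mathsf{KL}}}^2=v_{2,f_{\mathsf{KL}}}^2=0$, since $L_{1,f_{\mathsf{KL}}}\equiv1$ and $L_{2,f_{\mathsf{KL}}}\equiv-1$ there) and both KL terms are $O_\PP(n^{-1})$, conditionally and unconditionally, by the $n$-rate null limits \eqref{eq:KL-GS-onesamp-null}/\eqref{eq:KL-GS-twosamp-null}, so the claim degenerates to a triviality.

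The main obstacle is step (a): setting up the conditional bootstrap CLT in the \emph{weighted} space $L^2(\eta)$, i.e.\ verifying $\EE_\mu\|\varphi_\sigma(\cdot-X)\|_{L^2(\eta)}^2<\infty$ under the primitive conditions of Proposition~\ref{Prop:GS-KL-limdist} and controlling the conditional covariance so that it converges in operator norm to the Gaussian covariance, and---in the two-sample case---ensuring the joint resampling reproduces the cross-covariance $\Sigma^{(1,2)}_{\mu,\nu,\sigma}$. Given the linearity of the Hadamard derivative, the remaining steps are routine bookkeeping with the delta method.
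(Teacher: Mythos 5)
Your approach mirrors the paper's almost exactly: reduce the claim to the bootstrap functional delta method by using that $\Phi'_{\theta^\star}$ is a linear, continuous functional on $L^2(\eta)$; establish the conditional bootstrap CLT for the smoothed empirical densities in $L^2(\eta)$; and then pass from conditional weak convergence to the Kolmogorov-metric statement via continuity of the Gaussian limit CDF. Where the paper simply cites Gin\'e and Zinn (1990, Remark~2.5) for the bootstrap CLT in separable Banach spaces, you unpack the conditional-covariance-convergence argument by hand, which is a valid and more self-contained route to the same intermediate fact. The final upgrade via P\'olya's theorem is equivalent to the paper's Dudley-metric argument.

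There is, however, a genuine gap in your step~(b). The bootstrap delta method you want to invoke is Theorem~23.9 of van der Vaart's \emph{Asymptotic Statistics} (which you miscite as \cite{AVDV-book}; the correct reference is \cite{Va1998}), and that theorem requires Hadamard differentiability tangentially to a \emph{linear subspace} of $\mathfrak{D}$ on which the Gaussian limit lives, with a linear continuous derivative there. But the differentiability established via Proposition~\ref{Prop:Hadamarddiff-gen} is only tangential to the tangent \emph{cone} $\mathfrak{T}_\Theta(\theta^\star)$, which is convex but not a priori a subspace. Saying ``linearity of the derivative legitimizes the bootstrap'' elides exactly the hypothesis that needs checking. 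The paper fills this in: it notes that $\big(n^{1/2}(p_{\hat\mu_n*\gamma_\sigma}-p_{\mu*\gamma_\sigma}),0\big)$ takes values in $\mathfrak{T}_\Theta(\theta^\star)$, applies the Portmanteau theorem to deduce that $\supp\big(G_{\mu,\sigma}/\nu*\varphi_\sigma\big)\subseteq\mathfrak{T}_\Theta(\theta^\star)$, and then invokes Lemma~5.1 of van der Vaart (2008) on Gaussian measures in separable Banach spaces to conclude that the (closed convex) tangent cone must contain the reproducing kernel Hilbert space of the limit---a genuine linear subspace. Your proof needs this intermediate argument before Theorem~23.9 can be applied. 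Your parenthetical about the null subcase (degenerate $W_1$) is a fair observation but the paper leaves the same point implicit, so this is not a gap relative to the paper's own treatment.
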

The proof follows from  \cite[Theorem 23.9]{Va1998} and the linearity of the first order Hadamard derivative of the KL divergence functional (see Proposition \ref{Prop:Hadamarddiff-gen}), once we verify that the tangent cone (domain of the Hadamard derivative) contains a non-trivial linear subspace. The details are provided in Section \ref{Sec:bsconsist-proof}. Bootstrap consistency results for other $f$-divergences (except TV distance) may be derived in an analogous manner, but we skip the formal statements below for brevity.

\section{Application to Auditing Differential Privacy} \label{Sec:appkldiffprivacy}
We consider the application of our limit distribution theory to auditing DP, which was introduced in \cite{DMKS-2006} as an approach for quantifying privacy leakage of privatization mechanisms. We recall some DP notions that are relevant to our setting. Consider a set~$\mathfrak{U}$ with a relation~$\sim$ such that $u \sim v$, for $u,v \in \mathfrak{U}$, denotes that $u$ and $v$ are adjacent. In the DP context, $\mathfrak{U}$ is a set of databases and $u\sim v$ denotes that $u$ and $v$ are adjacent databases, differing on a single entry. 

\begin{definition}[DP mechanisms\cite{DMKS-2006,DKMMN-2006,WLF-2016}]
Let $\epsilon, \delta \geq 0$. A randomized (measurable) mechanism  $f:\mathfrak{U} \rightarrow \RR^d$ is 
\begin{enumerate}[(i)]
    \item $\epsilon$-differentially private if  $\PP\left(f(u) \in \cT\right) \leq e^{\epsilon}\,\PP\left(f(v) \in \cT\right)$ for every $u \sim v$ and $\cT \in \cB(\RR^d)$;
   \item $(\epsilon,\delta)$-differentially private if $\PP\left(f(u) \in \cT\right) \leq e^{\epsilon}\,\PP\left(f(v) \in \cT\right)+\delta$  for every $u \sim v$ and  $\cT \in \cB(\RR^d)$;
   \item $\epsilon$-KL differentially private if $\kl{\mu_{u}}{\mu_{v}} \leq \epsilon$ for every $u \sim v$,  where $\mu_{u} \in \cP(\RR^d)$ is the distribution of $f(u)$.
\end{enumerate}
\end{definition}
In addition, we say that a privacy mechanism is $\epsilon$-smoothed KL differentially private~if $\kl{\mu_u*\gamma_{\sigma}}{\mu_v*\gamma_{\sigma}} \leq \epsilon$ for every $u \sim v$,  where $\sigma>0$ is a pre-specified parameter.

\medskip
Standard noise-injection mechanisms for DP operate as follows. Consider $g:\mathfrak{U} \rightarrow \RR^d$, a (deterministic) query function to be published about the database and define its  $\ell^r$-sensitivity as $\Delta_r(g):=\sup_{u,v \in \mathfrak{U}:\, u \sim v} \|g(u)-$ $g(v)\|_r$. A noise-injection DP mechanism is given by $f(\cdot)=g(\cdot)+W$, where $W$ is an additive noise random variable. For instance, the Laplace mechanism\cite{Dwork-Roth-2014} takes $W \sim \mathrm{Lap}(0,b)^{\otimes d}$ (i.e., whose Lebesgue density is $\propto e^{-\|\,\cdot\,\|_1/b}$) with $b \geq \Delta_1(g)/\epsilon$, which guarantees $\epsilon$-DP. Similarly, the Gaussian mechanism sets $W \sim \gamma_\sigma$ with $\sigma\geq \Delta_2(g)\sqrt{2\log (1.25/\delta)}/\epsilon$, which guarantees $(\epsilon,\delta)$-DP. While the above DP mechanisms add unbounded noise, a privacy mechanism that adds bounded noise in an adaptive query setting and ensures $(\epsilon,\delta)$-DP (asymptotically) for any $\delta>0$  is proposed in \cite{Dagan-Kur-2022}. As $\epsilon$-DP is equivalent to $\sup_{u\sim v}\mathsf{D}_\infty(\mu_u\|\mu_v)\leq \epsilon$, where $\mathsf{D}_\infty$ is the $\infty$-order Renyi divergence, it is clear that KL DP is a relaxation of DP.\footnote{In particular,  $\epsilon$-DP implies $\epsilon\big(1 \wedge (e^{\epsilon}-1)\big)$-KL DP by \cite[Lemma 3.18]{Dwork-Roth-2014}.} By the data processing inequality, we further have that smoothed KL DP is a relaxation of KL DP.

In practice, given output samples from a privacy mechanism, one encounters the problem of ascertaining whether the mechanism is differentially private or not, referred to as auditing DP. In \cite{domingo2022auditing}, a hypothesis test for auditing DP using regularized kernel R\'{e}nyi divergence is proposed, where the null hypothesis is that the mechanism satisfies $(\epsilon,\delta)$-DP. The authors propose a decision rule achieving any non-zero significance level (type I error probability), leaving the characterization of the power (equivalently, type II error probability) open. Here, utilizing the limit distribution theory from Section \ref{Sec:GS limit theorems}, we put forth a principled hypothesis testing pipeline for DP auditing using the Gaussian-smoothed and classic KL divergence as the privacy measures of interest. Our analysis accounts for both significance and power of the test. We start from the smoothed KL DP test.

\subsection{Smoothed KL DP test} The main objective of a privacy audit is to identify violations. For that reason, we set up an hypothesis test where the null $H_0$ corresponds to when privacy holds, and consider a sequence of local alternatives $H_{1,n}$ that become harder to distinguish from $H_0$ as $n$ grows. This models a situation where the alternative hypothesis is arbitrarily close to the null, and we seek a powerful test that successfully rejects the null, even under these local alternatives. To define the local alternatives, we consider a sequence of privacy mechanisms that violate $\epsilon$-smoothed KL DP by an $O(n^{-1/2})$ amount. 

Fix $\sigma,\epsilon,b,C>0$ and, for $n\in\NN_0$, let $f_n:\mathfrak{U}\to\cI_b:=[-b,b]^d$ be a sequence of 
privacy mechanisms. Denote a pair adjacent databases\footnote{The results  in this section do not depend on the database distribution $\tilde \pi$ per say. Also, note that the current model subsumes the case of deterministic $(U,V)$ by taking $\tilde \pi$ to be a point mass on a pair of adjacent databases. In this case, the randomness in $\big(f_n(U),f_n(V))$ only comes from the mechanism.} by $(U,V) \sim \tilde \pi \in \cP(\mathfrak{U} \times \mathfrak{U})$.  
Let $\pi_n:=(f_n,f_n)_\#\tilde \pi$ be the joint distribution of $\big(f_n(U),f_n(V)\big)$, where $\#$ is the pushforward operation. 
The first and second marginals of $\pi_n$ are denoted by $\mu_n$ and $\nu_n$, respectively. We impose the following assumption on the sequence $(\pi_n)_{n\in\NN_0}$.

\begin{assump} \label{Assump-localalt}
The sequence $(\pi_n)_{n \in \NN_0}$ is such that
\begin{enumerate}[(i)]
    \item there exists $0\neq h\in L^2(\pi_0)$ with $n \helsq{\pi_n}{\pi_0} \rightarrow \norm{h/2}^2_{2,\pi_0}$, $\int_{\RR^d \times \RR^d}h\, d\pi_0=0$, and 
    \begin{align}
   & \left( n^{1/2}\left(\frac{\mu_n*\varphi_{\sigma}-\mu_0*\varphi_{\sigma}}{\nu_0*\varphi_{\sigma}}\right), n^{1/2}\left(\frac{\nu_n*\varphi_{\sigma}}{\nu_0*\varphi_{\sigma}}-1\right)\right) \notag \\
    &\qquad \qquad \qquad\longrightarrow   \left(\frac{\EE_{\pi_0}\left[h(X,Y)\varphi_{\sigma}(\cdot-X)\right]}{\nu_0*\varphi_{\sigma}},\frac{\EE_{\pi_0}\left[h(X,Y)\varphi_{\sigma}(\cdot-Y)\right]}{\nu_0*\varphi_{\sigma}}\right) \mbox{ in }L^{\infty}(\lambda) \times L^{\infty}(\lambda).\label{eq:convmarglocalt}
    \end{align}
    \item $\kl{\mu_0*\gamma_\sigma}{\nu_0*\gamma_\sigma}\leq \epsilon$ and $\kl{\mu_n*\gamma_\sigma}{\nu_n*\gamma_\sigma} \geq \epsilon_{n,C}:=\epsilon +Cn^{-1/2}$ for all $n$ sufficiently large.
\end{enumerate}
\end{assump}
Observe that Assumption \ref{Assump-localalt}$(ii)$ implies that $f_0$ satisfies $\epsilon$-smoothed KL DP while $f_n$  violates it for all  $n$ sufficiently large. On the other hand, Assumption \ref{Assump-localalt}$(i)$ is a technical requirement which ensures that Gaussian-smoothed KL divergence limit theorems relevant for our purpose continue to hold in the local alternatives setting in which $\pi_n$ changes with $n$. Proposition \ref{prop:pertconstr} below presents an explicit construction of $(\pi_n)_{n \in \NN_0}$ that satisfies Assumption \ref{Assump-localalt} for any $\sigma,\epsilon,b,C>0$. 
For now, under this assumption, consider the following binary hypothesis test with a sequence of alternatives:
\begin{align}\label{HT-smoothed-KL-DP}
\begin{split}
    H_0&: \kl{\mu_0*\gamma_{\sigma}}{\nu_0*\gamma_{\sigma}} \leq \epsilon, \\
    H_{1,n}&: \kl{\mu_n*\gamma_{\sigma}}{\nu_n*\gamma_{\sigma}}  \geq  \epsilon_{n,C}.
    \end{split}
\end{align}

Let $(X_1,Y_1),\ldots,(X_n,Y_n) \sim \pi $ be pairwise i.i.d. samples of the privacy mechanism's output when acting on i.i.d.
pairs of adjacent databases, where  $ \pi=\pi_0$ under $H_0$ and $\pi=\pi_{n}$ under $H_{1,n}$. Denote the empirical measures of $(X_1,\ldots,X_n)$ and $(Y_1,\ldots,Y_n)$ by $\hat \mu_{n}$ and $\hat \nu_{n}$, respectively. 
For a test statistic $T_n=T_n(X_1,\ldots,X_n,Y_1,\ldots,$ $Y_n)$, a standard class of tests rejects $H_0$ if $T_n>t_n$, where $t_n$ is a critical value chosen according to the desired level $\tau\in(0,1)$. The operational meaning of rejecting $H_0$ is declaring that $\epsilon$-smoothed KL DP is violated, and hence, also $\epsilon$-DP itself. We say that such a sequence has \textit{asymptotic level $\tau$} if $\limsup_{n\to\infty}\PP(T_n>t_n|H_0)\leq \tau$. The \textit{power} of a test is the probability that it correctly rejects $H_0$, i.e., $\PP(T_n> t_n|H_{1,n})$, and the \textit{asymptotic power} is $\liminf_{n\to\infty}\PP(T_n> t_n|H_{1,n})$. Lastly, the sequence of tests is called \textit{asymptotically consistent} if its asymptotic power is 1. 
The above definitions specialize to the case of a fixed alternative $H_1$ by taking $H_{1,n}=H_1$ and $\pi_n=\pi_1$ for all $n \in \NN$.
 
\medskip
For $\tau \in [0,1]$, let $Q^{-1}(\tau)=\inf\big\{z\in \RR:(2 \pi)^{-1/2} \int_{z}^{\infty} e^{-u^2/2}du \leq \tau\big\}$ be the inverse complimentary cumulative distribution function of $N(0,1)$.
The following proposition provides a test statistic for the above hypothesis test and characterizes its asymptotic level and asymptotic power against local alternatives.   

\begin{prop}[Smoothed KL DP audit]\label{prop:HTperf}
Suppose Assumption \ref{Assump-localalt} holds. For $0<\tau,\tau' \leq 1$, let $C_{b,d,\sigma,\tau,\tau'}=c_{b,d,\sigma}\big(Q^{-1}(\tau)-Q^{-1}(1-\tau')\big)$, where $c_{b,d,\sigma}$ is given in \eqref{eq:threshold-HT}. Then the test statistic $T_n=\kl{\hat \mu_{n}*\gamma_{\sigma}}{\hat \nu_{n}*\gamma_{\sigma}}$ with critical value $t_n= \epsilon+c_{b,d,\sigma}Q^{-1}(\tau) n^{-1/2}$
achieves an asymptotic level $\tau$ and asymptotic power at least $1-\tau'$ for the test in \eqref{HT-smoothed-KL-DP}, whenever $C>C_{b,d,\sigma,\tau,\tau'} \vee 0$.
\end{prop}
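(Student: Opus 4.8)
The plan is to analyze the level and the power of the proposed test separately, in both cases reducing everything to the Gaussian limit theorems for the Gaussian-smoothed KL divergence established in Proposition \ref{Prop:GS-KL-limdist} together with the local-alternative bookkeeping guaranteed by Assumption \ref{Assump-localalt}. Throughout, write $D_\sigma(\mu,\nu):=\kl{\mu*\gamma_\sigma}{\nu*\gamma_\sigma}$, so that $T_n=D_\sigma(\hat\mu_n,\hat\nu_n)$ and the population value under $\pi_0$ is $D_\sigma(\mu_0,\nu_0)\le\epsilon$, while under $\pi_n$ it is $D_\sigma(\mu_n,\nu_n)\ge\epsilon_{n,C}=\epsilon+Cn^{-1/2}$. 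The key quantitative input is the asymptotic standard deviation of $\sqrt n\,T_n$ under the alternative-type regime, which the statement encodes through the constant $c_{b,d,\sigma}$ in \eqref{eq:threshold-HT}: this is the supremum of $v_{2,f_{\mathsf{KL}}}(\mu,\nu,\sigma)$ (the two-sample alternative asymptotic standard deviation) over all $\mu,\nu$ supported in $\cI_b=[-b,b]^d$, and crucially it is finite because the covariance kernel $\Sigma_{\mu,\nu,\sigma}$ and the score functions $L_{i,f_{\mathsf{KL}}}$ are uniformly bounded once the supports are confined to a fixed compact set.

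\textbf{Level.} Under $H_0$ the data are i.i.d.\ from $\pi_0$. If $\mu_0\ne\nu_0$, Proposition \ref{Prop:GS-KL-limdist}$(iv)$ applies (the compact-support hypothesis holds since $f_0$ maps into $\cI_b$), giving $\sqrt n\,(T_n-D_\sigma(\mu_0,\nu_0))\trightarrow{d} N(0,v_{2,f_{\mathsf{KL}}}^2(\mu_0,\nu_0,\sigma))$ with $v_{2,f_{\mathsf{KL}}}(\mu_0,\nu_0,\sigma)\le c_{b,d,\sigma}$. Then
\begin{align*}
\PP(T_n>t_n\mid H_0)
&=\PP\!\left(\sqrt n\,(T_n-D_\sigma(\mu_0,\nu_0))>\sqrt n\,(\epsilon-D_\sigma(\mu_0,\nu_0))+c_{b,d,\sigma}Q^{-1}(\tau)\right)\\
&\le \PP\!\left(\sqrt n\,(T_n-D_\sigma(\mu_0,\nu_0))>c_{b,d,\sigma}Q^{-1}(\tau)\right)
\longrightarrow \PP\!\left(N(0,v_{2,f_{\mathsf{KL}}}^2)>c_{b,d,\sigma}Q^{-1}(\tau)\right)\le\tau,
\end{align*}
where the first inequality uses $D_\sigma(\mu_0,\nu_0)\le\epsilon$ and the last uses monotonicity of the Gaussian tail together with $v_{2,f_{\mathsf{KL}}}\le c_{b,d,\sigma}$ and the definition of $Q^{-1}$. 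The boundary case $\mu_0=\nu_0$ (the null of the null) must be handled by the two-sample \emph{null} limit Proposition \ref{Prop:GS-KL-limdist}$(iii)$: then $nT_n$ converges to a nonnegative weighted $\chi^2$ variable, so $\sqrt n\,T_n\to 0$ in probability and $\PP(T_n>t_n\mid H_0)\to 0\le\tau$ since $t_n\to\epsilon>0$ wait—here $t_n-\epsilon=c_{b,d,\sigma}Q^{-1}(\tau)n^{-1/2}\to 0$ but $\sqrt n\,t_n\to c_{b,d,\sigma}Q^{-1}(\tau)\ge 0$; since $\sqrt n\,T_n\to 0$ in probability this still forces the rejection probability to $0$ when $Q^{-1}(\tau)>0$, and to a value $\le\tau$ in general by the same tail comparison. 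I would state the argument uniformly by invoking whichever of parts $(iii)$/$(iv)$ applies and noting both give a rejection probability $\le\tau$ in the limit.

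\textbf{Power.} Under $H_{1,n}$ the data are i.i.d.\ from $\pi_n$, which drifts with $n$; this is exactly what Assumption \ref{Assump-localalt}$(i)$ is designed to control. The plan is to re-run the two-sample alternative argument underlying Proposition \ref{Prop:GS-KL-limdist}$(iv)$ with $\pi_n$ in place of a fixed $\pi$: the CLT in $L^2$ (or $L^\infty$, as in \eqref{eq:convmarglocalt}) for the smoothed empirical densities, combined with the Hadamard differentiability of $D_\sigma$ from Proposition \ref{Prop:Hadamarddiff-gen} and the functional delta method Lemma \ref{Lem:extfuncdelta}, yields
\[
\sqrt n\,\big(T_n-D_\sigma(\mu_n,\nu_n)\big)\trightarrow{d} N\!\big(0,\bar v^2\big),
\]
where $\bar v^2$ is the limiting variance computed at the base point $\pi_0$ (the drift $\pi_n\to\pi_0$ at rate $n^{-1/2}$ only affects the centering, not the variance), and in particular $\bar v\le c_{b,d,\sigma}$ since the supports stay in $\cI_b$. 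Then, using $D_\sigma(\mu_n,\nu_n)\ge\epsilon+Cn^{-1/2}$,
\begin{align*}
\PP(T_n>t_n\mid H_{1,n})
&=\PP\!\left(\sqrt n\,(T_n-D_\sigma(\mu_n,\nu_n))>\sqrt n\,(\epsilon-D_\sigma(\mu_n,\nu_n))+c_{b,d,\sigma}Q^{-1}(\tau)\right)\\
&\ge \PP\!\left(\sqrt n\,(T_n-D_\sigma(\mu_n,\nu_n))>-C+c_{b,d,\sigma}Q^{-1}(\tau)\right)
\longrightarrow \PP\!\left(N(0,\bar v^2)>c_{b,d,\sigma}Q^{-1}(\tau)-C\right).
\end{align*}
Since $\bar v\le c_{b,d,\sigma}$, this limit is at least $\PP\!\big(N(0,c_{b,d,\sigma}^2)>c_{b,d,\sigma}Q^{-1}(\tau)-C\big)=Q\!\big(Q^{-1}(\tau)-C/c_{b,d,\sigma}\big)$, which is $\ge 1-\tau'$ precisely when $Q^{-1}(\tau)-C/c_{b,d,\sigma}\le Q^{-1}(1-\tau')$, i.e.\ $C\ge c_{b,d,\sigma}(Q^{-1}(\tau)-Q^{-1}(1-\tau'))=C_{b,d,\sigma,\tau,\tau'}$; the extra "$\vee\,0$" covers the trivial case. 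Because in fact $\bar v$ may be strictly smaller than $c_{b,d,\sigma}$, the monotonicity of $z\mapsto \PP(N(0,s^2)>z)$ in $s$ for $z\ge 0$ is what lets us uniformly lower-bound the power by the worst-case variance; one should note that $c_{b,d,\sigma}Q^{-1}(\tau)-C$ may be negative, in which case the Gaussian tail is still bounded below by the same expression and the inequality chain goes through verbatim.

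\textbf{Main obstacle.} The routine parts — the two Gaussian tail comparisons and the constant-chasing — are straightforward; the real work is the power analysis under the drifting sequence $\pi_n$. One must verify that the limit-distribution machinery of Proposition \ref{Prop:GS-KL-limdist}$(iv)$ genuinely survives when the sampling distribution moves with $n$, i.e.\ that Assumption \ref{Assump-localalt}$(i)$ supplies exactly the joint weak convergence (in $L^\infty(\lambda)\times L^\infty(\lambda)$, hence in the relevant $L^2(\eta_1)\times L^2(\eta_2)$ since supports are compact) of the recentered smoothed empirical densities needed to feed the functional delta method, and that the resulting limiting variance is the one evaluated at $\pi_0$ (so that the uniform bound by $c_{b,d,\sigma}$ is legitimate). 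This is a triangular-array refinement of the proof of Proposition \ref{Prop:GS-KL-limdist}$(iv)$ rather than a new idea, but it is the step that needs care, and it is also where one must confirm that the Hadamard derivative is evaluated at the base point $\pi_0$ uniformly along the sequence. The other delicate point is the boundary case $\mu_0=\nu_0$ in the level bound, which is resolved by switching to the null limit in Proposition \ref{Prop:GS-KL-limdist}$(iii)$ and observing $\sqrt n\,T_n\to 0$ in probability there.
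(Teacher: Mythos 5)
Your level analysis matches the paper's, and the final constant-chasing for the power is essentially the same arithmetic the paper does (eq.\ \eqref{eq:t1error}--\eqref{eq:t2error}, followed by the bound $v_{2,f_{\mathsf{KL}}}(\mu_0,\nu_0,\sigma)\le c_{b,d,\sigma}$ from \eqref{eq:threshold-HT}). You also correctly identify the crux: establishing a CLT for $\sqrt n\big(T_n-\kl{\mu_n*\gamma_\sigma}{\nu_n*\gamma_\sigma}\big)$ when the sampling distribution $\pi_n$ drifts with $n$. However, your proposal for that step differs from the paper's and, as sketched, has a genuine gap.

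You propose to ``re-run'' the two-sample alternative argument of Proposition \ref{Prop:GS-KL-limdist}$(iv)$ with $\pi_n$ in place of a fixed $\pi$ — i.e.\ a triangular-array version of the $L^2$ CLT plus the functional delta method applied at a moving base point. Two issues. First, Theorem \ref{Thm:CLT-in-Lp} (the $L^r$ CLT the paper invokes) is for i.i.d.\ sequences from a \emph{fixed} distribution; a triangular-array analogue in a Banach space needs Lindeberg/uniform-integrability control and convergence of the covariance operator, none of which is supplied or cited, and is not what Assumption~\ref{Assump-localalt}$(i)$ provides. Second, Lemma \ref{Lem:extfuncdelta} differentiates $\Phi$ at a \emph{fixed} $\theta$, whereas your centering $D_\sigma(\mu_n,\nu_n)$ moves; you flag this (``the Hadamard derivative is evaluated at the base point $\pi_0$ uniformly along the sequence'') but don't explain how to reconcile it. The paper avoids both problems by a contiguity argument: it proves joint weak convergence under $\pi_0$ of the two recentered smoothed empirical densities \emph{together with} the log-likelihood ratio $\Lambda_{n,\pi_n,\pi_0}$ (Lemma \ref{lem:bickel98}), then applies Le Cam's third lemma to transfer the joint law to $\pi_n$ — this only shifts the Gaussian mean by the covariance with $h$, leaving the variance at its $\pi_0$ value. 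The Hadamard derivative is then applied at the fixed point $(\mu_0*\gamma_\sigma,\nu_0*\gamma_\sigma)$, and the deterministic convergence in \eqref{eq:convmarglocalt} is used to compute $\sqrt n\big(D_\sigma(\mu_n,\nu_n)-D_\sigma(\mu_0,\nu_0)\big)$ separately; subtracting gives \eqref{eq:localaltlimdist}. This is precisely what the Hellinger condition $n\helsq{\pi_n}{\pi_0}\to\|h/2\|^2_{2,\pi_0}$ in Assumption~\ref{Assump-localalt}$(i)$ is designed to enable — it is the contiguity hypothesis, not an ad hoc smoothness requirement — so reaching for a triangular-array CLT misses the mechanism the assumption is built to feed.

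Two smaller points. Your side remark about $\mu_0=\nu_0$ in the level bound is unnecessary: with the paper's convention $c/0=\infty$, the degenerate case $v_{2,f_{\mathsf{KL}}}(\mu_0,\nu_0,\sigma)=0$ still gives $Q(c/v)=Q(\infty)=0\le\tau$ from \eqref{eq:t1error} directly, without switching to the null limit. And your monotonicity discussion at the end of the power bound is inverted: $s\mapsto Q(z/s)$ is increasing in $s$ for $z>0$ and \emph{decreasing} for $z<0$, so the lower bound $Q(z/\bar v)\ge Q(z/c_{b,d,\sigma})$ holds precisely when $z=c_{b,d,\sigma}Q^{-1}(\tau)-C<0$, which is the regime guaranteed by $C>C_{b,d,\sigma,\tau,\tau'}\vee 0$ and $\tau+\tau'<1$; the negativity is not a side case but the case.
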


The proof of the above claim relies on the  limit distribution result for smoothed KL divergence given in \eqref{eq:KL-GS-twosamp-alt} along with its refinement to account for the local alternatives scenario, i.e., to  account for a sequence of distributions $(\mu_n*\gamma_{\sigma},\nu_n*\gamma_{\sigma})_{n \in \NN}$ instead of a fixed one. This refinement (see \eqref{eq:localaltlimdist}) is derived  under Assumption \ref{Assump-localalt} by invoking Le Cam's third lemma \cite[Theorem 3.10.7]{AVDV-book}. Given these results and the fact that the relevant limit distributions are Gaussian,  the claim in Proposition \ref{prop:HTperf} follows by an analysis of the asymptotic level and power via the Portmanteau theorem \cite[Theorem 1.3.4]{AVDV-book}. Note that the constant $C_{b,d,\sigma,\tau,\tau'}$ is positive whenever $\tau+\tau'< 1$, which is when the requirement $C>C_{b,d,\sigma,\tau,\tau'}$ is active. Operationally, $\tau+\tau'< 1$ means that the sum of type I and type II error probabilities is less than 1, which is the interesting regime for hypothesis testing; otherwise, a test based on a random coin flip is preferable.

\medskip
We conclude this part by providing an explicit construction of a sequence of couplings $(\pi_n)_{n\in\NN_0}$ that satisfies Assumption \ref{Assump-localalt}. 
\begin{prop}[Construction satisfying Assumption \ref{Assump-localalt}]\label{prop:pertconstr}
\begin{enumerate}[(i)]
    \item 
Let $\pi_0 \in \cP(\RR^d\times \RR^d)$ be such that $\mu_0 \neq \nu_0$, $\nu_0 \otimes \mu_0 \ll \pi_0$, $\norm{d(\nu_0 \otimes \mu_0)/d\pi_0}_{\infty,\pi_0} 
<\infty$ and $\norm{h_{\pi_0,\bar c}}_{2,\pi_0}<\infty$, where
\begin{align}
   h_{\pi_0,\bar c}:=\bar c\left(\frac{d (\mu_0 \otimes \nu_0)}{d \pi_0}-\frac{d (\nu_0 \otimes \mu_0)}{d \pi_0}\right), \label{eq:pertjntkl}
\end{align} 
and $\bar c>0$ is an arbitrary constant. Let $\pi_n \ll \pi_0$ be the probability measure specified by the relative density
\begin{align}
\frac{d\pi_n}{d\pi_0}=1+n^{-\frac12}h_{\pi_0,\bar c},\label{eq:pertconst}
\end{align}
whenever the RHS is non-negative $\pi_0$-a.s.; otherwise, set $\pi_n=\pi_0$. Then, $\pi_n$ satisfies Assumption \ref{Assump-localalt}$(i)$ with $h=h_{\pi_0,\bar c}$.
\item Let $\pi_0 \in \cP(\cI_b \times \cI_b)$ and $\sigma$ be such that $\mu_0\neq\nu_0$, $\nu_0  \otimes  \mu_0  \ll  \pi_0 $, $\norm{d(\nu_0 \otimes \mu_0)/d\pi_0}_{\infty,\pi_0} $ $\vee \norm{d(\mu_0 \otimes \nu_0)/d\pi_0}_{2,\pi_0} $ $
<\infty$ and $ \kl{\mu_0*\gamma_{\sigma}}{\nu_0*\gamma_{\sigma}}=\epsilon$. 
Then, there exists a sufficiently large $\bar c$, such that $\pi_n$ as  defined in \eqref{eq:pertconst} satisfies Assumption \ref{Assump-localalt} with $h=h_{\pi_0,\bar c}$ for any $C>0$. 
\end{enumerate}
\end{prop}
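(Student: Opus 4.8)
The plan is to build the sequence $(\pi_n)_{n\in\NN_0}$ exactly as in \eqref{eq:pertconst} and verify the two parts in turn. For Part $(i)$, I would first record two elementary facts about $h_{\pi_0,\bar c}$: integrating \eqref{eq:pertjntkl} against $\pi_0$ gives $\int h_{\pi_0,\bar c}\,d\pi_0=\bar c\big(\int d(\mu_0\otimes\nu_0)-\int d(\nu_0\otimes\mu_0)\big)=\bar c(1-1)=0$, and $h_{\pi_0,\bar c}$ is not $\pi_0$-a.s.\ zero because $\mu_0\neq\nu_0$ forces $\mu_0\otimes\nu_0\neq\nu_0\otimes\mu_0$ (their first marginals differ), so the two Radon--Nikodym derivatives in \eqref{eq:pertjntkl} disagree on a set of positive $\pi_0$-measure. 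Since $h_{\pi_0,\bar c}\geq-\bar c\norm{d(\nu_0\otimes\mu_0)/d\pi_0}_{\infty,\pi_0}$ $\pi_0$-a.s., the right side of \eqref{eq:pertconst} is $\pi_0$-a.s.\ nonnegative once $n\geq\bar c^2\norm{d(\nu_0\otimes\mu_0)/d\pi_0}_{\infty,\pi_0}^2$, so for all such $n$ the measure $\pi_n$ is a genuine probability measure with $d\pi_n/d\pi_0=1+n^{-1/2}h_{\pi_0,\bar c}$. The Hellinger limit then follows from dominated convergence: writing $\sqrt{1+x}-1=x/(\sqrt{1+x}+1)$ gives $n\helsq{\pi_n}{\pi_0}=\int h_{\pi_0,\bar c}^2/\big(\sqrt{1+n^{-1/2}h_{\pi_0,\bar c}}+1\big)^2\,d\pi_0$, whose integrand is dominated by $h_{\pi_0,\bar c}^2\in L^1(\pi_0)$ and converges pointwise to $h_{\pi_0,\bar c}^2/4$, so the limit is $\norm{h_{\pi_0,\bar c}/2}_{2,\pi_0}^2$. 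For the marginal convergences \eqref{eq:convmarglocalt}, the change-of-measure identity yields, for $n$ large, the \emph{exact} equalities $\mu_n*\varphi_\sigma=\mu_0*\varphi_\sigma+n^{-1/2}\EE_{\pi_0}[h_{\pi_0,\bar c}(X,Y)\varphi_\sigma(\cdot-X)]$ and its analogue with $Y$ and $\nu_n$; hence both sides of \eqref{eq:convmarglocalt} coincide for all large $n$ and the convergence is trivial. Unwinding \eqref{eq:pertjntkl} and using $\mu_0\otimes\nu_0,\nu_0\otimes\mu_0\ll\pi_0$ moreover gives $\EE_{\pi_0}[h_{\pi_0,\bar c}(X,Y)\varphi_\sigma(\cdot-X)]=\bar c(\mu_0*\varphi_\sigma-\nu_0*\varphi_\sigma)$ and $\EE_{\pi_0}[h_{\pi_0,\bar c}(X,Y)\varphi_\sigma(\cdot-Y)]=-\bar c(\mu_0*\varphi_\sigma-\nu_0*\varphi_\sigma)$, which I would keep for Part $(ii)$. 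This establishes Assumption \ref{Assump-localalt}$(i)$ with $h=h_{\pi_0,\bar c}$.

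For Part $(ii)$, note first that $\norm{h_{\pi_0,\bar c}}_{2,\pi_0}<\infty$, since $\norm{d(\mu_0\otimes\nu_0)/d\pi_0}_{2,\pi_0}<\infty$ and $\norm{d(\nu_0\otimes\mu_0)/d\pi_0}_{2,\pi_0}\leq\norm{d(\nu_0\otimes\mu_0)/d\pi_0}_{\infty,\pi_0}<\infty$; hence Part $(i)$ applies and Assumption \ref{Assump-localalt}$(i)$ holds for every $\bar c>0$. As $\kl{\mu_0*\gamma_\sigma}{\nu_0*\gamma_\sigma}=\epsilon$, the remaining task is to secure $\kl{\mu_n*\gamma_\sigma}{\nu_n*\gamma_\sigma}\geq\epsilon_{n,C}$ for all large $n$, for a suitable $\bar c$. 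Since $\mu_0,\nu_0$ — and hence $\mu_n,\nu_n$, because $\pi_n\ll\pi_0$ — are supported in $\cI_b$, the Part $(i)$ identities show the Lebesgue densities $u_n:=\mu_n*\varphi_\sigma=u_0+n^{-1/2}a$ and $v_n:=\nu_n*\varphi_\sigma=v_0-n^{-1/2}a$, where $u_0:=\mu_0*\varphi_\sigma$, $v_0:=\nu_0*\varphi_\sigma$, $a:=\bar c(u_0-v_0)$. I would then consider, for $t\in[0,n^{-1/2}]$, the scalar function $F(t):=\int_{\RR^d}(u_0+ta)\log\!\big((u_0+ta)/(v_0-ta)\big)\,d\lambda$. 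Since $u_0,v_0>0$ everywhere and $u_n,v_n\geq0$ (marginals of the probability measure $\pi_n$), $u_0+ta$ and $v_0-ta$ are strictly positive on $[0,n^{-1/2}]$, so $F$ is well-defined there, $F(0)=\epsilon$, and $F(n^{-1/2})=\kl{\mu_n*\gamma_\sigma}{\nu_n*\gamma_\sigma}$, the latter being finite by the same tail estimates for Gaussian-smoothed compactly supported measures used in the proof of Proposition \ref{Prop:GS-KL-limdist} (if it were infinite the bound to be proved is immediate).

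The crux is that $(p,q)\mapsto p\log(p/q)$ is jointly convex and $(u_0+ta,v_0-ta)$ is affine in $t$, so $F$ is convex on $[0,n^{-1/2}]$; consequently $\kl{\mu_n*\gamma_\sigma}{\nu_n*\gamma_\sigma}=F(n^{-1/2})\geq F(0)+n^{-1/2}F_+'(0)$, which dispenses entirely with a Taylor-remainder estimate. Differentiating under the integral — legitimate because $\int|a|\,d\lambda<\infty$, $\int u_0|a|/v_0\,d\lambda<\infty$, and $\int|a|\,|\log(u_0/v_0)|\,d\lambda<\infty$ under the hypotheses — and using $\int a\,d\lambda=\bar c\int(u_0-v_0)\,d\lambda=0$ gives
\[
F_+'(0)=\bar c\,\kappa_0,\qquad \kappa_0:=\kl{\mu_0*\gamma_\sigma}{\nu_0*\gamma_\sigma}+\kl{\nu_0*\gamma_\sigma}{\mu_0*\gamma_\sigma}+\chisq{\mu_0*\gamma_\sigma}{\nu_0*\gamma_\sigma}.
\]
Here $\kappa_0\in(0,\infty)$: finiteness of all three divergences follows from the compact support of $\mu_0,\nu_0$ and Gaussian smoothing, and positivity from $\mu_0\neq\nu_0\Rightarrow\mu_0*\gamma_\sigma\neq\nu_0*\gamma_\sigma$ (injectivity of Gaussian convolution). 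Hence $\kl{\mu_n*\gamma_\sigma}{\nu_n*\gamma_\sigma}\geq\epsilon+n^{-1/2}\bar c\,\kappa_0$, and given any prescribed $C>0$, taking $\bar c\geq C/\kappa_0$ yields $\kl{\mu_n*\gamma_\sigma}{\nu_n*\gamma_\sigma}\geq\epsilon_{n,C}$ for all $n$ sufficiently large. Together with Part $(i)$, this establishes Assumption \ref{Assump-localalt} and completes the proof.

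The steps I expect to be most delicate are, in Part $(i)$, the bookkeeping around the ``all-or-nothing'' definition of $\pi_n$ — establishing the pointwise lower bound on $h_{\pi_0,\bar c}$ so that $1+n^{-1/2}h_{\pi_0,\bar c}\geq0$ $\pi_0$-a.s.\ for $n$ large, and recognizing that \eqref{eq:convmarglocalt} actually holds with equality — and, in Part $(ii)$, the conceptual point that the joint convexity of $F$ supplies the one-sided bound $F(t)\geq F(0)+tF_+'(0)$ \emph{for free}: a naive second-order Taylor expansion would require controlling $F''$, which scales like $\bar c^2$ and would force a $\bar c$-dependent threshold on $n$, whereas convexity makes the dependence on $\bar c$ entirely benign. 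The only other nontrivial ingredient is the finiteness and strict positivity of $\kappa_0$, which is routine given the Gaussian-smoothing tail estimates already developed for the results of Section \ref{Sec:GS limit theorems}.
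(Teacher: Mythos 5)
Your proof is correct. Part~(i) is essentially the paper's argument: you establish the nonnegativity threshold for $n$ from $\norm{d(\nu_0\otimes\mu_0)/d\pi_0}_{\infty,\pi_0}<\infty$, verify $\int h\,d\pi_0=0$ and $h\neq0$, compute the Hellinger limit by dominated convergence, and observe that \eqref{eq:convmarglocalt} holds with exact equality via the marginal identities. The only cosmetic deviation is that you use the algebraic identity $\sqrt{1+x}-1=x/(\sqrt{1+x}+1)$ to expose the dominating function $h_{\pi_0,\bar c}^2$ where the paper instead Taylor-expands $f_{\mathsf{H}^2}$ to second order with integral remainder; both lead to the same limit $\|h_{\pi_0,\bar c}/2\|_{2,\pi_0}^2$.

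In Part~(ii) you take a genuinely cleaner route. The paper expands $\phi_{\mathsf{KL}}\circ(u_n,v_n)-\phi_{\mathsf{KL}}\circ(u_0,v_0)$ by second-order Taylor's theorem with integral remainder, integrates, reads off the first-order term $n^{-1/2}\bar c\,\kappa_0$, and then argues by inspection that all three remainder integrals are nonnegative --- the $D^{(1,1)}$ cross term, a priori of indefinite sign, flips to a positive contribution precisely because $v_n-v_0=-(u_n-u_0)$. You replace this bookkeeping by noting that the perspective function $\phi_{\mathsf{KL}}(p,q)=p\log(p/q)$ is jointly convex, so $t\mapsto\phi_{\mathsf{KL}}(u_0+ta,v_0-ta)$ is convex pointwise and hence $F(n^{-1/2})\geq F(0)+n^{-1/2}F_+'(0)$ immediately; the paper's sign inspection is, in hindsight, a manifestation of exactly that convexity, and your formulation makes the conclusion structurally transparent. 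Both arguments exploit the anti-symmetric direction $(a,-a)$ in the same way, but yours makes it conceptual rather than computational. One small technical remark: your integrability bounds justifying $F_+'(0)=\bar c\,\kappa_0$ are stated only at $t=0$, whereas differentiation under the integral wants a dominating function uniformly for $t$ near $0$. This can be sidestepped entirely by integrating the pointwise subgradient inequality $\phi_{\mathsf{KL}}(u_0+ta,v_0-ta)\geq\phi_{\mathsf{KL}}(u_0,v_0)+t\,\partial_t\phi_{\mathsf{KL}}(u_0+ta,v_0-ta)\big|_{t=0}$ directly, which only requires integrability of the pointwise derivative and of $\phi_{\mathsf{KL}}(u_0,v_0)$ --- both of which you have already established.
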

Proposition \ref{prop:pertconstr}$(ii)$, which is proven in Section \ref{prop:pertconstr-proof}, provides a method  of constructing $\pi_n$ for the hypothesis test in \eqref{HT-smoothed-KL-DP}, given  $\pi_0$ that satisfies the aforementioned regularity assumptions. 
This can be achieved, for instance, by choosing $\pi_0$ such that $ \mu_0 \ll \gg \nu_0 \ll \lambda$, $\norm{d\nu_0 /d\mu_0}_{\infty} \vee \norm{d\mu_0 /d\nu_0}_{\infty} 
<\infty$ and $ \kl{\mu_0*\gamma_{\sigma}}{\nu_0*\gamma_{\sigma}}=\epsilon$.

\subsection{KL DP test} A more stringent DP audit is realized via a hypothesis test for detecting $\epsilon$-KL DP violations, rather than its Gaussian-smoothed version. We next provide such a test against a fixed alternative, namely: 
\begin{align} \label{HT-KL-DP}
\begin{split}
  & H_0: \kl{\mu_0}{\nu_0} \leq \epsilon,   \\
  & H_{1}: \kl{\mu_1}{\nu_1} \geq  \tilde{\epsilon},
\end{split}
\end{align}
where $\tilde \epsilon>\epsilon>0$. For this test, we again employ the test statistic $T_n$ from Proposition~\ref{prop:HTperf} with appropriately chosen $\sigma$. Doing so requires additional assumption on the output distributions of the DP mechanism, namely that $\mu_i,\nu_i$, for $i=0,1$, to have smooth Lebesgue densities belonging to the following class. 
\begin{definition}[Lipschitz class, see \cite{DL-1993}] \label{def:Lipschitzclass} 
For  $r \in (0,\infty]$, $m \in \NN$, 
and $f \in L^r\big(\RR^d\big)$, the $m$-th modulus of smoothness of $f$ is
\begin{equation}
  \kappa_{m,r}(f,t):=\sup_{y \in \RR^d, \norm{y} \leq t}\norm{\Delta_{y}^m f}_{r}, \label{modsmdefn}
\end{equation}
where $\Delta_{y}^m f(x)=\sum_{j=0}^{m} (-1)^{m-j}f(x+jy)$. For $0<s \leq 1$ and $\cX \subseteq \RR^d$,
 the Lipschitz class with smoothness parameter $s$ and norm parameter $M$ is
\begin{align}
\mathrm{Lip}_{s,r}(M,\cX):=\big\{f \in L^r\big(\RR^d\big): \norm{f}_{\mathrm{Lip}(s,r)} \leq M, \supp(f)\subseteq \cX \big\}, \notag
\end{align}
where 
$\norm{f}_{\mathrm{Lip}(s,r)}:=\norm{f}_{r}+\sup_{t>0} t^{-s}\kappa_{1,r}(f,t)$ is the Lipschitz seminorm. 
\end{definition} 
\begin{assump} \label{Assump:KLDPtest}
For $i=0,1$, the Lebesgue densities
$p_{\mu_i}, p_{\nu_i} \mspace{-4 mu}\in \mspace{-3mu}\mathrm{Lip}_{s,1}\mspace{-2 mu}(M,\cI_b)$ and $ \norm{p_{\mu_i}/p_{\nu_i}}_{\infty} $ $\vee \norm{p_{\nu_i}/p_{\mu_i}}_{\infty} \leq M$ for some $0<s \leq 1$ and $M>0$. Further, $\kl{\mu_0}{\nu_0} \leq \epsilon$ and $\kl{\mu_1}{\nu_1} \geq \tilde \epsilon$ for some $\tilde \epsilon>\epsilon>0$.
\end{assump}

Assumption \ref{Assump:KLDPtest} is not very restrictive in practice. Indeed, the definition of DP itself necessitates that $\norm{p_{\mu_u}/p_{\mu_{v}}}_{\infty}$ is bounded uniformly for all $u,v \in \mathfrak{U}$ with $u \sim v$. Moreover, the class of Lipschitz functions grows as we shrink the smoothness parameter $s$, and thus $\cup_{M \geq 0} \mathrm{Lip}_{1,1}(M,\cI_b) \subseteq \cup_{M \geq 0} \mathrm{Lip}_{s,1}(M,\cI_b)$ under our assumption that $0<s\leq 1$. As the class of functions with bounded variation (for $d=1$) over $\cI_b$ is contained in $\cup_{M \geq 0}\mathrm{Lip}_{1,1}(M,\cI_b)$, Assumption \ref{Assump:KLDPtest} allows for most densities of practical interest. 
\medskip

We are now ready to state the $\epsilon$-KL DP audit result. As it may be unrealistic to assume that the exact values of $M$, $s$, and $\tilde \epsilon$ are known when constructing $T_n$ and choosing critical values, the following proposition only requires the existence of known constants $\bar M$ ,$\bar \epsilon$, $\underaccent{\bar}{s}$, and $\bar s$  such that $M \leq \bar M<\infty$, $\epsilon<\bar \epsilon\leq \tilde \epsilon$, and $0 <\underaccent{\bar}{s} \leq s \leq \bar s \leq 1$.
\begin{prop}[KL DP audit]\label{prop:HTperfKLDP}
Suppose Assumption \ref{Assump:KLDPtest} holds. Let $0<\tau\leq 1$ and $0<\sigma<\sigma_{ \epsilon,\bar \epsilon,\underaccent{\bar}{s},\bar s,d,\bar M}$, where $\sigma_{ \epsilon,\bar \epsilon,\underaccent{\bar}{s},\bar s,d,\bar M}$ is the solution of \eqref{eq:sigmachoice}. Then the test statistic $T_n=\kl{\hat \mu_{n}*\gamma_{\sigma}}{\hat \nu_{n}*\gamma_{\sigma}}$ with critical value $t_n= \epsilon+c_{b,d,\sigma} Q^{-1}(\tau) n^{-1/2}$, where $c_{b,d,\sigma}$ is given in \eqref{eq:threshold-HT}, is asymptotically consistent and achieves an asymptotic level $\tau$ for the test in \eqref{HT-KL-DP}.
\end{prop}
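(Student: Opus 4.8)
The plan is to establish two things: that the test has asymptotic level $\tau$ under $H_0$, and that it is consistent (asymptotic power $1$) under the fixed alternative $H_1$. Two facts do the bulk of the work. First, the limit theorem for Gaussian-smoothed KL divergence, Proposition~\ref{Prop:GS-KL-limdist}, applied to the fixed pairs $(\mu_0,\nu_0)$ and $(\mu_1,\nu_1)$; since under Assumption~\ref{Assump:KLDPtest} both pairs are compactly supported in $\cI_b$, parts (iii) and (iv) apply, giving $nT_n=O_{\PP}(1)$ when the two measures coincide and $\sqrt n\big(T_n-\kl{\mu*\gamma_\sigma}{\nu*\gamma_\sigma}\big)\trightarrow{d}N\big(0,v_{2,f_{\mathsf{KL}}}^2(\mu,\nu,\sigma)\big)$ otherwise. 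Second, a stability bound on the smoothing gap $\big|\kl{\mu*\gamma_\sigma}{\nu*\gamma_\sigma}-\kl{\mu}{\nu}\big|$ that is uniform over the Lipschitz-with-bounded-ratios class of Assumption~\ref{Assump:KLDPtest} and vanishes as $\sigma\downarrow 0$; the constraint $0<\sigma<\sigma_{\epsilon,\bar\epsilon,\underaccent{\bar}{s},\bar s,d,\bar M}$ is precisely the calibration of that bound, via~\eqref{eq:sigmachoice}, making it strictly smaller than $\bar\epsilon-\epsilon$ over $\mathrm{Lip}_{\underaccent{\bar}{s},1}(\bar M,\cI_b)$.

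For the level, I would start from the observation that $\gamma_\sigma$-convolution is a Markov kernel, so the data-processing inequality for KL divergence gives $\kl{\mu_0*\gamma_\sigma}{\nu_0*\gamma_\sigma}\le\kl{\mu_0}{\nu_0}\le\epsilon$ under $H_0$; moreover, since $\mu_0$ has a density with $\norm{p_{\mu_0}/p_{\nu_0}}_\infty\le M<\infty$, this inequality is in fact strict whenever $\mu_0\neq\nu_0$, so $\kl{\mu_0*\gamma_\sigma}{\nu_0*\gamma_\sigma}<\epsilon$ in all cases. Proposition~\ref{Prop:GS-KL-limdist}(iii) and (iv) then give $T_n\to\kl{\mu_0*\gamma_\sigma}{\nu_0*\gamma_\sigma}$ in probability, while $t_n=\epsilon+c_{b,d,\sigma}Q^{-1}(\tau)n^{-1/2}\to\epsilon$, whence $\PP(T_n>t_n\mid H_0)\to 0\le\tau$. (Alternatively, retaining only the non-strict data-processing inequality, one recovers $\limsup_n\PP(T_n>t_n\mid H_0)\le\tau$ by a Portmanteau argument exactly as in the level part of the proof of Proposition~\ref{prop:HTperf}, using the uniform variance estimate $v_{2,f_{\mathsf{KL}}}(\mu_0,\nu_0,\sigma)\le c_{b,d,\sigma}$ from~\eqref{eq:threshold-HT}.)

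For consistency, under $H_1$ we have $\kl{\mu_1}{\nu_1}\ge\tilde\epsilon\ge\bar\epsilon>\epsilon$ and in particular $\mu_1\neq\nu_1$. The choice of $\sigma$ enters here: by the stability bound and~\eqref{eq:sigmachoice}, the smoothing gap for $(\mu_1,\nu_1)$ is strictly below $\bar\epsilon-\epsilon$, hence $\rho:=\kl{\mu_1*\gamma_\sigma}{\nu_1*\gamma_\sigma}-\epsilon>0$. By Proposition~\ref{Prop:GS-KL-limdist}(iv), $\sqrt n\big(T_n-(\epsilon+\rho)\big)$ converges in distribution to a finite-variance Gaussian, so $T_n\to\epsilon+\rho$ in probability; since $t_n\to\epsilon$, we get $\PP(T_n>t_n\mid H_1)=\PP\big(\sqrt n(T_n-\epsilon-\rho)>-\rho\sqrt n+c_{b,d,\sigma}Q^{-1}(\tau)\big)\to 1$, as the threshold on the right diverges to $-\infty$.

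I expect the main obstacle to be the stability lemma in precisely the quantitative, uniform form invoked above, together with its inversion to produce~\eqref{eq:sigmachoice}: one must bound $\kl{\mu}{\nu}-\kl{\mu*\gamma_\sigma}{\nu*\gamma_\sigma}$ uniformly over $p_\mu,p_\nu\in\mathrm{Lip}_{s,1}(M,\cI_b)$ with $\norm{p_\mu/p_\nu}_\infty\vee\norm{p_\nu/p_\mu}_\infty\le M$, by a quantity tending to $0$ as $\sigma\downarrow 0$ at a rate controlled by the smoothness $s$ (uniformly in $s\ge\underaccent{\bar}{s}$). The pieces are: (a) Gaussian-tail control of the mass that $\mu*\gamma_\sigma$ and $\nu*\gamma_\sigma$ place outside a fixed dilation of $\cI_b$; (b) a modulus-of-smoothness estimate, $\norm{p_\mu*\varphi_\sigma-p_\mu}_1\lesssim\kappa_{1,1}(p_\mu,\sigma)\lesssim\sigma^{s}$, valid on the Lipschitz class, and likewise for $\nu$; and (c) the two-sided ratio bound, which keeps the integrand $\log\big(p_\mu*\varphi_\sigma/p_\nu*\varphi_\sigma\big)$ bounded (its denominator being bounded below on the relevant region by Gaussian smoothing), so that $L^1$ perturbations of the densities translate into an $O(\sigma^{s})$ perturbation of the KL functional; solving the resulting inequality against $\bar\epsilon-\epsilon$ then yields the threshold. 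A more routine check is the uniform variance estimate $v_{2,f_{\mathsf{KL}}}(\mu,\nu,\sigma)\le c_{b,d,\sigma}$ of~\eqref{eq:threshold-HT} over the same class, and the (immediate) verification that the compact-support hypotheses of Proposition~\ref{Prop:GS-KL-limdist}(iii) and (iv) hold since the supports of $\mu_i,\nu_i$ lie in $\cI_b$.
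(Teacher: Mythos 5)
Your proposal takes essentially the same route as the paper: the data-processing inequality gives $\kl{\mu_0*\gamma_\sigma}{\nu_0*\gamma_\sigma}\le\epsilon$ for the level, the stability lemma (Lemma~\ref{lem:smoothKL-stability}) together with the calibration in \eqref{eq:sigmachoice} forces $\kl{\mu_1*\gamma_\sigma}{\nu_1*\gamma_\sigma}>\epsilon$ for consistency, and both error probabilities are then controlled via Proposition~\ref{Prop:GS-KL-limdist}(iv), the Portmanteau theorem, and the uniform variance bound $v_{2,f_{\mathsf{KL}}}\le c_{b,d,\sigma}$ from \eqref{eq:threshold-HT}. Your strict-DPI remark in the level step is not actually justified and not needed, but your parenthetical fallback already recovers exactly the paper's Portmanteau argument, so the proof goes through as written.
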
 

The key difference between the proof of this claim and Proposition \ref{prop:HTperf} is that given $\bar M$, $\bar \epsilon$, $\underaccent{\bar}{s}$, and $\bar s$, it is possible to choose $\sigma>0$ small enough so that $\kl{\mu_1*\gamma_{\sigma}}{\nu_1*\gamma_{\sigma}}>\epsilon$ while $\kl{\mu_0*\gamma_{\sigma}}{\nu_0*\gamma_{\sigma}}  \leq \epsilon$. Choosing such a $\sigma$, the claim then follows by utilizing   \eqref{eq:KL-GS-twosamp-alt} along with the Portmanteau theorem to bound the type I and type II error probabilities associated with $T_n$. The aforementioned choice of $\sigma$ relies on a stability lemma for smoothed KL divergence given next, which may be of independent interest.
\begin{lemma}[Stability lemma for smoothed KL divergence] \label{lem:smoothKL-stability}
Let $\cX \subseteq \RR^d$, and $\mu,\nu \in \cP(\cX)$ have Lebesgue densities $p_{\mu}$ and $p_{\nu}$, respectively. Further, assume that $p_{\mu}, p_{\nu} \in \mathrm{Lip}_{s,1}(M,\cX)$ and $ \norm{p_{\mu}/p_{\nu}}_{\infty} \vee \norm{p_{\nu}/p_{\mu}}_{\infty} \leq M$ for some $M \geq 1$. Then,
\begin{align}
    \abs{\kl{\mu}{\nu}-\kl{\mu*\gamma_{\sigma}}{\nu*\gamma_{\sigma}}} \leq c_{d,s} M\left(M+1+\log M\right) \sigma^s, \label{eq:KL-stability-ub}
\end{align}
where $c_{d,s}:=\int_{\RR^d} \norm{z}^s \varphi_1(z) d z$.
\end{lemma}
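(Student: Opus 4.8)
The plan is to avoid any direct term-by-term comparison of $\kl{\mu}{\nu}$ with $\kl{\mu*\gamma_\sigma}{\nu*\gamma_\sigma}$ and instead combine the data-processing inequality with the Donsker--Varadhan variational formula. Write $\tilde\mu:=\mu*\gamma_\sigma$, $\tilde\nu:=\nu*\gamma_\sigma$, with Lebesgue densities $\tilde p_\mu=p_\mu*\varphi_\sigma$ and $\tilde p_\nu=p_\nu*\varphi_\sigma$. Since convolution with $\gamma_\sigma$ is a Markov kernel applied to both arguments, the data-processing inequality gives $\kl{\tilde\mu}{\tilde\nu}\le\kl{\mu}{\nu}$, so the quantity in \eqref{eq:KL-stability-ub} is nonnegative and it suffices to bound $\kl{\mu}{\nu}-\kl{\tilde\mu}{\tilde\nu}$ from above. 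The hypothesis $\norm{p_\mu/p_\nu}_{\infty}\vee\norm{p_\nu/p_\mu}_{\infty}\le M$ forces $\{p_\mu>0\}=\{p_\nu>0\}=:\cX'$ up to a null set, and the likelihood ratio $r:=p_\mu/p_\nu$ lies in $[1/M,M]$ on $\cX'$; in particular $\kl{\mu}{\nu}\le\log M$ and $\kl{\tilde\mu}{\tilde\nu}\le\log M$ (the latter since $\tilde p_\mu\le M\tilde p_\nu$ pointwise), so all quantities below are finite.

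The key step is to plug the bounded test function $g:=(\log r)\,\ind_{\cX'}$ into the (elementary) Donsker--Varadhan lower bound $\kl{\tilde\mu}{\tilde\nu}\ge\EE_{\tilde\mu}[g]-\log\EE_{\tilde\nu}\big[e^{g}\big]$, which gives
\[
\kl{\mu}{\nu}-\kl{\tilde\mu}{\tilde\nu}\;\le\;\big(\kl{\mu}{\nu}-\EE_{\tilde\mu}[g]\big)+\log\EE_{\tilde\nu}\big[e^{g}\big].
\]
This choice of $g$ makes both terms collapse. Since $p_\mu$ vanishes off $\cX'$, $\kl{\mu}{\nu}=\int_{\cX'}p_\mu\log r\,dx=\EE_{\mu}[g]$, so the first term equals $\int_{\cX'}(p_\mu-\tilde p_\mu)\log r\,dx$, bounded in absolute value by $\log M\cdot\norm{p_\mu-\tilde p_\mu}_1$ because $\abs{\log r}\le\log M$ on $\cX'$. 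For the second term, $e^{g}=r$ on $\cX'$ and $e^{g}=1$ off $\cX'$, so $\EE_{\tilde\nu}\big[e^{g}\big]=\tilde\nu(\RR^d\setminus\cX')+\int_{\cX'}r\,\tilde p_\nu\,dx=1+\int_{\cX'}(r-1)\tilde p_\nu\,dx$; using $\log(1+t)\le t$ and the identity $\int_{\cX'}(r-1)p_\nu\,dx=\mu(\cX')-\nu(\cX')=0$, we obtain $\log\EE_{\tilde\nu}\big[e^{g}\big]\le\int_{\cX'}(r-1)(\tilde p_\nu-p_\nu)\,dx\le(M-1)\norm{p_\nu-\tilde p_\nu}_1$.

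It remains to control the smoothing errors $\norm{p_\mu-\tilde p_\mu}_1$ and $\norm{p_\nu-\tilde p_\nu}_1$ by the Lipschitz seminorm. Writing $p_\mu-p_\mu*\varphi_\sigma=\int_{\RR^d}\big(p_\mu-p_\mu(\cdot-z)\big)\varphi_\sigma(z)\,dz$, Minkowski's integral inequality together with $\norm{p_\mu-p_\mu(\cdot-z)}_1\le\kappa_{1,1}(p_\mu,\norm{z})\le M\norm{z}^{s}$ (the first being the definition of the modulus of smoothness, the second the bound $\norm{p_\mu}_{\mathrm{Lip}(s,1)}\le M$) yields, after the substitution $z=\sigma w$, $\norm{p_\mu-\tilde p_\mu}_1\le M\int_{\RR^d}\norm{z}^{s}\varphi_\sigma(z)\,dz=c_{d,s}M\sigma^{s}$, and identically for $\nu$. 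Substituting into the display above, $\kl{\mu}{\nu}-\kl{\tilde\mu}{\tilde\nu}\le c_{d,s}M\sigma^{s}(M-1+\log M)$, which is at most the right-hand side of \eqref{eq:KL-stability-ub}; together with the data-processing lower bound this completes the proof.

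The one genuine obstacle is the non-smoothness and unboundedness of $u\mapsto u\log u$ at the origin. A naive expansion of $\int_{\RR^d}\big[\phi(p_\mu,p_\nu)-\phi(\tilde p_\mu,\tilde p_\nu)\big]\,dx$ with $\phi(a,b)=a\log(a/b)$ splits into single-variable pieces such as $\int\tilde p_\mu\log p_\mu\,dx$ that need not even be integrable (the smoothed density $\tilde p_\mu$ has full support while $p_\mu$ may vanish to high order), and forcing that route through introduces spurious factors like $M^{2}\log M$. Passing to the dual representation with the bounded test function $g=\log(p_\mu/p_\nu)$ sidesteps this, since $g$ and $e^{g}$ are bounded on $\cX'$ and $e^{g}$ is bounded away from $0$; once that observation is made, only routine $L^1$ estimates remain.
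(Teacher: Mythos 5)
Your proof is correct, and it takes a genuinely different route from the paper. The paper Taylor-expands $\phi(a,b)=a\log(a/b)$ along the segment $\tau\mapsto\big((1-\tau)p_\mu+\tau\tilde p_\mu,\,(1-\tau)p_\nu+\tau\tilde p_\nu\big)$ and observes that the interpolated density ratio stays in $[1/M,M]$, yielding the two-sided bound $\abs{\kl{\mu}{\nu}-\kl{\tilde\mu}{\tilde\nu}}\le(1+\log M)\norm{p_\mu-\tilde p_\mu}_1+M\norm{p_\nu-\tilde p_\nu}_1$ directly, with the same $L^1$ smoothing estimate finishing it off. You instead settle the sign via data processing and then apply the Donsker--Varadhan lower bound $\kl{\tilde\mu}{\tilde\nu}\ge\EE_{\tilde\mu}[g]-\log\EE_{\tilde\nu}[e^{g}]$ with the tailored bounded test function $g=(\log r)\ind_{\cX'}$; the two resulting error terms collapse because $\kl{\mu}{\nu}=\EE_\mu[g]$ and $\int_{\cX'}(r-1)p_\nu\,dx=0$. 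The variational route is arguably cleaner (no expansion of a function that is non-smooth at the origin is needed, and the cancellations are automatic once the test function is chosen) and it in fact yields the marginally sharper constant $\log M+M-1$ in place of $1+\log M+M$, which still satisfies \eqref{eq:KL-stability-ub}. The paper's Taylor-remainder argument is more elementary and does not presuppose the variational characterization of KL; it also transfers more transparently to other twice-differentiable $f$ (indeed the paper uses the same Taylor device in Lemma~\ref{lem:fdivnullonesamplb}). One small remark: your closing paragraph warns against a term-by-term splitting of $\int[\phi(p_\mu,p_\nu)-\phi(\tilde p_\mu,\tilde p_\nu)]\,dx$, and that concern is sound, but it is not the route the paper takes — the paper's interpolation along the segment keeps the ratio $z_{\mu,\tau}/z_{\nu,\tau}$ in $[1/M,M]$ uniformly, which is exactly what sidesteps the integrability issue you describe.
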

The proof of Lemma \ref{lem:smoothKL-stability} upper bounds the left-hand side (LHS) of \eqref{eq:KL-stability-ub} using Taylor's theorem, and then exploits the boundedness and Lipschitzness of the densities to control the resulting terms. 

\section{Proofs} \label{Sec:proofs}
This section contains proofs of the results from Section \ref{Sec:Techframework}-\ref{Sec:appkldiffprivacy}.
\subsection{Proof of Proposition \ref{Prop:Hadamarddiff-gen}}

The derivation uses the following lemma whose proof is given in Appendix \ref{Lem:contmapLp-proof} for completeness. 
\begin{lemma}[Generalized Slutsky's theorem] \label{Lem:contmapLp}
Let $r, r' \geq 1$ be conjugate indices, i.e., $1/r+1/r'=1$. The following hold:
    \begin{enumerate}[(i)]
        \item  If $f_n \rightarrow f$ in $L^{r}(\rho)$ and $g_n \rightarrow g$ in $L^{r'}(\rho)$,   then $f_n g_n \rightarrow fg $ in $L^{1}(\rho)$.
        \item If $Y_n \trightarrow{w} Y$ in $L^{r}(\rho)$ and $Z_n \trightarrow{w} z$ in $L^{ r'}(\rho)$,  where $z$ is deterministic,  then $Y_n Z_n \trightarrow{w} Yz$ in $L^{1}(\rho)$. 
    \end{enumerate}
\end{lemma}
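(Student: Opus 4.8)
The plan is to prove Part $(i)$ directly from Hölder's inequality and then bootstrap Part $(ii)$ from it via the continuous mapping theorem, after upgrading the weak convergence $Z_n \trightarrow{w} z$ to convergence in (outer) probability.

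For Part $(i)$, I would use the decomposition $f_ng_n - fg = (f_n-f)g_n + f(g_n-g)$ and bound each summand in $L^1(\rho)$ by Hölder's inequality with conjugate exponents $(r,r')$:
\[
\norm{(f_n-f)g_n}_{1,\rho} \le \norm{f_n-f}_{r,\rho}\,\norm{g_n}_{r',\rho}, \qquad \norm{f(g_n-g)}_{1,\rho} \le \norm{f}_{r,\rho}\,\norm{g_n-g}_{r',\rho}.
\]
Since $g_n \to g$ in $L^{r'}(\rho)$, the sequence $\norm{g_n}_{r',\rho}$ is bounded, while $\norm{f_n-f}_{r,\rho}\to 0$; and $\norm{f}_{r,\rho}<\infty$ with $\norm{g_n-g}_{r',\rho}\to 0$. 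Hence $\norm{f_ng_n - fg}_{1,\rho}\to 0$. The endpoint case $\{r,r'\}=\{1,\infty\}$ is handled identically using $\norm{hk}_{1,\rho}\le\norm{h}_{1,\rho}\norm{k}_{\infty,\rho}$. In particular, the pointwise product map $m:L^{r}(\rho)\times L^{r'}(\rho)\to L^1(\rho)$, $m(h,k)=hk$, is sequentially continuous, hence continuous (these are metric spaces) and Borel measurable.

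For Part $(ii)$, the crucial observation is that, because $z$ is deterministic, $Z_n \trightarrow{w} z$ implies $Z_n \to z$ in outer probability in $L^{r'}(\rho)$. I would then promote the two marginal convergences to the joint statement $(Y_n,Z_n)\trightarrow{w}(Y,z)$ in $L^{r}(\rho)\times L^{r'}(\rho)$: first, since $h\mapsto(h,z)$ is continuous, the continuous mapping theorem gives $(Y_n,z)\trightarrow{w}(Y,z)$; second, the distance between $(Y_n,Z_n)$ and $(Y_n,z)$ in the product metric is controlled by $\norm{Z_n-z}_{r',\rho}\to 0$ in outer probability, so the asymptotic-equivalence (``converging together'') lemma \cite{AVDV-book} yields $(Y_n,Z_n)\trightarrow{w}(Y,z)$. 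Applying the continuous mapping theorem to the continuous map $m$ from the previous paragraph gives $Y_nZ_n = m(Y_n,Z_n)\trightarrow{w} m(Y,z)=Yz$ in $L^1(\rho)$, which is the claim; note $Yz$ is a well-defined $L^1(\rho)$-valued map since $\norm{Yz}_{1,\rho}\le\norm{Y}_{r,\rho}\norm{z}_{r',\rho}<\infty$ a.s.

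The main obstacle is not the analytic content, which is routine, but the measure-theoretic bookkeeping behind the continuous mapping theorem and the converging-together lemma in this infinite-dimensional setting: when $r$ or $r'$ equals $\infty$ the space $L^{r}(\rho)$ is non-separable, so weak convergence must be read in the sense used throughout the paper (with outer expectations and probabilities), and the relevant products of measurable maps may fail to be Borel measurable, which is exactly why outer probability enters. In the cases invoked in the sequel one has $r=r'=2$, where $L^2(\rho)$ is separable and Polish and all of the above is standard. Conceptually, the only subtle point is that the deterministic limit $z$ is precisely what upgrades the marginal weak convergence of $Z_n$ to joint weak convergence of $(Y_n,Z_n)$; without this the conclusion of Part $(ii)$ genuinely fails.
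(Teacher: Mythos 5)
Your proof is correct and follows essentially the same route as the paper: Part $(i)$ is the same Minkowski–Hölder decomposition, and Part $(ii)$ is the same continuous-mapping-plus-Slutsky argument (the paper simply cites \cite[Example 1.4.7]{AVDV-book} for the converging-together step that you spell out, and invokes separability of $L^r(\rho)$, $1\le r<\infty$, exactly as you do).
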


Having that, recall that all partial derivatives  of $\phi$ of order two exists in  $(0,\infty) \times (0,\infty)$. Consequently, the multivariate second-order Taylor's expansion of $\phi(x,y)$ around $\phi(x^{\star},y^{\star})$, where $x,y,x^\star,y^\star>0$, yields
\begin{align}  \phi(x,y)&=\phi(x^{\star},y^{\star})+(x-x^{\star})D^{(1,0)}\phi(x^{\star},y^{\star})+(y-y^{\star}) D^{(0,1)}\phi(x^{\star},y^{\star}) + (x-x^{\star})^2\int_{0}^1 D^{(2,0)}\phi(u_{\tau}) (1-\tau) d \tau \notag \\
    &\quad+ (y-y^{\star})^2\int_{0}^1 D^{(0,2)}\phi(u_{\tau})(1-\tau) d \tau + 2(x-x^{\star}) (y-y^{\star})\int_{0}^1 D^{(1,1)}\phi(u_{\tau})(1-\tau) d \tau, \notag
\end{align}
where $u_{\tau}:=(1-\tau)(x^{\star},y^{\star})+\tau(x,y)$.  Substituting $g_1(x),g_1^{\star}(x),g_2(y),g_2^{\star}(y)$ for $x,x^{\star},y,y^{\star}$, respectively, in the above equation, and setting $u_{j,\tau}=(1-\tau)g_j^{\star}+\tau g_j$ for $j=1,2$,  $\theta=(\theta_1,\theta_2)=\big(g_1-g_1^{\star},g_2-g_2^{\star}\big)$,
we obtain  
\begin{align}  \phi \mspace{-2 mu}\circ \mspace{-2 mu}(g_1,g_2)&=\phi \mspace{-2 mu}\circ \mspace{-2 mu}(g_1^{\star},g_2^{\star})+\theta_1 D^{(1,0)}\phi \circ(g_1^{\star},g_2^{\star})+\theta_2 D^{(0,1)}\phi \circ(g_1^{\star},g_2^{\star}) \mspace{-2 mu} +\mspace{-2 mu}\int_{0}^1 \mspace{-2 mu}\theta_1^2 D^{(2,0)}\phi \mspace{-2 mu}\circ\mspace{-2 mu}(u_{1,\tau},u_{2,\tau}) (1-\tau) d \tau \notag \\
    & \quad  +\int_{0}^1 \theta_2^2 D^{(0,2)}\phi\circ(u_{1,\tau},u_{2,\tau}) (1-\tau) d \tau   + 2\int_{0}^1 \theta_1 \theta_2 D^{(1,1)}\phi\circ(u_{1,\tau},u_{2,\tau})(1-\tau) d \tau, \notag
\end{align}
for all $(x,y)$ with $g_1(x),g_1^{\star}(x),g_2(y),g_2^{\star}(y)>0$.
The validity of the above equation when $g_1(x) \geq 0$, and $g_2(y),g_1^{\star}(x),$ $g_2^{\star}(y)>0$  then follows by taking limits $g_1(x) \downarrow 0$ via Assumption \ref{Assump1} (specifically, the continuity of $\phi$ at $(0,c)$ for $c>0$, and the continuity of second order partial derivatives) along with the  dominated convergence theorem applied to the last three integrals using   Assumption \ref{Assump4}. Likewise, the above equation extends to the case  $g_1(x) =0 $, $g_2(y) \geq 0$ and $g_1^{\star}(x),$ $g_2^{\star}(y)>0$ by taking  limits $g_2(y) \downarrow 0$ and using $\lim_{z \downarrow 0}\phi(0,z)=\phi(0,0)$ in Assumption \ref{Assump1}. Note that the above scenarios correspond to the constraints $\{(g_1,g_2): g_1 \geq 0,g_2 \geq 0, \supp(g_1) \subseteq \supp(g_2)\}$ in the definition of $\Theta'$.  In a similar vein, the above equation also generalizes to the case $g_1(x),g_2(y) \geq 0$, $g_1^{\star}(x),g_2^{\star}(y)>0$  by taking limits $g_1(x),g_2(y) \downarrow 0$, provided that $\phi$ is  continuous at $(0,0)$. This corresponds to the constraints defining the set $\Theta''$.

Integrating w.r.t. $\rho$  then gives 
\begin{align}
    \Phi(\theta_1,\theta_2) &=\Phi(0,0)+\int_{\mathfrak{S}}\theta_1 D^{(1,0)}\phi \circ(g_1^{\star},g_2^{\star})d \rho+\int_{\mathfrak{S}}\theta_2 D^{(0,1)}\phi \circ(g_1^{\star},g_2^{\star})d \rho  \notag \\
    & \quad +\int_{\mathfrak{S}}\int_{0}^1\theta_1^2 D^{(2,0)}\phi\circ(u_{1,\tau},u_{2,\tau}) (1-\tau) d \tau d \rho+\int_{\mathfrak{S}}\int_{0}^1 \theta_2^2 D^{(0,2)}\phi\circ(u_{1,\tau},u_{2,\tau}) (1-\tau) d \tau d \rho \notag \\
    &\quad + 2\int_{\mathfrak{S}}\int_{0}^1 \theta_1 \theta_2 D^{(1,1)}\phi\circ(u_{1,\tau},u_{2,\tau})(1-\tau) d \tau d \rho, \numberthis\label{eq:Taylorexpfunc}
\end{align}
where we have used the definition of $\Phi$ in \eqref{eq:defPhifunc}. The terms in \eqref{eq:Taylorexpfunc}  are well-defined and finite due to the following reasons. The finiteness of $ \Phi(\theta_1,\theta_2)$ and $\Phi(0,0)$  is straightforward from Assumption \ref{Assump3}, while that of the first two integrals is a consequence of Assumption~\ref{Assump2} and H\"{o}lder's inequality. The remaining integrals exists and are finite 
since for any $\alpha$ with $|\alpha|=2$, we have 
\begin{flalign}
 & \int_{\mathfrak{S}}\mspace{-2mu}\int_{0}^1\mspace{-5mu} \big|\theta_1^{2}\mspace{-2mu}D^{(2,0)}\mspace{-2mu}\phi\mspace{-3mu}\circ\mspace{-3mu}(u_{1,\tau},\mspace{-2mu}u_{2,\tau})\mspace{-3mu}+\mspace{-3mu}\theta_2^{2}D^{(0,2)}\mspace{-2mu}\phi\mspace{-3mu}\circ\mspace{-3mu}(u_{1,\tau},\mspace{-2mu}u_{2,\tau})\mspace{-3mu}+\mspace{-3mu}2\theta_1\theta_2D^{(1,1)}\mspace{-2mu}\phi\mspace{-3mu}\circ\mspace{-3mu}(u_{1,\tau},\mspace{-2mu}u_{2,\tau})\big|(1\mspace{-2mu}-\mspace{-2mu}\tau) d \tau d \rho \notag \\  
  &\lesssim \int_{\mathfrak{S}} \left(\big|\theta_1^2D^{(2,0)}\psi_{2,0}\circ(g_1^{\star},g_2^{\star})\big|+\big|\theta_2^2D^{(0,2)}\psi_{0,2}\circ(g_1^{\star},g_2^{\star})\big|+\big| \theta_1\theta_2D^{(1,1)}\psi_{1,1}\circ(g_1^{\star},g_2^{\star})\big|\right) d \rho \notag \\
 &\leq \int_{\mathfrak{S}} \left(\big|\theta_1^2D^{(2,0)}\psi_{2,0}\circ(g_1^{\star},g_2^{\star})\big|+\big|\theta_2^2D^{(0,2)}\psi_{0,2}\circ(g_1^{\star},g_2^{\star})\big|\right)\mspace{-3mu} d \rho \notag \\
 & \ \quad\qquad\qquad\qquad \qquad\qquad \qquad\qquad +\left(\int_{\mathfrak{S}}\theta_1^2\big| D^{(1,1)}\psi_{1,1}\circ(g_1^{\star},g_2^{\star})\big|d\rho\right)^{\frac 12}  \left(\int_{\mathfrak{S}}\theta_2^2\big| D^{(1,1)}\psi_{1,1}\circ(g_1^{\star},g_2^{\star})\big|d\rho\right)^{\frac 12}\notag\\
  & <\infty, \label{eq:intermbnd} &&
\end{flalign}
where the first step uses \eqref{eq:fdiv-integcond} and  $v_{ \alpha} \in L_+^1([0,1],\lambda)$, the second one invokes the Cauchy-Schwarz inequality, while the finiteness follows from \eqref{eq:normetadef} and $(\theta_1,\theta_2) \in \Theta \subset \mathfrak{D}$. 
 
 \medskip
 
Given the expansion from \eqref{eq:Taylorexpfunc}, setting $(\theta_{1,n},\theta_{2,n})=(g_{1,n}-g_1^{\star},g_{2,n}-g_2^{\star}) \in \Theta$ and 
 $h_{j,n}=\theta_{j,n}/t_n$, $t_n>0$, $j=1,2$, we obtain 
\begin{align}
 & \frac{\Phi\big(t_nh_{1,n},t_n h_{2,n}\big)-\Phi(0,0)}{t_n}=\int_{\mathfrak{S}}\Big(h_{1,n}D^{1,0}\phi \circ(g_1^{\star},g_2^{\star})+h_{2,n}D^{(0,1)}\phi \circ(g_1^{\star},g_2^{\star})\Big)d \rho +J_n, \label{eq:phininteg}
\end{align}
where $J_n:= t_n\int_{\mathfrak{S}} (h_{1,n}^2  I_{1,n} +h_{2,n}^2 I_{2,n}+h_{1,n}h_{2,n}I_{3,n}) d\rho$ with 
\begin{align}
 I_{1,n}&:=   \int_0^1  D^{(2,0)}\phi\circ(g_1^{\star}+\tau  t_n h_{1,n},g_2^{\star}+\tau t_n h_{2,n})(1-\tau) d \tau, \notag \\
 I_{2,n}&:=\int_{0}^1 D^{(0,2)}\phi\circ(g_1^{\star}+\tau  t_nh_{1,n},g_2^{\star}+\tau t_n h_{2,n})(1-\tau) d \tau,  \notag \\
  I_{3,n}&:=2\int_{0}^1 D^{(1,1)}\phi\circ(g_1^{\star}+\tau  t_nh_{1,n},g_2^{\star}+\tau t_n h_{2,n})(1-\tau) d \tau. \notag
\end{align}
We will show that 
the limit of the RHS of \eqref{eq:phininteg} as $n \rightarrow \infty$ evaluates to the RHS of \eqref{eq:Hadmardfirstderiv} for all $t_n \downarrow 0^+$ and $(h_{1,n},h_{2,n}) \rightarrow (h_1,h_2)$ in $\mathfrak{D}$ for some 
$(h_1,h_2) \in \mathfrak{T}_{\Theta}(\theta^{\star})$, which will prove \eqref{eq:Hadmardfirstderiv}. Here, $\mathfrak{T}_{\Theta}(\theta^{\star})$ is as given in Proposition \ref{Prop:Hadamarddiff-gen} since $\Theta$ is convex and $\theta^{\star} \in \Theta$.   
\medskip

Under Assumption \ref{Assump2}, the first two terms in \eqref{eq:phininteg} converge to the RHS of \eqref{eq:Hadmardfirstderiv} by Lemma~\ref{Lem:contmapLp}$(i)$ since $h_{1,n} \rightarrow h_1$ and  $h_{2,n} \rightarrow h_2$ in $L^2(\rho)$. 
Thus, it remains to show that $J_n$ converges to zero. To that end we will show that every subsequence of $J_n$ has a further subsequence that converges to 0. Fix $(t_n, h_{1,n},h_{2,n})_{n \in \NN}$ and consider a subsequence $(n_k)_{k \in \NN}$ of $\NN$. Since $(h_{1,n_k},h_{2,n_k})$ $ \rightarrow (h_1,h_2)$ in $\mathfrak{D}$ implies $h_{i,n_k}\rightarrow h_i$ in $L^2(\eta_i)$, for $i=1,2$, and $L^2$ convergence implies convergence in measure, there exists a further subsequence $(n_{k_l})_{l \in \NN}$ such that $h_{i,n_{k_l}} \rightarrow h_i$  $\eta_i$-a.e., for $i=1,2$. Consequently, \eqref{eq:fdiv-integcond} with $v_{\alpha} \in L_+^1([0,1],\lambda)$ and  dominated convergence theorem implies that
$\rho$-a.e.,
\begin{align}
   &\lim_{l \rightarrow \infty}   I_{1,n_{k_l}}\mspace{-2 mu}+\mspace{-2 mu}I_{2,n_{k_l}}\mspace{-2 mu}+\mspace{-2 mu}I_{3,n_{k_l}} 
  \mspace{-2 mu} =\mspace{-2 mu} \frac 12 \Big(D^{(2,0)}\phi \circ(g_1^{\star},g_2^{\star})  +  D^{(0,2)}\phi\circ(g_1^{\star},g_2^{\star}) +2   D^{(1,1)}\phi\circ(g_1^{\star},g_2^{\star}) \Big). \notag
\end{align}
Next, we claim that  $\big(h_{1,n_{k_l}}^2I_{1,n_{k_l}}+h_{2,n_{k_l}}^2I_{2,n_{k_l}}+h_{1,n_{k_l}}h_{2,n_{k_l}}I_{3,n_{k_l}}\big)_{l \in \NN}$ is uniformly integrable w.r.t. $\rho$. 
This along with the above equation and Vitali's convergence theorem then leads to
\begin{align}
   &\lim_{l\rightarrow \infty} \int_{\mathfrak{S}} \big(h_{1,n_{k_l}}^2I_{1,n_{k_l}}+h_{2,n_{k_l}}^2I_{2,n_{k_l}}+h_{1,n_{k_l}}h_{2,n_{k_l}}I_{3,n_{k_l}}\big)   d\rho\notag \\
   & \qquad\qquad \qquad\qquad   = \frac 12 \int_{\mathfrak{S}} \left(h_{1}^2  D^{(2,0)}\phi\circ(g_1^{\star},g_2^{\star})  +  h_{2}^2 D^{(0,2)}\phi\circ(g_1^{\star},g_2^{\star}) +2  h_{1}h_{2} D^{(1,1)}\phi\circ(g_1^{\star},g_2^{\star}) \right)   d\rho, \label{eq:conv-Iterms}
\end{align}
and hence, $\lim_{l\rightarrow \infty} J_{n_{k_l}}=0$ as $t_{n_{k_l}} \downarrow 0^+$. Thus,  every subsequence $(J_{n_{k}})_{k \in \NN}$ has a further subsequence $(J_{n_{k_l}})_{l \in \NN}$ which converges to $0$, which implies $\lim_{n\rightarrow \infty} J_{n}=0$ and shows that 
\begin{align}
   \lim_{n \rightarrow \infty}  \frac{\Phi\big(t_nh_{1,n},t_n h_{2,n}\big)-\Phi(0,0)}{t_n}&=\int_{\mathfrak{S}}\Big(h_{1}D^{1,0}\phi \circ(g_1^{\star},g_2^{\star})+h_{2}D^{(0,1)}\phi \circ(g_1^{\star},g_2^{\star})\Big)d \rho. \notag
 \end{align}
 Since the above holds for any $t_n \downarrow 0^+$ and $(h_{1,n},h_{2,n}) \rightarrow (h_1,h_2)$, \eqref{eq:Hadmardfirstderiv} follows.

\medskip

To show the uniform integrability claim mentioned above, note that for any $\cD \subset \mathfrak{S} $ 
\begin{flalign}
   & \int_{\mathfrak{S}}  \ind_{\cD} \abs{h_{1,n_{k_l}}^2I_{1,n_{k_l}}+h_{2,n_{k_l}}^2I_{2,n_{k_l}}+h_{1,n_{k_l}}h_{2,n_{k_l}}I_{3,n_{k_l}}} d \rho\notag \\
    & \leq \int_{\mathfrak{S}}  \ind_{\cD} \abs{h_{1,n_{k_l}}^2I_{1,n_{k_l}}} d \rho+\int_{\mathfrak{S}}  \ind_{\cD} \abs{h_{2,n_{k_l}}^2I_{2,n_{k_l}}} d \rho+\int_{\mathfrak{S}}  \ind_{\cD} \abs{h_{1,n_{k_l}}h_{2,n_{k_l}}I_{3,n_{k_l}}} d \rho\notag \\
    & \lesssim \int_{\mathfrak{S}} \ind_{\cD}\left(h_{1,n_{k_l}}^2\big|\psi_{2,0} \circ (g_1^{\star},g_2^{\star})\big|+h_{2,n_{k_l}}^2\big|\psi_{0,2} \circ(g_1^{\star},g_2^{\star})\big|\right) d \rho\notag\\
    &\qquad\qquad \qquad\qquad \qquad\qquad\qquad+\left(\int_{\mathfrak{S}} \ind_{\cD}h_{1,n_{k_l}}^2\big| \psi_{1,1} \circ(g_1^{\star},g_2^{\star})\big|d\rho\right)^{\frac 12}\left(\int_{\mathfrak{S}} \ind_{\cD}h_{2,n_{k_l}}^2\big| \psi_{1,1} \circ(g_1^{\star},g_2^{\star})\big|d\rho\right)^{\frac 12} \notag \\
    & \lesssim \norm{\ind_{\cD} h_{1,n_{k_l}}}_{2,\eta_1}^2+\norm{\ind_{\cD}h_{2,n_{k_l}}}_{2,\eta_2}^2. \notag &&
\end{flalign}
where the penultimate inequality above follows via similar steps leading to the bound in \eqref{eq:intermbnd}.
Also, recall that $h_{i,n_{k_l}} \rightarrow h_i$ in $L^2(\eta_i)$ implies uniform integrability of $(h_{i,n_{k_l}})_{l \in \NN}$, for $i=1,2$. This along with  the above equation then shows the desired uniform integrability.

 \medskip

To compute the second order Hadamard derivative, from \eqref{eq:phininteg}, we have 
 \begin{align}
     \frac{\Phi(t_nh_{1,n},t_nh_{2,n})-\Phi(0,0)-t_n\Phi_{\Theta}'(h_{1,n},h_{2,n})}{t_n^2}
     &=\int_{\mathfrak{S}} (h_{1,n}^2I_{1,n} +h_{2,n}^2I_{2,n}+h_{1,n}h_{2,n}I_{3,n}) d\rho. \notag
 \end{align} 
Using similar argument leading to \eqref{eq:conv-Iterms} one readily shows that the RHS multiplied by 2 above converges to the RHS of \eqref{eq:Hadmardsecderiv}.  Since this holds for any $t_n \downarrow 0^+$ and $(h_{1,n},h_{2,n}) \rightarrow (h_1,h_2)$, \eqref{eq:Hadmardsecderiv} follows, thus completing the proof of the proposition. 
\subsection{Proofs for Section \ref{Sec:limittheorems}} \label{Sec:limittheorems-proof}
We first state a lemma that shows that Assumption \ref{Assump1} and Assumption \ref{Assump4} are satisfied by the functionals corresponding to $f$-divergences.
\begin{lemma} \label{Lem:assumpfdiv}
Consider $\rho$, $g_1^{\star},g_2^{\star}$ and $\Theta$ as in Section \ref{Sec:Techframework}. The following hold:
\begin{enumerate}[(i)]
    \item The functions
\begin{align}
       \phi_{\mathsf{KL}}(x,y)&:=y f_{\mathsf{KL}}(x/y)=x \log (x/y), \notag \\
    \phi_{\chi^2}(x,y)&:=yf_{\chi^2}(x/y)= (x-y)^2/y, \notag \\
    \phi_{\mathsf{H}^2}(x,y)&:=yf_{\mathsf{H}^2}(x/y)=(\sqrt{x}-\sqrt{y})^2, \notag
\end{align}
are convex and satisfy Assumption \ref{Assump1}. Moreover,  
$\phi_{\mathsf{H}^2}$ is continuous in $\RR_{\geq 0} \times \RR_{\geq 0}$, while $\phi_{\mathsf{KL}}$ and $\phi_{\chi^2}$ are continuous in $[0,\infty) \times (0,\infty)$.  
\medskip

\item The above functions satisfy Assumption \ref{Assump4} under conditions enumerated below: 
\begin{enumerate}[(a)]
    \item $\phi_{\mathsf{KL}}$ with $\psi_{2,0}\circ\big(g_1^{\star},g_2^{\star}\big)=1/g_1^{\star}$, $\psi_{0,2}\circ\big(g_1^{\star},g_2^{\star}\big)=g_1^{\star}/g_2^{\star2}+q/g_2^{\star}$,  $\psi_{1,1}\circ\big(g_1^{\star},g_2^{\star}\big)=1/g_2^{\star}$,  $v_{2,0}=v_{0,2}=v_{1,1}=1$, and 
 \begin{equation}
  \Theta \subseteq \bar\Theta(q) := \left\{ \begin{aligned}
   (g_1-g_1^{\star},g_2-g_2^{\star}) \in \mathfrak{D}&:g_1 \geq 0,g_2 \geq  0, \supp(g_1) \subseteq \supp(g_2),  \norm{g_1}_{1,\rho} \vee \norm{g_2}_{1,\rho} \leq 1,\\
   &\abs{g_1/g_2} \leq q,~ \rho  \mbox{-a.e.}    
  \end{aligned}\right\}.   \label{eq:thetbarqset}
 \end{equation}
    \item $\phi_{\chi^2}$ with  $\psi_{2,0}\circ\big(g_1^{\star},g_2^{\star}\big)=1/g_2^{\star}$, $\psi_{0,2}\circ\big(g_1^{\star},g_2^{\star}\big)=g_1^{\star2}/g_2^{\star3}+(q^2/g_2^{\star})$,  $\psi_{1,1}\circ\big(g_1^{\star},g_2^{\star}\big)=g_1^{\star}/g_2^{\star2}+q/g_2^{\star}$,  $v_{2,0}=v_{0,2}=v_{1,1}=1$, and  $\Theta \subseteq \bar \Theta(q)$;
\item $\phi_{\mathsf{H}^2}$ with $\psi_{2,0}\circ\big(g_1^{\star},g_2^{\star}\big)=\big(g_2^{\star1/2}+q_2^{1/2}\big)/g_1^{\star3/2}$, $\psi_{0,2}\circ\big(g_1^{\star},g_2^{\star}\big)=\big(g_1^{\star1/2}+q_1^{1/2}\big)/g_2^{\star3/2}$, $\psi_{1,1}\circ\big(g_1^{\star},g_2^{\star}\big)=\big(g_1^{\star}g_2^{\star}\big)^{-1/2}$,  $v_{2,0}=v_{0,2}=\tau^{1/2}(1-\tau)^{-1/2}$,  $v_{1,1}=1$, and  
\begin{flalign}
  \Theta \subseteq \check \Theta(q_1,q_2)\mspace{-4 mu}:=\mspace{-4 mu}\left\{(g_1-g_1^{\star},g_2-g_2^{\star}) \mspace{-2 mu}\in \mspace{-2 mu}\mathfrak{D}: g_1,g_2 \geq 0,  \norm{g_1}_{1,\rho}\mspace{-2 mu} \vee \mspace{-2 mu}\norm{g_2}_{1,\rho}\mspace{-3 mu} \leq \mspace{-3 mu}1,|g_1| \leq q_1,|g_2| \leq q_2,\mspace{1 mu} \rho\mbox{-a.e.}\right\}. \notag
\end{flalign}
\end{enumerate}
\end{enumerate}
\end{lemma}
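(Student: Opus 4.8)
The plan is to verify Assumptions~\ref{Assump1} and~\ref{Assump4} separately for each of the three functions $\phi_{\mathsf{KL}}$, $\phi_{\chi^2}$, $\phi_{\mathsf{H}^2}$, working directly from their explicit formulas. For part $(i)$, convexity follows from the perspective-function construction $\phi(x,y)=yf(x/y)$ applied to a convex $f$, which is a standard fact; the normalization $\phi(1,1)=0$ is immediate from $f(1)=0$; and $\phi(0,0)=\lim_{y\downarrow 0}\phi(0,y)$ together with continuity at $(0,c)$, $c>0$, and existence and continuity of the order-two partials on $(0,\infty)^2$ (with $\pm\infty$ values allowed only at the boundary) is checked by hand. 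Concretely, $\phi_{\mathsf{KL}}(x,y)=x\log(x/y)$ has $D^{(1,0)}\phi_{\mathsf{KL}}=\log(x/y)+1$, $D^{(0,1)}\phi_{\mathsf{KL}}=-x/y$, $D^{(2,0)}\phi_{\mathsf{KL}}=1/x$, $D^{(0,2)}\phi_{\mathsf{KL}}=x/y^2$, $D^{(1,1)}\phi_{\mathsf{KL}}=-1/y$, all continuous on $(0,\infty)^2$, with $\phi_{\mathsf{KL}}(0,y)=0$ for $y>0$ (using $0\log(c/0)=0$), so $\phi_{\mathsf{KL}}(0,0)=0=\lim_{y\downarrow 0}\phi_{\mathsf{KL}}(0,y)$; similarly $\phi_{\chi^2}(x,y)=(x-y)^2/y$ has $D^{(2,0)}=2/y$, $D^{(0,2)}=2x^2/y^3$, $D^{(1,1)}=-2x/y^2$, $\phi_{\chi^2}(0,y)=y$, so it is continuous on $[0,\infty)\times(0,\infty)$ but blows up as $y\downarrow 0$ with $x>0$ fixed (consistent with the claim, since continuity at $(0,0)$ is \emph{not} asserted for $\phi_{\mathsf{KL}},\phi_{\chi^2}$); and $\phi_{\mathsf{H}^2}(x,y)=(\sqrt x-\sqrt y)^2=x+y-2\sqrt{xy}$ has $D^{(2,0)}=\tfrac12 y^{1/2}x^{-3/2}$, $D^{(0,2)}=\tfrac12 x^{1/2}y^{-3/2}$, $D^{(1,1)}=-\tfrac12 (xy)^{-1/2}$, and extends continuously to all of $\RR_{\ge0}\times\RR_{\ge0}$ since $x+y-2\sqrt{xy}$ is jointly continuous there.

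For part $(ii)$, the task is to exhibit, for each of the three multi-indices $\alpha=(2,0),(0,2),(1,1)$, a function $v_\alpha\in L^1([0,1],\lambda)$ and a $\psi_\alpha$ so that the pointwise bound~\eqref{eq:fdiv-integcond} holds for every $\theta=(\theta_1,\theta_2)=(g_1-g_1^\star,g_2-g_2^\star)\in\Theta$ and $\tau\in[0,1]$. Writing $u_{1,\tau}=(1-\tau)g_1^\star+\tau g_1$ and $u_{2,\tau}=(1-\tau)g_2^\star+\tau g_2$, the key observation in each case is a monotonicity estimate of the relevant derivative along the segment $u_{\cdot,\tau}$. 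For $\phi_{\mathsf{KL}}$: $(1-\tau)|\theta_1^2 D^{(2,0)}\phi_{\mathsf{KL}}\circ(u_{1,\tau},u_{2,\tau})|=(1-\tau)\theta_1^2/u_{1,\tau}\le \theta_1^2/g_1^\star$ since $u_{1,\tau}\ge(1-\tau)g_1^\star$ and $g_1\ge0$, giving $v_{2,0}=1$, $\psi_{2,0}\circ(g_1^\star,g_2^\star)=1/g_1^\star$; $(1-\tau)|\theta_2^2 D^{(0,2)}\phi_{\mathsf{KL}}|=(1-\tau)\theta_2^2 u_{1,\tau}/u_{2,\tau}^2\le \theta_2^2\big((1-\tau)g_1^\star+\tau g_1\big)/\big((1-\tau)g_2^\star\big)^2\cdot(1-\tau)$, and on $\bar\Theta(q)$ one uses $g_1\le q g_2$, $u_{2,\tau}\ge(1-\tau)g_2^\star$, to bound this by $\theta_2^2\big(g_1^\star/g_2^{\star2}+q/g_2^\star\big)$ (after a short estimate of $u_{1,\tau}/u_{2,\tau}$), giving $v_{0,2}=1$; and the cross term $2(1-\tau)|\theta_1\theta_2|/u_{2,\tau}\le 2|\theta_1\theta_2|/g_2^\star$, giving $v_{1,1}=1$, $\psi_{1,1}\circ(g_1^\star,g_2^\star)=1/g_2^\star$. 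The $\phi_{\chi^2}$ case is analogous using $D^{(2,0)}=2/y$, $D^{(0,2)}=2x^2/y^3$, $D^{(1,1)}=-2x/y^2$ together with $g_1\le q g_2$ on $\bar\Theta(q)$, yielding the stated $\psi$'s and $v_{2,0}=v_{0,2}=v_{1,1}=1$ (absorbing constant factors into the $\lesssim$). The $\phi_{\mathsf{H}^2}$ case is where the $v_\alpha$ become genuinely $\tau$-dependent: $(1-\tau)|\theta_1^2 D^{(2,0)}\phi_{\mathsf{H}^2}|=\tfrac12(1-\tau)\theta_1^2 u_{2,\tau}^{1/2}u_{1,\tau}^{-3/2}$, and bounding $u_{1,\tau}^{-3/2}\le\big((1-\tau)g_1^\star\big)^{-3/2}$ and $u_{2,\tau}^{1/2}\le(g_2^\star)^{1/2}+q_2^{1/2}$ (using $g_2\le q_2$ and $(a+b)^{1/2}\le a^{1/2}+b^{1/2}$) leaves a factor $(1-\tau)\cdot(1-\tau)^{-3/2}=(1-\tau)^{-1/2}$, which is not integrable — so instead one keeps $\tau^{1/2}$ in play by noting the bound can be taken with $v_{2,0}(\tau)=\tau^{1/2}(1-\tau)^{-1/2}\in L^1([0,1])$ once one argues more carefully that the worst case occurs with a compensating $\tau^{1/2}$ factor; symmetrically for $v_{0,2}$, while the cross term is clean with $v_{1,1}=1$ and $\psi_{1,1}\circ(g_1^\star,g_2^\star)=(g_1^\star g_2^\star)^{-1/2}$.

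The main obstacle is precisely this last point: getting the $L^1([0,1])$-integrable majorant $v_{2,0}(\tau)=v_{0,2}(\tau)=\tau^{1/2}(1-\tau)^{-1/2}$ for $\phi_{\mathsf{H}^2}$, since the naive bound produces the non-integrable $(1-\tau)^{-1/2}$. The resolution is to not discard the $\tau$-dependence in the numerator prematurely: one needs $(1-\tau)\,\theta_1^2\,u_{2,\tau}^{1/2}u_{1,\tau}^{-3/2}\lesssim |\theta_1^2\psi_{2,0}\circ(g_1^\star,g_2^\star)|\,\tau^{1/2}(1-\tau)^{-1/2}$, i.e., $(1-\tau)^{3/2}\tau^{-1/2}u_{2,\tau}^{1/2}\lesssim (g_2^{\star1/2}+q_2^{1/2})u_{1,\tau}^{3/2}/g_1^{\star3/2}\cdot(1-\tau)^{3/2}\cdot\dots$ — the cleanest route is to split $[0,1]$ at $\tau=1/2$: on $[0,1/2]$, $1-\tau\ge1/2$ so $u_{1,\tau}\ge g_1^\star/2$ and the bound with $v_{2,0}=1$ suffices (and $1\lesssim\tau^{1/2}(1-\tau)^{-1/2}$ fails near $\tau=0$, so actually one wants $v_{2,0}$ bounded on $[0,1/2]$, which $\tau^{1/2}(1-\tau)^{-1/2}$ is not — hence one should rather verify that on $[0,1/2]$ a \emph{constant} majorant works and on $[1/2,1]$ a $(1-\tau)^{-1/2}$-type weight is tamed by the extra $u_{1,\tau}^{-3/2}$ estimate when $g_1\ge g_1^\star/2$... ). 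Since pinning down the exact $v_\alpha$ for $\phi_{\mathsf{H}^2}$ requires this delicate segment analysis, that is where I would spend most of the effort; the $\phi_{\mathsf{KL}}$ and $\phi_{\chi^2}$ verifications, and all the Assumption~\ref{Assump1} checks, are routine and follow by direct substitution into~\eqref{eq:fdiv-integcond} using $g_1,g_2\ge0$ and the domination hypotheses defining $\bar\Theta(q)$ and $\check\Theta(q_1,q_2)$.
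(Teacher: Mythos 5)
Your Part $(i)$ and the $\phi_{\mathsf{KL}}$, $\phi_{\chi^2}$ verifications in Part $(ii)$ are correct and follow the same route as the paper: convexity via the perspective-function construction, direct substitution of the second-order partials, and the bounds $u_{1,\tau}\ge(1-\tau)g_1^\star$, $u_{2,\tau}\ge(1-\tau)g_2^\star$, $g_1\le qg_2$ to produce constant majorants in $\tau$. But your handling of $\phi_{\mathsf{H}^2}$ contains a factual error that propagates into a lot of unnecessary confusion.

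You claim that the naive bound
\[
(1-\tau)\,u_{2,\tau}^{1/2}\,u_{1,\tau}^{-3/2}
\;\le\;
\bigl(g_2^{\star 1/2}+q_2^{1/2}\bigr)\,g_1^{\star -3/2}\,(1-\tau)^{-1/2}
\]
produces a $\tau$-weight $(1-\tau)^{-1/2}$ that is \emph{not} integrable on $[0,1]$, and that therefore the estimate must be refined (you then go down a segment-splitting path that you never bring to completion). That premise is false: $\int_0^1 (1-\tau)^{-1/2}\,d\tau = 2 < \infty$, so $(1-\tau)^{-1/2}\in L^1([0,1],\lambda)$. Assumption~\ref{Assump4} only requires \emph{some} $v_\alpha\in L^1([0,1],\lambda)$; the naive bound you wrote down already supplies one, with $\psi_{2,0}\circ(g_1^\star,g_2^\star) = (g_2^{\star 1/2}+q_2^{1/2})/g_1^{\star 3/2}$ exactly as stated. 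The symmetric estimate handles $D^{(0,2)}\phi_{\mathsf{H}^2}$, and $D^{(1,1)}\phi_{\mathsf{H}^2}$ is cleaner still since $(1-\tau)/(u_{1,\tau}u_{2,\tau})^{1/2}\le (g_1^\star g_2^\star)^{-1/2}$. No split of $[0,1]$ is needed, and once the integrability error is corrected, your proof closes.

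A secondary point: you were struggling to reproduce the literal $v_{2,0}(\tau)=\tau^{1/2}(1-\tau)^{-1/2}$ from the lemma, and correctly observed that this does not dominate a $\tau$-independent constant near $\tau=0$. The paper's own computation applies $\sqrt{a+b}\le\sqrt a+\sqrt b$ to $u_{2,\tau}^{1/2}$, keeping the $\tau$-dependence and producing the two-piece bound $g_2^{\star 1/2}/g_1^{\star 3/2}+q_2^{1/2}\tau^{1/2}(1-\tau)^{-1/2}/g_1^{\star 3/2}$; the first piece is dominated by a constant, not by $\tau^{1/2}(1-\tau)^{-1/2}$, and the paper does not explicitly reconcile this with the stated $v_{2,0}$. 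The cleanest statement is $v_{2,0}=v_{0,2}=(1-\tau)^{-1/2}$, which is what the paper itself uses in the one-sample instantiation. This is a cosmetic mismatch in the $v_\alpha$ bookkeeping, not a substantive gap, since the only requirement is $v_\alpha\in L^1$.
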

The proof of Lemma \ref{Lem:assumpfdiv} is given in Appendix \ref{Lem:assumpfdiv-proof}. We proceed with the proofs of the results in Section \ref{Sec:limittheorems}.
\subsubsection{Proof of Theorem \ref{Thm:KLdiv-limdist}}
We will invoke Proposition \ref{Prop:Hadamarddiff-gen} to prove the claim. To that end, we specialize the Hadamard differentiability framework of Section \ref{Sec:Techframework} by identifying the relevant quantities and showing that the required assumptions hold.
For brevity, we will only prove  the two-sample case and highlight its differences with the one-sample case at the end.

\medskip
\noindent\textbf{Part $\bm{(iii)}$:} 
Let $\rho=\mu$, $g_1^{\star}=g_2^{\star}=1$, $\psi_{2,0}\circ(1,1)=\psi_{1,1}\circ(1,1)=1$, $\psi_{0,2}\circ(1,1)=1+q$,  $v_{2,0}=v_{0,2}=v_{1,1}=1$, $p_{\eta_1}=3$ and $p_{\eta_2}=3+q$. 
Note that $\kl{\mu_n}{\nu_n}=\Phi(p_{\mu_n}-1,p_{\nu_n}-1)$  with $\phi(x,y)=\phi_{\mathsf{KL}}(x,y)=x \log (x/y)$ in \eqref{eq:defPhifunc}. Also, observe that under the assumptions of Part $(i)$ of Theorem \ref{Thm:KLdiv-limdist}, we have 
$ (p_{\mu_n}-1,p_{\nu_n}-1) \in \Theta $ a.s., where 
\begin{equation}\label{eq:thetanormspacekl3}
\begin{split} 
\Theta &= \left\{ \begin{aligned}
    (g_1-1,g_2-1) \in \mathfrak{D}:&g_1\geq 0, g_2 \geq 0, \supp(g_1) \subseteq \supp(g_2),   \norm{g_1}_{1,\mu}=\norm{g_2}_{1,\mu}=1,\\ &\abs{g_1/g_2} \leq q,~ \mu  \mbox{-a.s.}
\end{aligned} \right\},\\
\mathfrak{D}&=\left\{(g_1,g_2):\, g_1,g_2 \in  L^1(\mu), \norm{(g_1-1,g_2-1)}_{\mathfrak{D}}<\infty \right\}.
\end{split}
 \end{equation}
 Note that $\Theta \subseteq \bar\Theta(q)$ as defined in \eqref{eq:thetbarqset}. 
Then, Part $(ii)(a)$ of Lemma \ref{Lem:assumpfdiv} implies that
Assumption \ref{Assump1} and  \ref{Assump4} are satisfied, while Assumption \ref{Assump3} holds by hypothesis. Assumption \ref{Assump2} holds since from \eqref{eq:KL2dervs}, we obtain
$  D^{(1,0)}\phi_{\mathsf{KL}}\circ (1,1)=1$ and $ D^{(0,1)}\phi_{\mathsf{KL}}\circ (1,1)=-1$.  Proposition \ref{Prop:Hadamarddiff-gen}$(i)$ and \eqref{eq:KL2dervs}  then yield
\[
 \Phi_{\theta^{\star}}'(h_1,h_2)=\int_{\mathfrak{S}} h_{1}d \mu -\int_{\mathfrak{S}} h_{2}d \mu,\qquad \mbox{and}\qquad\Phi_{\theta^{\star}}''(h_1,h_2)=\int_{\mathfrak{S}} \big(h_{1}-h_2\big)^2 d \mu, \notag
\]
for all $(h_1,h_2) \in\mathfrak{T}_{\Theta }(\theta^{\star})=\mathrm{cl}\big(\big\{\big((g_1-1)/t,(g_2-1)/t\big):(g_1-1,g_2-1) \in \Theta, ~t>0\big\}\big)$ (note that $\Theta$ is convex).

Next, note that $\Phi(0,0)=\Phi_{\theta^{\star}}'(h_1,h_2)=0$ for all $(h_1,h_2) \in \mathfrak{T}_{\Theta}(\theta^{\star})$. The latter follows from that fact that if $(h_1,h_2) \in \mathfrak{T}_{\Theta}(\theta^{\star})$, then $h_1,h_2 \in L^1(\mu)$ and there exists a sequence $(h_{1,n},h_{2,n})_{n \in \NN}$, where  $h_{1,n}=(g_{1,n}-1)/t_n$, $h_{2,n}=(g_{2,n}-1)/t_n$, $(g_{1,n}-1,g_{2,n}-1) \in \Theta$, such that $\norm{h_{1,n}-h_1}_{1,\mu} \vee \norm{h_{2,n}-h_2}_{1,\mu} \rightarrow 0$. As $\int_{\mathfrak{S}}h_{1,n} d\mu =0$ due to $\norm{g_{1,n}}_{1,\mu}=1$ for every $n$, we further have
\begin{align}
    \abs{\int_{\mathfrak{S}} h_1 d\mu}=\abs{\lim_{n \rightarrow \infty}\int_{\mathfrak{S}} h_1 d\mu-\int_{\mathfrak{S}} h_{1,n} d\mu } \leq \lim_{n \rightarrow \infty} \int_{\mathfrak{S}} \abs{h_1- h_{1,n}  }d\mu =0. \label{eq:integzero}
\end{align}
Hence, $\int_{\mathfrak{S}} h_1 d\mu=0$ and similarly, $\int_{\mathfrak{S}} h_2 d\mu=0$.

To conclude, we observe that  $\big(r_n(p_{\mu_n}-1),r_n(p_{\nu_n}-1)\big) \trightarrow{w} (B_1,B_2)$ in~$\mathfrak{D}$ (w.r.t. norm of $L^2(\eta_1) \times L^2(\eta_2)$) and that $\big(r_n(p_{\mu_n}-1),r_n(p_{\nu_n}-1)\big)$ and $(B_1,B_2)$ take values in $\mathfrak{T}_{\Theta}(\theta^{\star})$. An application of Lemma \ref{Lem:extfuncdelta}$(ii)$ then yields \eqref{eq:KL-twosample-null}.

\medskip

\noindent\textbf{Part $\bm{(iv)}$:} 
Taking $\rho=\nu$,  first consider the case when $p_{\mu}>0$. Let $g_1^{\star}=p_{\mu}$, $g_2^{\star}=1$, $\psi_{2,0}\circ\big(p_{\mu},1\big)=1/p_{\mu}$, $\psi_{1,1}\circ\big(p_{\mu},1\big)=1$, $\psi_{0,2}\circ\big(p_{\mu},1\big)=p_{\mu}+q$,  $v_{2,0}=v_{0,2}=v_{1,1}=1$, $p_{\eta_1}=2+(1/p_{\mu})$ and $p_{\eta_2}=2+p_{\mu}+q$. We have $\kl{\mu_n}{\nu_n}=\Phi(p_{\mu_n}-p_{\mu},p_{\nu_n}-1)$  with $\phi(x,y)=\phi_{\mathsf{KL}}(x,y)=x \log (x/y)$ in \eqref{eq:defPhifunc}. 
Under the hypothesis in Part $(iv)$,  
$(p_{\mu_n}-p_{\mu},p_{\nu_n}-1) \in \Theta  $ a.s., where 
\begin{equation}\label{eq:thetanormspacekl4}
\begin{split} 
\Theta&= \left\{\begin{aligned}
    (g_1-p_{\mu},g_2-1) \in \mathfrak{D}&:g_1\geq 0, g_2 \geq 0,  \supp(g_1) \subseteq \supp(g_2), \norm{g_1}_{1,\nu}=\norm{g_2}_{1,\nu}=1,\\& \abs{g_1/g_2} \leq q,~ \nu \mbox{-a.s.}
\end{aligned} \right\},\\
\mathfrak{D}&=\left\{(g_1,g_2):\, g_1,g_2 \in  L^1(\nu), \norm{(g_1,g_2)}_{\mathfrak{D}}<\infty\right\}.
\end{split}
 \end{equation}
Observe that $\Theta \subseteq \bar\Theta(q)$ as given in \eqref{eq:thetbarqset}, and that Assumptions \ref{Assump1}, \ref{Assump3}, and \ref{Assump4} are satisfied (via Part $(i)$ and $(ii)(a)$ of Lemma \ref{Lem:assumpfdiv}).  Assumption \ref{Assump2} also holds since  
$  D^{(1,0)}\phi_{\mathsf{KL}}\circ (p_{\mu},1)=1+\log p_{\mu}\in L^2(\nu)$ and $ D^{(0,1)}\phi_{\mathsf{KL}}\circ (p_{\mu},1)=- p_{\mu} \in L^2(\nu)$ (see \eqref{eq:KL2dervs}) by hypothesis. It then follows from Proposition \ref{Prop:Hadamarddiff-gen}$(i)$ and \eqref{eq:KL2dervs}  that
\begin{align}
 \Phi_{\theta^{\star}}'(h_1,h_2)&=\int_{\mathfrak{S}} \big(1+\log  p_{\mu}\big)h_{1}d \nu-\int_{\mathfrak{S}}h_{2}p_{\mu}d \nu=\int_{\mathfrak{S}} h_{1} \log  p_{\mu} \,d \nu-\int_{\mathfrak{S}}h_{2}d \mu, \label{eq:hadderklgen}
\end{align}
for all $(h_1,h_2)\mspace{-2.5mu}\in\mspace{-2.5mu}\mathfrak{T}_{\Theta}(\theta^{\star})\mspace{-2mu}=\mspace{-2mu}\mathrm{cl}\big(\mspace{-2mu}\big\{\mspace{-2mu}\big((g_1\mspace{-2mu}-\mspace{-2mu}p_{\mu})/t,(g_2\mspace{-2mu}-\mspace{-2mu}1)/t\big)\mspace{-2mu}:\,(g_1\mspace{-2mu}-\mspace{-2mu}p_{\mu},g_2\mspace{-2mu}-\mspace{-2mu}1)\mspace{-3mu} \in\mspace{-3mu}\Theta,\,t\mspace{-3mu}>\mspace{-1mu}0\mspace{-1mu}\big\}\mspace{-2mu}\big)$. Noting that $\Phi(0,0)=\kl{\mu}{\nu}$, we obtain \eqref{eq:KL-twosample-alt} from Lemma \ref{Lem:extfuncdelta}$(i)$ and the fact that $\big(r_n(p_{\mu_n}-p_{\mu}),r_n(p_{\nu_n}-1)\big) \trightarrow{w} (B_1,B_2)$ in $\mathfrak{D}$ taking values in $\mathfrak{T}_{\Theta}(\theta^{\star})$. 

Finally, consider the case $p_{\mu} \ngtr 0$. To handle this scenario, we restrict the probability measures to the support of $p_{\mu}$,  i.e., $\mathfrak{S}= \supp(p_{\mu})$,
and apply Proposition \ref{Prop:Hadamarddiff-gen}$(ii)$. Set $\rho= \tilde \nu:=\nu_{|_{\supp(p_{\mu})}}$, where $\nu_{|_{\supp(p_{\mu})}}$ denotes the aforementioned restriction of $\nu$. Then, observe that $\tilde \mu:=\mu_{|_{\supp(p_{\mu})}}$ and $\tilde \mu_n:=\mu_{n|_{\supp(p_{\mu})}}$ are probability measures since $\mu_n \ll \mu$, while $\tilde \nu$ and $\tilde \nu_n:=\nu_{n|_{\supp(p_{\mu})}}$ are possibly deficient probability measures, i.e., $0<\tilde \nu(\supp(p_{\mu})) \leq 1$ and $0 \leq \tilde \nu_n(\supp(p_{\mu})) \leq 1$. Defining
\begin{align}
    \Theta&= \left\{\begin{aligned}
    (g_1-p_{\mu},g_2-1) \in \mathfrak{D}&:g_1\geq 0, g_2 \geq 0,  \supp(g_1) \subseteq \supp(g_2), \norm{g_1}_{1,\tilde \nu}=1, \norm{g_2}_{1,\tilde \nu} \leq 1,\\& \abs{g_1/g_2} \leq q,~ \nu \mbox{-a.s.}
\end{aligned} \right\}, \notag
\end{align}
we have  $(p_{\tilde \mu_n}-p_{\tilde \mu},p_{\tilde \nu_n}-1) \in \Theta  $ a.s. It now follows that $\big(r_n( p_{\tilde \mu_n}- p_{\tilde \mu}), r_n( p_{\tilde \nu_n}- 1)\big) \trightarrow{w} (\tilde B_1,\tilde B_2)$, where $(\tilde B_1,\tilde B_2)$ is the restriction of $(B_1,B_2)$ to $\supp(p_{\mu})$. Having that, the same argument as above with $p_{\mu}$, $p_{\mu_n}$, $p_{\nu_n}$, $ \nu$ replaced by $p_{\tilde \mu}$, $p_{\tilde \mu_n}$, $p_{\tilde \nu_n}$, $\tilde \nu$, respectively, yields
 \begin{align}
       r_n \big(\kl{\tilde \mu_n}{\tilde \nu_n}-\kl{\tilde \mu}{\tilde \nu}\big) \trightarrow{d} \int_{\supp(p_{\mu})} \tilde B_1 \log p_{\mu}   d \nu-\int_{\supp(p_{\mu})} \tilde B_2    d \mu=\int_{\supp(p_{\mu})}  B_1 \log p_{\mu}   d \nu-\int_{\supp(p_{\mu})}  B_2    d \mu.  \notag
\end{align}
The claim then follows by noting that $\kl{\tilde \mu_n}{\tilde \nu_n}=\kl{ \mu_n}{ \nu_n}$ and $\kl{\tilde \mu}{\tilde \nu}=\kl{\mu}{\nu}$.

The proof for the one-sample null  follows via similar arguments to the two-sample null  by considering $\rho=\mu$, $g_1^{\star}=g_2^{\star}=1$, $\psi_{2,0}\circ\big(g_1^{\star},g_2^{\star}\big)=1$, $\psi_{0,2}=\psi_{1,1}=0$, $v_{2,0}=v_{0,2}=v_{1,1}=1$, $\eta_1=2 \mu$, $\eta_2=\mu$, and
\begin{align}
     \mathfrak{D}&=\left\{(g_1-1,g_2-1):\, g_1,g_2 \in  L^1(\mu), \norm{(g_1-1,g_2-1)}_{\mathfrak{D}}<\infty \right\}, \notag \\
     \Theta &=\left\{(g_1-1,0) \in \mathfrak{D}: g_1 \geq 0,  \norm{g_1}_{1,\mu} = 1\right\}. \notag
\end{align}
Likewise, the proof for the one-sample alternative is obtained via analogous steps to the two-sample alternative by taking  $\mu \ll \nu=\rho$, $g_1^{\star}=p_{\mu}$, $g_2^{\star}=1$, $\psi_{2,0}\circ\big(g_1^{\star},g_2^{\star}\big)=1/p_{\mu}$, $\psi_{0,2}=\psi_{1,1}=0$, $v_{2,0}=v_{0,2}=v_{1,1}=1$, $p_{\eta_1}=1+(1/p_{\mu})$, $p_{\eta_2}=1$, and 
\begin{align}
     \mathfrak{D}&=\left\{(g_1-p_{\mu},g_2-1):\, g_1,g_2 \in  L^1(\nu), \norm{(g_1-p_{\mu},g_2-1)}_{\mathfrak{D}}<\infty \right\}, \notag \\
     \Theta &=\left\{(g_1-p_{\mu},0) \in \mathfrak{D}: g_1 \geq 0,  \norm{g_1}_{1,\nu}=1\right\}.\notag
\end{align}
 This completes the proof of the theorem.

\subsubsection{Proof of Theorem \ref{Thm:chisqdiv-limdist}} 
For proving Part $(i)$ and $(ii)$, the continuous mapping theorem suffices, while for Part $(iii)$ and $(iv)$, we will use the functional delta method. 

\noindent\textbf{Part $\bm{(i)}$:} The claim follows from the continuous mapping theorem \cite[Theorem 1.3.6]{AVDV-book}, by noting that $r_n^2 \chisq{\mu_n}{\mu}$ $=\int_{\mathfrak{S}} \big(r_n(p_{\mu_n}-1)\big)^2 d \mu$, $r_n(p_{\mu_n}-1) \trightarrow{w} B$ in $L^2(\mu)$, and  $f\mapsto\norm{f}_{2,\mu}^2$ is a continuous functional in $L^2(\mu)$. Also, observe that $r_n(p_{\mu_n}-1) \in L^2(\mu)$ implies that $\chisq{\mu_n}{\mu}=\norm{p_{\mu_n}-1}_{2,\mu}^2<\infty$.

\medskip

\noindent\textbf{Part $\bm{(ii)}$:}  
Note that
\begin{align}
  \chisq{\mu_n}{\nu}  =\norm{p_{\mu_n}-1}_{2,\nu}^2=\norm{p_{\mu_n}-p_{\mu}}_{2,\nu}^2+\norm{p_{\mu}-1}_{2,\nu}^2+2 \int_{\mathfrak{S}}(p_{\mu_n}-p_{\mu})(p_{\mu}-1) d\nu. \notag 
\end{align}
Hence, 
\begin{align}
  r_n\big(\chisq{\mu_n}{\nu} -\chisq{\mu}{\nu}\big) &=\int_{\mathfrak{S}} \Big(r_n^{\frac 12}\big(p_{\mu_n}-p_{\mu}\big)\Big)^2 d\nu+2 \int_{\mathfrak{S}}r_n(p_{\mu_n}-p_{\mu})(p_{\mu}-1) d\nu \notag \\
  &=\int_{\mathfrak{S}} \Big(r_n^{\frac 12}\big(p_{\mu_n}-p_{\mu}\big)\Big)^2 d\nu+2 \int_{\mathfrak{S}}r_n(p_{\mu_n}-p_{\mu})p_{\mu} d\nu. \notag 
\end{align}
Since $r_n(p_{\mu_n}-p_{\mu}) \trightarrow{w} B$ in $L^2(\nu)$, Slutsky's theorem implies that $r_n^{1/2}(p_{\mu_n}-p_{\mu}) \trightarrow{w} 0$ in $L^2(\nu)$. Consequently, first term in the RHS above converges weakly to zero in $L^2(\nu)$ by the continuous mapping theorem applied to the continuous functional $\norm{\cdot}_{2,\nu}^2$. Next, note that $\chisq{\mu}{\nu}<\infty$ and $\chisq{\mu_n}{\nu}<\infty$ imply  $p_{\mu} \in L^2(\nu) $ and $p_{\mu_n} \in L^2(\nu) $, respectively.  Lemma \ref{Lem:contmapLp}$(ii)$ together with the fact that $r_n(p_{\mu_n}-p_{\mu}) \trightarrow{w} B$ then imply that $r_n(p_{\mu_n}-p_{\mu})p_{\mu} \trightarrow{w} B p_{\mu}$ in $L^1(\nu)$. Having that, the claim follows from the continuous mapping theorem since  $f\mapsto \int_{\mathfrak{S}} f\,d \nu$ is a continuous functional in $L^1(\nu)$. 

\medskip

\noindent\textbf{Part $\bm{(iii)}$:}
Let $\rho=\mu$, $g_1^{\star}=g_2^{\star}=1$, $\psi_{2,0}\circ(1,1)=1$, $\psi_{1,1}\circ(1,1)=1+q$, $\psi_{0,2}\circ(1,1)=1+q^2$,  $v_{2,0}=v_{0,2}=v_{1,1}=1$, $p_{\eta_1}=3+q$ and $p_{\eta_2}=3+q+q^2$. We have $\chisq{\mu_n}{\nu_n}=\Phi(p_{\mu_n}-1,p_{\nu_n}-1)$  with $\phi(x,y)=\phi_{\chi^2}(x,y)=(x-y)^2/y$ in \eqref{eq:defPhifunc}.  Also, under the hypothesis in Part $(iii)$, we have $p_{\nu_n}>0$ and $(p_{\mu_n}-1,p_{\nu_n}-1) \in \Theta= \bar\Theta(q) $ a.s., where $\bar\Theta(q)$ and the ambient space $\mathfrak{D}$ are given in \eqref{eq:thetanormspacekl3}. To prove  \eqref{eq:chisq-twosample-null}, we next verify that Assumptions \ref{Assump1}-\ref{Assump4} hold in this setting and then invoke Proposition \ref{Prop:Hadamarddiff-gen}.

Observe that Part $(ii)(b)$ of Lemma \ref{Lem:assumpfdiv} implies that
Assumption \ref{Assump1} and Assumption \ref{Assump4} are satisfied. Furthermore, Assumption \ref{Assump3} holds by hypothesis, while Assumption \ref{Assump2} is satisfied since 
$ \abs{D^{(1,0)}\phi_{\chi^2}\circ (1,1)}=\abs{D^{(0,1)}\phi_{\chi^2}\circ (1,1)}$$=0$ from \eqref{eq:chisq2dervs}. Then, it follows via Proposition \ref{Prop:Hadamarddiff-gen}$(i)$ and \eqref{eq:chisq2dervs}   that 
\begin{align}
 \Phi_{\theta^{\star}}'(h_1,h_2)&=0 \qquad \mbox{and}\qquad \Phi_{\theta^{\star}}''(h_1,h_2)=\int_{\mathfrak{S}}2 \big(h_{1}-h_2\big)^2 d \mu, \notag
\end{align}
for all $(h_1,h_2) \in \mathfrak{T}_{\Theta}(\theta^{\star})=\mathrm{cl}\big(\big\{\big((g_1-1)/t,(g_2-1)/t\big):(g_1-1,g_2-1) \in \Theta, ~t>0\big\}\big)$.~Lastly, since $\Phi(0,0)=0$,  $\big(r_n(p_{\mu_n}-1),r_n(p_{\nu_n}-1)\big) \trightarrow{w} (B_1,B_2)$ in $\mathfrak{D}$, and $\big(r_n(p_{\mu_n}-1),r_n(p_{\nu_n}-1)\big)$ as well as $(B_1,B_2)$ take values in $\mathfrak{T}_{\Theta}(\theta^{\star})$, the convergence in  \eqref{eq:chisq-twosample-null} follows from Lemma \ref{Lem:extfuncdelta}$(ii)$. 

\medskip

\noindent\textbf{Part $\bm{(iv)}$:}
Let $\rho=\nu$ and assume first that $p_{\mu}>0$. Set $g_1^{\star}=p_{\mu}$, $g_2^{\star}=1$, $\psi_{2,0}\circ\big(p_{\mu},1\big)=1$, $\psi_{0,2}\circ\big(p_{\mu},1\big)=p_{\mu}^2+q^2$, $\psi_{1,1}\circ\big(p_{\mu},1\big)=p_{\mu}+q$,  $v_{2,0}=v_{0,2}=v_{1,1}=1$, $p_{\eta_1}=2+p_{\mu}+q$ and $p_{\eta_2}=1+p_{\mu}+q+p_{\mu}^2+q^2$. We have $\chisq{\mu_n}{\nu_n}=\Phi(p_{\mu_n}-p_{\mu},p_{\nu_n}-1)$  with $\phi(x,y)=\phi_{\chi^2}(x,y)=(x-y)^2/y$ in \eqref{eq:defPhifunc}.  Under the hypothesis in Part $(iii)$, 
$(p_{\mu_n}-p_{\mu},p_{\nu_n}-1) \in \Theta= \bar\Theta(q) $ a.s., where $\bar\Theta(q)$ and $\mathfrak{D}$ are as in \eqref{eq:thetanormspacekl4}.   Assumptions \ref{Assump1}, \ref{Assump3}, and \ref{Assump4} are satisfied for the same reason as in Part $(iii)$. Assumption \ref{Assump2} holds since  \eqref{eq:chisq2dervs} together with the hypothesis $p_{\mu}  \in L^4(\nu)$ implies $D^{(1,0)}\phi_{\chi^2}\circ(p_{\mu},1)=2 (p_{\mu}-1) \in L^2(\nu)$ and $ D^{(0,1)}\phi_{\chi^2}\circ(p_{\mu},1)=1-p_{\mu}^2 \in L^2(\nu)$. Applying Proposition \ref{Prop:Hadamarddiff-gen}$(i)$ while using \eqref{eq:chisq2dervs}, we have
\begin{align}
 \Phi_{\theta^{\star}}'(h_1,h_2)&=\int_{\mathfrak{S}} 2\big(p_{\mu}-1\big)h_{1}d \nu+\int_{\mathfrak{S}}\big(1-p_{\mu}^2\big)h_{2}d \nu=2\int_{\mathfrak{S}} h_{1}  \,d \mu-\int_{\mathfrak{S}}h_{2} p_{\mu} d \mu, \notag
\end{align}
for all $(h_1,h_2) \in \mathfrak{T}_{\Theta}(\theta^{\star})=\mathrm{cl}\big(\big\{\big((g_1-p_{\mu})/t,(g_2-1)/t\big):(g_1-p_{\mu},g_2-1) \in \Theta, ~t>0\big\}\big)$. Here, the last equality follows since $\int_{\mathfrak{S}} h_1 d\nu=\int_{\mathfrak{S}} h_2 d\nu=0$ for
$(h_1,h_2) \in  \mathfrak{T}_{\Theta}(\theta^{\star})$, similarly to \eqref{eq:integzero}. Finally, \eqref{eq:chisq-twosample-alt} follows from the above equation and Lemma \ref{Lem:extfuncdelta}$(i)$ by  noting that $\Phi(0,0)=\chisq{\mu}{\nu}$ and  $\big(r_n(p_{\mu_n}-p_{\mu}),r_n(p_{\nu_n}-1)\big) \trightarrow{w} (B_1,B_2)$ in $\mathfrak{D}$, taking values in $\mathfrak{T}_{\Theta}(\theta^{\star})$.
Finally,  the case $p_{\mu} \ngtr 0$ is handled  by restricting the space to $\supp(p_{\mu})$ as given at the end of the proof of Theorem \ref{Thm:KLdiv-limdist}, Part $(iv)$. The claim then follows by noting that  
$\chisq{\tilde \mu_n}{\tilde \nu_n}=\chisq{\mu_n}{\nu_n}$ and  $\chisq{\tilde \mu}{\tilde \nu}=\chisq{\mu}{\nu}$, where $\tilde \mu_n$, $\tilde \nu_n$, $\tilde \mu$ and $\tilde \nu$ are as defined  therein.

\subsubsection{Proof of Theorem \ref{Thm:Helsqdiv-limdist}} \label{Thm:Helsqdiv-limdist-proof}
The proof uses  Proposition \ref{Prop:Hadamarddiff-gen} by identifying the relevant quantities in the Hadamard differentiability framework of Section \ref{Sec:Techframework} and showing that the pertinent assumptions hold. Again, we only prove the two-sample case, delineating the difference for the one-sample case at the end.

\medskip
\noindent\textbf{Part $\bm{(iii)}$:}
Let  $\mu_n,\nu_n, \mu\ll \rho$ for some finite measure $\rho$, and set $g_1^{\star}=g_2^{\star}=p_{\mu}$,  $\psi_{2,0}\circ\big(p_{\mu},p_{\mu}\big)=\big(p_{\mu}^{1/2}+q_2^{1/2}\big)/p_{\mu}^{3/2}$, $\psi_{0,2}\circ\big(p_{\mu},p_{\mu}\big)=\big(p_{\mu}^{1/2}+q_1^{1/2}\big)/p_{\mu}^{3/2}$, $\psi_{1,1}\circ\big(p_{\mu},p_{\mu}\big)=1/p_{\mu}$, $v_{2,0}=v_{0,2}=\tau^{1/2}(1-\tau)^{-1/2}$,  $v_{1,1}=1$, $p_{\eta_1}=p_{\eta_2}=1+(1/p_{\mu})+\big(q^{1/2}/p_{\mu}^{3/2}\big)$, and 
\begin{align}
  \mathfrak{D}&=\left\{(g_1-p_{\mu},g_2-p_{\mu}):\, g_1,g_2 \in  L^1(\rho), \norm{(g_1-p_{\mu},g_2-p_{\mu})}_{\mathfrak{D}}<\infty \right\}, \notag \\
    \Theta &= \left\{(g_1-p_{\mu},g_2-p_{\mu}) \in \mathfrak{D}: g_1,g_2 \geq 0,  \norm{g_1}_{1,\rho}=\norm{g_2}_{1,\rho}=1,|g_1|\vee |g_2| \leq q,~ \rho  \mbox{-a.e.}\right\}. \notag
\end{align}
The hypothesis in Part $(iii)$ entails $ (p_{\mu_n}-p_{\mu},p_{\nu_n}-p_{\mu}) \in  \Theta $ a.s. Further, $\helsq{\mu_n}{\nu_n}=\Phi(p_{\mu_n}-p_{\mu},p_{\nu_n}-p_{\mu})$ with $\phi(x,y) = \phi_{\mathsf{H}^2}(x,y)=(\sqrt{x}-\sqrt{y})^2$ in \eqref{eq:defPhifunc}. Consequently, Parts $(i)$ and $(ii)(c)$ of Lemma \ref{Lem:assumpfdiv} shows that Assumptions \ref{Assump1} and \ref{Assump4} are satisfied since $\Theta \subseteq \check \Theta(q_1,q_2)$ and $ \phi_{\mathsf{H}^2}$ is continuous at $(0,0)$. Assumption \ref{Assump3} holds by boundedness of $\mathsf{H}^2$ distance. Assumption \ref{Assump2} follows since 
$D^{(1,0)}\phi_{\mathsf{H}^2}\circ(p_{\mu},p_{\mu})= D^{(0,1)}\phi_{\mathsf{H}^2}\circ (p_{\mu},p_{\mu})=0 $ from \eqref{eq:helsq2dervs}.  Proposition \ref{Prop:Hadamarddiff-gen}$(ii)$ and \eqref{eq:helsq2dervs}  now yield 
\begin{align} \Phi_{\theta^{\star}}'(h_1,h_2)&=0 \qquad  \mbox{and} \qquad \Phi_{\theta^{\star}}''(h_1,h_2)=\int_{\mathfrak{S}}\frac{ \big(h_{1}-h_2\big)^2}{2 p_{\mu}} d \rho, \notag
\end{align}
for all $(h_1,h_2) \in \mathfrak{T}_{\Theta}(\theta^{\star})=\mathrm{cl}\big(\big\{\big((g_1-p_{\mu})/t,(g_2-p_{\mu})/t\big):(g_1-p_{\mu},g_2-p_{\mu}) \in \Theta, ~t>0\big\}\big)$.\\ The convergence in \eqref{eq:Helsq-twosample-null} is then a consequence of Lemma \ref{Lem:extfuncdelta}$(ii)$, together with $\Phi(0,0)=0$, $\big(r_n(p_{\mu_n}-p_{\mu}),r_n(p_{\nu_n}-p_{\mu})\big) \trightarrow{w} (B_1,B_2)$ in $\mathfrak{D}$, where $\big(r_n(p_{\mu_n}-p_{\mu}),r_n(p_{\nu_n}-p_{\mu})\big)$ and $(B_1,B_2)$ take values in $\mathfrak{T}_{\Theta}(\theta^{\star})$.

\medskip
\noindent\textbf{Part $\bm{(iv)}$:} Let  $\mu_n, \nu_n, \mu,\nu \ll \rho$ for some finite measure $\rho$, and set $g_1^{\star}=p_{\mu}$, $g_2^{\star}=p_{\nu}$, $\psi_{2,0}\circ\big(p_{\mu},p_{\nu}\big)=\big(p_{\nu}^{1/2}+q_2^{1/2}\big)/p_{\mu}^{3/2}$, $\psi_{0,2}\circ\big(p_{\mu},p_{\nu}\big)=\big(p_{\mu}^{1/2}+q_1^{1/2}\big)/p_{\nu}^{3/2}$, $\psi_{1,1}\circ\big(p_{\mu},p_{\nu}\big)=1/\big(p_{\mu}p_{\nu}\big)^{1/2}$, $v_{2,0}=v_{0,2}=\tau^{1/2}(1-\tau)^{-1/2}$, and $v_{1,1}=1$. 
Further define the densities
\begin{align}
p_{\eta_1}&=1+\Big(p_{\nu}^{\frac 12}+q_2^{\frac 12}\Big)p_{\mu}^{-\frac 32}+p_{\mu}^{-\frac 12}p_{\nu}^{-\frac 12} 
\qquad  \mbox{and} \qquad p_{\eta_2}=1+\Big(p_{\mu}^{\frac 12}+q_1^{\frac 12}\Big)p_{\nu}^{-\frac 32}+p_{\mu}^{-\frac 12}p_{\nu}^{-\frac 12},\notag
\end{align} 
and consider the spaces
\begin{equation} \notag
\begin{split} 
 \Theta &=\check \Theta(q_1,q_2)= \left\{(g_1-p_{\mu},g_2-p_{\nu}) \in \mathfrak{D}:g_1,g_2 \geq 0,  \norm{g_1}_{1,\rho}=\norm{g_2}_{1,\rho}=1,|g_1| \leq q_1, |g_2| \leq q_2,~ \rho  \mbox{-a.e.} \right\},\\
\mathfrak{D}&=\left\{(g_1-p_{\mu},g_2-p_{\nu}):\, g_1,g_2 \in  L^1(\rho), \norm{(g_1-p_{\mu},g_2-p_{\nu})}_{\mathfrak{D}}<\infty  \right\}. 
\end{split}
 \end{equation}

Note that under the hypothesis in Part $(iv)$, 
$ (p_{\mu_n}-p_{\mu},p_{\nu_n}-p_{\nu}) \in  \Theta $ a.s., and  $\helsq{\mu_n}{\nu_n}=\Phi(p_{\mu_n}-p_{\mu},p_{\nu_n}-p_{\nu})$  with $\phi(x,y) = \phi_{\mathsf{H}^2}(x,y)=(\sqrt{x}-\sqrt{y})^2$ in \eqref{eq:defPhifunc}. Assumptions \ref{Assump1} and \ref{Assump4} as well as the continuity of $ \phi_{\mathsf{H}^2}$ at $(0,0)$ holds via Lemma \ref{Lem:assumpfdiv}, Parts $(i)$ and $(ii)(c)$, while  Assumption \ref{Assump3} follows by boundedness of $\mathsf{H}^2$. Assumption \ref{Assump2} is satisfied since from \eqref{eq:helsq2dervs}, we have both
$D^{(1,0)}\phi_{\mathsf{H}^2}\circ(p_{\mu},p_{\nu})=1-p_{\nu}^{1/2}p_{\mu}^{-1/2}$ and $ D^{(0,1)}\phi_{\mathsf{H}^2}\circ (p_{\mu},p_{\nu})=1-p_{\mu}^{ 1/2}p_{\nu}^{- 1/2}$ belonging to $L^2(\rho)$ because $p_{\nu}/p_{\mu},p_{\mu}/p_{\nu} \in L^1(\rho)$ and $\rho$ is a finite measure. Invoking  Proposition \ref{Prop:Hadamarddiff-gen}$(ii)$ and  \eqref{eq:helsq2dervs}, we obtain
\begin{align}
\Phi_{\theta^{\star}}'(h_1,h_2)&=\int_{\mathfrak{S}}\Big(1-p_{\nu}^{\frac 12}p_{\mu}^{-\frac 12}\Big) h_1 d\rho+\int_{\mathfrak{S}}\Big(1-p_{\mu}^{\frac 12}p_{\nu}^{-\frac 12}\Big) h_2 d\rho=-\int_{\mathfrak{S}}p_{\nu}^{\frac 12}p_{\mu}^{-\frac 12} h_1 d\rho-\int_{\mathfrak{S}}p_{\mu}^{\frac 12}p_{\nu}^{-\frac 12} h_2 d\rho, \notag
\end{align}
for all $(h_1,h_2) \in \mathfrak{T}_{\Theta}(\theta^{\star})=\mathrm{cl}\big(\big\{\big((g_1-p_{\mu})/t,(g_2-p_{\nu})/t\big):(g_1-p_{\mu},g_2-p_{\nu}) \in \Theta, ~t>0\big\}\big)$.\\ The desired result, namely \eqref{eq:Helsq-twosample-alt}, then follows from Lemma \ref{Lem:extfuncdelta}$(i)$, along with $\Phi(0,0)=\helsq{\mu}{\nu}$,  $\big(r_n(p_{\mu_n}-p_{\mu}),r_n(p_{\nu_n}-p_{\nu})\big) \trightarrow{w} (B_1,B_2)$ in $\mathfrak{D}$ with $\big(r_n(p_{\mu_n}-p_{\mu}),r_n(p_{\nu_n}-p_{\nu})\big)$ and $(B_1,B_2)$ taking values in $\mathfrak{T}_{\Theta}(\theta^{\star})$.

The proof for the one-sample null follows via analogous arguments to the two-sample null by taking $\mu_n, \mu \ll \rho$ for some finite measure $\rho$,    $g_1^{\star}=g_2^{\star}=p_{\mu}$, $\psi_{2,0}\circ\big(p_{\mu},p_{\mu}\big)=1/p_{\mu}$, $\psi_{0,2}=\psi_{1,1}=0$, $v_{2,0}=(1-\tau)^{-1/2}$, $v_{0,2}=v_{1,1}=1$, $p_{\eta_1}=1+(1/p_{\mu})$, $\eta_2=\rho$, and
\begin{align}
   &  \mathfrak{D}=\left\{(g_1-p_{\mu},g_2-p_{\mu}):\, g_1,g_2 \in  L^1(\rho), \norm{(g_1-p_{\mu},g_2-p_{\mu})}_{\mathfrak{D}}<\infty \right\}, \notag \\
  &\Theta =\left\{(g_1-p_{\mu},0) \in \mathfrak{D}: g_1 \geq 0,  \norm{g_1}_{1,\rho}=1\right\}. \notag
\end{align}
 In a similar vein, the proof  for the one-sample alternative  is   akin to the two-sample alternative, by considering $\mu_n, \mu,\nu \ll \rho$ for some finite measure $\rho$, and set $g_1^{\star}=p_{\mu}$, $g_2^{\star}=p_{\nu}$,  $\psi_{2,0}\circ\big(p_{\mu},p_{\nu}\big)=p_{\nu}^{1/2}/p_{\mu}^{3/2}$, $\psi_{0,2}=\psi_{1,1}=0$,  $v_{2,0}=(1-\tau)^{-1/2}$,  $v_{0,2}=v_{1,1}=1$, $p_{\eta_1}=1+\big(p_{\nu}^{1/2}/p_{\mu}^{3/2}\big)$, $\eta_2=\rho$, and
\begin{align}
     &\mathfrak{D}=\left\{(g_1-p_{\mu},g_2-p_{\nu}):\, g_1,g_2 \in  L^1(\rho), \norm{(g_1-p_{\mu},g_2-p_{\nu})}_{\mathfrak{D}}<\infty \right\}, \notag \\
  &\Theta= \left\{(g_1-p_{\mu},0) \in \mathfrak{D}: g_1 \geq 0,  \norm{g_1}_{1,\rho}=1\right\}. \notag
\end{align}
\subsubsection{Proof of Theorem \ref{Thm:TVdiv-limdist}}\label{Thm:TVdiv-limdist-proof}
We prove the one- and two-sample null cases together. Then, we prove both statements under the alternative.

\medskip
\noindent\textbf{(One-sample and two-sample null):} 
The results in these cases  follow directly by the continuous mapping theorem. To see this, note that $f\mapsto\norm{f}_{\mathsf{TV}}=\norm{f}_{1,\rho}/2$ is a continuous functional in $L^1(\rho)$. With that, the convergence of $r_n(p_{\mu_n}-p_{\mu}) \trightarrow{w} B$  and  $r_n(p_{\mu_n}-p_{\nu_n}) \trightarrow{w} B$ in $L^1(\rho)$ imply
  \begin{align}
    r_n \tv{\mu_n}{\mu} \trightarrow{d} \frac{1}{2} \int_{\mathfrak{S}} \abs{B} d \rho \qquad  \mbox{and} \qquad
    r_n \tv{\mu_n}{\nu_n} \trightarrow{d} \frac{1}{2} \int_{\mathfrak{S}} \abs{B} d \rho,  \notag 
\end{align}
respectively. Note that the above limit distributions are special cases of \eqref{eq:TV-onesample-alt} and \eqref{eq:TV-twosample-alt}~for $\mu=\nu$, since  $\cQ=\mathfrak{S}$ in this case and the second terms in the respective equations vanish.

\medskip
\noindent\textbf{(One-sample and two-sample alternative):}
Note that $\tv{\mu}{\nu}=\Phi(0,0)$ with $\phi(x,y)=\abs{x-y}/2$, $g_1^{\star}=p_{\mu}$ and $g_2^{\star}=p_{\nu}$ in \eqref{eq:defPhifunc}. Hence Assumption \ref{Assump1} is violated (at $x=y$), and the Hadamard differentiability framework in Section \ref{Sec:Techframework} does not apply directly. To circumvent that, we provide a direct argument for computing the Hadamard directional derivative in this case.

\medskip
Let $\mathfrak{D}=L^1(\rho)$, $g_1^{\star}, g_2^{\star} \in L^1(\rho)$ be arbitrary, set
\[
  \hat \Theta=\big\{g=g_1-g_2:g_1,g_2 \in L^1(\rho),~g_1,g_2 \geq 0, \norm{g_1}_{1,\rho}=\norm{g_2}_{1,\rho}=1\big\},
\]
and consider the functional $\Upsilon:\hat \Theta \rightarrow \RR_{\geq 0}$ given by $\Upsilon(g)=\int_{\mathfrak{S}}\abs{g}/2\, d \rho$. We will show that $\Upsilon$ is locally Lipschitz at $\theta^{\star}=g_1^{\star}-g_2^{\star}\in \hat \Theta$, whereby its Hadamard directional derivative at $\theta^{\star}$ coincides with the G\^{a}teaux directional derivative \cite{Shapiro-1990,Romisch-2004}, which is defined by  
\[
   \Upsilon'_{\theta^{\star}}(h)=\lim_{t_n \downarrow 0^+} \frac{\Upsilon(g_1^{\star}-g_2^{\star}+t_n h)-\Upsilon(g_1^{\star}-g_2^{\star})}{t_n},\quad h\in\mathfrak{D}.
\]
For the local Lipschitzness, observe that
\[
\Upsilon(g)-\Upsilon(\tilde g) = \frac 12 \int_{\mathfrak{S}} \abs{g} d \rho  -\frac 12 \int_{\mathfrak{S}} \abs{\tilde g} d \rho    \notag \leq \frac 12 \int_{\mathfrak{S}} \abs{g-\tilde g} d \rho =  \frac 12  \norm{g-\tilde g}_{1,\rho},
\]
and switch the roles of $g$ and $\tilde g$ to conclude that
$\abs{\Upsilon(g)-\Upsilon(\tilde g)} \leq \norm{g-\tilde g}_{1,\rho}/2$.

\medskip
To compute the G\^{a}teaux derivative of $\Upsilon$, set $\cQ^{\star}:=\{s \in \mathfrak{S}: g_1^{\star}(s)=g_1^{\star}(s)\}$ and write
\begin{flalign}
 \frac{\Upsilon(g_1^{\star}-g_2^{\star}+t h)-\Upsilon(g_1^{\star}-g_2^{\star})}{t}&=\frac 12 \int_{\cQ^{\star}}\abs{h} d\rho +\frac 12 \int_{\mathfrak{S}\setminus \cQ^{\star}}  \frac{\abs{g_1^{\star}-g_2^{\star}+t h}-\abs{g_1^{\star}-g_2^{\star}}}{t}  d\rho.\label{eq:gateauxder}
\end{flalign}
Note that the integrand in the last term above is dominated by $\abs{h}\in L^1(\rho)$ $\rho$-a.e. and that on $\mathfrak{S}\setminus \cQ^{\star}$, we have
\begin{align}
\lim_{t \downarrow 0^+} \frac{\abs{g_1^{\star}-g_2^{\star}+t h}-\abs{g_1^{\star}-g_2^{\star}}}{t}   =\mathrm{sgn}\big(g_1^{\star}-g_2^{\star}\big)h,\notag
\end{align}
where $\mathrm{sgn}(x)=x/|x|$ for $x \neq 0$ and $\mathrm{sgn}(0)=0$. An application of dominated convergence theorem to the last term in \eqref{eq:gateauxder} then yields the G\^{a}teaux derivative at $\theta^{\star}$ given by 
\begin{align}
  \Upsilon'_{\theta^{\star}}(h):= \lim_{t \downarrow 0^+} \frac{\Upsilon(g_1^{\star}-g_2^{\star}+t h)-\Upsilon(g_1^{\star}-g_2^{\star})}{t}&=\frac 12 \int_{\cQ^{\star}}\abs{h} d\rho +\frac 12 \int_{\mathfrak{S}\setminus \cQ^{\star}} \mathrm{sgn}\big(g_1^{\star}-g_2^{\star}\big)hd\rho,\label{eq:HadderTVdist}
\end{align}
which coincides with the Hadamard derivative due to the aforementioned local Lipschitzness of the functional.

\medskip
We are now in place to prove the limit distribution results. For the one-sample alternative case, let $\mu_n,\mu,\nu \ll \rho$ for some measure $\rho$, set $\theta^{\star}=p_{\mu}-p_{\nu}$, and consider 
\begin{align}
\Theta=\big\{g_1-p_{\nu}:g_1 \in L^1(\rho),~g_1 \geq 0, \norm{g_1}_{1,\rho}=1\big\} \subset \hat \Theta. \notag
\end{align}
Note that $\mathfrak{T}_{\Theta}(\theta^{\star})=\mathrm{cl}\big(\big\{\big(g_1-p_{\mu})/t: g_1-p_{\nu} \in \Theta, ~t>0\big\}\big)$ since $ \Theta$ is convex. Setting $\cQ:=\{s \in \mathfrak{S}: p_{\mu}(s)=p_{\nu}(s)\}$, the Hadamard directional derivative of $\Upsilon$ at $\theta^\star$ is
\begin{align}
\Upsilon'_{\theta^{\star}}(h)=\frac 12 \int_{\cQ}\abs{h} d\rho +\frac 12 \int_{\mathfrak{S}\setminus \cQ} \mathrm{sgn}\big(p_{\mu}-p_{\nu}\big)hd\rho,\quad h \in \mathfrak{T}_{\Theta}(\theta^{\star}).\notag
\end{align}
Lemma \ref{Lem:extfuncdelta}$(i)$ along with the fact that $r_n(p_{\mu_n}-p_{\mu}) \trightarrow{w} B$ in $L^1(\rho)$ with $r_n(p_{\mu_n}-p_{\mu})$ and $B$ taking values in $\mathfrak{T}_{\Theta}(\theta^{\star})$, leads to
\begin{align}
  r_n\big(\tv{p_{\mu_n}}{p_{\nu}}-\tv{p_{\mu}}{p_{\nu}}\big)=  r_n\big(\Upsilon(p_{\mu_n}-p_{\nu})-\Upsilon(p_{\mu}-p_{\nu})\big) \trightarrow{d} \Upsilon'_{\theta^{\star}}(B), \notag
\end{align}
which proves \eqref{eq:TV-onesample-alt}. 
\medskip

For two-sample case, let $\mu_n, \nu_n,\mu,\nu \ll \rho$ for some measure $\rho$, $\theta^{\star}=p_{\mu}-p_{\nu}$, and $\Theta=\hat \Theta$. Again, $\mathfrak{T}_{\Theta}(\theta^{\star})=\mathrm{cl}\big(\big\{\big(g-(p_{\mu}-p_{\nu})\big)/t:g \in \Theta, ~t>0\big\}\big)$ since $\Theta$ is convex. The claim in \eqref{eq:TV-twosample-alt} then follows using same arguments as above via Lemma \ref{Lem:extfuncdelta}$(i)$ together with  $r_n\big(p_{\mu_n}-p_{\nu_n}-(p_{\mu}-p_{\nu})\big) \trightarrow{w} B$ in $L^1(\rho)$ with $r_n\big(p_{\mu_n}-p_{\nu_n}-(p_{\mu}-p_{\nu})\big)$ and $B$ supported in $\mathfrak{T}_{\Theta}(\theta^{\star})$. This completes the proof.

\subsection{Proofs for Section \ref{Sec:GS limit theorems}}
To establish the limit distribution for Gaussian-smoothed $f$-divergences, we first use the CLT in $L^r$ spaces  to deduce weak convergence of the smooth empirical process, and then invoke the general limit distribution theorems from Section \ref{Sec:limittheorems}. The CLT in $L^r$ spaces is stated next.

\begin{theorem}[Proposition 2.1.11 in \cite{AVDV-book}] \label{Thm:CLT-in-Lp}
Let $1 \leq r <\infty$, and $Z,Z_1,\ldots,Z_n$ be i.i.d. $L^r(S,\cS,\rho)$-valued random variables (recall $\rho$ is $\sigma$-finite) with zero mean (in the sense of Bochner).  The following are equivalent:
\begin{enumerate}[(i)]
    \item There exists a centered Gaussian process $G$ in $L^r$ with  same covariance function as $Z$ such that $n^{-1/2}\sum_{i=1}^n Z_i$ converges weakly in $L^r$ to $G$.
    \item $\int_{\mathfrak{S}}\big(\EE\big[\abs{Z(s)}^2\big]\big)^{r/2} d \rho(s) <\infty$ and $\PP(\norm{Z}_{r,\rho} \geq t)=o(t^{-2})$ as $t \rightarrow \infty$.
\end{enumerate}
\end{theorem}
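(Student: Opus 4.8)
The plan is to prove the two implications separately, with the sufficiency of the moment conditions being the substantive direction; this is a classical result from probability in Banach spaces, so I will only sketch the main steps.

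For (i) $\Rightarrow$ (ii): assuming $n^{-1/2}\sum_{i=1}^n Z_i\trightarrow{w}G$ in $L^r$ with $G$ a centered $L^r(\mathfrak{S},\cS,\rho)$-valued Gaussian having the covariance of $Z$, I would first invoke Fernique's theorem to get $\EE\norm{G}_{r,\rho}^r<\infty$, then apply Tonelli and the scalar Gaussian moment identity $\EE[|G(s)|^r]=c_r(\EE[G(s)^2])^{r/2}$ together with $\EE[G(s)^2]=\EE[Z(s)^2]$ (equality of covariances on the diagonal) to deduce $\int_{\mathfrak{S}}(\EE[Z(s)^2])^{r/2}d\rho(s)<\infty$, the first half of (ii). For the tail bound, I would use that the weak convergence in (i) forces the sequence $\{n^{-1/2}\sum_{i=1}^n Z_i\}_n$ to be uniformly tight, hence bounded in probability in $L^r$; a standard symmetrization and maximal-inequality argument valid in any separable Banach space then yields $n\,\PP(\norm{Z}_{r,\rho}>\epsilon\sqrt n)\to 0$ for every $\epsilon>0$, i.e. $\PP(\norm{Z}_{r,\rho}>t)=o(t^{-2})$.

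For (ii) $\Rightarrow$ (i): I would first use the moment condition to produce the candidate limit $G$, realizing a scalar Gaussian process with covariance $(s,t)\mapsto\EE[Z(s)Z(t)]$ and checking $\EE\int_{\mathfrak{S}}|G(s)|^r d\rho=c_r\int_{\mathfrak{S}}(\EE[G(s)^2])^{r/2}d\rho<\infty$, so that (a measurable version of) $G$ is an $L^r$-valued centered Gaussian. To show $n^{-1/2}\sum_{i=1}^n Z_i\trightarrow{w}G$, I would truncate row-wise, $Z_i=Z_i'+Z_i''$ with $Z_i'=Z_i\ind_{\norm{Z_i}_{r,\rho}\le\sqrt n}$; the tail hypothesis makes $\PP(\exists i\le n:Z_i''\neq 0)\le n\,\PP(\norm{Z}_{r,\rho}>\sqrt n)\to 0$ and the centering correction $\sqrt n\,\EE Z_1'$ negligible, reducing matters to $n^{-1/2}\sum_{i=1}^n(Z_i'-\EE Z_i')\trightarrow{w}G$. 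Next I would fix a refining sequence of finite partitions of $\mathfrak{S}$ with associated conditional-expectation (norm-one) projections $\Pi_k$ onto finite-dimensional subspaces, satisfying $\Pi_k f\to f$ in $L^r$; for each fixed $k$ the classical multivariate CLT gives $n^{-1/2}\sum_{i=1}^n(\Pi_k Z_i-\EE\Pi_k Z_i)\trightarrow{d}\Pi_k G$. Finally I would establish asymptotic tightness by bounding the high-frequency remainder $R_{n,k}:=n^{-1/2}\sum_{i=1}^n((I-\Pi_k)Z_i'-\EE(I-\Pi_k)Z_i')$ uniformly in $n$: for $r\ge 2$, using that $L^r$ is of type $2$, $\EE\norm{R_{n,k}}_{r,\rho}^2\lesssim\EE\norm{(I-\Pi_k)Z}_{r,\rho}^2\to 0$ by dominated convergence with majorant $\norm{Z}_{r,\rho}^2$; for $1\le r<2$, using the Banach-lattice structure of $L^r$ and a square-function inequality, $\EE\norm{R_{n,k}}_{r,\rho}^r\lesssim\int_{\mathfrak{S}}(\EE[|(I-\Pi_k)Z(s)|^2])^{r/2}d\rho(s)\to 0$. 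Combining finite-dimensional convergence with asymptotic tightness gives (i).

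The step I expect to be the main obstacle is the uniform-in-$n$ control of the remainder $R_{n,k}$, particularly for $1\le r<2$ where no type-$2$ inequality is available: there one must exploit the lattice structure of $L^r$ through a square-function (Rosenthal-type) estimate to bound the $L^r$-norm of a sum of independent mean-zero truncated summands by the scalar quantity $\int_{\mathfrak{S}}(\EE[Z(s)^2])^{r/2}d\rho$, with constants independent of $n$ and $k$. The reverse implication, the finite-dimensional CLT, the pregaussian construction of $G$, and the truncation bookkeeping are comparatively routine.
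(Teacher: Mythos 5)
This statement is not proved in the paper: it is imported verbatim as Proposition~2.1.11 of the cited reference, and the authors use it only as a black box to verify the weak-convergence hypotheses of their limit theorems. There is therefore no in-paper proof to compare your argument against.

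As a reconstruction of the classical Banach-space CLT for $L^r$ your sketch is essentially correct and follows the standard Hoffmann-J{\o}rgensen--Pisier route. Direction (i)$\Rightarrow$(ii) via Fernique for the moment, and stochastic boundedness plus symmetrization and L\'evy's maximal inequality for the tail $n\,\PP(\|Z\|_{r,\rho}>\eps\sqrt n)\to0$, is exactly the textbook argument. For (ii)$\Rightarrow$(i), the truncation at level $\sqrt n$, the finite-dimensional CLT through conditional-expectation projections $\Pi_k$, and the type-$2$ estimate for $r\geq 2$ are all the right tools. You have also correctly flagged the genuinely delicate step: the uniform-in-$n$ control of the remainder $R_{n,k}$ when $1\leq r<2$. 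The clean way to finish that step is to note that, by Tonelli and Jensen (since $r\leq 2$), $\EE\|R_{n,k}\|_{r,\rho}^r\leq\int_{\mathfrak S}\big(\EE[((I-\Pi_k)Z(s))^2]\big)^{r/2}\,d\rho(s)$, and then to view $s\mapsto Z(s,\cdot)$ as an element of the mixed-norm space $L^r(\rho;L^2(\PP))$ (which is exactly what the first condition in (ii) asserts); since $\Pi_k$ is a norm-one projection on that Bochner space and $\Pi_k\to I$ strongly by martingale convergence, this quantity tends to $0$, uniformly in $n$. Your invocation of a ``square-function inequality'' is a correct instinct but leaves implicit the domination needed to pass to the $k\to\infty$ limit; the mixed-norm viewpoint supplies it. One further routine but easy-to-overlook point: one needs a jointly measurable version of $G$ on $\Omega\times\mathfrak S$ to make sense of the pointwise identities $\EE[|G(s)|^r]=c_r(\EE[Z(s)^2])^{r/2}$ and of the covariance matching; this is standard but should be stated. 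Beyond these, I see no gap; but since the paper does not reprove the result, this work, while instructive, duplicates a reference rather than any argument in the manuscript.
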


Henceforth, in this section, we  apply the general results from Section 4, namely, Theorems 1-4, with
$\mu \leftarrow \mu*\gamma_{\sigma}$, $\nu \leftarrow \nu*\gamma_{\sigma}$, $\mu_n=\hat {\mu}_n*\gamma_\sigma$ and $\nu_n=\hat {\nu}_n*\gamma_\sigma$.
The reference measure $\rho$  will be adapted on a case-by-case basis and specified in the proofs below.
\subsubsection{Proof of Proposition \ref{Prop:GS-KL-limdist} }

\noindent\textbf{Part $\bm{(i)}$:} To prove \eqref{eq:KL-GS-onesamp-null}, we apply Theorem \ref{Thm:KLdiv-limdist}(i) with $\rho=\mu*\gamma_{\sigma}$. To verify that the required conditions hold, first observe that $\mu*\gamma_{\sigma} \ll\gg \hat {\mu}_n*\gamma_\sigma$. Furthermore, for $\hat X \sim \hat \mu_n$  and  $X \sim \mu$ independent of $Y \sim \mu$, we have 
\begin{align}
    \EE \big[ \kl{\hat {\mu}_n*\gamma_{\sigma}}{\mu*\gamma_{\sigma}}\big] & =       \EE \Big[    \kl{\EE_{\hat \mu_n}\big[N\big(\hat X,\sigma^2I_d\big)\big]}{\EE_{ \mu}\big[N\big(Y,\sigma^2I_d\big)\big]}\Big]\notag \\
     &\stackrel{(a)}{\leq}\EE \Big[ \kl{N\big( X,\sigma^2I_d\big)}{N\big( Y,\sigma^2I_d\big)}\Big]\notag \\
     &\stackrel{(b)}{=} \frac{1}{2 \sigma^2}\EE \Big[\norm{X-Y}^2\Big]< \infty, \notag
 \end{align}
 where $(a)$ is by Jensen's inequality applied to KL divergence which is jointly convex in its arguments, $(b)$ uses the closed-form expression for KL divergence between multivariate Gaussian distributions (see  \cite{GIL-2013}), and the last inequality is because all moments of a sub-Gaussian distribution are finite (see\cite{jin-2019-subgaussnorm}). Since KL divergence is nonnegative, this implies that $\kl{\hat {\mu}_n*\gamma_{\sigma}}{\mu*\gamma_{\sigma}}<\infty$ a.s. Setting $r_n=n^{1/2}$, \eqref{eq:KL-GS-onesamp-null} follows from \eqref{eq:KL-onesample-null} provided that
\begin{align}
  &  n^{1/2} \left(\frac{\hat\mu_n*\varphi_{\sigma}}{\mu*\varphi_{\sigma}}-1\right) \trightarrow{w} \frac{G_{\mu,\sigma}}{\mu*\varphi_{\sigma}}\quad\mbox{ in }L^2(\mu*\gamma_\sigma). \label{eq:weakconv-emprat-null1}
    \end{align}

It thus remains to establish \eqref{eq:weakconv-emprat-null1}, for which we use Theorem \ref{Thm:CLT-in-Lp}. Note that $n^{1/2} \big((\hat \mu_n*\varphi_{\sigma}/ \mu*\varphi_{\sigma})-1\big)=n^{-1/2} \sum_{i=1}^n Z_i$, where
\begin{align}
 Z_i=\frac{\varphi(\cdot-X_i)-\mu*\varphi_{\sigma}}{\mu*\varphi_{\sigma}},\quad  i=1,\ldots,n, \label{eq:defZivars} 
\end{align}
are centered  i.i.d. random variables with the same distribution as $Z=\big(\varphi(\cdot-X)-\mu*\varphi_{\sigma}\big)/ \mu*\varphi_{\sigma}$. Further observe that the process $Z_i$ is jointly measurable when viewed as a map from $\big(\Omega \times \RR^d, \cA \times \cB(\RR^d)\big)$ to $\RR$,  and
has paths a.s. in $L^2(\mu*\gamma_\sigma)$; indeed,
\begin{align}
 \EE\big[\norm{Z_i}_{2,\mu*\gamma_\sigma}^2 \big]=\int_{\RR^d}\EE_{\mu}\left[\abs{Z_i(x)}^2\right] \mu*\varphi_{\sigma}(x) dx=\int_{\RR^d}\frac{\mathsf{Var}_{\mu}\big(\varphi_{\sigma}(x-\cdot)\big)}{\mu*\varphi_{\sigma}(x)}\, dx <\infty, \label{eq:pathsasL2}   
\end{align}
by Fubini's theorem and the assumption in \eqref{ASSUM:KL_null}. This finiteness also implies the conditions in Theorem \ref{Thm:CLT-in-Lp}(ii). The first follows directly from the equation above, while for the second, we show that if  $\PP(\norm{Z}_{2,\mu*\gamma_\sigma} \geq t)=o(t^{-2})$ as $t \rightarrow \infty$ does not hold, then $\EE\big[\norm{Z_i}_{2,\mu*\gamma_\sigma}^2 \big]<\infty$ is contradicted. To see this, note that the violation of the former condition implies that there exists constants $c>0$ and $t_0 \geq 1$ such that for all $t \geq t_0$, $\PP(\norm{Z}_{2,\mu*\gamma_\sigma} \geq t) \geq ct^{-2}$. Then
\begin{align}
  \EE\big[\norm{Z}_{2,\mu*\gamma_\sigma}^2\big]=\int_{0}^{\infty} \PP(\norm{Z}_{2,\mu*\gamma_\sigma} \geq \sqrt{t}) dt \geq \int_{t_0}^{\infty} ct^{-1} dt=\infty, \notag
\end{align}
which is the desired contradiction.
   Hence, the conditions in Theorem \ref{Thm:CLT-in-Lp}(ii) are satisfied under \eqref{ASSUM:KL_null}. Consequently, by Theorem \ref{Thm:CLT-in-Lp}$(i)$, there exists a centered Gaussian process $G_{\mu,\sigma}/\mu*\varphi_{\sigma}$ such that 
\[
n^{-1/2} \sum_{i=1}^n Z_i \trightarrow{w} \frac{G_{\mu,\sigma}}{\mu*\varphi_{\sigma}}\quad\mbox{ in }L^2(\mu*\gamma_\sigma),\] 
which completes the proof of \eqref{eq:KL-GS-onesamp-null}.

The claim that \eqref{ASSUM:KL_null} and \eqref{eq:KL-GS-onesamp-null} holds for $\beta$-sub-Gaussian $\mu$  with $\beta<\sigma$ follows from Proposition 1 in \cite{AZYA-2022} by noting that the LHS of \eqref{ASSUM:KL_null} equals $I_{\chi^2}(V;W)$, where $I_{\chi^2}(V;W):=\chisq{P_{V,W}}{P_V \times P_W}$ is the $\chi^2$ mutual information between $V$ and $W=V+Z$, where $V \sim \mu$ and $Z \sim \gamma_{\sigma}$ are independent. Finally, we observe that if \eqref{ASSUM:KL_null} is violated then $\liminf_{n \rightarrow \infty}n\EE\big[\kl{\hat\mu_n*\gamma_\sigma}{\mu*\gamma_\sigma}\big]= \infty$. This is a consequence of Lemma \ref{lem:fdivnullonesamplb}, by noting that $f_{\mathsf{KL}}$ is continuously twice differentiable  with  positive second derivative. 
\medskip

\noindent\textbf{Part $\bm{(ii)}$:} To prove \eqref{eq:KL-GS-onesamp-alt}, we use Theorem \ref{Thm:KLdiv-limdist}$(ii)$ 
with $\rho=\nu*\gamma_{\sigma}$.  Note  that $\mu*\gamma_{\sigma}$,  $\hat {\mu}_n*\gamma_\sigma$, $\nu*\gamma_{\sigma} $ are all mutually absolutely continuous, and $p_{\mu*\gamma_{\sigma}}=\mu*\varphi_{\sigma}/\nu*\varphi_{\sigma}>0$. Moreover,  $\log \big(\mu*\varphi_{\sigma}/\nu*\varphi_{\sigma}\big) \in L^2(\nu*\varphi_{\sigma})$ by assumption, and using similar steps as in Part $(i)$, we have $ \kl{\mu*\gamma_{\sigma}}{\nu*\gamma_{\sigma}}<\infty$ and $\kl{\hat \mu_n*\gamma_{\sigma}}{\nu*\gamma_{\sigma}}<\infty$ a.s. Hence, we have from \eqref{eq:KL-onesample-alt} with $r_n=n^{1/2}$  that
\begin{align}
   n^{\frac 12}\big(\kl{\hat \mu_n*\gamma_\sigma}{\nu*\gamma_{\sigma}}-\kl{\mu*\gamma_{\sigma}}{\nu*\gamma_{\sigma}}\big) \trightarrow{d}
   \int_{\RR^d} G_{\mu,\sigma}(x) \log \left(\frac{\mu*\varphi_{\sigma}(x)}{\nu*\varphi_{\sigma}(x)}\right) d x,
 \label{eq:onesampaltKLinterm} 
   \end{align}
provided 
\begin{align}
 n^{1/2} \left(\frac{\hat\mu_n*\varphi_{\sigma}}{\nu*\varphi_{\sigma}}-\frac{\mu*\varphi_{\sigma}}{\nu*\varphi_{\sigma}}\right) \trightarrow{w} \frac{G_{\mu,\sigma}}{\nu*\varphi_{\sigma}}\quad\mbox{ in }L^2(\eta), \label{eq:weakconvreqalt1}
 \end{align}
where $\eta$ has relative density $p_{\eta}=1+(\nu*\varphi_{\sigma}/\mu*\varphi_{\sigma})$. To prove  \eqref{eq:KL-GS-onesamp-alt}, it remains to show that \eqref{eq:weakconvreqalt1} holds and that the RHS of \eqref{eq:onesampaltKLinterm} is $N\big(0,v_{1,f_{\mathsf{KL}}}^2(\mu,\nu,\sigma)\big)$. 

\medskip

We first show that \eqref{eq:weakconvreqalt1} holds given \eqref{ASSUM:KL_null}, \eqref{ASSUM:KL_alt}, and $\norm{(\nu*\varphi_{\sigma})^2/\mu*\varphi_{\sigma}}_{\infty}<\infty$. The latter along with the fact that $\nu*\gamma_{\sigma}$ is a probability measure implies that $\eta$ is $\sigma$-finite. Next, observe that 
$n^{1/2} (\hat \mu_{n}*\varphi_{\sigma}-\mu*\varphi_{\sigma})/\nu*\varphi_{\sigma}=n^{-1/2} \sum_{i=1}^n Z_i$, where $Z_i=\big(\varphi(\cdot-X_i)-\mu*\varphi_{\sigma}\big)/ \nu*\varphi_{\sigma}$, for $i=1,\ldots, n$, are centered and i.i.d. with the same distribution as $Z=\big(\varphi(\cdot-X)-\mu*\varphi_{\sigma}\big)/ \nu*\varphi_{\sigma}$. The claim then follows from Theorem~\ref{Thm:CLT-in-Lp}$(ii)$, given that the conditions therein are satisfied. Akin to the proof of Part~$(i)$ above, the process $Z_i$ is jointly measurable with paths a.s. in $L^2(\eta)$. Indeed
\begin{align}
\EE\big[\norm{Z_i}_{2,{\eta}}^2 \big]&=\int_{\RR^d}\EE_{\mu} \left[\abs{Z_i(x)}^2\right] \left(\nu*\varphi_{\sigma}(x)+ \frac{\big(\nu*\varphi_{\sigma}(x)\big)^2}{\mu*\varphi_{\sigma}(x)}\right) dx\notag \\
&=\int_{\RR^d}\frac{\mathsf{Var}_{\mu}\big(\varphi_{\sigma}(x-\cdot)\big)}{\nu*\varphi_{\sigma}(x)}\, dx+\int_{\RR^d}\frac{\mathsf{Var}_{\mu}\big(\varphi_{\sigma}(x-\cdot)\big)}{\mu*\varphi_{\sigma}(x)}\, dx <\infty, \label{eq:bndsecmomexp}
\end{align}
where the first equality is by Fubini's theorem and last inequality is due to \eqref{ASSUM:KL_null} and \eqref{ASSUM:KL_alt}. This finiteness implies the conditions in Theorem \ref{Thm:CLT-in-Lp}(ii): the first follows trivially, while the second follows via similar arguments as in Part $(i)$. Invoking Theorem \ref{Thm:CLT-in-Lp}, there exists a centered Gaussian process $G_{\mu,\sigma}/\nu*\varphi_{\sigma}$ such that \eqref{eq:weakconvreqalt1} holds.

Next, we claim that the RHS of \eqref{eq:onesampaltKLinterm} is zero mean Gaussian random variable with variance $v_{1,f_{\mathsf{KL}}}^2(\mu,\nu,\sigma)$. This follows from the dual characterization of a random variable $B$ taking values in a Banach space $\mathfrak{B}$ as Gaussian if and only if $f(B)$ is a real-valued Gaussian random variable for every $f$ in the topological dual space of continuous linear functionals on $\mathfrak{B}$ (see \cite[Page 55]{LT-1991}). Hence,  the  RHS of \eqref{eq:onesampaltKLinterm} is a real Gaussian random variable because $G_{\mu,\sigma}/\sqrt{\nu*\varphi_{\sigma}}$ is $L^2(\RR^d)$-valued Gaussian random variable  and $\sqrt{\nu*\varphi_{\sigma}} \log \big(\mu*\varphi_{\sigma}/\nu*\varphi_{\sigma}\big) \in L^2(\RR^d)$ (by assumption).  
Computing the mean and variance explicitly leads to the claim above with  
\begin{align}
    v_{1,f_{\mathsf{KL}}}^2(\mu,\nu,\sigma)=\int_{\RR^d}\int_{\RR^d} \Sigma^{(1,1)}_{\mu,\nu,\sigma}(x,y) \log \left(\frac{\mu*\varphi_{\sigma}(x)}{\nu*\varphi_{\sigma}(x)}\right) \log \left(\frac{\mu*\varphi_{\sigma}(y)}{\nu*\varphi_{\sigma}(y)}\right) d x\, d y, \label{eq:var-smoothedKL1samp}
\end{align}
where we used   $\int_{\RR^d}\int_{\RR^d}\Sigma^{(1,1)}_{\mu,\nu,\sigma}(x,y)dx dy=\int_{\RR^d}\int_{\RR^d} \mathsf{cov}\big(\varphi_{\sigma}(x-X),\varphi_{\sigma}(y-X)\big)dx dy=0$, which in itself follows from  Fubini's theorem and $\int_{\RR^d}\varphi_{\sigma}(x-\cdot)dx=1$.
   \medskip
   
   To prove the final claim in Part $(ii)$, we note that for every $x \in \RR^d$, 
   \begin{align}
       \abs{\frac{\mu *\varphi_{\sigma}(x)}{\nu *\varphi_{\sigma}(x)}}=   \abs{\frac{\EE_{\mu}[ \varphi_{\sigma}(x-\cdot)]}{\EE_{\nu}[ \varphi_{\sigma}(x-\cdot)}} \leq \norm{\frac{d \mu}{d \nu}}_{\infty}<\infty, \label{eq:ratiodensbnd}
   \end{align}
which implies $\norm{\mu*\varphi_{\sigma}/\nu*\varphi_{\sigma}}_{\infty}<\infty$. Similarly,  $\norm{\nu*\varphi_{\sigma}/\mu*\varphi_{\sigma}}_{\infty}<\infty$. 
   This  leads to $\norm{(\nu*\varphi_{\sigma})^2/\mu*\varphi_{\sigma}}_{\infty}<\infty$ and  $\log \big(\mu*\varphi_{\sigma}/\nu*\varphi_{\sigma}\big) \in L^2(\nu*\varphi_{\sigma})$. On the other hand, for $\beta$-sub-Gaussian $\mu$ with $\beta<\sigma$, \eqref{ASSUM:KL_null} holds using the same argument as in Part~$(i)$, while \eqref{ASSUM:KL_alt} follows since
    \begin{equation}
      \int_{\RR^d}\frac{\mathsf{Var}_{\mu}\big(\varphi_{\sigma}(x-\cdot)\big)}{\nu*\varphi_{\sigma}(x)}\, dx \leq      \norm{\frac{\mu*\varphi_{\sigma}}{\nu*\varphi_{\sigma}}}_{\infty}\int_{\RR^d}\frac{\mathsf{Var}_{\mu}\big(\varphi_{\sigma}(x-\cdot)\big)}{\mu*\varphi_{\sigma}}\, dx<\infty.\label{eq:varcondbnddensrat}
    \end{equation} 
    Hence, all the conditions needed for \eqref{eq:KL-GS-onesamp-alt} are satisfied.

\medskip

\noindent\textbf{Part $\bm{(iii)}$:}
We apply Theorem \ref{Thm:KLdiv-limdist}$(iii)$ 
with $\rho=\mu*\gamma_{\sigma}$. We have  that $\mu*\gamma_{\sigma}$,  $\hat {\mu}_n*\gamma_\sigma$, $\hat {\nu}_n*\gamma_\sigma$ are all mutually absolutely continuous, and $p_{\hat {\nu}_n*\gamma_\sigma}=\hat {\nu}_n*\gamma_\sigma/\mu*\varphi_{\sigma}>0$.  Moreover, we have $ \kl{\hat \mu_n*\gamma_{\sigma}}{\hat \nu_n*\gamma_{\sigma}} <\infty$ a.s. via steps similar to those in Part~$(i)$. Also, since $\mu$ has compact support and $\supp(\hat \mu_n),\supp(\hat \nu_n)\subset \supp(\mu)$, we have
\begin{align}
 & \abs{\frac{\hat \mu_n *\varphi_{\sigma}(x)}{\hat \nu_n *\varphi_{\sigma}(x)}}=\frac{\sum_{i=1}^n e^{-\frac{\norm{x-X_i}^2}{2\sigma^2}}}{\sum_{i=1}^n e^{-\frac{\norm{x-Y_i}^2}{2\sigma^2}}} \leq \max_{1 \leq i \leq n}e^{-\frac{\norm{x-X_i}^2}{2\sigma^2}+\frac{\norm{x-Y_i}^2}{2\sigma^2}} 
 \leq  c'e^{c\norm{x}},\label{eq:ubnddenratio}
\end{align}
for some constants $c,c'>0$ that depend only on $\sigma$ and $\supp(\mu)$. Taking $q(x)=c'e^{c\norm{x}}$, we have
\begin{align}
   \frac{d\eta_1}{dx}(x)&=\mu*\varphi_{\sigma}(x),\notag \\
    \frac{d\eta_2}{dx}(x)&=\mu*\varphi_{\sigma}(x)+\mu*\varphi_{\sigma}(x)\,c'e^{c\norm{x}}.\notag
\end{align}

Recall that $n^{1/2} \big((\hat \mu_n*\varphi_{\sigma}/ \mu*\varphi_{\sigma})-1\big)=n^{-1/2} \sum_{i=1}^n Z_i$, where $Z_i$ is given in \eqref{eq:defZivars}. Let  $\tilde Z_i$ be defined similarly with $X_i$ replaced by $Y_i$, so that $n^{1/2} \big((\hat \nu_n*\varphi_{\sigma}/ \mu*\varphi_{\sigma})-1\big)=n^{-1/2} \sum_{i=1}^n \tilde Z_i$. Setting $r_n=n^{1/2}$, it follows from \eqref{eq:KL-twosample-null} that 
    \begin{align}
  n \kl{\hat \mu_n *\varphi_{\sigma}}{\hat \nu_n *\varphi_{\sigma}} \trightarrow{d} \frac{1}{2} \int_{\mathfrak{S}} \frac{\big(G_{\mu,\sigma}(x)-\tilde G_{\mu,\sigma}(x)\big)}{\mu*\varphi_{\sigma}(x)}^2 d x \notag
\end{align}
so long that
\begin{align}
  & \left( n^{-1/2} \sum_{i=1}^n  Z_i,n^{-1/2} \sum_{i=1}^n \tilde Z_i \right) \trightarrow{w} \left(\frac{G_{\mu,\sigma}}{\mu*\varphi_{\sigma}},\frac{\tilde G_{\mu,\sigma}}{\mu*\varphi_{\sigma}}\right)~~~\mbox{ in }L^2(\eta_1) \times L^2(\eta_2),\notag
    \end{align}
where $\big(G_{\mu,\sigma}, \tilde G_{\mu,\sigma}\big):=\big(G_{\mu,\sigma}(x), \tilde G_{\mu,\sigma}(y)\big)_{(x,y) \in \RR^d \times \RR^d}$ is a 2-dimensional Gaussian process. 

A sufficient condition for the joint weak convergence above is  
\begin{align}
  & n^{-1/2} \sum_{i=1}^n  Z_i \trightarrow{w} \frac{G_{\mu,\sigma}}{\mu*\varphi_{\sigma}} \mbox{ in }L^2(\eta_1)  \quad\mbox{ and } \quad n^{-1/2} \sum_{i=1}^n  \tilde Z_i \trightarrow{w} \frac{\tilde  G_{\mu,\sigma}}{\mu*\varphi_{\sigma}}\quad\mbox{ in } L^2(\eta_2).\label{eq:weakconvmarg}
    \end{align}
    To see this, first observe that both $\eta_1$ and $\eta_2$ are finite measures on $\big(\RR^d,\cB(\RR^d)\big)$, and hence $L^2(\eta_1)$ and $L^2(\eta_2)$ are Polish. Furthermore, given \eqref{eq:weakconvmarg}, $\big(n^{-1/2} \sum_{i=1}^n \tilde Z_i\big)_{n \in \NN}$ and $\big(n^{-1/2} \sum_{i=1}^n \tilde Z_i\big)_{n \in \NN}$ are both asymptotically tight and asymptotically measurable in $L^2(\eta_1)$ and $L^2(\eta_2)$, respectively \cite[Lemma 1.3.8]{AVDV-book}. This implies that  $\big(n^{-1/2} \sum_{i=1}^n \tilde Z_i,$ $n^{-1/2} \sum_{i=1}^n \tilde Z_i\big)_{n \in \NN}$  are jointly asymptotically tight and jointly measurable in $L^2(\eta_1) \times L^2(\eta_2)$ \cite[Lemma 1.4.3 and 1.4.4]{AVDV-book}. Then, by Polishness of $L^2(\eta_1) \times L^2(\eta_2)$,  the desired joint weak convergence holds if the finite-dimensional marginals of the joint process converge weakly (see \cite[Lemma 16]{GKRS-2022-entropicOT}), i.e., for every $x_1,\ldots, x_m, \tilde x_1,\ldots, \tilde x_l \in \RR^d$,
    \begin{align}
      \left(n^{-1/2} \sum_{i=1}^n  Z_i(x_1),\ldots, n^{-1/2} \sum_{i=1}^n  Z_i(x_m),n^{-1/2} \sum_{i=1}^n \tilde Z_i(\tilde x_1),\ldots,\sum_{i=1}^n \tilde Z_i(\tilde x_l)\right)  \label{eq:weakconfindimmarg}
    \end{align}
    converges weakly. The latter follows from the multivariate CLT.

Next, we apply Theorem \ref{Thm:CLT-in-Lp} to show that the sufficient condition given in \eqref{eq:weakconvmarg} is satisfied when $\mu,\nu$ have compact supports.
As in Part $(i)$, the process $Z_i$ above is jointly measurable,
has paths a.s. in $L^2(\eta_1)$, and satisfies the conditions in Theorem \ref{Thm:CLT-in-Lp}(ii). This follows by a similar argument as in Part $(i)$, under the assumption in \eqref{ASSUM:KL_null} holds. Hence, by Theorem \ref{Thm:CLT-in-Lp}, there exists a centered Gaussian process $G_{\mu,\sigma}/\mu*\varphi_{\sigma}$ such that the first weak convergence claim in the equation above holds. The second weak convergence claim holds via a similar argument with $Z_i$ and $\eta_1$ replaced by $\tilde Z_i$ and $\eta_2$, respectively, provided that
\[
\int_{\RR^d}\frac{\mathsf{Var}_{\mu}\big(\varphi_{\sigma}(x-\cdot)\big)}{\mu*\varphi_{\sigma}(x)}\big(1+e^{c\norm{x}}\big)\,dx<\infty.
\]
Thus, to prove \eqref{eq:KL-GS-twosamp-null}, it remains to show that the above equation holds when $\mu$ has compact support. This follows via a direct computation: 
\begin{flalign}
   &  \int_{\RR^d}\frac{\mathsf{Var}_{\mu}\big(\varphi_{\sigma}(x-\cdot)\big)}{\mu*\varphi_{\sigma}(x)}\, dx + \int_{\RR^d}\frac{\mathsf{Var}_{\mu}\big(\varphi_{\sigma}(x-\cdot)\big)}{\mu*\varphi_{\sigma}(x)}\, e^{c\norm{x}}dx \notag \\
     & \qquad  \qquad \leq \int_{\RR^d}\frac{\EE_{\mu}\big[\varphi^2_{\sigma}(x-\cdot)\big]}{\mu*\varphi_{\sigma}(x)}\, dx + \int_{\RR^d}\frac{\EE_{\mu}\big[\varphi^2_{\sigma}(x-\cdot)\big]}{\mu*\varphi_{\sigma}(x)}\, e^{c\norm{x}}dx \notag \\
     & \qquad \qquad\leq \int_{\RR^d}\frac{ \int_{\RR^d} e^{-\frac{\norm{x-y}^2}{\sigma^2}} d \mu(y)}{ \int_{\RR^d}e^{-\frac{\norm{x-y}^2}{2\sigma^2}}d \mu(y)}dx +\int_{\RR^d}\frac{ \int_{\RR^d} e^{-\frac{\norm{x-y}^2}{\sigma^2}} d \mu(y)}{ \int_{\RR^d}e^{-\frac{\norm{x-y}^2}{2\sigma^2}}d \mu(y)} e^{c\norm{x}} dx \notag\\
       & \qquad \qquad\leq \int_{\RR^d}e^{-\frac{\norm{x}^2}{2\sigma^2}}\frac{ \int_{\RR^d} e^{\frac{2 x\cdot y-\norm{y}^2}{\sigma^2}} d \mu(y)}{ \int_{\RR^d}e^{\frac{2 x\cdot y-\norm{y}^2}{2\sigma^2}}d \mu(y)}dx +\int_{\RR^d}e^{-\frac{\norm{x}^2}{2\sigma^2}}\frac{ \int_{\RR^d} e^{\frac{2 x\cdot y-\norm{y}^2}{\sigma^2}} d \mu(y)}{ \int_{\RR^d}e^{\frac{2 x\cdot y-\norm{y}^2}{2\sigma^2}}d \mu(y)} e^{c\norm{x}} dx \notag\\
          & \qquad \qquad\leq \int_{\RR^d}e^{-\frac{\norm{x}^2}{2\sigma^2}}\frac{ \int_{\RR^d} e^{\frac{2 \norm{x}\norm{y}-\norm{y}^2}{\sigma^2}} d \mu(y)}{ \int_{\RR^d}e^{\frac{-2 \norm{x}\norm{ y}-\norm{y}^2}{2\sigma^2}}d \mu(y)}dx +\int_{\RR^d}e^{-\frac{\norm{x}^2}{2\sigma^2}}\frac{ \int_{\RR^d} e^{\frac{2 \norm{x}\norm{y}-\norm{y}^2}{\sigma^2}} d \mu(y)}{ \int_{\RR^d}e^{\frac{-2 \norm{x}\norm{ y}-\norm{y}^2}{2\sigma^2}}d \mu(y)} e^{c\norm{x}} dx \notag\\
     &\qquad \qquad\leq  c' \int_{\RR^d} e^{-\frac{\norm{x}^2}{2\sigma^2}}e^{c\norm{x}} dx <\infty, \label{eq:finitevarint} &&
\end{flalign}
for some  constants $c,c'>0$  which depends on $\sigma$ and  $\supp(\mu)$.

\medskip

\noindent\textbf{Part $\bm{(iv)}$:} To prove \eqref{eq:KL-GS-twosamp-alt}, we utilize Theorem \ref{Thm:KLdiv-limdist}$(iv)$ 
with $\rho=\nu*\gamma_{\sigma}$. The positivity and absolute continuity of the probability measures as well as finiteness of KL divergences required in Theorem \ref{Thm:KLdiv-limdist}$(iv)$ follow from similar steps as above, which are thus omitted. Likewise, \eqref{eq:ubnddenratio} holds (possibly with different constants $c,c'$) since $\mu$ and $\nu$ have compact supports. Moreover, using Jensen's inequality and steps similar to those leading to \eqref{eq:finitevarint}, we have
\begin{align}
&\frac{\nu*\varphi_{\sigma}(x)^2}{\mu*\varphi_{\sigma}(x) } \leq \frac{\EE_{\nu}\big[\varphi^2_{\sigma}(x-\cdot)\big]}{\mu*\varphi_{\sigma}(x)} = e^{-\frac{\norm{x}^2}{2\sigma^2}} \frac{ \int_{\RR^d} e^{\frac{2 \norm{x}\norm{y}-\norm{y}^2}{\sigma^2}} d \nu(y)}{ \int_{\RR^d}e^{\frac{-2 \norm{x}\norm{ y}-\norm{y}^2}{2\sigma^2}}d \mu(y)} \leq  c'  e^{-\frac{\norm{x}^2}{2\sigma^2}}e^{c\norm{x}} \leq \tilde c, \notag \\
&\frac{\nu*\varphi_{\sigma}(x)}{\mu*\varphi_{\sigma}(x) }  \leq \frac{ \int_{\RR^d} e^{\frac{2 \norm{x}\norm{y}-\norm{y}^2}{2\sigma^2}} d \nu(y)}{ \int_{\RR^d}e^{\frac{-2 \norm{x}\norm{ y}-\norm{y}^2}{2\sigma^2}}d \mu(y)} \leq  c'  e^{c\norm{x}}, \label{eq:boundratdens}
\end{align}
 for some  constants $c,c', \tilde c>0$  which depends only on $\sigma$ and the supports of $\mu,\nu$. Hence,  $\norm{\nu*\varphi_{\sigma}^2/\mu*\varphi_{\sigma}}_{\infty}<\infty$ and  $\mu*\varphi_{\sigma},~\log \big(\mu*\varphi_{\sigma}/\nu*\varphi_{\sigma}\big) \in L^2(\nu*\varphi_{\sigma})$ (note the above inequalities also hold with $\mu$ and $\nu$ interchanged). Next, observe that the measures $\eta_1$ and $\eta_2$ given in Theorem \ref{Thm:KLdiv-limdist}$(iv)$ with  
$q(x)=c'e^{c\norm{x}}$ have Lebesgue densities
\begin{align}
   \frac{d\eta_1}{dx}(x)&=\nu*\varphi_{\sigma}(x)+\frac{\big(\nu*\varphi_{\sigma}(x)\big)^2}{\mu*\varphi_{\sigma}(x)},\notag \\
    \frac{d\eta_2}{dx}(x)&=\nu*\varphi_{\sigma}(x)+c' \nu*\varphi_{\sigma}(x)\,e^{c\norm{x}}+\mu*\varphi_{\sigma}(x),\label{eq:normspmeas}
\end{align}
and hence are finite measures based on the inequalities above. Then, the Polishness of  $L^2(\eta_1) \times L^2(\eta_2)$ and the discussion in Part $(iii)$ implies via \eqref{eq:KL-twosample-alt} with $r_n=n^{1/2}$ that 
\begin{align}
  & n^{\frac 12}\big(\kl{\hat \mu_n*\gamma_\sigma}{\hat \nu_n*\gamma_{\sigma}}-\kl{\mu*\gamma_{\sigma}}{\nu*\gamma_{\sigma}}\big)\notag \\
  &\qquad \qquad \qquad\qquad\trightarrow{d} \int_{\RR^d} G_{\mu,\sigma}(x) \log \left(\frac{\mu*\varphi_{\sigma}(x)}{\nu*\varphi_{\sigma}(x)}\right) d x-\int_{\RR^d} G_{\nu,\sigma}(x) \frac{\mu*\varphi_{\sigma}(x)}{\nu*\varphi_{\sigma}(x)} d x, \label{eq:KL-GS-twosamp-alt-interm}
\end{align} 
provided that
\begin{align} \label{eq:wcden2sampalt}
\begin{split}
 &  n^{1/2} \left(\frac{\hat\mu_n*\varphi_{\sigma}}{\nu*\varphi_{\sigma}}-\frac{\mu*\varphi_{\sigma}}{\nu*\varphi_{\sigma}}\right) \trightarrow{w} \frac{G_{\mu,\sigma}}{\nu*\varphi_{\sigma}}\mbox{ in }L^2(\eta_1), \\
   & n^{1/2} \left(\frac{\hat\nu_n*\varphi_{\sigma}}{\nu*\varphi_{\sigma}}-1\right) \trightarrow{w} \frac{G_{\nu,\sigma}}{\nu*\varphi_{\sigma}}\mbox{ in }L^2(\eta_2).
\end{split}
 \end{align}
Resorting to Theorem \ref{Thm:CLT-in-Lp} once more, we next show that the weak convergence requirements in \eqref{eq:wcden2sampalt} hold if 
\begin{align}
      & \int_{\RR^d}\frac{\mathsf{Var}_{\mu}\big(\varphi_{\sigma}(x-\cdot)\big)}{\nu*\varphi_{\sigma}(x)}\, dx +\int_{\RR^d}\frac{\mathsf{Var}_{\mu}\big(\varphi_{\sigma}(x-\cdot)\big)}{\mu*\varphi_{\sigma}(x)}\, dx<\infty, \notag 
      \end{align}
      and
      \begin{align}
       & \int_{\RR^d}\frac{\mathsf{Var}_{\nu}\big(\varphi_{\sigma}(x-\cdot)\big)}{\nu*\varphi_{\sigma}(x)}\, \big(1+e^{c\norm{x}}\big)dx+\int_{\RR^d}\frac{\mathsf{Var}_{\nu}\big(\varphi_{\sigma}(x-\cdot)\big)\mu*\varphi_{\sigma}(x)}{\big(\nu*\varphi_{\sigma}(x)\big)^2}\, dx <\infty,\notag
\end{align}
 respectively. To see this, note that the second term in the penultimate equation and the first term in the last equation can be bounded as shown in Part $(iii)$. For the remaining terms, we have from \eqref{eq:ratiodensbnd}  via steps leading to \eqref{eq:finitevarint}-\eqref{eq:boundratdens} that 
 \begin{align}
     \int_{\RR^d}\mspace{-8mu}\frac{\mathsf{Var}_{\nu}\big(\varphi_{\sigma}(x-\cdot)\big)\mu\mspace{-2mu}*\mspace{-2mu}\varphi_{\sigma}(x)}{\big(\nu*\varphi_{\sigma}(x)\big)^2}\, dx \leq c'\mspace{-5mu}\int_{\RR^d}\mspace{-10mu} e^{\mspace{-3mu}-\frac{\norm{x}^2}{2\sigma^2}}\mspace{-2mu}e^{c\norm{x}}\frac{\mu\mspace{-2mu}*\mspace{-2mu}\varphi_{\sigma}(x)}{\nu\mspace{-2mu}*\mspace{-2mu}\varphi_{\sigma}(x)} dx \leq    c'^2\mspace{-5mu}\int_{\RR^d}\mspace{-10mu} e^{-\frac{\norm{x}^2}{2\sigma^2}} e^{2c\norm{x}} dx  \mspace{-1.5mu}<\mspace{-1.5mu}\infty, \notag 
 \end{align}
 and
 \begin{align} \int_{\RR^d}\frac{\mathsf{Var}_{\mu}\big(\varphi_{\sigma}(x-\cdot)\big)}{\nu*\varphi_{\sigma}(x)}\, dx & \leq  \int_{\RR^d}\frac{\EE_{\mu}\big[\varphi^2_{\sigma}(x-\cdot)\big]}{\nu*\varphi_{\sigma}(x)}\, dx \leq  c' \int_{\RR^d} e^{-\frac{\norm{x}^2}{2\sigma^2}}e^{c\norm{x}} \, dx <\infty. \notag
 \end{align}
Finally, the proof of the claim is completed  by noting that the RHS of \eqref{eq:KL-GS-twosamp-alt-interm} is a zero mean Gaussian random variable with variance
\begin{align}
v_{2,f_{\mathsf{KL}}}^2(\mu,\nu,\sigma)&:=\sum_{1 \leq i,j \leq 2} \int_{\RR^d}\int_{\RR^d}  \Sigma^{(i,j)}_{\mu,\nu,\sigma}(x,y) L_{i,f_{\mathsf{KL}}}(x)L_{j,f_{\mathsf{KL}}}(y) d x\, d y\notag \\
    &=\sum_{1 \leq i,j \leq 2}\int_{\RR^d}\int_{\RR^d}  \Sigma^{(i,j)}_{\mu,\nu,\sigma}(x,y) \tilde L_{i,f_{\mathsf{KL}}}(x) \tilde L_{j,f_{\mathsf{KL}}}(y) d x\, d y, \label{eq:var-smoothedKL2samp} 
\end{align}
where $\tilde L_{1,f_{\mathsf{KL}}}:=\log \left(\mu*\varphi_{\sigma}/\nu*\varphi_{\sigma}\right)$ and $\tilde L_{2,f_{\mathsf{KL}}}:= -\mu*\varphi_{\sigma}/\nu*\varphi_{\sigma}$. The second equality above uses 
\begin{align}
\int_{\RR^d}\int_{\RR^d}\Sigma^{(1,1)}_{\mu,\nu,\sigma}(x,y)dx dy=\int_{\RR^d}\int_{\RR^d} \mathsf{cov}\big(\varphi_{\sigma}(x-X),\varphi_{\sigma}(y-X)\big)dx dy=0.   \notag
\end{align}

\subsubsection{Proof of Lemma \ref{lem:fdivnullonesamplb}} \label{lem:fdivnullonesamplb-proof}
Without loss of generality, we may assume that there exists a subsequence $(n_k)_{k \in \NN}$ such that $\EE\big[\fdiv{\hat \mu_{n_k}*\gamma_\sigma }{\mu*\gamma_\sigma}\big]<\infty$ for all $k \in \NN$, since otherwise the LHS of \eqref{eq:lowerbndfdiv} is infinite and there is nothing to prove. Henceforth, we take $n$ within such a subsequence. Note that since $\hat \mu_n*\gamma_\sigma \ll \mu*\gamma_\sigma$, $\fdiv{\hat \mu_n*\gamma_\sigma}{\mu*\gamma_\sigma} =\EE_{\mu*\gamma_\sigma}\big[f \circ (\hat \mu_n*\varphi_\sigma/ \mu*\varphi_\sigma)\big]$. Applying Taylor's expansion of  $f(x)$ at $x=1$ and observing that $f(1)=0$, we have
\begin{align}
  &f\left(\frac{\hat \mu_n*\varphi_\sigma(x)} {\mu*\varphi_\sigma(x)}\right)=f'(1)\mspace{-3mu}\left(\frac{\hat \mu_n*\varphi_\sigma(x)} {\mu*\varphi_\sigma(x)}\mspace{-2mu}-\mspace{-2mu}1\right)\mspace{-2mu}+\mspace{-2mu}\int_{0}^1 (1-\tau)f''\mspace{-2mu}\left((1\mspace{-2mu}-\mspace{-2mu}\tau)\mspace{-2mu}+\mspace{-2mu}\tau \frac{\hat \mu_n*\varphi_\sigma(x)}{ \mu*\varphi_\sigma(x)}\right)\mspace{-2mu}\left(\frac{\hat \mu_n*\varphi_\sigma(x)} {\mu*\varphi_\sigma(x)}\mspace{-2mu}-\mspace{-2mu}1\right)^2 \mspace{-2mu}d \tau. \label{eq:Taylorexpfdiv}
\end{align}
Let $\vartheta_n(x):=n^{1/2}\big(\hat \mu_n*\varphi_\sigma(x)-\mu*\varphi_\sigma(x)\big)$. Taking expectation first w.r.t. to $\mu*\gamma_\sigma$ and then w.r.t. $\mu^{\otimes n}$ in the above equation, and observing that the first term on the RHS integrates to zero, we obtain
\begin{align}
 n \EE\big[\fdiv{\hat \mu_n*\gamma_\sigma} {\mu*\gamma_\sigma}\big] & =\EE\left[\int_{\RR^d}\int_{0}^1 (1-\tau)f''\left(\frac{\mu*\varphi_\sigma(x)+\tau n^{-\frac 12} \vartheta_n(x)}{ \mu*\varphi_\sigma(x)}\right)\frac{\vartheta_n^2(x)}{\mu*\varphi_\sigma(x)}d \tau dx\right] \notag \\
 & =\int_{\RR^d}\int_{0}^1 (1-\tau)\,\EE\left[f''\left(\frac{\mu*\varphi_\sigma(x)+\tau n^{-\frac 12} \vartheta_n(x)}{ \mu*\varphi_\sigma(x)}\right) \frac{\vartheta_n^2(x)}{\mu*\varphi_\sigma(x)}\right]d \tau dx,\notag
\end{align}
where the final equality uses Fubini's theorem. Noting that the integrand is nonnegative (by non-negativity of $f''$), Fatou's lemma implies
\begin{align}
 &\liminf_{n \rightarrow \infty} n \EE\big[\fdiv{\hat \mu_n*\gamma_\sigma} {\mu*\gamma_\sigma}\big]  \geq  \int_{\RR^d}\int_{0}^1  (1-\tau)\liminf_{n \rightarrow \infty}\EE\left[f''\left(\frac{\mu*\varphi_\sigma(x)+\tau n^{-\frac 12} \vartheta_n(x)}{ \mu*\varphi_\sigma(x)}\right) \frac{\vartheta_n^2(x)}{\mu*\varphi_\sigma(x)}\right] d \tau dx.  \label{eq:applfatlemmahelsq}
\end{align}
Next, observe that since $\vartheta_n(x)=\sum_{i=1}^n Z_i(x)/\sqrt{n}$ with $Z_i(x)=\varphi_{\sigma}(x-X_{i}) - \mu*\varphi_{\sigma}(x)$ and $\abs{Z_i(x)} \leq  (2\pi \sigma^{2})^{-d/2}$, the CLT implies that for every $k \in \NN$, we have
\begin{align}
 \lim_{n \rightarrow \infty} \EE\left[f''\left(\frac{\mu*\varphi_\sigma(x)+\tau n^{-\frac 12} \vartheta_n(x)}{ \mu*\varphi_\sigma(x)}\right) \vartheta_n^2(x)\wedge k\right]= \E\left[f''(1)G_{\mu,\sigma}^2(x) \wedge k\right]. \label{eq:convexpclttrunc}
\end{align}
Indeed, this will follow by the definition of weak convergence applied to the bounded continuous map $y \mapsto y \wedge k$, provided that
\begin{align}
  f''\left(\frac{\mu*\varphi_\sigma(x)+\tau n^{-\frac 12} \vartheta_n(x)}{ \mu*\varphi_\sigma(x)}\right) \vartheta_n^2(x) \trightarrow{d }f''(1)\,G_{\mu,\sigma}^2(x),\quad \forall \,x\in\RR^d. \notag
\end{align}
However, the above come from the CLT, whereby $\vartheta_n(x)\trightarrow{d } G_{\mu,\sigma}(x)$, and the extended continuous mapping theorem (see \cite[Theorem 1.11.1]{AVDV-book}) applied to the sequence of functions $g_{n,x}(y)=f''\big((\mu*\varphi_\sigma(x)+\tau n^{-\frac 12} y)/ \mu*\varphi_\sigma(x)\big)  y^2$  satisfying  $\lim_{n \rightarrow \infty} g_{n,x}(y_n)=g_x(y):=y^2 f''(1) $ for $y_n \rightarrow y$ (the latter follows by continuity of $f''$). Then, taking  $k \rightarrow \infty$ in \eqref{eq:convexpclttrunc} and using monotone convergence theorem yields
\begin{equation}
  \lim_{n \rightarrow \infty} \EE\left[f''\left(\frac{\mu*\varphi_\sigma(x)+\tau n^{-\frac 12} \vartheta_n(x)}{ \mu*\varphi_\sigma(x)}\right) \vartheta_n^2(x)\right]= f''(1)\E\left[G_{\mu,\sigma}^2(x) \right]. \notag
\end{equation}
Substituting the above in \eqref{eq:applfatlemmahelsq} and evaluating the integral, we conclude that
\begin{align}
 \liminf_{n \rightarrow \infty} n \EE\big[\fdiv{\hat \mu_n*\gamma_\sigma} {\mu*\gamma_\sigma}\big] &\geq \frac{f''(1)}{2} \int_{\RR^d}  \frac{\E\big[G_{\mu,\sigma}^2(x)\big]}{\mu*\varphi_\sigma(x)} dx= \frac{f''(1)}{2}\int_{\RR^d}\frac{\mathsf{Var}_{\mu}\big(\varphi_{\sigma}(x-\cdot)\big)}{\mu*\varphi_{\sigma}(x)}\, dx. \notag
\end{align}

\subsubsection{Proof of Corollary \ref{Cor:GS-KL-bs}}\label{Sec:bsconsist-proof}
As already noted, the claim follows from  \cite[Theorem 23.9]{Va1998} and the linearity of the first order Hadamard derivative of the KL divergence functional, provided the tangent cone $\mathfrak{T}_{\Theta}(\theta^{\star})$ contains a non-trivial linear subspace. We show this below by considering the one-sample case. The proof for the two-sample case uses identical arguments, and hence is omitted.

Recall  that  $\kl{\hat \mu_n*\gamma_{\sigma}}{\nu*\gamma_{\sigma}}=\Phi\big(p_{\hat \mu_n*\gamma_{\sigma}}-p_{\mu*\gamma_{\sigma}},0)$ with $\phi(x,y)= \phi_{\mathsf{KL,1}}(x,y)=x \log x$ in \eqref{eq:defPhifunc}, where $p_{\hat \mu_n*\gamma_{\sigma}}=\hat \mu_n*\varphi_{\sigma}/ \nu*\varphi_{\sigma}$ and  $p_{\mu*\gamma_{\sigma}}= \mu*\varphi_{\sigma}/ \nu*\varphi_{\sigma}$  (see the proof of Theorem \ref{Thm:KLdiv-limdist}$(ii)$). The first order Hadamard derivative is given by 
\[\Phi_{\theta^{\star}}'(h_1,h_2)=\int_{\RR^d}  h_1(x) \log \big(p_{\mu*\gamma_{\sigma}}(x)\big)\nu*\varphi_{\sigma}(x)\, dx,\]
for all  $(h_1,h_2) \in \mathfrak{T}_{\Theta}(\theta^{\star})=\mathrm{cl}\big(\big\{\big((g_1-p_{\mu*\gamma_{\sigma}})/t,0\big):(g_1-p_{\mu*\gamma_{\sigma}},0) \in \Theta, ~t>0\big\}\big)$, where 
\begin{align}
     \Theta &=\left\{(g_1-p_{\mu*\gamma_{\sigma}},0) \in \mathfrak{D}: g_1\geq 0,  \norm{g_1}_{1,\nu*\gamma_{\sigma}}=1\right\},\notag \\
     \mathfrak{D}&=\left\{(g_1-p_{\mu*\gamma_{\sigma}},g_2-1):\, g_1,g_2 \in  L^1(\nu*\gamma_{\sigma}), \norm{g_1-p_{\mu*\gamma_{\sigma}}}_{2,\eta}+\norm{g_2-1)}_{2,\nu*\gamma_{\sigma}}<\infty \right\}, \notag 
\end{align}
and $\eta$ that has Lebesgue density $\nu*\varphi_{\sigma}+(\nu*\varphi_{\sigma})^2/\mu*\varphi_{\sigma}$. Further, the proof of Proposition~\ref{Prop:GS-KL-limdist}$(ii)$ shows that under the conditions therein, $\eta$ is a $\sigma$-finite measure and
\begin{align}
 n^{1/2} \left(p_{\hat \mu_n*\gamma_{\sigma}}-p_{\mu*\gamma_{\sigma}}\right) \trightarrow{w} \frac{G_{\mu,\sigma}}{\nu*\varphi_{\sigma}}\quad\mbox{ in }L^2(\eta). \label{eq:wcempmeas}
 \end{align}
 As $\big(n^{1/2} \left(p_{\hat \mu_n*\gamma_{\sigma}}-p_{\mu*\gamma_{\sigma}}\right),0\big)$
 takes values in $\mathfrak{T}_{\Theta}(\theta^{\star})$, the Portmanteau theorem guarantees that 
$\supp\big(G_{\mu,\sigma}/\nu*\varphi_{\sigma}\big)$ is contained in $\mathfrak{T}_{\Theta}(\theta^{\star})$. As $L^2(\eta)$ is a separable Banach space and $G_{\mu,\sigma}/\nu*\varphi_{\sigma}$ is a centered  $L^2(\eta)$-valued Gaussian random variable, \cite[Lemma 5.1]{VanderVaart-2008} then guarantees that $\mathfrak{T}_{\Theta}(\theta^{\star})$ contains the reproducing kernel Hilbert space corresponding to $G_{\mu,\sigma}/\nu*\varphi_{\sigma}$, and hence, in particular, a linear subspace. 

Given the above,  \cite[Theorem 23.9]{Va1998} applies and we obtain 
\begin{align}
    n^{\frac 12}\left(\mathsf{D}_{\mathsf{KL}}\big(\hat \mu_n^B*\gamma_\sigma \|\nu*\gamma_{\sigma}\big)-\kl{\hat \mu_n*\gamma_{\sigma}}{\nu*\gamma_{\sigma}}\right) &\trightarrow{d} \int_{\RR^d} G_{\mu,\sigma}(x) \log \left(\frac{\mu*\varphi_{\sigma}(x)}{\nu*\varphi_{\sigma}(x)}\right) d x  \sim N\big(0, v_{1,f_{\mathsf{KL}}}^2(\mu,\nu,\sigma)\big), \label{eq:condconv-hadder-bs}
\end{align}
 so long that
\begin{align}
 n^{1/2} \left(\frac{\hat\mu_n^B*\varphi_{\sigma}}{\nu*\varphi_{\sigma}}-\frac{\hat \mu_n*\varphi_{\sigma}}{\nu*\varphi_{\sigma}}\right) \trightarrow{w} \frac{G_{\mu,\sigma}}{\nu*\varphi_{\sigma}}\quad\mbox{ in }L^2(\eta),\notag
 \end{align}
where both convergences are conditionally in probability.  However, the last weak convergence above holds via \cite[Remark 2.5, Page 860]{Gine-Zinn-1990} provided that \eqref{eq:wcempmeas} is satisfied.

Next, note by \cite[Theorem 1.12.4]{AVDV-book} (and the discussion preceding it) that \eqref{eq:condconv-hadder-bs} is equivalent to
 \begin{align}
 \sup_{f \in \mathsf{BL}_1(\RR) }\EE_{B}\left[\abs{f\left(n^{\frac 12}\left(\mathsf{D}_{\mathsf{KL}}\big(\hat \mu_n^B*\gamma_\sigma \|\nu*\gamma_{\sigma}\big)-\kl{\hat \mu_n*\gamma_{\sigma}}{\nu*\gamma_{\sigma}}\right)\right)-f(W_1)}\right] =o_{\PP}(1),   \notag
\end{align}
where $\EE_{B}$ denotes the conditional expectation  given $(X_1,\ldots,X_n,Y_1,\ldots,Y_n)$,  $\mathsf{BL}_1(\RR)$ is the set of bounded 1-Lipschitz continuous functions, and $W_1 \sim N\big(0,v_{1,f_{\mathsf{KL}}}^2(\mu,\nu,\sigma)\big)$. Conclude by noting that the last equation  implies the claim in Corollary \ref{Cor:GS-KL-bs}$(i)$, which is equivalent to convergence in distribution conditionally in probability, since the so-called Dudley metric $d_{\mathsf{BL}_1}(\mu,\nu):=\sup_{f \in \mathsf{BL}_1(\RR)} \abs{\EE_{\mu}[f]-\EE_{\nu}[f]}$ metrizes weak convergence in $\RR$.

\subsection{Proof of Proposition \ref{prop:HTperf}}
We will use the following reformulation of a result given in
\cite{Bickel-98}.
\begin{lemma}[Corollary 2 in Appendix A.9 of \cite{Bickel-98}] \label{lem:bickel98}
Suppose $(\pi_n)_{n \in \NN_0}$ is such that there exists $h \in L^2(\pi_0)$ with $\lim_{n \rightarrow \infty}n \helsq{\pi_n}{\pi_0} = \norm{h/2}^2_{2,\pi_0}$ and $\int_{\RR^d \times \RR^d}h\, d\pi_0=0$. For $(X^n,Y^n) \sim \pi_0^{\otimes n}$, define $\Lambda_{n,\pi_n,\pi_0}(X^n,Y^n):=  \sum_{i=1}^n \log\big(d\pi_n(X_i,Y_i)/d\pi_0(X_i,Y_i)\big) $ and $\sigma_h:=\norm{h}_{2,\pi_0}$. Then 
\begin{align}  \begin{split}\label{eq:weaklimloglik}
   & \Lambda_{n,\pi_n,\pi_0}(X^n,Y^n)-n^{-\frac 12} \sum_{i=1}^n h(X_i)+0.5\sigma_h^2=o_{\PP}(1), \\
  &  \Lambda_{n,\pi_n,\pi_0}(X^n,Y^n) \trightarrow{d}  N\big(-0.5\sigma_h^2,\sigma_h^2\big).
    \end{split}     
    \end{align}
\end{lemma}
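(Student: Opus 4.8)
The plan is to recognize this as the standard local asymptotic normality (LAN) expansion of a log-likelihood ratio along a Hellinger-differentiable path and to reproduce Le Cam's square-root argument. First I would pass to the product space, setting $Z_i := (X_i,Y_i) \sim \pi_0$ i.i.d., $g_n := d\pi_n/d\pi_0$ (with $\int g_n\,d\pi_0 = 1$, as holds for the paths of interest, e.g.\ $d\pi_n/d\pi_0 = 1 + n^{-1/2}h_{\pi_0,\bar c}$ from Proposition \ref{prop:pertconstr}), $r_{ni} := \sqrt{g_n(Z_i)}$ and $W_{ni} := r_{ni} - 1 \ge -1$, so that $\Lambda_{n,\pi_n,\pi_0}(X^n,Y^n) = 2\sum_{i=1}^n \log(1 + W_{ni})$. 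I would then record the two elementary identities $\EE_{\pi_0}[W_{n1}^2] = \helsq{\pi_n}{\pi_0}$ and $\EE_{\pi_0}[W_{n1}] = -\tfrac12\helsq{\pi_n}{\pi_0}$ (the latter because $\int(g_n^{1/2}-1)^2\,d\pi_0 = 2(1 - \int g_n^{1/2}\,d\pi_0)$ thanks to $\int g_n\,d\pi_0 = 1$); together with the hypothesis $n\helsq{\pi_n}{\pi_0} \to \norm{h/2}_{2,\pi_0}^2$ these give $\sum_i \EE_{\pi_0}[W_{ni}^2] \to \norm{h/2}_{2,\pi_0}^2$ and $2n\,\EE_{\pi_0}[W_{n1}] \to -\norm{h/2}_{2,\pi_0}^2$.

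Next I would establish the expansion itself. From $n\,\EE_{\pi_0}[W_{n1}^2] = O(1)$ and row-wise independence, a union bound gives $\max_{i\le n}|W_{ni}| = o_{\PP}(1)$ and $\sum_i W_{ni}^2 = O_{\PP}(1)$; on the event $\{\max_i|W_{ni}| \le 1/2\}$, the bound $|2\log(1+x) - 2x + x^2| \lesssim |x|\,x^2$ then yields $\Lambda_{n,\pi_n,\pi_0}(X^n,Y^n) = 2\sum_i W_{ni} - \sum_i W_{ni}^2 + o_{\PP}(1)$. For the quadratic piece I would use quadratic-mean differentiability along the path, $2\sqrt{n}\,W_{n1} \to h(Z_1)$ in $L^2(\pi_0)$ (immediate for the path $g_n = 1 + n^{-1/2}h_{\pi_0,\bar c}$ via $|\sqrt{1+x}-1|\le|x|/2$ and dominated convergence, and otherwise part of the hypotheses of the cited result): then $(2\sqrt{n}\,W_{n1})^2 \to h(Z_1)^2$ in $L^1(\pi_0)$, so a truncation-plus-Chebyshev weak law for the triangular array gives $\sum_i W_{ni}^2 = n^{-1}\sum_i(\sqrt{n}\,W_{ni})^2 \to \norm{h/2}_{2,\pi_0}^2$ in probability. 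For the linear piece I would write $2\sum_i W_{ni} = 2n\,\EE_{\pi_0}[W_{n1}] + n^{-1/2}\sum_i\big(2\sqrt{n}\,W_{ni} - 2\sqrt{n}\,\EE_{\pi_0}[W_{n1}]\big)$; since $2\sqrt{n}\,W_{ni} - h(Z_i)$ has $L^2(\pi_0)$-norm tending to $0$ and $\EE_{\pi_0}[h]=0$, the centered sum equals $n^{-1/2}\sum_i h(Z_i)$ up to a mean-zero i.i.d.\ sum with variance $\le \EE_{\pi_0}[(2\sqrt{n}\,W_{n1}-h)^2] \to 0$, hence $2\sum_i W_{ni} = -\norm{h/2}_{2,\pi_0}^2 + n^{-1/2}\sum_i h(Z_i) + o_{\PP}(1)$. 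Combining the three pieces gives the first line of \eqref{eq:weaklimloglik} with $\sigma_h^2 = \norm{h}_{2,\pi_0}^2 = 4\norm{h/2}_{2,\pi_0}^2$ ($h(X_i)$ there meaning $h$ evaluated at the pair $(X_i,Y_i)$), and the second line then follows from the ordinary CLT $n^{-1/2}\sum_i h(Z_i) \trightarrow{d} N(0,\sigma_h^2)$ together with Slutsky's lemma.

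I expect the only genuinely delicate point to be the input of quadratic-mean differentiability, i.e.\ upgrading the stated Hellinger-norm convergence $n\helsq{\pi_n}{\pi_0}\to\norm{h/2}_{2,\pi_0}^2$ to the functional convergence $2\sqrt{n}(\sqrt{g_n}-1)\to h$ in $L^2(\pi_0)$ — convergence of norms alone does not suffice. In our applications this is harmless because it follows directly from the explicit perturbation in Proposition \ref{prop:pertconstr}; in general it is furnished by the hypotheses of \cite[Appendix~A.9]{Bickel-98}, and indeed the cleanest route is simply to invoke \cite[Corollary~2, Appendix~A.9]{Bickel-98}, the paragraphs above serving only to explain why the reformulation in terms of $\helsq{\cdot}{\cdot}$ and the centering condition $\int h\,d\pi_0=0$ is equivalent.
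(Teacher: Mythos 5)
Your proposal reproduces Le Cam's square-root argument from scratch, whereas the paper's proof is a one-paragraph verification that conditions (a)--(c) of BKRW's Corollary A.9.2 hold (with $\delta=h/2$, $\mu=\pi_0$) followed by an invocation of that result. Both routes arrive at the same LAN expansion, but your self-contained derivation exposes exactly where each hypothesis enters, and your closing caveat is substantively correct --- it identifies a genuine imprecision in Lemma~\ref{lem:bickel98} as stated. The two conditions $n\helsq{\pi_n}{\pi_0}\to\norm{h/2}^2_{2,\pi_0}$ and $\int h\,d\pi_0=0$ do not identify $h$ and do not imply the first line of \eqref{eq:weaklimloglik}. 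A concrete one-dimensional witness: take $\pi_0$ uniform on $[0,1]$, $d\pi_n/d\pi_0=1+n^{-1/2}\sin(2\pi n\,\cdot)$, and $h=\sin(2\pi\,\cdot)$; both hypotheses hold and the second display in \eqref{eq:weaklimloglik} is still verified, but $\Lambda_{n,\pi_n,\pi_0}-n^{-1/2}\sum_i h(X_i)+\sigma_h^2/2$ is asymptotically $N(0,1)$ rather than $o_\PP(1)$, because the true score $\sin(2\pi n\,\cdot)$ does not converge to $h$ in $L^2(\pi_0)$. What is really needed --- and what BKRW's condition (a) and the explicit path of Proposition~\ref{prop:pertconstr} both supply --- is the quadratic-mean differentiability $\norm{2\sqrt n\,(\sqrt{d\pi_n/d\pi_0}-1)-h}_{2,\pi_0}\to 0$ that you flag; the paper's gloss that the norm limit ``is equivalent to'' BKRW's (a) elides precisely this. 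One smaller point inside your own sketch: the step $\max_i|W_{ni}|=o_\PP(1)$ requires uniform integrability of $nW_{n1}^2$, not merely $n\,\EE[W_{n1}^2]=O(1)$; this too is delivered by the $L^2$ convergence of $2\sqrt n\,W_{n1}$ to $h$, so that convergence should be invoked before the maximum step rather than only for the quadratic sum.
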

A short proof of the above claim is given in Appendix \ref{lem:bickel98-proof} for completeness.

\medskip

For analyzing the type II error probability of the hypothesis test in \eqref{HT-smoothed-KL-DP}, we require a refined version of the limit distribution result in \eqref{eq:KL-GS-twosamp-alt} that accounts for the dependence of $\pi_n$  on $n$. More specifically, we will show that for $(X^n,Y^n) \sim \pi_n^{\otimes n}$,
\begin{align}
  & n^{\frac 12}\big(\kl{\hat \mu_n*\gamma_\sigma}{\hat \nu_n*\gamma_{\sigma}}-\kl{\mu_n*\gamma_{\sigma}}{\nu_n*\gamma_{\sigma}}\big)\notag \\
  &\qquad \qquad \qquad \qquad\qquad \qquad\trightarrow{d} \int_{\RR^d}  G_{\mu_0,\sigma}(x) \log \left(\frac{\mu_0*\varphi_{\sigma}(x)}{\nu_0*\varphi_{\sigma}(x)}\right) d x-\int_{\RR^d}  G_{\nu_0,\sigma}(x) \frac{\mu_0*\varphi_{\sigma}(x)}{\nu_0*\varphi_{\sigma}(x)} d x\notag \\
  & \qquad \qquad\qquad \qquad\qquad \qquad\sim N\big(0,v_{2,f_{\mathsf{KL}}}^2(\mu_0,\nu_0,\sigma)\big),\label{eq:localaltlimdist}
\end{align}
where $(G_{\mu_0,\sigma},G_{\nu_0,\sigma})$ is a centered Gaussian process with covariance function  $\Sigma_{\mu_0,\nu_0,\sigma}$ as given in \eqref{eq:covfuncGP} and $v_{2,f_{\mathsf{KL}}}^2(\mu_0,\nu_0,\sigma)$ is specified in \eqref{eq:var-smoothedKL2samp}. We shall establish the above under Assumption \ref{Assump-localalt}$(i)$ using Lemma \ref{lem:bickel98} and an application of Le Cam's third lemma \cite[Theorem 3.10.7]{AVDV-book}.  
Note that $(\pi_n)_{n \in \NN_0}$ satisfying Assumption \ref{Assump-localalt} also fulfills the conditions in Lemma \ref{lem:bickel98}, and hence \eqref{eq:weaklimloglik} holds for $(X^n,Y^n) \sim \pi_0^{\otimes n}$. Consequently, so long that \eqref{eq:wcden2sampalt} holds with $(\mu,\nu)=(\mu_0,\nu_0)$, we have
\begin{align}
 \bigg(  n^{1/2} \left(\frac{\hat\mu_n*\varphi_{\sigma}}{\nu_0*\varphi_{\sigma}}-\frac{\mu_0*\varphi_{\sigma}}{\nu_0*\varphi_{\sigma}}\right), n^{1/2} \left(\frac{\hat\nu_n*\varphi_{\sigma}}{\nu_0*\varphi_{\sigma}}-1\right),&\Lambda_{n,\pi_n,\pi_0}\bigg)\trightarrow{w} \left(\frac{G_{\mu_0,\sigma}}{\nu_0*\varphi_{\sigma}},\frac{G_{\nu_0,\sigma}}{\nu_0*\varphi_{\sigma}},W \right) , \notag 
\end{align}
in $L^2(\eta_1)\times L^2(\eta_2) \times \RR$, where $\eta_1$ and $\eta_2$ as given in \eqref{eq:normspmeas} are finite measures for compactly supported $(\mu_0,\nu_0)$, $W \sim  N\big(-0.5\sigma_h^2,\sigma_h^2\big)$, and the centered Gaussian process  
$\big(G_{\mu_0,\sigma}(x), G_{\nu_0,\sigma}(y),W\big)_{(x,y) \in \RR^d \times \RR^d}$ has covariance function  $\tilde \Sigma_{\mu,\nu,\sigma}: (\RR^d \times \RR^d) \times (\RR^d \times \RR^d) \rightarrow \RR^{3 \times 3}$ given by 
\begin{align}
\tilde \Sigma_{\mu_0,\nu_0,\sigma}\big((x,y),(\tilde x,\tilde y)\big)
&:=\mspace{-2 mu}
 \left[ {\begin{array}{ccc}
   \EE\big[G_{\mu_0,\sigma}(x)G_{\mu_0,\sigma}(\tilde x)\big] & \EE\big[G_{\mu_0,\sigma}(x)G_{\nu_0,\sigma}(\tilde y)\big] & \EE\big[G_{\mu_0,\sigma}(x)h(X,Y)\big] \\[2 pt]
   \EE\big[G_{\nu_0,\sigma}(y)G_{\mu_0,\sigma}(\tilde x)\big] & \EE\big[G_{\nu_0,\sigma}(y)G_{\nu_0,\sigma}(\tilde y)\big]& \EE\big[G_{\nu_0,\sigma}(y)h(X,Y)\big]\\
     \EE\big[h(X,Y)G_{\mu_0,\sigma}(\tilde x)\big] & \EE\big[h(X,Y)G_{\nu_0,\sigma}(\tilde y)\big] & \sigma_h^2 
  \end{array} } \right] \notag \\
  &=\mspace{-2 mu}\left[ \arraycolsep=1.3 pt\def\arraystretch{0.8}{\begin{array}{ccc}
\mathsf{cov}\big(\varphi_{\sigma}(x-X),\varphi_{\sigma}(\tilde x-X)\big) & \mathsf{cov}\big(\varphi_{\sigma}(x-X),\varphi_{\sigma}(\tilde y-Y)\big) &\EE\big[\varphi_{\sigma}(x-X)h(X,Y)\big] \\[2 pt]
\mathsf{cov}\big(\varphi_{\sigma}(y-Y),\varphi_{\sigma}(\tilde x-X)\big) & \mathsf{cov}\big(\varphi_{\sigma}(y-Y),\varphi_{\sigma}(\tilde y-Y)\big) &\EE\big[\varphi_{\sigma}(y-Y)h(X,Y)\big]  \\[2 pt]
\EE\big[h(X,Y)\varphi_{\sigma}(\tilde x-X)\big] & \EE\big[h(X,Y)\varphi_{\sigma}(\tilde y-Y)\big] & \sigma_h^2 
  \end{array} } \right], \notag
\end{align}
for $(X,Y) \sim \pi_0$. This follows by \eqref{eq:weaklimloglik}, the convergence of finite-dimensional marginals of the joint process (via the multivariate CLT), and the separability of $L^2(\eta_1)\times L^2(\eta_2) \times \RR$; see arguments leading to \eqref{eq:weakconfindimmarg}.  
\medskip

Having the above, Le Cam's third lemma \cite[Theorem 3.10.7 and Example 3.10.8]{AVDV-book} implies that for $(X^n,Y^n) \sim \pi_n^{\otimes n}$, we have
\begin{align}
 \bigg(  n^{1/2} \left(\frac{\hat\mu_n*\varphi_{\sigma}}{\nu_0*\varphi_{\sigma}}-\frac{\mu_0*\varphi_{\sigma}}{\nu_0*\varphi_{\sigma}}\right), n^{1/2} \left(\frac{\hat\nu_n*\varphi_{\sigma}}{\nu_0*\varphi_{\sigma}}-1\right)\bigg)\trightarrow{w} &\left(\frac{\bar G_{\mu_0,\sigma,h}}{\nu_0*\varphi_{\sigma}},\frac{\bar G_{\nu_0,\sigma,h}}{\nu_0*\varphi_{\sigma}}\right)\notag 
\end{align}
in $L^2(\eta_1)\times L^2(\eta_2)$, where the Gaussian process
$\big(\bar G_{\mu_0,\sigma,h}, \bar G_{\nu_0,\sigma,h}\big)$ has mean function  $(m_{1,h},m_{2,h}):=\big(\EE_{\pi_0}\big[h(X,Y)$ $\varphi_{\sigma}( \cdot-X)\big],\EE_{\pi_0}\big[h(X,Y)\varphi_{\sigma}(\cdot-Y)\big]\big)$ and  covariance function  $\Sigma_{\mu,\nu,\sigma}$ as given in \eqref{eq:covfuncGP}. Since $\mu_n$ and $\nu_n$ are compactly supported (on $[-b,b]^d$), the proof of Proposition \ref{Prop:GS-KL-limdist}$(iv)$ applies and results in 
\begin{align}
  & n^{\frac 12}\big(\kl{\hat \mu_n*\gamma_\sigma}{\hat \nu_n*\gamma_{\sigma}}-\kl{\mu_0*\gamma_{\sigma}}{\nu_0*\gamma_{\sigma}}\big)\notag \\
  &\qquad \qquad \qquad \qquad\qquad \qquad\trightarrow{d} \int_{\RR^d}  \bar G_{\mu_0,\sigma,h}(x) \log \left(\frac{\mu_0*\varphi_{\sigma}(x)}{\nu_0*\varphi_{\sigma}(x)}\right) d x-\int_{\RR^d} \bar G_{\nu_0,\sigma,h} (x)\frac{\mu_0*\varphi_{\sigma}(x)}{\nu_0*\varphi_{\sigma}(x)} d x. \notag
\end{align} 
Next, note that the finite measures $\eta_1$ and $\eta_2$ from \eqref{eq:normspmeas} have bounded Lebesgue densities. Consequently, the convergence given in \eqref{eq:convmarglocalt} in $L^{\infty}(\lambda) \times L^{\infty}(\lambda)$  implies the same in $L^{\infty}(\eta_1) \times L^{\infty}(\eta_2)$ and $L^{2}(\eta_1) \times L^{2}(\eta_2)$. It then follows via the definition of the Hadamard first derivative (see  \eqref{eq:hadderklgen}), similarly to \eqref{eq:KL-GS-twosamp-alt-interm}, that
\begin{align}
  & n^{\frac 12}\big(\kl{\mu_n*\gamma_\sigma}{ \nu_n*\gamma_{\sigma}}-\kl{\mu_0*\gamma_{\sigma}}{\nu_0*\gamma_{\sigma}}\big)\notag \\
  &\qquad \qquad \qquad \qquad\qquad\qquad\trightarrow{d} \int_{\RR^d}  m_{1,h}(x) \log \left(\frac{\mu_0*\varphi_{\sigma}(x)}{\nu_0*\varphi_{\sigma}(x)}\right) d x-\int_{\RR^d}m_{2,h}(x)\frac{\mu_0*\varphi_{\sigma}(x)}{\nu_0*\varphi_{\sigma}(x)} d x. \notag
\end{align} 
Subtracting the last equation from the penultimate one  leads to \eqref{eq:localaltlimdist}.

\medskip

Armed with \eqref{eq:localaltlimdist}, we proceed to analyze the  type I and type II error probabilities, i.e., $e_{1,n}(T_n):=\PP(T_n>t_n|H_0)$ and $e_{2,n}(T_n):=\PP(T_n\leq t_n|H_{1,n})$,  respectively.
Consider the test statistic $T_n=\kl{\hat \mu_{n}*\gamma_{\sigma}}{\hat \nu_{n}*\gamma_{\sigma}}$ with critical value $t_n= \epsilon+c n^{-1/2}$ 
for some constant $c$ that will be specified later. Define the event  $\cE_{n,\epsilon,c}:=\big\{\kl{\hat \mu_{n}*\gamma_{\sigma}}{\hat \nu_{n}*\gamma_{\sigma}} > \epsilon+cn^{-1/2}\big\}$.  Note that   $(-\infty,c)$ and $[c,\infty)$ are  continuity sets for the Gaussian measure on $\RR$, i.e., with boundary measure zero. Then, with $Z_{n,0}:=n^{1/2}\big(\kl{\hat \mu_{n}*\gamma_{\sigma}}{\hat \nu_{n}*\gamma_{\sigma}}-\kl{ \mu_0*\gamma_{\sigma}}{\nu_0*\gamma_{\sigma}}\big)$, we have
\begin{align}
\limsup_{n \rightarrow \infty} e_{1,n}(T_n)
 &=\limsup_{n \rightarrow \infty}\PP\big(\cE_{n,\epsilon,c}|H_0\big)\leq  \limsup_{n \rightarrow \infty} \PP\big(Z_{n,0} > c|H_0\big) =Q\big(c/v_{2,f_{\mathsf{KL}}}(\mu_0,\nu_0,\sigma)  \big), \label{eq:t1error}
\end{align}
where  $v_{2,f_{\mathsf{KL}}}(\mu_0,\nu_0,\sigma)$ is as given in  \eqref{eq:var-smoothedKL2samp} and $Q$ is the $Q$-function (or complementary error function) given by $Q(x)=(2 \pi)^{-1/2} \int_{x}^{\infty} e^{-z^2/2}dz$. The  inequality above is due to $\kl{ \mu_0*\gamma_{\sigma}}{\nu_0*\gamma_{\sigma}} \leq \epsilon$, while the final equality  uses the Portmanteau theorem and \eqref{eq:KL-GS-twosamp-alt} (the latter applies since $\mu_{0},\nu_0$ have compact supports). Similarly, defining $Z_{n,1}:=n^{1/2}\big(\kl{\hat \mu_{n}*\gamma_{\sigma}}{\hat \nu_{n}*\gamma_{\sigma}}-\kl{ \mu_n*\gamma_{\sigma}}{\nu_n*\gamma_{\sigma}}\big)$ and 
 $\bar \cE_{n,\epsilon,c}:=\big\{\kl{\hat \mu_{n}*\gamma_{\sigma}}{\hat \nu_{n}*\gamma_{\sigma}}$ $ \leq  \epsilon+cn^{-1/2}\big\}$, for the type II error probability we have
 \begin{flalign}
\limsup_{n \rightarrow \infty} e_{2,n}(T_n)
 &=\limsup_{n \rightarrow \infty}\PP\big(\bar \cE_{n,\epsilon,c}|H_{1,n}\big)\notag \\
 &= \limsup_{n \rightarrow \infty} \PP\Big(Z_{n,1}  \leq  -n^{1/2}\big(\kl{ \mu_n*\gamma_{\sigma}}{\nu_n*\gamma_{\sigma}}-\epsilon\big)+c\,|H_{1,n}\Big) \notag \\
  &\leq  \limsup_{n \rightarrow \infty} \PP\big(Z_{n,1} 
\leq c-C|H_{1,n}\big) \notag \\
 &=1-Q\big((c-C)/v_{2,f_{\mathsf{KL}}}(\mu_0,\nu_0,\sigma)\big),  \label{eq:t2error} &&
\end{flalign}
where the last inequality uses $\kl{ \mu_n*\gamma_{\sigma}}{\nu_n*\gamma_{\sigma}} \geq \epsilon+n^{-1/2}C$ and the final equality uses the Portmanteau theorem together with \eqref{eq:localaltlimdist}.
\medskip

To arrive at the result in Proposition \ref{prop:HTperf}, it remains to appropriately upper bound $v_{2,f_{\mathsf{KL}}}(\mu_0,\nu_0,\sigma)$. By using Cauchy-Schwarz inequality and the fact that second moment upper bounds variance, we have
\begin{flalign}
 \mathsf{cov}\big(\varphi_{\sigma}(x-X),\varphi_{\sigma}(y-X)\big)^2
&\leq \EE_{\mu_0}\big[\varphi^2_{\sigma}(x-X)\big]\EE_{\mu_0}\big[\varphi^2_{\sigma}(y-X)\big] \notag \\
  & =(2\pi \sigma^2)^{-2d}\int_{\RR^d}e^{-\frac{\norm{x-z}^2}{\sigma^2}}d\mu_0(z)\int_{\RR^d}e^{-\frac{\norm{y-z}^2}{\sigma^2}} d\mu_0(z)\notag \\
  & \leq (2\pi \sigma^2)^{-2d}e^{\frac{2b^2d}{\sigma^2}}e^{-\frac{(\norm{x}^2+\norm{y}^2)}{2\sigma^2}},\notag &&
\end{flalign}
where the last inequality uses $\norm{x-z}^2 \geq \norm{x}^2/2-\norm{z}^2$.
Hence 
\begin{align}
\Sigma^{(1,1)}_{\mu_0,\nu_0,\sigma}(x,y)=\mathsf{cov}\big(\varphi_{\sigma}(x-X),\varphi_{\sigma}(y-X)\big) \leq (2\pi \sigma^2)^{-d}e^{\frac{b^2d}{\sigma^2}}e^{-\frac{(\norm{x}^2+\norm{y}^2)}{4\sigma^2}}. \notag
\end{align}
 By following similar steps,   for all $1 \leq i,j \leq 2$, we have 
\begin{align}
 \Sigma^{(i,j)}_{\mu_0,\nu_0,\sigma}(x,y)& \leq (2\pi \sigma^2)^{-d}e^{\frac{b^2d}{\sigma^2}}e^{-\frac{(\norm{x}^2+\norm{y}^2)}{4\sigma^2}}.\notag
\end{align}
Also,
\begin{align}
e^{\frac{-b^2d}{2\sigma^2}}e^{\frac{-2b\sqrt{d}\norm{x}}{\sigma^2}} \leq \frac{ \int_{\RR^d} e^{\frac{-2 \norm{x}\norm{z}-\norm{z}^2}{2\sigma^2}} d \mu_0(z)}{ \int_{\RR^d}e^{\frac{2 \norm{x}\norm{ z}-\norm{z}^2}{2\sigma^2}}d \nu_0(z)} &\leq \frac{\mu_0*\varphi_{\sigma}(x)}{\nu_0*\varphi_{\sigma}(x)} \leq \frac{ \int_{\RR^d} e^{\frac{2 \norm{x}\norm{z}-\norm{z}^2}{2\sigma^2}} d \mu_0(z)}{ \int_{\RR^d}e^{\frac{-2 \norm{x}\norm{ z}-\norm{z}^2}{2\sigma^2}}d \nu_0(z)} \leq e^{\frac{b^2d}{2\sigma^2}}e^{\frac{2b\sqrt{d}\norm{x}}{\sigma^2}}. \notag
\end{align}
Combining the above bounds, we have from \eqref{eq:var-smoothedKL2samp} that 
\begin{flalign}
v_{2,f_{\mathsf{KL}}}^2(\mu_0,\nu_0,\sigma)
    & = \sum_{1 \leq i,j \leq 2}\int_{\RR^d}\int_{\RR^d}  \Sigma^{(i,j)}_{\mu_0,\nu_0,\sigma}(x,y) \tilde L_{i,f_{\mathsf{KL}}}(x) \tilde L_{j,f_{\mathsf{KL}}}(y) d x\, d y \notag \\
    &\leq  (2 \pi \sigma^2)^{-d}e^{\frac{b^2d}{\sigma^2}}\int_{\RR^d}\int_{\RR^d}e^{-\frac{(\norm{x}^2+\norm{y}^2)}{4\sigma^2}} \bigg(\frac{(b^2d+4b\sqrt{d}\norm{x}) (b^2d+4b\sqrt{d}\norm{y})}{4\sigma^4}\notag \\
    &\quad + e^{\frac{2b^2d+4b\sqrt{d}(\norm{x}+\norm{y})}{2 \sigma^2}}+ e^{\frac{b^2d+4b\sqrt{d}\norm{x}}{2 \sigma^2}}\frac{(b^2d+4b\sqrt{d}\norm{y}) }{2\sigma^2} \notag \\
    &\quad+ e^{\frac{b^2d+4b\sqrt{d}\norm{y}}{2 \sigma^2}}\frac{(b^2d+4b\sqrt{d}\norm{x}) }{2\sigma^2}\bigg)dx\, dy \notag \\
   & =:c_{b,d,\sigma}.\label{eq:threshold-HT} &&
\end{flalign}
For $0<\tau,\tau' \leq 1 $, setting $c=c_{b,d,\sigma} Q^{-1}(\tau)$, the claim of Proposition \ref{prop:HTperf} follows from \eqref{eq:t1error} and \eqref{eq:t2error} with $C_{b,d,\sigma,\tau,\tau'}=c-c_{b,d,\sigma}Q^{-1}(1-\tau')$.

\subsection{Proof of Proposition \ref{prop:pertconstr}}\label{prop:pertconstr-proof}
\ \\ 
\noindent\textbf{Part $\bm{(i)}$:} Since $\norm{d(\nu_0 \otimes \mu_0)/d\pi_0}_{\infty} <\infty$, there exists $n_0 \in \NN$ such that the RHS of \eqref{eq:pertconst} is non-negative $\pi_0$-a.s., and henceforth, we take $n \geq n_0$. Further, observe that $\pi_n$ specified by \eqref{eq:pertconst} is a valid joint probability measure as
\begin{align}
\int_{\RR^d \times \RR^d} d\pi_n=\int_{\RR^d \times \RR^d}d\pi_0+\bar c n^{-\frac12}(d (\mu_0 \otimes \nu_0)-d (\nu_0 \otimes \mu_0)=1.\notag
\end{align}
Via Taylor's theorem applied to $f_{\mathsf{H}^2}(x)=(\sqrt{x}-1)^2$ around $x=1$ and using $f'(1)=f''(1)=0$, we have
\begin{align}
f_{\mathsf{H}^2}\circ\frac{d\pi_n} {d\pi_0}&=\frac 12 \int_{0}^1 \frac{(1-\tau)}{\left((1-\tau)+\tau \frac{d\pi_n} {d\pi_0}\right)^{\frac 32}} \left(\frac{d\pi_n} {d\pi_0}-1\right)^2 d \tau=\frac 12 \int_{0}^1 \frac{(1-\tau)}{\left((1-\tau)+\tau \frac{d\pi_n} {d\pi_0}\right)^{\frac 32}} h_{\pi_0,\bar c}^2 d \tau.\notag 
\end{align}
Multiplying by $n$ and taking expectation w.r.t. $\pi_0$ leads to
\begin{align}
n \helsq{\pi_n}{\pi_0}&= \int_{\RR^d \times \RR^d} \int_{0}^1 \frac{(1-\tau)}{2\left((1-\tau)+\tau \frac{d\pi_n} {d\pi_0}\right)^{3/2}} h_{\pi_0,\bar c}^2 d \tau d\pi_0. \notag
\end{align}
Note that the integrand in the RHS above is dominated by $0.5(1-\tau)^{-1/2}h_{\pi_0,\bar c}^2$, which is integrable w.r.t. the product measure   $\tau \otimes \pi_0$ under the assumption $\norm{h_{\pi_0,\bar c}}_{2,\pi_0}<\infty$. Taking limit, the dominated convergence theorem and the fact that $d\pi_n/d\pi_0$ converges pointwise to 1, imply that 
\begin{align}
\lim_{n \rightarrow \infty}n \helsq{\pi_n}{\pi_0}&=\frac 14\norm{h_{\pi_0,\bar c}}_{2,\pi_0}^2. \notag
\end{align}
Moreover, it is readily verified that $\int_{\RR^d \times \RR^d} h_{\pi_0,\bar c} d\pi_0=0$. Finally, the marginals of $\pi_n$ satisfy 
$ d\mu_n=d\mu_0+n^{-1/2}\bar c \big(d\mu_0-d\nu_0\big)$ and $ d\nu_n=d\nu_0+n^{-1/2}\bar c\big(d\nu_0-d\mu_0\big)$, which implies
\begin{align} \label{eq:margpert}
\begin{split}\mu_n*\varphi_{\sigma}&=\mu_0*\varphi_{\sigma}+ n^{-1/2}\bar c\big(\mu_0*\varphi_{\sigma}-\nu_0*\varphi_{\sigma}\big),  \\
\nu_n*\varphi_{\sigma}&=\nu_0*\varphi_{\sigma}+n^{-1/2}\bar c\big(\nu_0*\varphi_{\sigma}-\mu_0*\varphi_{\sigma}\big),
\end{split}
\end{align}
and thus
\begin{align}
&n^{1/2}\left(\frac{\mu_n*\varphi_{\sigma}-\mu_0*\varphi_{\sigma}}{\nu_0*\varphi_{\sigma}}\right)= \frac{\bar c(\mu_0*\varphi_{\sigma}-\nu_0*\varphi_{\sigma})}{\nu_0*\varphi_{\sigma}}=\frac{\EE_{\pi_0}\left[h_{\pi_0,\bar c}(X,Y)\varphi_{\sigma}(\cdot-X)\right]}{\nu_0*\varphi_{\sigma}},\notag \\
& n^{1/2}\left(\frac{\nu_n*\varphi_{\sigma}-\nu_0*\varphi_{\sigma}}{\nu_0*\varphi_{\sigma}}\right)= \frac{\bar c(\nu_0*\varphi_{\sigma}-\mu_0*\varphi_{\sigma})}{\nu_0*\varphi_{\sigma}}=\frac{\EE_{\pi_0}\left[h_{\pi_0,\bar c}(X,Y)\varphi_{\sigma}(\cdot-Y)\right]}{\nu_0*\varphi_{\sigma}}. \notag
\end{align}
Hence, \eqref{eq:convmarglocalt} is satisfied (with pointwise equality), verifying all the requirements in Assumption \ref{Assump-localalt}$(i)$ and concluding the proof of Part $(i)$. 

\medskip

\noindent\textbf{Part $\bm{(ii)}$:} Note that $\norm{h_{\pi_0,\bar c}}_{2,\pi_0}<\infty$ as $\norm{d(\nu_0 \otimes \mu_0)/d\pi_0}_{\infty,\pi_0}, \norm{d(\mu_0 \otimes \nu_0)/d\pi_0}_{2,\pi_0} 
<\infty$. Hence all the assumptions in Part $(i)$ are fulfilled thereby implying via the proof above that Assumption \ref{Assump-localalt}$(i)$ is satisfied. So, we only need to verify that  Assumption \ref{Assump-localalt}$(ii)$ holds. By Taylor's theorem applied to $\phi_{\mathsf{KL}}(x,y)=x \log (x/y)$, we have
\begin{align}
&\mu_n*\varphi_{\sigma} \log\left(\frac{\mu_n*\varphi_{\sigma}}{\nu_n*\varphi_{\sigma}}\right)-\mu_0*\varphi_{\sigma} \log\left(\frac{\mu_0*\varphi_{\sigma}}{\nu_0*\varphi_{\sigma}}\right)\notag \\
&\qquad \qquad\qquad=\left(1+\log \left(\frac{\mu_0*\varphi_{\sigma}}{\nu_0*\varphi_{\sigma}}\right)\right) 
 \left(\mu_n*\varphi_{\sigma}-\mu_0*\varphi_{\sigma}\right) -\frac{\mu_0*\varphi_{\sigma}}{\nu_0*\varphi_{\sigma}}\left(\nu_n*\varphi_{\sigma}-\nu_0*\varphi_{\sigma}\right) \notag \\
 &\quad \qquad \qquad\qquad + \left(\mu_n*\varphi_{\sigma}-\mu_0*\varphi_{\sigma}\right)^2\int_{0}^1 \frac{(1-\tau)d\tau}{(1-\tau)\mu_0*\varphi_{\sigma}+\tau \mu_n*\varphi_{\sigma}} \notag \\
 &\quad \qquad \qquad\qquad + \left(\nu_n*\varphi_{\sigma}-\nu_0*\varphi_{\sigma}\right)^2\int_{0}^1 \frac{(1-\tau)\big((1-\tau)\mu_0*\varphi_{\sigma}+\tau \mu_n*\varphi_{\sigma}\big)d\tau}{(1-\tau)\nu_0*\varphi_{\sigma}+\tau \nu_n*\varphi_{\sigma}} \notag \\
 &\quad \qquad \qquad\qquad-2 \left(\mu_n*\varphi_{\sigma}-\mu_0*\varphi_{\sigma}\right)\left(\nu_n*\varphi_{\sigma}-\nu_0*\varphi_{\sigma}\right)\int_{0}^1 \frac{(1-\tau)d\tau}{(1-\tau)\nu_0*\varphi_{\sigma}+\tau \nu_n*\varphi_{\sigma}}. \notag 
\end{align}
Taking integral w.r.t. Lebesgue measure and simplifying using \eqref{eq:margpert}, we obtain
\begin{flalign}
&\kl{\mu_n*\gamma_{\sigma}}{\nu_n*\gamma_{\sigma}} 
-\kl{\mu_0*\gamma_{\sigma}}{\nu_0*\gamma_{\sigma}} \notag \\
&\qquad \qquad\qquad=n^{-\frac{1}{2}}\bar c\big(\kl{\mu_0*\gamma_{\sigma}}{\nu_0*\gamma_{\sigma}} +\kl{\nu_0*\gamma_{\sigma}}{\mu_0*\gamma_{\sigma}}+\chisq{\mu_0*\gamma_{\sigma}}{\nu_0*\gamma_{\sigma}}\big) \notag \\
 &\quad \qquad \qquad\qquad+ \frac{\bar c^2}{n}\int_{\RR^d}\left(\mu_0*\varphi_{\sigma}-\nu_0*\varphi_{\sigma}\right)^2\int_{0}^1 \frac{(1-\tau)}{(1-\tau)\mu_0*\varphi_{\sigma}+\tau \mu_n*\varphi_{\sigma}} d\tau dx\notag \\
 & \quad \qquad \qquad\qquad+\frac{\bar c^2}{n} \int_{\RR^d}\left(\nu_0*\varphi_{\sigma}-\mu_0*\varphi_{\sigma}\right)^2\int_{0}^1 \frac{(1-\tau)\big((1-\tau)\mu_0*\varphi_{\sigma}+\tau \mu_n*\varphi_{\sigma}\big)}{(1-\tau)\nu_0*\varphi_{\sigma}+\tau \nu_n*\varphi_{\sigma}}d\tau dx \notag \\
 &\quad \qquad \qquad\qquad+\frac{2\bar c^2}{n} \int_{\RR^d}\big(\nu_0*\varphi_{\sigma}-\mu_0*\varphi_{\sigma}\big)^2\int_{0}^1 \frac{(1-\tau)}{(1-\tau)\nu_0*\varphi_{\sigma}+\tau \nu_n*\varphi_{\sigma}} d\tau dx\notag\\
 & \qquad \qquad\qquad> n^{-\frac{1}{2}}\bar c\,\big(\kl{\mu_0*\gamma_{\sigma}}{\nu_0*\gamma_{\sigma}} +\kl{\nu_0*\gamma_{\sigma}}{\mu_0*\gamma_{\sigma}}+\chisq{\mu_0*\gamma_{\sigma}}{\nu_0*\gamma_{\sigma}}\big), \notag &&
\end{flalign}
where the final inequality is due to the omitted terms being positive. The proof is concluded by noting that the RHS is larger than $Cn^{-1/2}$ for  $\bar c$ sufficiently large. 

\subsection{Proof of Proposition \ref{prop:HTperfKLDP}}
We consider the test statistic $T_n=\kl{\hat \mu_{n}*\gamma_{\sigma}}{\hat \nu_{n}*\gamma_{\sigma}}$ with critical value $t_n= \epsilon+c n^{-1/2}$  for an appropriately chosen $\sigma$ (small enough) and $c$, and analyze the resulting asymptotic error probabilities of the hypothesis test in \eqref{HT-KL-DP}. The choice of $\sigma$ relies on the stability lemma (see Lemma \ref{lem:smoothKL-stability}) which quantifies the deviation of KL divergence from its smoothed version as a function of $\sigma$.

Recall that $\bar M,\bar \epsilon,\underaccent{\bar}{s}, \bar s $ are known constants such that $M \leq \bar M<\infty$, $\epsilon<\bar \epsilon\leq \tilde \epsilon$, and $0 <\underaccent{\bar}{s} \leq s \leq \bar s \leq 1$.  Let  $\sigma_{ \epsilon,\bar \epsilon,\underaccent{\bar}{s},\bar s,d,\bar M}$ equal $x$ such that \begin{align}
 c_{d,\bar s} \bar M\big(\bar M+1+\log \bar M\big) (x^{\underaccent{\bar}{s}}\vee x^{\bar s})=\bar \epsilon-\epsilon. \label{eq:sigmachoice}
\end{align}
Choose any  $\sigma<\sigma_{ \epsilon,\bar \epsilon,\underaccent{\bar}{s},\bar s,d,\bar M}$, whereby $c_{d,s} M\big(M+1+\log M\big) \sigma^{s} <\bar \epsilon-\epsilon $. Hence, \eqref{eq:KL-stability-ub} and $\kl{ \mu_1}{\nu_{1}} \geq \tilde \epsilon$ imply  $\kl{\mu_1*\gamma_{\sigma}}{\nu_1*\gamma_{\sigma}} >\epsilon$. On the other hand, the data processing inequality along with the fact that $\kl{ \mu_0}{\nu_0} \leq \epsilon$ imply $\kl{\mu_0*\gamma_{\sigma}}{\nu_0*\gamma_{\sigma}} \leq \epsilon$. Then, for the test statistic  $ T_n$, 
we obtain via steps leading to \eqref{eq:t1error} that
\begin{align}
\limsup_{n \rightarrow \infty} e_{1,n}(T_n)
\leq Q\big(c/v_{2,f_{\mathsf{KL}}}(\mu_0,\nu_0,\sigma)  \big).\notag
\end{align}
Setting $Z_{n,1}:=n^{1/2}\big(\kl{\hat \mu_{n}*\gamma_{\sigma}}{\hat \nu_{n}*\gamma_{\sigma}}-\kl{ \mu_1*\gamma_{\sigma}}{\nu_1*\gamma_{\sigma}}\big)$, we further have 
\begin{align}
\limsup_{n \rightarrow \infty} e_{2,n}(T_n)&\leq \limsup_{n \rightarrow \infty} \PP\big(Z_{n,1} \leq -n^{1/2}(\kl{ \mu_1*\gamma_{\sigma}}{\nu_1*\gamma_{\sigma}}-\epsilon)+c|H_1\big) =1-Q(-\infty)=0,  \notag
\end{align}
where the penultimate equality uses the Portmanteau theorem applied to $Z_{n,1}$ converging weakly to a Gaussian random variable (via \eqref{eq:KL-GS-twosamp-alt} with $v_{2,f_{\mathsf{KL}}}^2(\mu_1,\nu_1,\sigma) \leq c_{b,d,\sigma}$ similar to \eqref{eq:threshold-HT}),  and $c-n^{1/2}(\kl{ \mu_1*\gamma_{\sigma}}{\nu_1*\gamma_{\sigma}}$ $-\epsilon)$ diverging to $-\infty$ due to $\kl{ \mu_1*\gamma_{\sigma}}{\nu_1*\gamma_{\sigma}}$ $>\epsilon$. Choosing $c=c_{b,d,\sigma} Q^{-1}(\tau)$ (see \eqref{eq:threshold-HT}) completes the proof.

\subsection{Proof of Lemma \ref{lem:smoothKL-stability}}
For $\tau \in [0,1]$, let $z_{\mu,\tau}(x):=(1-\tau)p_{\mu}(x)+\tau \mu*\varphi_{\sigma}(x) $ and $z_{\nu,\tau}(x):=(1-\tau)p_{\nu}(x)+\tau\nu*\varphi_{\sigma}(x)$. By Taylor's theorem, we have 
\begin{align}
\mu*\varphi_{\sigma}(x) \log \left(\frac{\mu*\varphi_{\sigma}(x)}{\nu*\varphi_{\sigma}(x)}\right)&=p_{\mu}(x) \log \left(\frac{p_{\mu}(x)}{p_{\nu}(x)}\right)+ \big(\mu*\varphi_{\sigma}(x)-p_{\mu}(x)\big)\int_{0}^1\left(1+\log \frac{z_{\mu,\tau}(x)}{z_{\nu,\tau}(x)}\right)d\tau \notag \\
&\quad \qquad  -\big(\nu*\varphi_{\sigma}(x)-p_{\nu}(x)\big)\int_{0}^1 \frac{z_{\mu,\tau}(x)}{z_{\nu,\tau}(x)}d\tau.\notag 
\end{align}
Note that $1/M \leq p_{\mu}(x)/p_{\nu}(x) \leq M$ by assumption, which implies $1/M \leq \mu*\varphi_{\sigma}(x)/\nu*\varphi_{\sigma}(x) \leq M$, and hence  
\[\frac 1M \leq \frac{p_{\mu}(x)}{p_{\nu}(x)} \wedge \frac{\mu*\varphi_{\sigma}(x)}{\nu*\varphi_{\sigma}(x)} \leq \frac{z_{\mu,\tau}(x)}{z_{\nu,\tau}(x)} \leq \frac{p_{\mu}(x)}{p_{\nu}(x)} \vee \frac{\mu*\varphi_{\sigma}(x)}{\nu*\varphi_{\sigma}(x)}\leq M,\quad \forall \,x \in \RR^d.\] 
Integrating w.r.t. Lebesgue measure in the above equation, we then obtain (note that $M \geq 1$)
\begin{align}
\abs{\kl{\mu}{\nu}-\kl{\mu*\gamma_{\sigma}}{\nu*\gamma_{\sigma}}}  &\leq \left(1+\log M\right)\int_{\RR^d}\abs{\mu*\varphi_{\sigma}(x)-p_{\mu}(x)} dx +M \int_{\RR^d}\abs{\nu*\varphi_{\sigma}(x)-p_{\nu}(x)} dx.\label{eq:bnddiffKL}
\end{align}
The first integral is bounded as follows:
\begin{flalign}
   \int_{\RR^d} \abs{p_{\mu}(x)-\mu*\varphi_{\sigma}(x)}d x&=  \int_{\RR^d} \abs{p_{\mu}(x)-\sigma^{-d}\int_{\RR^d} p_{\mu}(y) \varphi_{1}\left((x-y)\sigma^{-1}\right)d y }d x \notag\\
&=\int_{\RR^d} \abs{p_{\mu}(x)-\int_{\RR^d}  p_{\mu}(x-\sigma z) \varphi_{1}(z) d z}d x \notag\\
&=\int_{\RR^d} \abs{\int_{\RR^d}\big( p_{\mu}(x)\varphi_1(z)- p_{\mu}(x-\sigma z) \varphi_1(z)\big) d z } d x \notag \\
&  \stackrel{(a)}{\leq} \int_{\RR^d} \left(\int_{\RR^d}\abs{ p_{\mu}(x+\sigma z)- p_{\mu}(x) }d x \right) \varphi_1(z) d z \notag \\
& \stackrel{(b)}{\leq}   \int_{\RR^d}  \kappa_{1,1}\left(p_{\mu},\sigma \norm{z}\right)\varphi_1(z) d z \notag \\
& \stackrel{(c)}{\leq}  M  \sigma^s \int_{\RR^d} \norm{z}^s \varphi_1(z) d z, \notag &&
\end{flalign}
where $(a)$ uses Fubini's theorem, $(b)$ is by the definition of the modulus of smoothness in Definition \ref{def:Lipschitzclass}, while $(c)$ is because $p_{\mu} \in \mathrm{Lip}_{s,1}(M,\cX)$. Following similar steps with $\mu$ replaced by $\nu$, the same bound holds for the second integral term in \eqref{eq:bnddiffKL}, which concludes the proof.

\section{Concluding Remarks} \label{Sec:conclusion}
This paper provided a unified methodology for deriving one- and two-sample limit distributions for $f$-divergences, under both the null (i.e., when the population distributions are equal) and the alternative. We focused on four prominent examples, namely, KL divergence, $\chi^2$ divergence, squared Hellinger distance, and total variation distance. The general limit theorems were stated under technical conditions on the distributions which guarantee Hadamard differentiability of the relevant $f$-divergence functional. The framework allows arbitrary estimators of the population measures and accounts for dependent data, which renders it rather flexible and broadly applicable. We instantiate the general limit distribution theory to the setting of Gaussian-smoothed $f$-divergence, showing that the limit (except for TV distance) is Gaussian under the alternative, or can be represented as a weighted sum of i.i.d. $\chi^2$ random variables under the null. In contrast,  the limit distribution for smoothed TV distance in both the above scenarios is non-Gaussian. We also proposed a hypothesis testing pipeline for auditing DP and characterized its asymptotic level and power by utilizing our distributional limits.

While this work focuses on $f$-divergences, a limit distribution theory for other classes of divergences, such as integral probability metrics and Bregman divergences, are largely unexplored and worth pursuing. We believe that our approach based on the functional delta method and Hadamard directional differentiability can be extended to cover those cases as well. Another appealing research avenue is to explore asymptotic distributions of more sophisticated (non plug-in) estimators of $f$-divergences. This includes approaches based on bias correction mechanisms (e.g., \cite{Yanjun-2020,kandasamy2015nonparametric,berrett2019-efficientfunctional}) or variational methods (e.g., \cite{belghazi2018,sreekumar2021neural}). While such estimators are known to achieve better minimax rates over appropriate classes of distributions, the main challenge here is to establish regularity conditions under which the Hadamard derivative of the relevant functional exists. Towards that end, recent results on Hadamard differentiability of supremum-type functionals \cite{CCR-2020} may be useful for deriving limit theorems of variational estimators. Yet another intriguing question pertains to establishing Berry-Esseen type results, which would reveal the convergence rate of the empirical $f$-divergence to its distributional limit. However, deriving such results is highly non-trivial and would require different tools and technical approaches than the empirical process theory-based techniques employed in this work.

\begin{appendices}
      
\section{Proofs of  Part $(iii)$ of Lemma \ref{Lem:extfuncdelta}, Theorem \ref{thm:extcontmapthm}, and the claim in Remark \ref{rem:relaxeddeltmethod}}

\subsubsection{Proof of Part $(iii)$ of Lemma \ref{Lem:extfuncdelta}} \label{Lem:extfuncdelta-proof}
 We will use  similar steps as in  \cite[Theorem 1]{Romisch-2004} and \cite[Theorem 2]{Romisch-2004} with  the extended continuous mapping theorem replaced by Theorem \ref{thm:extcontmapthm}. We highlight the relevant steps for the claim that Part $(i)$ of Lemma \ref{Lem:extfuncdelta} holds. The proof that  Part $(ii)$ holds is similar and omitted. 

Recall that since $\Theta$ is convex, the tangent cone is given by (see \cite{Romisch-2004})
\begin{align}
     \mathfrak{T}_{\Theta}(\theta)&:=\bigg\{h \in \mathfrak{D}:h=\lim_{n \rightarrow \infty} \frac{\theta_n-\theta}{t_n} \mbox{ for some }\theta_n \rightarrow \theta \mbox{ with } \theta_n \in \Theta \mbox{ and }t_n \downarrow 0^+\bigg\}\notag \\
     &=\mathrm{cl}\big(\big\{(\tilde \theta-\theta)/t:\tilde \theta \in \Theta, ~t>0\big\}\big).\notag
\end{align}
Since $ \mathfrak{T}_{\Theta}(\theta)$ is closed, its boundary is a Borel set. Set $\mathfrak{D}_n=\{h \in \mathfrak D: \theta+(h/r_n) \in \mathfrak D_{\Phi}\}$,  $\mathfrak{D}_n^{\rightarrow}=\{h \in \mathfrak D: \theta+(h/r_n) \in \Theta\}$, $\mathfrak{D}_g=\mathfrak{D}_{\infty} =\mathfrak{T}_{\Theta}(\theta)$, $g=\Phi_{\theta}' $,  $g_n(h)=r_n \big(\Phi(\theta+(h/r_n))-\Phi(\theta) \big)$, $H_n=r_n(Z_n-\theta)$ and $H=Z$. Then, the claim will follow from Theorem \ref{thm:extcontmapthm}, provided that $\PP\big(H \in \mathsf{int}\big(\mathfrak{T}_{\Theta}(\theta)\big)\big)=1$ and $\mathfrak{D}_* \subseteq \mathfrak{D} \setminus \mathsf{int}\big(\mathfrak{T}_{\Theta}(\theta)\big)$, since these imply $\PP\left(H \in \mathfrak{D}_{\infty}\right)=1$ and $\PP\left(H \in \mathfrak{D}_*\right)=0$.

The first condition above follows from
\begin{align}
    \PP\left(H \in \mathsf{int}\big(\mathfrak{T}_{\Theta}(\theta)\big)\right) 
    \stackrel{(a)}{=}1-\PP\left(H \notin \mathfrak{T}_{\Theta}(\theta)\right)
    \stackrel{(b)}{=}1-\lim_{n \rightarrow \infty} \PP\left(H_n \notin \mathfrak{T}_{\Theta}(\theta)\right)
      \stackrel{(c)}{\geq} 1-\lim_{n \rightarrow \infty} \PP^*\left(Z_n \notin \Theta\right)=1, \notag
\end{align}
where $(a)$ is because $ \PP\big(H \in \partial \mathfrak{T}_{\Theta}(\theta)\big)=0$, $(b)$ is since $H_n \trightarrow{w} H$ and Portmanteau theorem, $(c)$ is due to $Z_n \in \Theta$ implies $H_n \in \mathfrak{T}_{\Theta}(\theta)$,  and the final equality follows from $\PP^*\left(Z_n \notin \Theta\right) \rightarrow 0$. To show the second condition, suppose $h \in \mathsf{int}\big(\mathfrak{T}_{\Theta}(\theta)\big) $.  We claim that there exists $m \in \NN$ such that $h \in \mathsf{int}\big(\mathfrak{D}_n^{\rightarrow}\big)$ for all $n \geq m$ which  implies that $h \notin \cup_{n \geq m} \overline{\mathfrak{D} \setminus \mathfrak{D}_n^{\rightarrow}} \supseteq \mathfrak{D}_*$ as desired. To see this, note that $h=(\tilde \theta-\theta)/t$ for some $t>0$ and $\tilde \theta \in \mathsf{int}\big(\Theta\big)$. Let $\hat{\theta}_{\alpha}:=\alpha \tilde \theta+ (1-\alpha) \theta $ for  $\alpha \in [0,1]$. We have  $\hat{\theta}_{\alpha} \in \Theta$ since $\Theta$ is convex. Then, by definition of $\mathfrak{D}_n^{\rightarrow}$,  $\alpha_n r_n (\tilde \theta-\theta)=r_n(\hat{\theta}_{\alpha_n}-\theta) \in \mathfrak{D}_n^{\rightarrow}$. Hence,  for $n \geq m=\min\{n:r_n t \geq  1\}$,   we have $h \in \mathsf{int}\big(\mathfrak{D}_n^{\rightarrow}\big)$ by choosing $\alpha_n=1/(tr_n) $. Hence, all the conditions in Theorem \ref{thm:extcontmapthm}  are satisfied under the assumptions in Part $(iii)$ of Lemma \ref{Lem:extfuncdelta}, and consequently  $r_n\big(\Phi(Z_n)-\Phi(\theta)\big) \trightarrow{d} \Phi_{\theta}'(Z)$ follows. The proof of 
    $r_n\big(\Phi(Z_n)-\Phi(\theta)\big)=\phi'_\theta\big(r_n(Z_n-\theta)\big) +o_\PP(1)$ and Part $(ii)$ follows similarly to \cite{Romisch-2004} with the relevant steps adapted as above, and hence omitted. 

\subsubsection{Proof of Theorem \ref{thm:extcontmapthm}} \label{thm:extcontmapthm-proof}

Since the proof follows closely to that of \cite[Theorem 1.11.1]{AVDV-book}, we only  highlight the differences. Following similar steps as therein, we have that $g$ restricted to $\mathfrak{D}_g \cap \mathfrak{D}_{\infty}$ is continuous, and hence $g(H)$ is Borel measurable.

Consider a closed set $\cC \subseteq \mathfrak{E}$. Then, we have
\begin{align}
\cap_{n=1}^{\infty} \overline{\cup_{m=n}^{\infty} g_m^{-1}(\cC)} \subseteq g^{-1}(\cC) \cup \big(\mathfrak{D} \setminus(\mathfrak{D}_{\infty} \cap \mathfrak{D}_g) \big)\cup \mathfrak{D}_* , \label{eq:setinclu}
\end{align}
where $\bar \cC$ denotes the closure of the set $\cC$.
Indeed, if $h$ is in the set on the left,  then one of the following condition must hold: $(i)$ there exists a divergent sequence $(n_{k})_{k \in \NN}$ such that $h_{n_k} \in g_{n_k}^{-1}(\cC) \cap \mathfrak{D}_{\rightarrow}$ and $d(h_{n_k},h) \rightarrow 0$, where $d(\cdot,\cdot)$ denotes the metric of $\mathfrak D$; or $(ii)$ there exists a divergent sequence $(n_{k})_{k \in \NN}$ such that $h_{n_k} \in g_{n_k}^{-1}(\cC) \cap (\mathfrak{D} \setminus \mathfrak{D}_n^{\rightarrow})$ and $d(h_{n_k},h) \rightarrow 0$. If $(i)$ holds, then by the assumption in the theorem, either $g_{n_k}(h_{n_k}) \rightarrow g(h)$ (if $h \in \mathfrak{D}_{\infty} \cap \mathfrak{D}_g$) or $h \in \mathfrak{D} \setminus(\mathfrak{D}_{\infty} \cap \mathfrak{D}_g)$. If $g_{n_k}(h_{n_k}) \rightarrow g(h)$, then $g(h) \in \cC$ since $g_{n_k}(h_{n_k}) \in \cC$ and $\cC$ is a closed set. On the other hand,  if $(ii)$ holds, then this implies that $h \in \cap_{m=1}^{\infty} \overline{\cup_{k=m}^{\infty} g_{n_k}^{-1}(\cC) \cap (\mathfrak{D} \setminus \mathfrak{D}_{n_k}^{\rightarrow})} \subseteq \cap_{m=1}^{\infty} \overline{\cup_{k=m}^{\infty}  (\mathfrak{D} \setminus \mathfrak{D}_{n_k}^{\rightarrow})} \subseteq \mathfrak{D}_*$.  Hence, \eqref{eq:setinclu} holds. Then,  we have for any fixed $k$ that
\begin{align}
   \limsup_{n \rightarrow \infty} \PP\left(g_n(H_n) \in \cC \right) \leq \limsup_{n \rightarrow \infty} \PP\left(H_n \in \overline{ \cup_{m=k}^{\infty} g_m^{-1}(\cC)} \right) \leq \PP\left(H \in \overline{ \cup_{m=k}^{\infty} g_m^{-1}(\cC)} \right), 
\end{align}
where the final inequality follows from $H_n \trightarrow{w} H $ and Portmanteau theorem \cite[Theorem 1.3.4]{AVDV-book}. Taking limit $k \rightarrow \infty$, we have
\begin{align}
    \limsup_{n \rightarrow \infty} \PP\left(g_n(H_n) \in \cC \right) &\leq \PP\left(H \in \cap_{k=1}^{\infty}\overline{ \cup_{m=k}^{\infty} g_m^{-1}(\cC)} \right) \notag \\
    &\stackrel{(a)}{\leq} \PP(H \in g^{-1}(\cC) )+\PP\big(H \in \mathfrak{D} \setminus(\mathfrak{D}_{\infty} \cap \mathfrak{D}_g) \big)+\PP\big(H \in \mathfrak{D}_*\big) \notag \\
   &\stackrel{(b)}{=} \PP(g(H) \in \cC ), \label{eq:weakconvextc}
\end{align}
where $(a)$ is due to \eqref{eq:setinclu}, and $(b)$ is because   $\PP(H \in \mathfrak{D}_*)=0$ by assumption 
    and 
    \begin{align}
        \PP\big(H \in \mathfrak{D} \setminus(\mathfrak{D}_{\infty} \cap \mathfrak{D}_g) \big)=1-\PP\big(H \in \mathfrak{D}_{\infty} \cap \mathfrak{D}_g \big)=1-\PP\big(H \in \mathfrak{D}_{\infty} \big)=0. \notag
    \end{align} 
Since $\cC$ is an arbitrary closed set, $g_n(H_n) \trightarrow{w} g(H)$ again follows from \eqref{eq:weakconvextc} by Portmanteau theorem.

    \subsubsection{Proof of the claim in Remark \ref{rem:relaxeddeltmethod}} \label{Sec:remarkproof}
The proof follows by using Part $(iii)$ of Lemma \ref{Lem:extfuncdelta}  in the proof of Part $(iii)$ and $(iv)$ of Theorem \ref{Thm:KLdiv-limdist}. 
Recall that $\cQ_n:=\{\omega \in \Omega: p_{\mu_n}(\omega,\cdot)/p_{\nu_n}(\omega,\cdot) \leq q(\cdot) \}$, and let $\tilde \cQ_n:= \{\omega \in \Omega:Z_n(\omega,\cdot)=(p_{\mu_n}(\omega,\cdot)-p_{\mu},p_{\nu_n}(\omega,\cdot)-1) \in \Theta\}$. Note that under the conditions in Part $(iii)$ and $(iv)$ of Theorem \ref{Thm:KLdiv-limdist} sans the requirement  $p_{\mu_n}/p_{\nu_n} \leq q$ a.s., $\Omega \setminus \tilde \cQ_n = (\Omega \setminus  \cQ_n) \cup \cN_0 $ for some null set $\cN_0$ of outer probability zero. Hence, $\PP^*(Z_n \notin \Theta) =\PP^*(\Omega \setminus \cQ_n) \rightarrow 0$ under the assumption in Remark \ref{rem:relaxeddeltmethod}. Also, $Z=(B_1,B_2)$ being continuous implies $\PP\big(Z \in \partial \mathfrak{T}_{\Theta}(\theta)\big)=0$. Hence, all the conditions in Part $(iii)$ of Lemma \ref{Lem:extfuncdelta} are satisfied, and the result  follows.

\section{Proof of Lemma \ref{Lem:contmapLp}} \label{Lem:contmapLp-proof}
Since $g_n \rightarrow g$ in $L^{r'}(\rho)$, for every $\epsilon>0$,  there exists $n_0(\epsilon) \in \NN$ such that   $\norm{g_n}_{r',\rho}\leq \norm{g}_{r',\rho}+\epsilon$ for all $n \geq n_0(\epsilon)$. Thus, for  every $n \geq n_0(\epsilon)$, we have by Minkowski's and H\"{o}lder's inequality that
\begin{align}
    \norm{f_ng_n-fg}_{1,\rho} &\leq \norm{f_ng_n-fg_n}_{1,\rho}+\norm{fg_n-fg}_{1,\rho} \notag \\
    &\leq \norm{f_n-f}_{r,\rho}\norm{g_n}_{r',\rho}+\norm{f}_{r,\rho}\norm{g_n-g}_{r',\rho}. \notag \\
     &\leq \norm{f_n-f}_{r,\rho}(\norm{g}_{r',\rho}+\epsilon)+\norm{f}_{r,\rho}\norm{g_n-g}_{r',\rho}.\notag
\end{align}
The RHS converges to zero since $f_n \rightarrow f$ in $L^r(\rho)$ and $g_n \rightarrow g$ in $L^{r'}(\rho)$, thus proving Part $(i)$. The claim in  Part $(ii)$ follows from \cite[Example 1.4.7 (Slutsky's lemma)]{AVDV-book} by the separability of $L^r(\rho)$ spaces for $1 \leq r <\infty$, and the continuous mapping theorem applied to the continuous map $(f,g) \mapsto fg$ from  $L^{r}(\rho) \times L^{r'}(\rho) \rightarrow L^1(\rho)$ by Part $(i)$. 

\section{Proof of Lemma \ref{Lem:assumpfdiv}}
\label{Lem:assumpfdiv-proof}
 
\noindent\textbf{Part $\bm{(i)}$:} 
The convexity of  $\phi_{\mathsf{KL}}$, $\phi_{\chi^2}$ and $\phi_{\mathsf{H}^2}$ follows since these are perspectives\footnote{The perspective of a convex function $f:\RR \rightarrow \RR$  is the function $f_p:\RR \times \RR_+ \rightarrow \RR$ given by $f_p(x,t)=t f(x/t)$.} of the convex functions $f_{\mathsf{KL}}$,  $f_{\chi^2}$ and $f_{\mathsf{H}^2}$, respectively (see \cite{Boyd}). 
Computing the partial derivatives yields
\begin{subequations}
  \begin{align}
        \begin{split} \label{eq:KL2dervs}
      &  D^{(1,0)}\phi_{\mathsf{KL}}(x,y)=1+\log \left(\frac xy\right), \quad  D^{(0,1)}\phi_{\mathsf{KL}}(x,y)=-\frac{x}{y}, \\ 
  &  D^{(1,1)}\phi_{\mathsf{KL}}(x,y)=-\frac{1}{y}, \quad D^{(2,0)}\phi_{\mathsf{KL}}(x,y)=\frac 1x, \quad D^{(0,2)}\phi_{\mathsf{KL}}(x,y)=\frac{x}{y^2}, 
        \end{split}\\
                 \begin{split} \label{eq:chisq2dervs}
     & D^{(1,0)}\phi_{\chi^2}(x,y)=\frac{2(x-y)}{y},\quad  D^{(0,1)}\phi_{\chi^2}(x,y)=1-\frac{x^2}{y^2},  \\
   & D^{(1,1)}\phi_{\chi^2}(x,y)=-\frac{2x}{y^2}, \quad D^{(2,0)}\phi_{\chi^2}(x,y)=\frac 2y, \quad D^{(0,2)}\phi_{\chi^2}(x,y)=\frac{2x^2}{y^3},
            \end{split}\\
            \begin{split}\label{eq:helsq2dervs}
     & D^{(1,0)}\phi_{\mathsf{H}^2}(x,y)=1-y^{\frac 12}x^{-\frac 12},\quad  D^{(0,1)}\phi_{\mathsf{H}^2}(x,y)=1-x^{\frac 12}y^{-\frac 12}, \\
  & D^{(1,1)}\phi_{\mathsf{H}^2}(x,y)=-\frac 12x^{-\frac 12}y^{-\frac 12}, \quad D^{(2,0)}\phi_{\mathsf{H}^2}(x,y)=\frac 12x^{-\frac 32}y^{\frac 12}, \quad   D^{(0,2)}\phi_{\mathsf{H}^2}(x,y)=\frac 12y^{-\frac 32}x^{\frac 12}. 
      \end{split}
        \end{align}
\end{subequations}
    
These partial derivatives obviously satisfy the continuity conditions in  Assumption~\ref{Assump1} and those mentioned in the statement of Lemma \ref{Lem:assumpfdiv} (i), thus completing the proof of Part $(i)$.

\medskip

 \noindent\textbf{Part $\bm{(ii)}$:}
For $(ii)(a)$, first note that for $g_1,g_2 \geq 0$ and $g_1^{\star},g_2^{\star}>0$, we have
\begin{align}
\abs{D^{(2,0)}\phi_{\mathsf{KL}}\circ\big((1-\tau)(g_1^{\star},g_2^{\star})+\tau (g_1,g_2)\big)}(1-\tau)& =\frac{(1-\tau)}{(1-\tau)g_1^{\star}+\tau g_1} \leq \frac{1}{g_1^{\star}}. \notag 
\end{align}

The case of $D^{(1,1)}\phi_{\mathsf{KL}}$ is similar, while for $D^{(0,2)}\phi_{\mathsf{KL}}$, we have for $(g_1,g_2) \in \bar\Theta(q) $ that
\begin{align}
\abs{D^{(0,2)}\phi_{\mathsf{KL}}\circ\big((1-\tau)(g_1^{\star},g_2^{\star})+\tau (g_1,g_2)\big)}(1-\tau)& = \frac{(1-\tau)\big((1-\tau)g_1^{\star}+\tau g_1\big)}{\big((1-\tau)g_2^{\star}+\tau g_2\big)^2} \leq \frac{g_1^{\star}}{g_2^{\star2}}+\frac{g_1}{2g_2^{\star}g_2}\leq\frac{g_1^{\star}}{g_2^{\star2}}+\frac{q}{g_2^{\star}}.\notag
\end{align}

\medskip

For $(ii)(b)$, the case of $D^{(1,1)}\phi_{\chi^2}$ and $D^{(2,0)}\phi_{\chi^2}$ again can be shown as above, while
\begin{align}
\abs{D^{(0,2)}\phi_{\chi^2}\circ\big((1-\tau)(g_1^{\star},g_2^{\star})+\tau (g_1,g_2)\big)}(1-\tau)& = \frac{2(1-\tau)\big((1-\tau)g_1^{\star}+\tau g_1\big)^2}{\big((1-\tau)g_2^{\star}+\tau g_2\big)^3} \lesssim \frac{g_1^{\star2}}{g_2^{\star3}}+\frac{g_1^2}{g_2^{\star}g_2^2}\leq\frac{g_1^{\star2}}{g_2^{\star3}}+\frac{q^2}{g_2^{\star}}.\notag
\end{align}

Finally, for $(ii)(c)$, let $(g_1,g_2),(g_1^{\star},g_2^{\star}) \in \check \Theta(q_1,q_2)$. We have 
\begin{align}
\abs{D^{(1,1)}\phi_{\mathsf{H}^2}\mspace{-2 mu}\circ\mspace{-2 mu}\big((1-\tau)(g_1^{\star},g_2^{\star})\mspace{-2 mu}+\mspace{-2 mu}\tau (g_1,g_2)\big)}\mspace{-2 mu}\mspace{-2 mu}(1-\tau)\mspace{-2 mu}&\lesssim \mspace{-2 mu}\frac{(1-\tau)}{\big((1-\tau)g_2^{\star}+\tau g_2\big)^{\frac 12}\mspace{-2 mu}\big((1-\tau)g_1^{\star}+\tau g_1\big)^{\frac 12}}  \leq  \frac{1}{\big(g_1^{\star}g_2^{\star}\big)^{\frac 12}},\notag \\
\abs{D^{(2,0)}\phi_{\mathsf{H}^2}\circ\big((1-\tau)(g_1^{\star},g_2^{\star})+\tau (g_1,g_2)\big)}(1-\tau)&\lesssim \frac{(1-\tau)\big((1-\tau)g_2^{\star}+\tau g_2\big)^{\frac 12}}{\big((1-\tau)g_1^{\star}+\tau g_1\big)^{\frac 32}} \notag \\
& \leq \frac{g_2^{\star\frac 12}}{g_1^{\star\frac 32}}+\frac{g_2^{\frac 12}\tau^{1/2}}{g_1^{\star\frac 32}(1-\tau)^{\frac 12}}\leq \frac{g_2^{\star\frac 12}}{g_1^{\star\frac 32}}+\frac{q_2^{\frac 12}\tau^{\frac 12}}{g_1^{\star\frac 32}(1-\tau)^{\frac 12}}, \notag
\end{align}
where  $\tau^{1/2}(1-\tau)^{-\frac 12} \in L^1([0,1],\lambda)$. 
The bound for  $D^{(0,2)}\phi_{\mathsf{H}^2}$ can be shown similarly, thus completing the proof.

\section{Proof of Lemma \ref{lem:bickel98}} \label{lem:bickel98-proof}
    The claim follows from the proof of \cite[Appendix A.9, Corollary 2]{Bickel-98} (see Page 500-501) by noting that the condition $(a)$, $(b)$ and $(c)$ given therein which suffices for the proof to hold are satisfied under the conditions  here. Specifically, condition $(a)$ is equivalent to $\lim_{n \rightarrow \infty}n \helsq{\pi_n}{\pi_0} = \norm{h/2}^2_{2,\pi_0}$ (with $\delta=h/2$ and $\mu=\pi_0$ in the notation therein) which in turn also implies $(b)$, while $(c)$ is equivalent to  $\int_{\RR^d \times \RR^d}h\, d\pi_0=0$.

 \section{Limit distribution for Gaussian-smoothed $\chi^2$ divergence, $\mathsf{H}^2$ distance and TV distance} \label{App:GSfdiv}

\subsection{$\chi^2$ divergence} \label{App:GSfdiv-chisq} We consider distributional limits for the Gaussian-smoothed $\chi^2$ divergence.
\begin{prop}[Limit distribution for Gaussian-smoothed $\chi^2$ divergence] \label{Prop:GS-chisq-limdist} The following hold:
\begin{enumerate}[(i)]
    \item  (One-sample null) If \eqref{ASSUM:KL_null} is satisfied,
then there exists a version of $G_{\mu,\sigma}$ such that $G_{\mu,\sigma}/\sqrt{\mu*\varphi_{\sigma}}$ is $L^2(\RR^d)$-valued, and 
\begin{align}
  n \chisq{\hat \mu_n*\gamma_\sigma}{\mu*\gamma_\sigma} \trightarrow{d}   \int_{\RR^d} \frac{G_{\mu,\sigma}^2(x)}{\mu*\varphi_{\sigma}(x)}d x, \label{eq:chisq-GS-onesamp-null}
\end{align}
where the limit can be represented as a weighted sum of i.i.d. $\chi^2$ random variables with 1 degree of freedom. In particular, \eqref{eq:chisq-GS-onesamp-null} holds for $\beta$-sub-Gaussian $\mu$  with $\beta<\sigma$. Conversely,  if \eqref{ASSUM:KL_null} is violated, then $\EE\big[\chisq{\hat \mu_n*\gamma_\sigma}{\mu*\gamma_\sigma}\big]=\infty$ for every $n \in \NN$.
\medskip
\item  (One-sample alternative)  If \eqref{ASSUM:KL_alt} holds and $\chisq{\mu*\varphi_{\sigma}}{\nu*\varphi_{\sigma}}<\infty$,  then 
\begin{align}
  n^{\frac 12}\big(\chisq{\hat \mu_n*\gamma_\sigma}{\nu*\gamma_{\sigma}}-\chisq{\mu*\gamma_{\sigma}}{\nu*\gamma_{\sigma}}\big) \trightarrow{d} N\big(0,v_{1,f_{\chi^2}}^2(\mu,\nu,\sigma)\big), \label{eq:chisq-GS-onesamp-alt}
\end{align}
where $v_{1,f_{\chi^2}}^2(\mu,\nu,\sigma)$ is given in \eqref{eq:chisqvaronesamp}.
In particular,  \eqref{eq:chisq-GS-onesamp-alt} holds for $\beta$-sub-Gaussian $\mu$  with $\beta<\sigma$ such that  $\mu \ll \nu$ and $\norm{d \mu/d \nu}_{\infty}  <\infty$.
\medskip
\item (Two-sample null) If 
$\mu$ has compact support, then   there exists a version of $G_{\mu,\sigma}$,  $\tilde G_{\mu,\sigma}$ such that $G_{\mu,\sigma}/\sqrt{\mu*\varphi_{\sigma}}$ and $\tilde G_{\mu,\sigma}/\sqrt{\mu*\varphi_{\sigma}}$ are $L^2(\RR^d)$-valued, and
\begin{align}
  n\, \chisq{\hat \mu_n*\gamma_\sigma}{\hat \nu_n*\gamma_\sigma} \trightarrow{d}   \int_{\RR^d} \frac{\big(G_{\mu,\sigma}(x)-\tilde G_{\mu,\sigma}(x)\big)^2}{\mu*\varphi_{\sigma}(x)}d x, \label{eq:chisq-GS-twosamp-null}
\end{align}
where the limit can be represented as a weighted sum of i.i.d.  $\chi^2$ random variables with 1 degree of freedom. 
\medskip
 \item (Two-sample alternative) If  $\mu,\nu$ have compact supports, then 
 \begin{align}
   & n^{\frac 12}\big(\chisq{\hat \mu_n*\gamma_\sigma}{\hat \nu_n*\gamma_{\sigma}}-\chisq{\mu*\gamma_{\sigma}}{\nu*\gamma_{\sigma}}\big)\trightarrow{d} N\big(0,v_{2,f_{\chi^2}}^2(\mu,\nu,\sigma)\big), \label{eq:chisq-GS-twosamp-alt}
\end{align}
where $v_{2,f_{\chi^2}}^2(\mu,\nu,\sigma)$  is given in \eqref{eq:var-smoothedchisq2samp}.
\end{enumerate}
\end{prop}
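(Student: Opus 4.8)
The plan is to mirror the proof of Proposition~\ref{Prop:GS-KL-limdist}, feeding the general $\chi^2$ limit theorems (Theorem~\ref{Thm:chisqdiv-limdist}) with weak convergence of the recentered smooth empirical densities obtained from the CLT in $L^r$ spaces (Theorem~\ref{Thm:CLT-in-Lp}). First I would apply Theorem~\ref{Thm:chisqdiv-limdist} with the substitutions $\mu \leftarrow \mu*\gamma_\sigma$, $\nu \leftarrow \nu*\gamma_\sigma$, $\mu_n = \hat\mu_n*\gamma_\sigma$, $\nu_n = \hat\nu_n*\gamma_\sigma$, choosing $\rho = \mu*\gamma_\sigma$ for parts (i),(iii) and $\rho = \nu*\gamma_\sigma$ for parts (ii),(iv). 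For each part I would check mutual absolute continuity of the convolved measures and almost-sure finiteness of the relevant $\chi^2$ divergence (by Jensen's inequality and the Gaussian closed form, exactly as in the KL proof), thereby reducing each statement to a weak-convergence claim for $n^{1/2}$ times the recentered ratio of convolved densities in the appropriate $L^2(\eta)$.

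The core step will be verifying the hypotheses of Theorem~\ref{Thm:CLT-in-Lp}(ii) for the centered score $Z = (\varphi_\sigma(\cdot-X) - \mu*\varphi_\sigma)/\rho_{\mathrm{den}}$ and its $Y$-analogue $\tilde Z$: I would write $n^{1/2}$ times the recentered ratio as $n^{-1/2}\sum_{i=1}^n Z_i$ and note that the finite-second-moment condition $\int_{\RR^d}\EE[|Z(s)|^2]\, d\rho(s)<\infty$ reduces, by Fubini, to integrals of the form $\int_{\RR^d} \mathsf{Var}_\mu(\varphi_\sigma(x-\cdot))/\nu*\varphi_\sigma(x)\, dx$, possibly weighted by the dominating function $q$; these are precisely conditions \eqref{ASSUM:KL_null}, \eqref{ASSUM:KL_alt}, and the tail requirement $\PP(\|Z\|_{2,\rho}\ge t) = o(t^{-2})$ then follows automatically via the same truncation argument as in Proposition~\ref{Prop:GS-KL-limdist}(i). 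For the two-sample parts I would additionally use that compact support of $\mu$ (and $\nu$) bounds $\hat\mu_n*\varphi_\sigma/\hat\nu_n*\varphi_\sigma \le q(x) := c'e^{c\|x\|}$, so the measures $\eta_1,\eta_2$ from Theorem~\ref{Thm:chisqdiv-limdist}(iii),(iv) (whose densities involve $q$, $q^2$, $p_\mu$, $p_\mu^2$) are finite; joint weak convergence in the Polish space $L^2(\eta_1)\times L^2(\eta_2)$ would then be deduced from the two marginal weak convergences together with convergence of finite-dimensional marginals (multivariate CLT), exactly as in the KL proof.

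To identify the limits, in the null cases (parts (i),(iii)) Theorem~\ref{Thm:chisqdiv-limdist}(i),(iii) yield $\int G_{\mu,\sigma}^2/(\mu*\varphi_\sigma)$ (resp.\ with $G_{\mu,\sigma}-\tilde G_{\mu,\sigma}$), with no factor $1/2$ in contrast to KL, and the weighted-$\chi^2$ representation would follow verbatim from Remark~\ref{Rem:variance-alt}. In the alternative cases (parts (ii),(iv)), Theorem~\ref{Thm:chisqdiv-limdist}(ii),(iv) give limits of the form $2\int B\, d(\mu*\gamma_\sigma)$, resp.\ $2\int B_1\, d(\mu*\gamma_\sigma) - \int B_2\, (\mu*\varphi_\sigma/\nu*\varphi_\sigma)\, d(\mu*\gamma_\sigma)$; I would check the moment condition there (e.g.\ $\mu*\varphi_\sigma/\nu*\varphi_\sigma \in L^4(\nu*\gamma_\sigma)$, which holds since $\chi^2(\mu*\gamma_\sigma\|\nu*\gamma_\sigma)<\infty$ forces this ratio into $L^2$ and, for compactly supported $\mu,\nu$, into $L^\infty$), then argue the limit is centered Gaussian by the dual characterization of Banach-space-valued Gaussian variables (as in Proposition~\ref{Prop:GS-KL-limdist}(ii)) and compute its variance $v_{1,f_{\chi^2}}^2(\mu,\nu,\sigma)$, resp.\ $v_{2,f_{\chi^2}}^2(\mu,\nu,\sigma)$, via the definitions of $L_{i,f_{\chi^2}}$, using $\int\!\int \Sigma^{(1,1)}_{\mu,\nu,\sigma}(x,y)\, dx\, dy = 0$ (Fubini and $\int\varphi_\sigma(x-\cdot)\, dx = 1$) to remove the mean. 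The sub-Gaussian specializations in (i),(ii) would follow as in the KL case ($\beta<\sigma$ giving \eqref{ASSUM:KL_null} by identifying its left side with $\chi^2$ mutual information, and $\|d\mu/d\nu\|_\infty<\infty$ giving $\|\mu*\varphi_\sigma/\nu*\varphi_\sigma\|_\infty<\infty$ and hence \eqref{ASSUM:KL_alt}). The converse in part (i) is in fact immediate here: by the i.i.d.\ structure, $\EE[\chi^2(\hat\mu_n*\gamma_\sigma\|\mu*\gamma_\sigma)] = n^{-1}\int_{\RR^d}\mathsf{Var}_\mu(\varphi_\sigma(x-\cdot))/\mu*\varphi_\sigma(x)\, dx$ exactly, so a violation of \eqref{ASSUM:KL_null} makes this infinite for every $n$ (alternatively Lemma~\ref{lem:fdivnullonesamplb} with $f_{\chi^2}''\equiv 2$ applies).

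The main obstacle will be the same as in the KL proof, namely showing that all the auxiliary integrals converge under the stated primitive conditions, in particular the weighted integrals $\int \mathsf{Var}_\mu(\varphi_\sigma(x-\cdot))/\mu*\varphi_\sigma(x)\, e^{c\|x\|}\, dx$ and the analogues with $p_\mu$, $p_\mu^2$, $q^2$ entering the $\chi^2$ versions of $\eta_2$. Here compact support of $\mu$ (and of $\nu$ in the two-sample case) is essential: it lets one bound $\mathsf{Var}_\mu(\varphi_\sigma(x-\cdot))/\mu*\varphi_\sigma(x) \lesssim e^{-\|x\|^2/(2\sigma^2)}e^{c\|x\|}$ and $\mu*\varphi_\sigma/\nu*\varphi_\sigma \lesssim e^{c\|x\|}$, after which the Gaussian factor dominates; these estimates are routine but must be carried out just as in \eqref{eq:finitevarint}--\eqref{eq:boundratdens}. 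The only genuinely $\chi^2$-specific bookkeeping is that the $\chi^2$ functional has a nonvanishing quadratic but vanishing linear part at the null (forcing $r_n = n$ with a weighted-$\chi^2$ limit), while under the alternative its first-order Hadamard derivative is $2\int h_1\, d\mu - \int h_2 p_\mu\, d\mu$ (forcing $r_n = \sqrt n$ with a Gaussian limit) — all of which is already encoded in Theorem~\ref{Thm:chisqdiv-limdist}, so no new differentiability analysis is needed.
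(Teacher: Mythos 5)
Your proposal follows the paper's proof almost line for line in its overall structure: the same choices of $\rho$, the same reduction to weak convergence in $L^2$ via Theorem~\ref{Thm:CLT-in-Lp}, the same exponential envelope $q(x)=c'e^{c\|x\|}$ from compact support in the two-sample parts, the same identification of the limit variance using $\int\!\int\Sigma^{(1,1)}_{\mu,\nu,\sigma}=0$, and the same two routes to the converse in part (i).

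There is, however, one concrete error in your argument for part (iv). You claim that, for compactly supported $\mu,\nu$, the ratio $\mu*\varphi_{\sigma}/\nu*\varphi_{\sigma}$ lies in $L^{\infty}$, and deduce the required $L^4(\nu*\gamma_{\sigma})$ membership from $L^2\cap L^{\infty}$. This is false: compact support only gives the exponential bound $\mu*\varphi_{\sigma}(x)/\nu*\varphi_{\sigma}(x)\le c'e^{c\|x\|}$ (see \eqref{eq:ubnddenratio} and \eqref{eq:boundratdens}), not boundedness. For a counterexample take $\mu=\delta_a$ and $\nu=\delta_{-a}$, for which $\mu*\varphi_{\sigma}(x)/\nu*\varphi_{\sigma}(x)=e^{2a\cdot x/\sigma^2}$, which is unbounded. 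The $L^{\infty}$ bound you cite for part (ii) comes from the separate hypothesis $\|d\mu/d\nu\|_{\infty}<\infty$ via \eqref{eq:ratiodensbnd}, which is not assumed in part (iv). The paper instead verifies the $L^4$ condition by the direct computation
\begin{align}
\int_{\RR^d}\frac{\big(\mu*\varphi_{\sigma}(x)\big)^4}{\big(\nu*\varphi_{\sigma}(x)\big)^3}\,dx
= \int_{\RR^d} e^{-\|x\|^2/(2\sigma^2)}\,\frac{\Big(\int e^{(2x\cdot y-\|y\|^2)/(2\sigma^2)}\,d\mu(y)\Big)^{4}}{\Big(\int e^{(2x\cdot y-\|y\|^2)/(2\sigma^2)}\,d\nu(y)\Big)^{3}}\,dx
\le \int_{\RR^d} e^{-\|x\|^2/(2\sigma^2)}\,e^{c\|x\|}\,dx <\infty,
\end{align}
where the surviving Gaussian weight $e^{-\|x\|^2/(2\sigma^2)}$, coming from the mismatch between four copies of $\mu*\varphi_\sigma$ and three of $\nu*\varphi_\sigma$, dominates the exponential growth. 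This is the same style of estimate you already invoke for $\eta_1,\eta_2$, so your proof goes through once you replace the $L^{\infty}$ shortcut with this direct bound.
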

\begin{proof}
    Since the general idea of the proof is similar to that of Proposition \ref{Prop:GS-KL-limdist}, we only provide a sketch of the proof, while highlighting  the differences.
\medskip

\noindent\textbf{Part $\bm{(i)}$:} We apply Theorem \ref{Thm:chisqdiv-limdist}(i) 
with $\rho=\mu*\gamma_{\sigma}$. We have
   \begin{flalign}
    \EE \big[ \chisq{\hat {\mu}_n*\gamma_{\sigma}}{\mu*\gamma_{\sigma}}\big] & \stackrel{(a)}{=}      \frac{1}{n^2} \int_{\RR^d} \EE \left[     \frac{\Big( \sum_{i=1}^n \big(\varphi_{\sigma}(x-X_i)-\mu*\varphi_{\sigma}(x)\big)\Big)^2}{\mu*\varphi_{\sigma}(x)} \right]\,dx\notag \\
    &\stackrel{(b)}{=}      \frac{1}{n} \int_{\RR^d} \EE \left[     \frac{\big(  \varphi_{\sigma}(x-X)-\mu*\varphi_{\sigma}(x)\big)^2}{\mu*\varphi_{\sigma}(x)} \right]\,dx\notag \\
        &=     \frac{1}{n} \int_{\RR^d}\frac{\mathsf{Var}_{\mu}\big(\varphi_{\sigma}(x-\cdot)\big)}{\mu*\varphi_{\sigma}(x)}\,dx, \label{eq:chisqexpbnd} &&
 \end{flalign} 
 where $(a)$ is by Fubini's theorem and  $(b)$ uses that $X_1,\ldots,X_n$ are i.i.d. Consequently, under \eqref{ASSUM:KL_null}, we obtain $\chisq{\hat {\mu}_n*\gamma_{\sigma}}{\mu*\gamma_{\sigma}}<\infty$ a.s. The rest of the proof leading to \eqref{eq:chisq-GS-onesamp-null} via Theorem \ref{Thm:chisqdiv-limdist}(i) is similar to that of Proposition \ref{Prop:GS-KL-limdist}$(i)$ and proceeds by showing that the conditions in Theorem \ref{Thm:CLT-in-Lp}$(ii)$ are satisfied; we omit the details to avoid repetition. The claim that \eqref{eq:chisq-GS-onesamp-null} holds for $\beta$-sub-Gaussian $\mu$  with $\beta<\sigma$ is a consequence of \cite[Proposition~1]{AZYA-2022} by noting that the LHS of \eqref{ASSUM:KL_null} equals $I_{\chi^2}(V;V+Z)$, with $V \sim \mu$ and $Z \sim \gamma_{\sigma}$ independent of each other. The final claim  is obvious from \eqref{eq:chisqexpbnd}.
 
 \medskip

\noindent\textbf{Part $\bm{(ii)}$:}
Note that
\begin{flalign}
    \EE \big[ \chisq{\hat {\mu}_n*\gamma_{\sigma}}{\nu*\gamma_{\sigma}}\big]
    & =       \int_{\RR^d} \frac{1}{{n^2\nu*\varphi_{\sigma}(x)}}\EE \left[\left( \sum_{i=1}^n \big(\varphi_{\sigma}(x-X_i)-\nu*\varphi_{\sigma}(x)\big)\right)^2 \right]\,dx\notag \\
    &\stackrel{(a)}{\leq }      \int_{\RR^d} \frac{1}{{\nu*\varphi_{\sigma}(x)}}\EE_{\mu} \left[      \big(\varphi_{\sigma}(x-X)-\nu*\varphi_{\sigma}(x)\big)^2 \right]\,dx\notag \\
        &\stackrel{(b)}{= }     \int_{\RR^d}\frac{\mathsf{Var}_{\mu}\big(\varphi_{\sigma}(x-\cdot)\big)}{\nu*\varphi_{\sigma}(x)}\,dx+\chisq{\mu*\varphi_{\sigma}}{\nu*\varphi_{\sigma}}, \notag &&
 \end{flalign} 
where $(a)$ uses the convexity of the map $x \mapsto x^2$ while $(b)$ is because $X_1,\ldots,X_n$ are i.i.d. Since the RHS of the above equation is finite by assumption, we conclude that 
$\chisq{\hat {\mu}_n*\gamma_{\sigma}}{\mu*\gamma_{\sigma}}<\infty$ a.s. Also, under \eqref{ASSUM:KL_alt}, it follows that
\begin{align}
 n^{1/2} \left(\frac{\hat\mu_n*\varphi_{\sigma}}{\nu*\varphi_{\sigma}}-\frac{\mu*\varphi_{\sigma}}{\nu*\varphi_{\sigma}}\right) \trightarrow{w} \frac{G_{\mu,\sigma}}{\nu*\varphi_{\sigma}}\quad\mbox{ in }L^2(\nu*\varphi_{\sigma}), \notag
 \end{align}
 via similar arguments to those in proof of Proposition~\ref{Prop:GS-KL-limdist}$(ii)$. Then, \eqref{eq:chisq-GS-onesamp-alt} is a direct consequence of Theorem~\ref{Thm:chisqdiv-limdist}$(ii)$ with $\rho=\nu*\gamma_{\sigma}$, which implies 
\begin{align}
      n^{\frac 12}\big(\chisq{\hat \mu_n*\gamma_\sigma}{\nu*\gamma_{\sigma}}-\chisq{\mu*\gamma_{\sigma}}{\nu*\gamma_{\sigma}}\big) &\trightarrow{d} 2\int_{\RR^d} G_{\mu,\sigma}(x) \frac{\mu*\varphi_{\sigma}(x)}{\nu*\varphi_{\sigma}(x)} d x \sim  N\big(0,v_{1,f_{\chi^2}}^2(\mu,\nu,\sigma)\big), \notag
\end{align}
where 
\begin{align}
 v_{1,f_{\chi^2}}^2(\mu,\nu,\sigma)= 4 \int_{\RR^d}\int_{\RR^d} \Sigma^{(1,1)}_{\mu,\nu,\sigma}(x,y) \frac{\mu*\varphi_{\sigma}(x)}{\nu*\varphi_{\sigma}(x)}\frac{\mu*\varphi_{\sigma}(y)}{\nu*\varphi_{\sigma}(y)}d x\, d y. \label{eq:chisqvaronesamp}
\end{align}

Finally, the last claim follows from the first since \eqref{ASSUM:KL_alt} is satisfied for $\beta$-sub-Gaussian $\mu$ with $\beta<\sigma$ and $ \norm{d \mu/d \nu}_{\infty}<\infty$ due to  \eqref{eq:varcondbnddensrat},  and by \eqref{eq:ratiodensbnd}
\begin{align}
    \chisq{\mu*\varphi_{\sigma}}{\nu*\varphi_{\sigma}}=\int_{\RR^d} \left(\frac{\mu*\varphi_{\sigma}(x)}{\nu*\varphi_{\sigma}(x)}-1\right)^2 \nu*\varphi_{\sigma}(x)\, dx\leq 2\left(\norm{\frac{\mu*\varphi_{\sigma}}{\nu*\varphi_{\sigma}}}_{\infty}^2+1\right) <\infty.\notag 
\end{align}

 \medskip

\noindent\textbf{Part $\bm{(iii)}$:}
We specialize Theorem \ref{Thm:chisqdiv-limdist}$(iii)$ with $\rho=\mu*\gamma_{\sigma}$. First, note that the absolute continuity requirements on the distributions are satisfied, and $p_{\hat {\nu}_n*\gamma_\sigma}>0$. Also,  $\chisq{\hat {\mu}_n*\gamma_\sigma}{\hat {\nu}_n*\gamma_\sigma}<\infty$ a.s. since with $\hat X \sim \hat \mu_n$, $\hat Y \sim \hat \nu_n$, $X \sim \mu $ independent of $Y \sim \mu$, we have
  \begin{flalign}
    \EE \big[ \chisq{\hat {\mu}_n*\gamma_{\sigma}}{\hat \nu_n*\gamma_{\sigma}}\big] &= \EE \big[ \chisq{N(\hat X, \sigma^2I_d)}{N(\hat Y, \sigma^2 I_d)}\big] \notag \\
    &\stackrel{(a)}{\leq }    \EE \big[ \chisq{N(X, \sigma^2I_d)}{N(Y, \sigma^2 I_d)}\big] \notag \\
    &\stackrel{(b)}{=}    \EE \left[e^{\frac{\norm{X-Y}^2}{\sigma^2}}\right]   \stackrel{(c)}{<}\infty. \label{eq:expchisqbnd} &&
 \end{flalign} 
 Here, $(a)$ follows by  convexity of $\chi^2$ divergence,  $(b)$ uses the closed form expression for $\chi^2$ divergence between multi-variate isotropic Gaussians \cite{Nielsen-2013}, and $(c)$ is because $\mu$ has compact support. On the other hand, taking $q(x)=c'e^{c\norm{x}}$ such that $ \hat \mu_n *\varphi_{\sigma}(x)/\hat \nu_n *\varphi_{\sigma}(x) \leq q(x)$ a.s. based on  \eqref{eq:ubnddenratio}, we have
 \begin{align}
   \frac{d\eta_1}{dx}(x)&=\big(1+c'e^{c\norm{x}}\big)\mu*\varphi_{\sigma}(x),\notag \\
    \frac{d\eta_2}{dx}(x)&=\big(1+c'e^{c\norm{x}}+c'^2 e^{2c\norm{x}}\big)\mu*\varphi_{\sigma}(x).\notag
\end{align}
Setting $r_n=n^{1/2}$, it follows from \eqref{eq:chisq-twosample-null} and the arguments in the proof of Part $(iii)$ that 
    \begin{align}
  n \chisq{\hat \mu_n *\varphi_{\sigma}}{\hat \nu_n *\varphi_{\sigma}} \trightarrow{d}  \int_{\mathfrak{S}} \frac{\big(G_{\mu,\sigma}(x)-\tilde G_{\mu,\sigma}(x)\big)}{\mu*\varphi_{\sigma}(x)}^2 d x, \notag
\end{align}
provided that \eqref{eq:weakconvmarg} holds. By the same arguments as in the proof of Proposition \ref{Prop:GS-KL-limdist}$(iii)$, this holds if
\[
\int_{\RR^d}\frac{\mathsf{Var}_{\mu}\big(\varphi_{\sigma}(x-\cdot)\big)}{\mu*\varphi_{\sigma}(x)}\big(1+e^{2c\norm{x}}\big)\,dx<\infty,
\]
which in turn is satisfied because $\supp(\mu)$ is compact (see the derivation leading to \eqref{eq:finitevarint}). 

 \medskip

\noindent\textbf{Part $\bm{(iv)}$:} 
We employ Theorem \ref{Thm:chisqdiv-limdist}$(iv)$ 
with $\rho=\nu*\gamma_{\sigma}$. The required positivity and absolute continuity requirements of the probability measures are readily verified. Also, using Jensen's inequality and \eqref{eq:expchisqbnd}, we have
 \begin{align}
 \chisq{\mu*\gamma_{\sigma}}{\nu*\gamma_{\sigma}} \leq   \EE \big[ \chisq{\hat {\mu}_n*\gamma_{\sigma}}{\hat \nu_n*\gamma_{\sigma}}\big]    <\infty, \notag
 \end{align}
 which implies $ \chisq{\hat {\mu}_n*\gamma_{\sigma}}{\hat \nu_n*\gamma_{\sigma}} <\infty$ a.s.  Moreover,
 \begin{flalign}
\int_{\RR^d}\frac{\big(\mu*\varphi_{\sigma}(x)\big)^4}{\big(\nu*\varphi_{\sigma}(x)\big)^3}\, dx  &= \int_{\RR^d}\frac{ \left(\int_{\RR^d} e^{-\frac{\norm{x-y}^2}{2\sigma^2}} d \mu(y)\right)^4}{ \left(\int_{\RR^d}e^{-\frac{\norm{x-y}^2}{2\sigma^2}}d \nu(y)\right)^3}dx \notag \\
&= \int_{\RR^d} e^{-\frac{\norm{x}^2}{2\sigma^2}}  \frac{ \left(\int_{\RR^d}e^{\frac{2 x\cdot y-\norm{y}^2}{2\sigma^2}}d \mu(y)\right)^4}{ \left(\int_{\RR^d}e^{\frac{2 x\cdot y-\norm{y}^2}{2\sigma^2}}d \nu(y)\right)^3}dx \notag \\
& \leq  \int_{\RR^d} e^{-\frac{\norm{x}^2}{2\sigma^2}}  e^{c \norm{x}}dx<\infty. \notag &&
 \end{flalign}
Taking $q(x)=c'e^{c\norm{x}}$ with $ \hat \mu_n *\varphi_{\sigma}(x)/\hat \nu_n *\varphi_{\sigma}(x) \leq q(x)$ a.s. (see \eqref{eq:ubnddenratio}), we have
 \begin{align}
   \frac{d\eta_1}{dx}(x)&=\nu*\varphi_{\sigma}(x)+\mu*\varphi_{\sigma}(x)+\nu*\varphi_{\sigma}(x)\,c'e^{c\norm{x}},\notag \\
    \frac{d\eta_2}{dx}(x)&=\frac{d\eta_1}{dx}(x)+\nu*\varphi_{\sigma}(x)\,c'^2e^{2c\norm{x}}+\frac{\big(\mu*\varphi_{\sigma}(x)\big)^2}{\nu*\varphi_{\sigma}(x)}.\notag
\end{align}
Similarly to the proof of Proposition \ref{Prop:GS-chisq-limdist}$(iv)$, 
 setting $r_n=n^{1/2}$ in \eqref{eq:chisq-twosample-alt}, yields
 \begin{align}
       n^{\frac 12}\big(\chisq{\hat \mu_n*\gamma_\sigma}{\hat \nu_n*\gamma_{\sigma}}\mspace{-2 mu}-\mspace{-2 mu}\chisq{\mu*\gamma_{\sigma}}{\nu*\gamma_{\sigma}}\mspace{-2 mu}\big)&\mspace{-2 mu}\trightarrow{d}\mspace{-2 mu} 2\mspace{-2 mu}\int_{\RR^d} \mspace{-2 mu}G_{\mu,\sigma}(x) \frac{\mu*\varphi_{\sigma}(x)}{\nu*\varphi_{\sigma}(x)} d x \mspace{-2 mu}+ \mspace{-2 mu}\int_{\RR^d} \mspace{-2 mu} G_{\nu,\sigma}(x) \left(\frac{\mu*\varphi_{\sigma}(x)}{\nu*\varphi_{\sigma}(x)}\right)^2 d x \notag \\
   & \qquad  \sim N\big(0,v_{2,f_{\chi^2}}^2(\mu,\nu,\sigma)\big), \notag
 \end{align}
 provided that \eqref{eq:wcden2sampalt} holds. Here, the variance is given by
 \begin{align}
  v_{2,f_{\chi^2}}^2(\mu,\nu,\sigma)&:= \sum_{1 \leq i,j \leq 2} \int_{\RR^d}\int_{\RR^d}  \Sigma^{(i,j)}_{\mu,\nu,\sigma}(x,y) L_{i,f_{\chi^2}}(x)L_{j,f_{\chi^2}}(y) d x\, d y\notag \\
    &=\sum_{1 \leq i,j \leq 2}\int_{\RR^d}\int_{\RR^d}  \Sigma^{(i,j)}_{\mu,\nu,\sigma}(x,y) \tilde L_{i,f_{\chi^2}}(x) \tilde L_{j,f_{\chi^2}}(y) d x\, d y, \label{eq:var-smoothedchisq2samp} 
\end{align}
with $\tilde L_{1,f_{\chi^2}}\mspace{-4 mu}:=\mspace{-4 mu}2 \left(\mu*\varphi_{\sigma}/\nu*\varphi_{\sigma}\right)$ and $\tilde L_{2,f_{\chi^2}}\mspace{-4 mu}:=\mspace{-4 mu} (\mu*\varphi_{\sigma}/\nu*\varphi_{\sigma})^2$.
Finally, Theorem \ref{Thm:CLT-in-Lp} implies that \eqref{eq:wcden2sampalt} is satisfied if
\begin{align}
    & \int_{\RR^d}\frac{\mathsf{Var}_{\mu}\big(\varphi_{\sigma}(x-\cdot)\big)}{\nu*\varphi_{\sigma}(x)}\big(1+e^{c\norm{x}}\big)\, dx +\int_{\RR^d}\frac{\mathsf{Var}_{\mu}\big(\varphi_{\sigma}(x-\cdot)\big)\mu*\varphi_{\sigma}(x)}{\big(\nu*\varphi_{\sigma}(x)\big)^2}\, dx<\infty, \notag\\
      &  \int_{\RR^d}\frac{\mathsf{Var}_{\nu}\big(\varphi_{\sigma}(x-\cdot)\big)}{\nu*\varphi_{\sigma}(x)}\big(1+e^{2c\norm{x}}\big)\, dx +\sum_{j=1,2}\int_{\RR^d}\frac{\mathsf{Var}_{\nu}\big(\varphi_{\sigma}(x-\cdot)\big)\big(\mu*\varphi_{\sigma}(x)\big)^j}{\big(\nu*\varphi_{\sigma}(x)\big)^{j+1}}\, dx<\infty,\notag 
\end{align}
which holds for compactly supported $\mu,\nu$, akin to \eqref{eq:finitevarint}. This completes the proof.
\end{proof}

\subsection{$\mathsf{H}^2$ distance} \label{App:GSfdiv-helsq} The following proposition obtains limit distributions for  Gaussian-smoothed squared Hellinger distance. 
\begin{prop}[Limit distribution for Gaussian-smoothed $\mathsf{H}^2$ distance] \label{Prop:GS-helsq-limdist} The following hold:
\begin{enumerate}[(i)]
    \item  (One-sample null)  If \eqref{ASSUM:KL_null} holds,
then there exists a version of $G_{\mu,\sigma}$ such that $G_{\mu,\sigma}/\sqrt{\mu*\varphi_{\sigma}}$ is $L^2(\RR^d)$-valued, and 
\begin{align}
  n \helsq{\hat \mu_n*\gamma_\sigma}{\mu*\gamma_\sigma} \trightarrow{d}   \frac 14 \int_{\mathfrak{S}} \frac{G_{\mu,\sigma}^2(x)}{\mu*\varphi_{\sigma}(x)}d x, \label{eq:helsq-GS-onesamp-null}
\end{align}
where the limit can be represented as a weighted sum of i.i.d.  $\chi^2$ random variables with 1 degree of freedom.
In particular,  \eqref{eq:helsq-GS-onesamp-null} holds for $\beta$-sub-Gaussian $\mu$  with $\beta<\sigma$. Conversely, if~\eqref{ASSUM:KL_null} is violated, then  $\liminf_{n \rightarrow \infty} n \EE\big[\helsq{\hat \mu_n*\gamma_\sigma}{\mu*\gamma_\sigma} \big]=\infty$. 
\medskip
\item  (One-sample alternative)  If  \begin{equation}
      \int_{\RR^d}\frac{\mathsf{Var}_{\mu}\big(\varphi_{\sigma}(x-\cdot)\big)}{\mu*\varphi_{\sigma}(x)}\, dx+ \int_{\RR^d}\frac{\mathsf{Var}_{\mu}\big(\varphi_{\sigma}(x-\cdot)\big)}{\mu*\varphi_{\sigma}(x)}\left(\frac{\nu*\varphi_{\sigma}(x)}{\mu*\varphi_{\sigma}(x)}\right)^{\frac 12}\, dx <\infty,\label{ASSUM:Hel_alt}
    \end{equation}    then
\begin{align}
    n^{\frac 12}\big(\helsq{\hat \mu_n*\gamma_\sigma}{\nu*\gamma_{\sigma}}-\helsq{\mu*\gamma_{\sigma}}{\nu*\gamma_{\sigma}}\big) \trightarrow{d}  N\big(0,v_{1,f_{\mathsf{H}^2}}^2(\mu,\nu,\sigma)\big), \label{eq:helsq-GS-onesamp-alt}
\end{align}
where $v_{1,f_{\mathsf{H}^2}}^2(\mu,\nu,\sigma)$ is given in \eqref{eq:helsqvaronesamp}. In particular,  \eqref{eq:helsq-GS-onesamp-alt} holds for $\beta$-sub-Gaussian $\mu$ with $\beta<\sigma$ and $\nu \ll \mu$ such that  $\norm{d \nu/d \mu}_{\infty}  <\infty$.
\medskip
\item (Two-sample null) If 
\begin{equation}
      \int_{\RR^d}\frac{\mathsf{Var}_{\mu}\big(\varphi_{\sigma}(x-\cdot)\big)}{\big(\mu*\varphi_{\sigma}(x)\big)^{3/2}}\, dx <\infty,\label{ASSUM:Hel_null_2samp}
    \end{equation} 
  then   there exists a version of $G_{\mu,\sigma}$, $\tilde G_{\mu,\sigma}$ such that $G_{\mu,\sigma}/\sqrt{\mu*\varphi_{\sigma}}$ and $\tilde G_{\mu,\sigma}/\sqrt{\mu*\varphi_{\sigma}}$ are $L^2(\RR^d)$-valued, and  
\begin{align}
  n\, \helsq{\hat \mu_n*\gamma_\sigma}{\hat \nu_n*\gamma_\sigma} \trightarrow{d}   \frac 14 \int_{\RR^d} \frac{\big(G_{\mu,\sigma}(x)-\tilde G_{\mu,\sigma}(x)\big)^2}{\mu*\varphi_{\sigma}(x)} d x, \label{eq:helsq-GS-twosamp-null}
\end{align}
where the limit can be represented as a weighted sum of i.i.d. $\chi^2$ random variables with 1 degree of freedom. 
In particular,~\eqref{ASSUM:Hel_null_2samp} and \eqref{eq:helsq-GS-twosamp-null} holds for $\beta$-sub-Gaussian $\mu$  with $\beta<\sigma/\sqrt{6}$. 
\medskip
 \item (Two-sample alternative) If either of the following conditions hold
 \begin{enumerate}
    \item $\mu,\nu$ have compact supports;
    \item $\norm{d \mu/d\nu}_{\infty}<\infty$, and 
  \begin{align}
      \qquad\int_{\RR^d}\frac{\mathsf{Var}_{\mu}\big(\varphi_{\sigma}(x-\cdot)\big)}{\big(\mu*\varphi_{\sigma}(x)\big)^{3/2}}\, dx +
     \int_{\RR^d}\frac{\mathsf{Var}_{\nu}\big(\varphi_{\sigma}(x-\cdot)\big)}{\mu*\varphi_{\sigma}(x)}\, dx+  \int_{\RR^d}\frac{\mathsf{Var}_{\nu}\big(\varphi_{\sigma}(x-\cdot)\big)}{\big(\nu*\varphi_{\sigma}(x)\big)^{3/2}}\, dx <\infty\,; \label{ASSUM:Hel_alt_2samp}
    \end{align}
 \item The conditions in $(b)$ hold with the role of $\mu$ and $\nu$ interchanged;
 \end{enumerate}
 \vspace{5 pt}
 then 
 \begin{align}
   & n^{\frac 12}\big(\helsq{\hat \mu_n*\gamma_\sigma}{\hat \nu_n*\gamma_{\sigma}}-\helsq{\mu*\gamma_{\sigma}}{\nu*\gamma_{\sigma}}\big)\trightarrow{d} N\big(0,v_{2,f_{\mathsf{H}^2}}^2(\mu,\nu,\sigma)\big),\label{eq:helsq-GS-twosamp-alt}
\end{align}
where $v_{2,f_{\mathsf{H}^2}}^2(\mu,\nu,\sigma)$ is specified in \eqref{eq:var-smoothedhelsq2samp}. In particular,  \eqref{eq:helsq-GS-twosamp-alt} holds for $\beta$-sub-Gaussian $\mu$, $\nu$  with $\beta<\sigma/\sqrt{6}$ such that $\mu \ll \gg \nu $ and $\norm{d \mu/d\nu}_{\infty} \vee \norm{d \nu/d\mu}_{\infty}<\infty$.
\end{enumerate}
\end{prop}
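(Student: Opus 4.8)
\emph{Approach.} I would instantiate the general $\mathsf{H}^2$ limit theorems, Theorem \ref{Thm:Helsqdiv-limdist}, with $\mu\leftarrow\mu*\gamma_\sigma$, $\nu\leftarrow\nu*\gamma_\sigma$, $\mu_n=\hat\mu_n*\gamma_\sigma$, $\nu_n=\hat\nu_n*\gamma_\sigma$, and reference measure $\rho=\mu*\gamma_\sigma$ (and $\rho=\nu*\gamma_\sigma$ for the symmetric variant $(iv)(c)$), paralleling the smoothed KL and $\chi^2$ proofs (Propositions \ref{Prop:GS-KL-limdist} and \ref{Prop:GS-chisq-limdist}). Because $\hat\mu_n*\varphi_\sigma,\mu*\varphi_\sigma,\nu*\varphi_\sigma$ are everywhere positive, the mutual-absolute-continuity and positivity hypotheses of Theorem \ref{Thm:Helsqdiv-limdist} hold automatically, and $\mathsf{H}^2\le 2$ dispenses with finiteness. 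With $p_{\mu*\gamma_\sigma}=1$ and $p_{\nu*\gamma_\sigma}=\nu*\varphi_\sigma/\mu*\varphi_\sigma$ relative to $\rho=\mu*\gamma_\sigma$, the dominating measures $\eta$ (resp.\ $\eta_1,\eta_2$) and dominating functions $q$ (resp.\ $q_1,q_2$) of Theorem \ref{Thm:Helsqdiv-limdist} become explicit; for smoothed empirical measures the natural choice is $q_i(x)=(2\pi\sigma^2)^{-d/2}/\mu*\varphi_\sigma(x)$, since $\hat\mu_n*\varphi_\sigma\le(2\pi\sigma^2)^{-d/2}$ pointwise (so the a.s.\ domination $p_{\hat\mu_n*\gamma_\sigma}\le q_1$ holds), and this makes each $\eta_i$ a $\sigma$-finite measure (being dominated by the finite measure $\rho$), hence $L^2(\eta_i)$ Polish.

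\emph{Weak convergence via the $L^2$ CLT.} The key input in all four parts is the weak convergence required by Theorem \ref{Thm:Helsqdiv-limdist}, which I would obtain from the CLT in $L^2$ (Theorem \ref{Thm:CLT-in-Lp}). Writing $n^{1/2}(\hat\mu_n*\varphi_\sigma/\mu*\varphi_\sigma-1)=n^{-1/2}\sum_{i=1}^n Z_i$ with $Z_i=(\varphi_\sigma(\cdot-X_i)-\mu*\varphi_\sigma)/\mu*\varphi_\sigma$ (and $\tilde Z_i$ defined with $Y_i$, $\nu*\varphi_\sigma$ for the $\nu_n$-marginal), the $Z_i$ are i.i.d., centered, jointly measurable, and
\[
\EE\big[\|Z_i\|_{2,\eta}^2\big]=\int_{\RR^d}\frac{\mathsf{Var}_\mu\big(\varphi_\sigma(x-\cdot)\big)}{\big(\mu*\varphi_\sigma(x)\big)^2}\,p_\eta(x)\,\mu*\varphi_\sigma(x)\,dx .
\]
Substituting the explicit $p_\eta$'s shows this equals a constant multiple of precisely the stated primitive integrals: \eqref{ASSUM:KL_null} in $(i)$; \eqref{ASSUM:Hel_alt} in $(ii)$; \eqref{ASSUM:Hel_null_2samp} in $(iii)$; and the three integrals in \eqref{ASSUM:Hel_alt_2samp} in $(iv)(b)$, after using $\|d\mu/d\nu\|_\infty<\infty$ (which, via the argument of \eqref{eq:ratiodensbnd}, bounds $\mu*\varphi_\sigma/\nu*\varphi_\sigma$) together with the elementary bounds $\mathsf{Var}_\mu(\varphi_\sigma(x-\cdot))\le(2\pi\sigma^2)^{-d/2}\mu*\varphi_\sigma(x)$ and $\mathsf{Var}_\mu(\varphi_\sigma(x-\cdot))\le(4\pi\sigma^2)^{-d/2}\mu*\varphi_{\sigma/\sqrt2}(x)$ (the latter since $\varphi_\sigma^2=(4\pi\sigma^2)^{-d/2}\varphi_{\sigma/\sqrt2}$) to absorb the cross terms. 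The tail condition $\PP(\|Z\|_{2,\eta}\ge t)=o(t^{-2})$ follows from finiteness of the second moment by the contradiction argument of Proposition \ref{Prop:GS-KL-limdist}$(i)$. For the two-sample parts, joint weak convergence of $(n^{-1/2}\sum Z_i,n^{-1/2}\sum\tilde Z_i)$ in $L^2(\eta_1)\times L^2(\eta_2)$ is deduced from the two marginals exactly as in Proposition \ref{Prop:GS-KL-limdist}$(iii)$ (asymptotic tightness plus convergence of finite-dimensional marginals via the multivariate CLT).

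\emph{Reading off the limits.} Theorem \ref{Thm:Helsqdiv-limdist} now delivers the limits directly. In the null parts the limit is $\tfrac14\int(B_1-B_2)^2/p_\mu\,d\rho$, which under our substitutions is $\tfrac14\int(G_{\mu,\sigma}-\tilde G_{\mu,\sigma})^2/(\mu*\varphi_\sigma)\,dx$ (with $\tilde G_{\mu,\sigma}\equiv 0$ in the one-sample case), and Remark \ref{Rem:variance-alt} rewrites it as a weighted $\chi^2$ series. In the alternative parts the limit is $-\int(p_\nu/p_\mu)^{1/2}B_1\,d\rho-\int(p_\mu/p_\nu)^{1/2}B_2\,d\rho$, i.e.,
\[
-\int_{\RR^d}\Big(\tfrac{\nu*\varphi_\sigma(x)}{\mu*\varphi_\sigma(x)}\Big)^{\frac12}G_{\mu,\sigma}(x)\,dx-\int_{\RR^d}\Big(\tfrac{\mu*\varphi_\sigma(x)}{\nu*\varphi_\sigma(x)}\Big)^{\frac12}G_{\nu,\sigma}(x)\,dx;
\]
to see this is centered Gaussian I would invoke the dual (Banach-space) characterization of Gaussianity, pairing the Gaussians $G_{\mu,\sigma}/\sqrt{\mu*\varphi_\sigma}$, $G_{\nu,\sigma}/\sqrt{\nu*\varphi_\sigma}$ (which lie in $L^2(\RR^d)$ by the moment bounds just verified) against $(\nu*\varphi_\sigma)^{1/2},(\mu*\varphi_\sigma)^{1/2}\in L^2(\RR^d)$. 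Computing the mean (zero, using $\int\Sigma^{(i,j)}_{\mu,\nu,\sigma}(x,y)\,dy=0$, which also makes the constant ``$1$'' in $L_{i,f_{\mathsf{H}^2}}$ drop out) and the variance gives
\begin{align}
v_{1,f_{\mathsf{H}^2}}^2(\mu,\nu,\sigma)&:=\int_{\RR^d}\int_{\RR^d}\Sigma^{(1,1)}_{\mu,\nu,\sigma}(x,y)\Big(\tfrac{\nu*\varphi_\sigma(x)}{\mu*\varphi_\sigma(x)}\Big)^{\frac12}\Big(\tfrac{\nu*\varphi_\sigma(y)}{\mu*\varphi_\sigma(y)}\Big)^{\frac12}dx\,dy,\label{eq:helsqvaronesamp}\\
v_{2,f_{\mathsf{H}^2}}^2(\mu,\nu,\sigma)&:=\sum_{1\le i,j\le2}\int_{\RR^d}\int_{\RR^d}\Sigma^{(i,j)}_{\mu,\nu,\sigma}(x,y)\,\tilde L_{i,f_{\mathsf{H}^2}}(x)\,\tilde L_{j,f_{\mathsf{H}^2}}(y)\,dx\,dy,\label{eq:var-smoothedhelsq2samp}
\end{align}
with $\tilde L_{1,f_{\mathsf{H}^2}}:=(\nu*\varphi_\sigma/\mu*\varphi_\sigma)^{1/2}$ and $\tilde L_{2,f_{\mathsf{H}^2}}:=(\mu*\varphi_\sigma/\nu*\varphi_\sigma)^{1/2}$. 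For the converse in $(i)$ I would apply Lemma \ref{lem:fdivnullonesamplb} with $f=f_{\mathsf{H}^2}$ — continuously twice differentiable on $(0,\infty)$ with $f''_{\mathsf{H}^2}(1)=1/2>0$ — giving $\liminf_n n\,\EE[\helsq{\hat\mu_n*\gamma_\sigma}{\mu*\gamma_\sigma}]\ge\tfrac14\int\mathsf{Var}_\mu(\varphi_\sigma(x-\cdot))/(\mu*\varphi_\sigma(x))\,dx=\infty$ whenever \eqref{ASSUM:KL_null} fails. For the $\beta$-sub-Gaussian sufficient conditions, \eqref{ASSUM:KL_null} holds for $\beta<\sigma$ by \cite[Proposition 1]{AZYA-2022}, and the extra integrals in $(ii)$--$(iv)$, which carry one additional factor $(\,\cdot*\varphi_\sigma)^{-1/2}$, reduce to Gaussian-tail estimates: $\mu*\varphi_\sigma\gtrsim e^{-(1+\epsilon)\|x\|^2/(2\sigma^2)}$ and $\mu*\varphi_{\sigma/\sqrt2}\lesssim e^{-(1-\delta)\|x\|^2/\sigma^2}$ for $\beta$-sub-Gaussian $\mu$, so $\int\mu*\varphi_{\sigma/\sqrt2}/(\mu*\varphi_\sigma)^{3/2}\,dx<\infty$ exactly when the exponents balance, which forces $6\beta^2<\sigma^2$; mutual absolute continuity with bounded density ratios then handles the cross terms between $\mu$ and $\nu$.

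\emph{Main obstacle.} The principal work is the bookkeeping in part $(iv)$: the densities $p_{\eta_1},p_{\eta_2}$ in Theorem \ref{Thm:Helsqdiv-limdist}$(iv)$ are built from $p_\mu^{\pm1/2},p_\nu^{\pm1/2},q_i^{1/2}$, so $\EE[\|Z_i\|_{2,\eta_1}^2]$ (and its $\nu$-analogue) expands into several cross terms mixing $\mathsf{Var}_\mu$ or $\mathsf{Var}_\nu$, the ratios $\nu*\varphi_\sigma/\mu*\varphi_\sigma$, and $q_i^{1/2}$; each such term must be reduced — via Cauchy--Schwarz and the two elementary variance bounds above — either to the three integrals of \eqref{ASSUM:Hel_alt_2samp} under hypothesis $(b)$, or to the exponential bounds of Proposition \ref{Prop:GS-KL-limdist}$(iv)$ under hypothesis $(a)$. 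Case $(iv)(c)$ then follows from $(iv)(b)$ by the symmetry of $\mathsf{H}^2$, swapping the roles of $\mu$ and $\nu$ and taking $\rho=\nu*\gamma_\sigma$.
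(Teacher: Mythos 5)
Your proposal is correct and follows essentially the same route as the paper's proof: instantiate Theorem \ref{Thm:Helsqdiv-limdist} with $\rho=\mu*\gamma_\sigma$ (resp.\ $\nu*\gamma_\sigma$ for $(iv)(c)$), take $q_i=c_{d,\sigma}/\mu*\gamma_\sigma$ with $c_{d,\sigma}=(2\pi\sigma^2)^{-d/2}$, verify the weak-convergence hypotheses via the $L^2$ CLT (Theorem \ref{Thm:CLT-in-Lp}) by reducing $\EE[\|Z\|_{2,\eta}^2]$ to the stated primitive integrals, read off the limits including the Gaussian variance via the Banach-space dual characterization, and get the converse in $(i)$ from Lemma \ref{lem:fdivnullonesamplb} with $f''_{\mathsf{H}^2}(1)=1/2$. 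Two small inaccuracies worth noting: $\eta_i$ is not dominated by $\rho$ with bounded Radon--Nikodym derivative (the density $p_{\eta_i}\propto 2+(c_{d,\sigma}/\mu*\varphi_\sigma)^{1/2}$ is unbounded relative to $\rho$); the paper instead observes that the Lebesgue density $2\mu*\varphi_\sigma+\sqrt{c_{d,\sigma}\,\mu*\varphi_\sigma}$ is bounded on $\RR^d$, hence $\eta_i$ is $\sigma$-finite. Also, your heuristic exponent-matching for the $\beta<\sigma/\sqrt6$ threshold in $(iii)$ compresses the actual argument, which applies Jensen to $(\mu*\varphi_\sigma)^{3/2}\ge\EE_\mu[\varphi_\sigma^{3/2}(x-\cdot)]$, the Young-type bound $\|x-y\|^2\le(1+\tau)\|x\|^2+(1+\tau^{-1})\|y\|^2$, the sub-Gaussian moment-generating estimate $\E[e^{\tau'\|X\|^2}]\le(1-2\beta^2\tau')^{-d/2}$, and then lets $\tau\downarrow 0$.
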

\begin{proof}
    The proof relies on Theorem \ref{Thm:Helsqdiv-limdist} by ensuring that the relevant conditions therein are fulfilled.

\medskip
\noindent\textbf{Part $\bm{(i)}$:}
We apply Theorem \ref{Thm:Helsqdiv-limdist}$(i)$ with $\rho=\mu*\gamma_{\sigma}$.
Observe that $p_{\eta}=2$ and that the positivity and absolute continuity assumptions are satisfied. Then, \eqref{eq:helsq-GS-onesamp-null} will follow from \eqref{eq:Hel-onesample-null}, provided    \eqref{eq:weakconv-emprat-null1} holds, which in turn is fulfilled under \eqref{ASSUM:KL_null}  (see \eqref{eq:pathsasL2} and the following discussion). The penultimate claim is because $\beta$-sub-Gaussian $\mu$  with $\beta<\sigma$ satisfies~\eqref{ASSUM:KL_null}, as mentioned in the proof of Proposition \ref{Prop:GS-KL-limdist}$(i)$. The final claim follows by instantiating Lemma~\ref{lem:fdivnullonesamplb} to $\mathsf{H}^2$ distance and noting that $f_{\mathsf{H}^2}$ satisfy the regularity conditions therein.

\medskip
\noindent\textbf{Part $\bm{(ii)}$:}
We apply Theorem \ref{Thm:Helsqdiv-limdist}$(ii)$ with 
$\rho=\mu*\gamma_{\sigma}$. Note that $\norm{\nu*\varphi_{\sigma}/\mu*\varphi_{\sigma}}_{1,\mu*\varphi_{\sigma}}=1 $ and 
$d\eta/dx=\mu*\varphi_{\sigma}+\big(\mu*\varphi_{\sigma}\big)^{1/2} \big( \nu*\varphi_{\sigma}\big)^{1/2}$, which implies that $\eta$ is a finite measure. Thus, provided that
\begin{align}
 n^{1/2} \left(\frac{\hat\mu_n*\varphi_{\sigma}}{\mu*\varphi_{\sigma}}-1\right) \trightarrow{w} \frac{G_{\mu,\sigma}}{\mu*\varphi_{\sigma}}\quad\mbox{ in }L^2(\eta), \notag
 \end{align}
we obtain from \eqref{eq:Helsq-onesample-alt} the limit distribution
\begin{align}
        n^{\frac 12}\big(\helsq{\hat \mu_n*\gamma_\sigma}{\nu*\gamma_{\sigma}}-\helsq{\mu*\gamma_{\sigma}}{\nu*\gamma_{\sigma}}\big) &\trightarrow{d} \int_{\RR^d}  \left(\frac{\nu*\varphi_{\sigma}(x)}{\mu*\varphi_{\sigma}(x)}\right)^{\frac 12}  G_{\mu,\sigma}(x)\, dx  \sim N\big(0,v_{1,f_{\mathsf{H}^2}}^2(\mu,\nu,\sigma)\big), \notag
\end{align}
where 
 \begin{align}
 v_{1,f_{\mathsf{H}^2}}^2(\mu,\nu,\sigma)=  \int_{\RR^d}\int_{\RR^d} \Sigma^{(1,1)}_{\mu,\nu,\sigma}(x,y) \left(\frac{\nu*\varphi_{\sigma}(x)}{\mu*\varphi_{\sigma}(x)}\right)^{\frac 12} \left(\frac{\nu*\varphi_{\sigma}(y)}{\mu*\varphi_{\sigma}(y)}\right)^{\frac 12} d x\, d y. \label{eq:helsqvaronesamp}
\end{align}
To establish the weak convergence condition we again verify Condition $(ii)$ of Theorem \ref{Thm:CLT-in-Lp}, which uses the assumption from \eqref{ASSUM:Hel_alt}, and then invoke Theorem \ref{Thm:CLT-in-Lp}$(i)$ (see the proof of Proposition \ref{Prop:GS-KL-limdist}). The final claim follows as the LHS of \eqref{ASSUM:Hel_alt} is finite for $\beta$-sub-Gaussian $\mu$ with $\beta<\sigma$ and $\norm{d \nu/d \mu}_{\infty}  <\infty$, based on \eqref{eq:ratiodensbnd} and the arguments at the end of proof of Proposition \ref{Prop:GS-KL-limdist}$(i)$.  

\medskip
\noindent\textbf{Part $\bm{(iii)}$:}
We specialize Theorem \ref{Thm:Helsqdiv-limdist}$(iii)$ with 
$\rho=\mu*\gamma_{\sigma}$. Setting $q_1 =q_2=c_{d,\sigma}/\mu*\gamma_{\sigma}$ where $c_{d,\sigma}=(2\pi \sigma^2)^{-d/2}$, we have that $\eta_1=\eta_2=\eta$ is a $\sigma$-finite measure specified by the Lebesgue density
$d \eta/dx=2\mu*\varphi_{\sigma}+\big(c_{d,\sigma}\,\mu*\varphi_{\sigma}\big)^{1/2}$ (note that both terms are bounded on $\RR^d$). Then, from \eqref{eq:Helsq-twosample-null}, we obtain
 \begin{align}
  n \helsq{\hat \mu_n *\varphi_{\sigma}}{\hat \nu_n *\varphi_{\sigma}} \trightarrow{d}  \frac 14 \int_{\mathfrak{S}} \frac{\big(G_{\mu,\sigma}(x)-\tilde G_{\mu,\sigma}(x)\big)}{\mu*\varphi_{\sigma}(x)}^2 d x, \notag
\end{align}
provided that
\begin{align} 
  n^{1/2} \left(\frac{\hat\mu_n*\varphi_{\sigma}}{\mu*\varphi_{\sigma}}-1\right) \trightarrow{w} \frac{G_{\mu,\sigma}}{\mu*\varphi_{\sigma}}\quad  \mbox{ and }\quad n^{1/2} \left(\frac{\hat\nu_n*\varphi_{\sigma}}{\mu*\varphi_{\sigma}}-1\right) \trightarrow{w} \frac{\tilde G_{\mu,\sigma}}{\mu*\varphi_{\sigma}}\quad\mbox{ in }L^2(\eta). \notag
 \end{align}
Applying Theorem \ref{Thm:CLT-in-Lp}$(ii)$, the above weak convergence holds once \eqref{ASSUM:Hel_null_2samp} is satisfied.

\medskip

To prove the final claim in Part $(iii)$,
we first note that
\begin{flalign}
\int_{\RR^{d}} \frac{\mathsf{Var}_{\mu}\big(\varphi_{\sigma}(x-\cdot)\big)}{\big(\mu*\varphi_{\sigma}(x)\big)^{\frac 32}} dx &\le \int_{\RR^{d}} \int_{\RR^{d}} \frac{\varphi_{\sigma}^{2}(x-y)}{\big(\mu*\varphi_{ \sigma}(x))^{\frac 32}} dx\,  d\mu(y) \notag \\
&= \int_{\RR^{d}} e^{-\| y \|^{2}/\sigma^{2}} \frac{1}{(2\pi \sigma^{2})^{d}}\int_{\RR^{d}} \frac{e^{2y \cdot x/\sigma^{2}-\|x\|^{2}/\sigma^{2}}}{\big(\mu*\varphi_{ \sigma}(x))^{\frac 32}} dx \, d\mu(y). \notag &&
\end{flalign}
Since $\|x-y\|^{2} \le (1+\tau)\|x\|^{2}+(1+\tau^{-1})\|y\|^{2}, \, \forall\, \tau  \in (0,1)$, Jensen's inequality implies
\begin{align}
\big(\mu*\varphi_{\sigma}(x)\big)^{\frac 32} \ge \EE_{\mu}\left[\varphi_{\sigma}^{\frac 32}(x-\cdot)\right] \geq  \frac{e^{-3(1+\tau)\|x\|^{2}/(4\sigma^{2})}}{(2\pi \sigma^{2})^{\frac{3d}{4}}}  \underbrace{\int_{\RR^{d}} e^{-3(1+\tau^{-1})\|y\|^{2}/(4\sigma^{2})} d \mu(y)}_{=: c_{\mu,\tau}}. \notag   
\end{align}
Then
\begin{align}
\frac{1}{(2\pi \sigma^{2})^{d}} \int_{\RR^{d}} \frac{e^{2y \cdot x/\sigma^{2}-\|x\|^{2}/\sigma^{2}}}{\big(\mu*\varphi_{ \sigma}(x))^{\frac 32}} dx 
&\le \frac{2^{\frac{3d}{4}}\sigma^{\frac d2}\pi^{\frac d4}}{(1-3\tau)^{\frac d2} c_{\mu,\tau}} \int_{\RR^{d}} e^{2y \cdot x/\sigma^{2}} \varphi_{\sqrt{2}\sigma/\sqrt{1-3\tau}}(x) d x =\frac{2^{\frac{3d}{4}}\sigma^{\frac d2}\pi^{\frac d4}}{(1-3\tau)^{\frac d2} c_{\mu,\tau}} e^{ \frac{4 \| y \|^{2}}{(1-3\tau)
\sigma^{2}}}, \notag
\end{align}
where in the last equation we used $\EE\big[e^{\alpha \cdot W}\big] = e^{\norm{\alpha}^2 \sigma^2/2}$ for $W \sim N(0,\sigma^2 I_d)$.
Conclude that 
\begin{align}
\int_{\RR^{d}} \frac{\mathsf{Var}_{\mu}\big(\varphi_{\sigma}(x-\cdot)\big)}{\big(\mu*\varphi_{ \sigma}(x))^{\frac 32}}dx
&\le \frac{2^{\frac{3d}{4}}\sigma^{\frac d2}\pi^{\frac d4}}{(1-3\tau)^{\frac d2} c_{\mu,\tau}} \int_{\RR^{d}}e^{ \frac{3(1+\tau)\| y \|^{2}}{(1-3\tau)\sigma^{2}}}  d\mu(y) \leq \frac{2^{\frac{3d}{4}}\sigma^{\frac d2}\pi^{\frac d 4}}{(1-3\tau)^{\frac d 2} c_{\mu,\tau}} \left ( 1-\frac{6(1+\tau)\beta^{2}}{(1-3\tau)\sigma^{2}} \right)^{-\frac d2}, \notag
\end{align}
provided that
$3(1+\tau)/\big((1-3\tau)\sigma^{2}\big) < 1/(2\beta^{2})$, i.e., $\beta < \sigma \sqrt{1-3\tau}/\sqrt{6(1+\tau)}$. Here, the final inequality uses that for any $\beta$-sub-Gaussian $X$ with mean zero and $0 \le \tau' < 1/(2\beta^{2})$, we have 
$\E\big [e^{\tau' \| X \|^{2}} \big] \leq (1-2\beta^{2} \tau')^{-d/2}$ (see \cite[Equation 7]{Goldfeld-Kato-2020}). Since $\tau \in (0,1)$ is arbitrary and $c_{\mu,\tau}>0$ for $\tau>0$, the desired result follows.

\medskip
\noindent\textbf{Part $\bm{(iv)}$:} We use Theorem~\ref{Thm:Helsqdiv-limdist}$(iv)$ with 
$\rho=\mu*\gamma_{\sigma}$. Observe that $\norm{\nu*\varphi_{\sigma}/\mu*\varphi_{\sigma}}_{1,\mu*\varphi_{\sigma}}=1 $, and since $\norm{d \mu/d \nu}_{\infty} <\infty$ by assumption, we further have $\mu*\varphi_{\sigma}/\nu*\varphi_{\sigma} \in L^1(\mu*\gamma_{\sigma})$. With $q_1 =q_2=c_{d,\sigma}/\mu*\gamma_{\sigma}$, the measures $\eta_1$ and $\eta_2$ in Theorem \ref{Thm:Helsqdiv-limdist}$(iv)$ are specified by the Lebesgue densities
\begin{align}
    \frac{d \eta_1}{dx}&=\mu*\varphi_{\sigma}+\big(\mu*\varphi_{\sigma}\,\nu*\varphi_{\sigma}\big)^{\frac 12}+\big(c_{d,\sigma}\,\mu*\varphi_{\sigma}\big)^{\frac 12}+\big(\mu*\varphi_{\sigma}\big)^{\frac 32}\big(\nu*\varphi_{\sigma}\big)^{-\frac 12},\label{eq:lebdenssqhelalt1} \\
     \frac{d \eta_2}{dx}&=\mu*\varphi_{\sigma}+\big(\mu*\varphi_{\sigma}\big)^{\frac 52}\big(\nu*\varphi_{\sigma}\big)^{-\frac 32}+c_{d,\sigma}^{\frac 12}\big(\mu*\varphi_{\sigma}\big)^2\big(\nu*\varphi_{\sigma}\big)^{-\frac 32}+\big(\mu*\varphi_{\sigma}\big)^{\frac 32}\big(\nu*\varphi_{\sigma}\big)^{-\frac 12}. \label{eq:lebdenssqhelalt2}
\end{align}
Thus, if the weak convergence conditions
\begin{align} \notag
\begin{split}
 &  n^{1/2} \left(\frac{\hat\mu_n*\varphi_{\sigma}}{\mu*\varphi_{\sigma}}-1\right) \trightarrow{w} \frac{G_{\mu,\sigma}}{\mu*\varphi_{\sigma}}\quad\mbox{ in }L^2(\eta_1), \\
   & n^{1/2} \left(\frac{\hat\nu_n*\varphi_{\sigma}}{\mu*\varphi_{\sigma}}-\frac{\nu*\varphi_{\sigma}}{\mu*\varphi_{\sigma}}\right) \trightarrow{w} \frac{G_{\nu,\sigma}}{\mu*\varphi_{\sigma}}\quad\mbox{ in }L^2(\eta_2).
\end{split}
 \end{align}
 hold, then \eqref{eq:Helsq-twosample-alt} yields 
\begin{align}
      & n^{\frac 12}\big(\helsq{\hat \mu_n*\gamma_\sigma}{\hat \nu_n*\gamma_{\sigma}}-\helsq{\mu*\gamma_{\sigma}}{\nu*\gamma_{\sigma}}\big)\notag \\
   &\qquad \qquad \qquad \qquad\trightarrow{d} \int_{\RR^d}  \left(\frac{\nu*\varphi_{\sigma}(x)}{\mu*\varphi_{\sigma}(x)}\right)^{\frac 12}  G_{\mu,\sigma}(x)\, dx+ \int_{\RR^d}  \left(\frac{\mu*\varphi_{\sigma}(x)}{\nu*\varphi_{\sigma}(x)}\right)^{\frac 12}  G_{\nu,\sigma}(x)\, dx \notag \\
   &\qquad \qquad \qquad \qquad \qquad \sim  N\big(0,v_{2,f_{\mathsf{H}^2}}^2(\mu,\nu,\sigma)\big), \notag
\end{align}
where
 \begin{align}
  v_{2,f_{\mathsf{H}^2}}^2(\mu,\nu,\sigma)&:= \sum_{1 \leq i,j \leq 2} \int_{\RR^d}\int_{\RR^d}  \Sigma^{(i,j)}_{\mu,\nu,\sigma}(x,y) L_{i,f_{\mathsf{H}^2}}(x)L_{j,f_{\mathsf{H}^2}}(y) d x\, d y\notag \\
    &=\sum_{1 \leq i,j \leq 2}\int_{\RR^d}\int_{\RR^d}  \Sigma^{(i,j)}_{\mu,\nu,\sigma}(x,y) \tilde L_{i,f_{\mathsf{H}^2}}(x) \tilde L_{j,f_{\mathsf{H}^2}}(y) d x\, d y, \label{eq:var-smoothedhelsq2samp} 
\end{align}
with $\tilde L_{1,f_{\mathsf{H}^2}}:=(\nu*\varphi_{\sigma}/\mu*\varphi_{\sigma})^{1/2} $ and $\tilde L_{2,f_{\mathsf{H}^2}}:= (\mu*\varphi_{\sigma}/\nu*\varphi_{\sigma})^{1/2}$.
Since $\norm{d \mu/d \nu}_{\infty}$ $ <\infty$ by assumption,   $\eta_1$ and $\eta_2$ are $\sigma$-finite measures because the terms in the RHS of \eqref{eq:lebdenssqhelalt1} and \eqref{eq:lebdenssqhelalt2} either have finite Lebesgue integrals or are bounded on $\RR^d$. Hence, the above weak convergences hold if the conditions in Theorem \ref{Thm:CLT-in-Lp}$(ii)$ are satisfied. This in turn happens if
  \begin{equation}
      \int_{\RR^d}\frac{\mathsf{Var}_{\mu}\big(\varphi_{\sigma}(x-\cdot)\big)}{\big(\mu*\varphi_{\sigma}(x)\big)^{3/2}}\, dx+\int_{\RR^d}\frac{\mathsf{Var}_{\mu}\big(\varphi_{\sigma}(x-\cdot)\big)}{\big(\mu*\varphi_{\sigma}(x)\nu*\varphi_{\sigma}(x)\big)^{1/2}}\, dx <\infty, \notag
    \end{equation}
    and
    \begin{equation}
     \int_{\RR^d}\frac{\mathsf{Var}_{\nu}\big(\varphi_{\sigma}(x-\cdot)\big)}{\mu*\varphi_{\sigma}(x)}\, dx+  \int_{\RR^d}\frac{\mathsf{Var}_{\nu}\big(\varphi_{\sigma}(x-\cdot)\big)}{\big(\nu*\varphi_{\sigma}(x)\big)^{3/2}}\, dx+\int_{\RR^d}\frac{\mathsf{Var}_{\nu}\big(\varphi_{\sigma}(x-\cdot)\big)}{\big(\mu*\varphi_{\sigma}(x)\nu*\varphi_{\sigma}(x)\big)^{1/2}}\, dx <\infty.\notag
    \end{equation}
The above conditions simplify to that in  \eqref{ASSUM:Hel_alt_2samp} when $\norm{d \mu/d \nu}_{\infty} <\infty$, thus proving the desired claim  under the conditions in $(b)$. The validity of \eqref{eq:helsq-GS-twosamp-alt} under conditions in $(c)$ is a consequence of the  symmetry of $\mathsf{H}^2$ distance in its arguments. 
For Condition (a), we need to show that \eqref{ASSUM:Hel_alt_2samp} holds when $\mu,\nu$ have compact supports. Using steps similar to those in \eqref{eq:boundratdens}, one readily verifies \eqref{ASSUM:Hel_alt_2samp}, as well as that $\eta_1$ and $\eta_2$ are $\sigma$-finite, and $\mu*\varphi_{\sigma}/\nu*\varphi_{\sigma} \in L^1(\mu*\gamma_{\sigma})$. Hence, all the sufficient conditions required for \eqref{eq:helsq-GS-twosamp-alt} to hold are verified.

\medskip
The final claim in Part $(iv)$ follows from the proof of the final claim in Part $(iii)$ which shows that the first and last terms in the LHS of \eqref{ASSUM:Hel_alt_2samp} are finite for $\beta$-sub-Gaussian $\mu,\nu$  with $\beta<\sigma/\sqrt{6}$, and 
\begin{flalign}
     \int_{\RR^d}\frac{\mathsf{Var}_{\nu}\big(\varphi_{\sigma}(x-\cdot)\big)}{\mu*\varphi_{\sigma}(x)}\, dx &= \int_{\RR^d}\frac{\mathsf{Var}_{\nu}\big(\varphi_{\sigma}(x-\cdot)\big)}{\nu*\varphi_{\sigma}(x)}\frac{\nu*\varphi_{\sigma}(x)}{\mu*\varphi_{\sigma}(x)}\, dx \notag \\
     &\leq \norm{\frac{d\nu}{d\mu}}_{\infty}\int_{\RR^d}\frac{\mathsf{Var}_{\nu}\big(\varphi_{\sigma}(x-\cdot)\big)}{\nu*\varphi_{\sigma}(x)} \, dx \notag \\
       &\leq (2\pi \sigma^{2})^{-d/4} \norm{\frac{d\nu}{d\mu}}_{\infty}\int_{\RR^d}\frac{\mathsf{Var}_{\nu}\big(\varphi_{\sigma}(x-\cdot)\big)}{\big(\nu*\varphi_{\sigma}(x)\big)^{3/2}} \, dx <\infty. \notag &&
\end{flalign}
 This completes the proof.
\end{proof}

\subsection{TV distance}\label{App:GSfdiv-TV}
Before stating limit distributions for Gaussian-smoothed TV distance, we recall the definition of tightness or stochastic boundedness of a sequence of random variables. A   sequence of real-valued random variables $Z_n,n \in \NN$, is tight if for any $\epsilon>0$, there exists a constant $c_{\epsilon}$ such that $\PP(\abs{Z_n}>c_{\epsilon}) \leq \epsilon$ for all $n$ \cite{AVDV-book}.
\begin{prop}[Gaussian-smoothed TV distance limit distribution] \label{Prop:GS-TV-limdist}
Let $\rho=\lambda$,  $\cQ=\{s \in \RR^d: p_{\mu*\gamma_{\sigma}}(s)=p_{\nu*\gamma_{\sigma}}(s)\}$ and  $\mathrm{sgn}(x)=x/|x|$ for $x \neq 0$. Then, the following hold:
\begin{enumerate}[(i)]
    \item  (One-sample null and alternative) If 
    \begin{equation}
       \int_{\RR^d}\sqrt{\mathsf{Var}_{\mu}\big(\varphi_{\sigma}(x-\cdot)\big)}\, dx <\infty,\label{ASSUM:TV_null}
    \end{equation}
then  there exists a version of $G_{\mu,\sigma}$ which is $L^1(\RR^d)$ valued, and 
\begin{align}
  &  n^{\frac 12}\big(\tv{\hat \mu_n*\gamma_\sigma}{\nu*\gamma_{\sigma}}-\tv{\mu*\gamma_{\sigma}}{\nu*\gamma_{\sigma}}\big)   
    \notag \\
    &\qquad \qquad \qquad \qquad\qquad \qquad \trightarrow{d}\frac 12 \int_{\cQ}\abs{G_{\mu,\sigma}} dx +\frac 12 \int_{\mathfrak{S}\setminus \cQ} \mathrm{sgn}\big(p_{\mu*\gamma_{\sigma}}-p_{\nu*\gamma_{\sigma}}\big)G_{\mu,\sigma}dx. \label{eq:TV-GS-onesamp-alt}
\end{align}
In particular, \eqref{ASSUM:TV_null} and \eqref{eq:TV-GS-onesamp-alt} holds if $\mu$ has finite $(2d + \epsilon)$ moment for some $\epsilon > 0$, i.e., $\E[\| X \|^{2d+\epsilon}] < \infty$ for $X \sim \mu$.
Conversely, if \eqref{ASSUM:TV_null} is violated, then the sequence $n^{1/2} \tv{\hat \mu_n*\gamma_\sigma}{\mu*\gamma_{\sigma}}$ is not tight.  
\medskip
 \item (Two-sample null and alternative)
 If 
  \begin{equation}
           \int_{\RR^d}\sqrt{\mathsf{Var}_{\mu}\big(\varphi_{\sigma}(x-\cdot)\big)}\, dx+\int_{\RR^d}\sqrt{\mathsf{Var}_{\nu}\big(\varphi_{\sigma}(x-\cdot)\big)}\, dx <\infty,\label{ASSUM:TV_alt-2samp}
    \end{equation}
  then   there exists a version of $G_{\mu,\sigma}$ and $G_{\nu,\sigma}$ which are $L^1(\RR^d)$ valued  such that
\begin{align}
   & n^{\frac 12}\big(\tv{\hat \mu_n*\gamma_\sigma}{\hat \nu_n*\gamma_{\sigma}}-\tv{\mu*\gamma_{\sigma}}{\nu*\gamma_{\sigma}}\big) \notag \\
   & \qquad \qquad \qquad\qquad \trightarrow{d}  \frac 12 \int_{\cQ}\abs{G_{\mu,\sigma}-G_{\nu,\sigma}} dx +\frac 12 \int_{\mathfrak{S}\setminus \cQ} \mathrm{sgn}\big(p_{\mu*\gamma_\sigma}-p_{\nu*\gamma_\sigma}\big)\big(G_{\mu,\sigma}-G_{\nu,\sigma}\big)dx. \label{eq:TV-GS-twosamp-alt}
\end{align}
In particular, \eqref{ASSUM:TV_alt-2samp} and \eqref{eq:TV-GS-twosamp-alt}  holds if $\mu$ and $\nu$ have finite $(2d + \epsilon)$ moments for some $\epsilon > 0$. Conversely, if $\mu=\nu$ and \eqref{ASSUM:TV_alt-2samp} does not hold, then  $n^{1/2} \tv{\hat \mu_n*\gamma_\sigma}{\hat \nu_n*\gamma_{\sigma}}$ is not tight.
\end{enumerate}
\end{prop}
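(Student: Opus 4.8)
My plan is to derive both parts by instantiating the general TV limit theorem (Theorem~\ref{Thm:TVdiv-limdist}) with $\rho=\lambda$, $\mu\leftarrow\mu*\gamma_\sigma$, $\nu\leftarrow\nu*\gamma_\sigma$, $\mu_n=\hat\mu_n*\gamma_\sigma$ and $\nu_n=\hat\nu_n*\gamma_\sigma$; the only nontrivial input is weak convergence of the recentered smoothed empirical density in $L^1(\RR^d)$, which I would obtain from the CLT in $L^r$ spaces (Theorem~\ref{Thm:CLT-in-Lp} with $r=1$). For Part $(i)$ I would write $n^{1/2}(\hat\mu_n*\varphi_\sigma-\mu*\varphi_\sigma)=n^{-1/2}\sum_{i=1}^nZ_i$ with i.i.d.\ centered summands $Z_i=\varphi_\sigma(\cdot-X_i)-\mu*\varphi_\sigma$; since $\EE[|Z(x)|^2]=\mathsf{Var}_\mu(\varphi_\sigma(x-\cdot))$, the first hypothesis of Theorem~\ref{Thm:CLT-in-Lp}$(ii)$ is precisely condition \eqref{ASSUM:TV_null}, while the second is automatic because $\|Z\|_{1,\lambda}\le\|\varphi_\sigma(\cdot-X)\|_1+\|\mu*\varphi_\sigma\|_1=2$ is bounded, so $\PP(\|Z\|_{1,\lambda}\ge t)=0$ for $t>2$. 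The CLT then yields $n^{-1/2}\sum_iZ_i\trightarrow{w}G_{\mu,\sigma}$ in $L^1(\RR^d)$ with $G_{\mu,\sigma}$ the centered Gaussian process of covariance $\Sigma^{(1,1)}_{\mu,\nu,\sigma}$, and feeding this into Theorem~\ref{Thm:TVdiv-limdist}$(i)$ produces \eqref{eq:TV-GS-onesamp-alt}. Part $(ii)$ would proceed identically with $Z_i$ replaced by $V_i=(\varphi_\sigma(\cdot-X_i)-\mu*\varphi_\sigma)-(\varphi_\sigma(\cdot-Y_i)-\nu*\varphi_\sigma)$, using $\sqrt{\EE[|V(x)|^2]}\le\sqrt{\mathsf{Var}_\mu(\varphi_\sigma(x-\cdot))}+\sqrt{\mathsf{Var}_\nu(\varphi_\sigma(x-\cdot))}$ (Minkowski in $L^2(\PP)$) to reduce the CLT condition to \eqref{ASSUM:TV_alt-2samp}, before invoking Theorem~\ref{Thm:TVdiv-limdist}$(ii)$.

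For the moment-based sufficient conditions I would show that $\EE[\|X\|^{2d+\epsilon}]<\infty$ implies \eqref{ASSUM:TV_null}. Bounding $\mathsf{Var}_\mu(\varphi_\sigma(x-\cdot))\le\EE_\mu[\varphi_\sigma^2(x-\cdot)]\lesssim\int e^{-\|x-y\|^2/\sigma^2}d\mu(y)$ and splitting the inner integral at $\|y\|=\|x\|/2$ (on $\{\|y\|\le\|x\|/2\}$ one has $\|x-y\|\ge\|x\|/2$; the complement is controlled by Markov's inequality for $\|Y\|^{2d+\epsilon}$) gives $\mathsf{Var}_\mu(\varphi_\sigma(x-\cdot))\lesssim e^{-\|x\|^2/(4\sigma^2)}+\|x\|^{-(2d+\epsilon)}$; taking square roots and integrating---finite on $\{\|x\|\le1\}$ by boundedness of $\varphi_\sigma$, and finite on $\{\|x\|>1\}$ since $\int_{\|x\|>1}\|x\|^{-(d+\epsilon/2)}dx<\infty$ for $\epsilon>0$---yields \eqref{ASSUM:TV_null}. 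The two-sample version follows by applying this bound to $\mu$ and to $\nu$ separately.

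The delicate part will be the two converse (non-tightness) claims, which I expect to be the main obstacle. For Part $(i)$, assuming $c:=\int_{\RR^d}\sqrt{\mathsf{Var}_\mu(\varphi_\sigma(x-\cdot))}\,dx=\infty$, I would set $W_n=2n^{1/2}\tv{\hat\mu_n*\gamma_\sigma}{\mu*\gamma_\sigma}=\|n^{-1/2}\sum_iZ_i\|_{1,\lambda}$ and, for a ball $A=B(0,R)$, use $W_n\ge\int_A|n^{-1/2}\sum_iZ_i|\,d\lambda$. Since $\lambda|_A$ is finite and $Z_i|_A$ is bounded, the CLT in $L^1(A)$ applies with no further hypotheses, so $\int_A|n^{-1/2}\sum_iZ_i|\,d\lambda\trightarrow{d}\|G^{(A)}\|_{1,\lambda|_A}$ for a centered $L^1(A)$-valued Gaussian $G^{(A)}$ of mean norm $m_R:=\EE\|G^{(A)}\|_{1,\lambda|_A}=\sqrt{2/\pi}\int_A\sqrt{\mathsf{Var}_\mu(\varphi_\sigma(x-\cdot))}\,dx$, with $m_R\uparrow\infty$ as $R\to\infty$ by monotone convergence. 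I would then invoke the comparability of moments of norms of Banach-space Gaussians, $\EE\|G^{(A)}\|^2\le K(\EE\|G^{(A)}\|)^2$ with $K$ absolute, and Paley--Zygmund to obtain $\PP(\|G^{(A)}\|>m_R/2)\ge1/(4K)=:\delta>0$; the Portmanteau theorem applied to the open set $(m_R/2,\infty)$ then gives $\liminf_n\PP(W_n>m_R/2)\ge\delta$, hence $\sup_n\PP(W_n>M)\ge\delta$ for every $M$ (choose $R$ with $m_R/2>M$), contradicting tightness. The converse in Part $(ii)$ (for $\mu=\nu$, so the statistic is centered at $0$) is the same argument with $Z_i$ replaced by $\varphi_\sigma(\cdot-X_i)-\varphi_\sigma(\cdot-Y_i)$. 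The subtle point, and the reason a direct expectation computation does not suffice, is that a sequence of nonnegative variables with divergent mean can still be tight; passing through Gaussian anti-concentration on finite-measure sub-domains is what converts the divergence of $c$ into a genuine failure of tightness.
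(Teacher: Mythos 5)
Your derivation of the limit laws in Parts~$(i)$ and $(ii)$ matches the paper's route in substance: both feed the CLT in $L^1(\RR^d)$ (Theorem~\ref{Thm:CLT-in-Lp} with $r=1$) into Theorem~\ref{Thm:TVdiv-limdist}, with \eqref{ASSUM:TV_null} being precisely the first condition of Theorem~\ref{Thm:CLT-in-Lp}$(ii)$ and the second holding automatically from $\|Z\|_{1}\le 2$, and the moment-based sufficiency is verified by the same split at $\|y\|=\|x\|/2$ plus Markov. The only cosmetic difference in Part~$(ii)$ is that you apply the CLT once to the difference process $V_i$, whereas the paper establishes marginal weak convergence of each smoothed empirical process and assembles the joint convergence via Polishness of $L^1$; both accommodate arbitrary correlation between $X_i$ and $Y_i$.

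Where you genuinely diverge from the paper is in the non-tightness converses, and your route is valid. The paper first shows that tightness of $W_n=\|n^{-1/2}\sum_i Z_i\|_{1}/2$ forces $\sup_n\EE[W_n]<\infty$ by combining Hoffmann--J{\o}rgensen's inequality with Montgomery--Smith's maximal inequality (for normed partial sums of i.i.d.\ summands, tightness does control the first moment), and then lower bounds $\liminf_n\EE[W_n]$ via the pointwise CLT, a truncated continuous-mapping step, Fatou, and monotone convergence; the contrapositive yields the claim. You never touch the moments of $W_n$ directly: you restrict to balls $B(0,R)$---where Theorem~\ref{Thm:CLT-in-Lp} applies unconditionally since the reference measure is finite and $Z$ is uniformly bounded---obtain a limiting $L^1(B(0,R))$-valued Gaussian whose mean norm $m_R$ diverges by monotone convergence, and then convert that divergence into anti-concentration via the universal comparability of moments of norms of Banach-space Gaussians plus Paley--Zygmund, closing with the Portmanteau lower bound on the open set $(m_R/2,\infty)$. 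Both arguments correctly handle the subtlety you flag---divergent mean alone does not preclude tightness---but through different structures: the paper exploits the i.i.d.\ partial-sum structure, you exploit the Gaussian structure of the restricted limit and thereby bypass Hoffmann--J{\o}rgensen entirely, making the reduction a bit more self-contained. One caveat shared by your sketch and the paper's (which omits the details): for the two-sample converse with an arbitrary coupling and $\mu=\nu$, the relevant quantity is the variance of $\varphi_\sigma(x-X)-\varphi_\sigma(x-Y)$, which can vanish identically (e.g.\ $X_i=Y_i$ a.s.), so the stated converse should be read under an extra hypothesis such as independence of $X_i$ and $Y_i$.
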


We note here that in contrast to Proposition  \ref{Prop:GS-KL-limdist}, \ref{Prop:GS-chisq-limdist}, and \ref{Prop:GS-helsq-limdist}, the limit distribution for smoothed TV distance in the one- and two-sample alternative is not Gaussian. This is a consequence of the non-linearity of the first-order Hadamard derivative (see \eqref{eq:HadderTVdist}). 
\begin{remark}[Upper bound for \eqref{ASSUM:TV_null}]
To establish that  \eqref{ASSUM:TV_null}  holds if $\mu$ has finite $(2d + \epsilon)$ moments for some $\epsilon > 0$, we  show in the proof that for $X \sim \mu$,
\begin{align}
\int_{\RR^{d}} \sqrt{\mathsf{Var}_{\mu}(\varphi_{\sigma}(x-\cdot))} dx & \le 8^{d/2} + \frac{2^{d/2+1}}{\sigma^{d}\Gamma(d/2)} \int_{0}^{\infty}t^{d-1}\sqrt{\PP(\|X\| > t)} dt,
\label{eq: TV moment bound}
\end{align}
where $\Gamma$ denotes the Gamma function. The RHS is then finite by Markov's inequality provided $\E[\| X \|^{2d+\epsilon}] < \infty$.
\end{remark}
\begin{proof}
    We apply Theorem \ref{Thm:TVdiv-limdist} with $\rho=\lambda$.

\medskip
\noindent\textbf{Part $\bm{(i)}$:}
Observe that
\eqref{eq:TV-GS-onesamp-alt} is a direct consequence of \eqref{eq:TV-onesample-alt}, provided that
\begin{align}
 n^{1/2} \left(\hat\mu_n*\gamma_{\sigma}-\mu*\gamma_{\sigma}\right) \trightarrow{w} G_{\mu,\sigma}\quad \mbox{ in }L^1(\RR^d).\notag
\end{align}
Let $Z_i(x)=\varphi_{\sigma}(x-X_{i}) - \mu*\varphi_{\sigma}(x)$ and 
 $Z(x)=\varphi_{\sigma}(x-X)-\mu*\varphi_{\sigma}(x)$, where $X \sim \mu$. Note that $\norm{Z}_1 \leq 2$ a.s. and $\sqrt{n}\big(\hat\mu_n*\gamma_{\sigma}-\mu*\gamma_{\sigma}\big)=(1/\sqrt{n})\sum_{i=1}^n Z_i$. Since $\PP(\norm{Z}_{1} \geq t)=0$ for $ t>2$, Theorem \ref{Thm:CLT-in-Lp}(ii) implies that the weak convergence above holds if
\begin{align}
    \int_{\RR^d} \Big(\EE \big[\abs{Z(x)}^2  \big]\Big)^{\frac 12}dx= \int_{\RR^d} \Big(\mathsf{Var}_{\mu}\big(\varphi_{\sigma}(x-\cdot)\big)\Big)^{\frac 12}dx <\infty.\notag
\end{align}
The penultimate and final claim in Proposition \ref{Prop:GS-TV-limdist}$(i)$ come from \cite[Lemma 1]{Goldfeld-Kato-2020} and \cite[Proposition 1]{Goldfeld-Kato-2020}, respectively, whose proofs we repeat here for completeness. To prove the former, note that for $X \sim \mu$, one has
\begin{equation}
\mathsf{Var}_{\mu}\big(\varphi_{\sigma}(x-\cdot)\big) 
\le \E[\varphi_{\sigma}^{2}(x - X)] =\frac{1}{(2\pi \sigma^{2})^{d}} \int_{\RR^{d}} e^{-\|x-y\|^{2}/\sigma^{2}} d \mu(y).
\label{eq: TV moment 0}
\end{equation}
Splitting the integral over $\RR^{d}$ into $\|y\| \le \|x\|/2$ and $\|y\| > \|x\|/2$, we further obtain 
\begin{equation}
\int_{\RR^{d}} e^{-\|x-y\|^{2}/\sigma^{2}} d \mu(y)  \le \int_{\|y\| \le \|x\|/2} e^{-\|x-y\|^{2}/\sigma^{2}} d \mu(y)  + \PP \big(\|X\| > \|x\|/2\big). 
\label{eq: TV moment 1}
\end{equation}
Changing to polar coordinates leads to 
\begin{align}
&\int_{\RR^{d}} \sqrt{\PP \big(\|X\| > \|x\|/2\big)} dx =\frac{2^{d+1}\pi^{d/2}}{\Gamma(d/2)} \int_{0}^{\infty} t^{d-1}\sqrt{\PP\big(\|X\| > t\big)} d t. 
\label{eq: TV moment 2}
\end{align}
Next, using $\|x-y\|^{2} \ge \|x\|^{2}/2-\|y\|^{2}$,  we have 
\begin{align}
\int_{\|y\| \le \|x\|/2} e^{-\|x-y\|^{2}/\sigma^{2}} d \mu(y) \le e^{-\|x\|^{2}/(4\sigma^{2})} \int_{\|y\| \le \|x\|/2} d \mu(y)\le e^{-\|x\|^{2}/(4\sigma^{2})},
\label{eq: TV moment 3}
\end{align}
and the square root of the RHS integrates to $(16 \pi \sigma^{2})^{d/2}$. 
Combining (\ref{eq: TV moment 0})--(\ref{eq: TV moment 3}), we obtain inequality (\ref{eq: TV moment bound}). Finally, if $\mu$ has finite $(2d+\epsilon)$ moments, then by Markov's inequality
\[
t^{d-1} \sqrt{\PP\big(\|X\| > t\big)} \le t^{d-1} \wedge \sqrt{\EE[\|X\|^{2d+\epsilon}]}t^{-1-(\epsilon/2)} .
\]
The RHS is integrable on $[0,\infty)$, thus showing that \eqref{ASSUM:TV_null} (and consequently \eqref{eq:TV-GS-onesamp-alt})  holds under the assumption that $\mu$ has a finite $(2d + \epsilon)$-th moment, for some $\epsilon > 0$.

\medskip

The proof of the final claim is divided into two steps. We first show that if $\big(\sqrt{n}\tv{\hat\mu_n*\varphi_{\sigma}}{\mu*\varphi_{\sigma}}\big)_{n \in \NN}$ is tight, then its first moment is uniformly bounded for all $n$. Then we prove that under this uniform boundedness, Condition \eqref{ASSUM:TV_null} holds. 

For the first step, define $S_{i}\mspace{-3 mu} =\mspace{-3 mu} \sum_{j=1}^{i} \mspace{-3 mu}\big(\varphi_{\sigma}(x-X_{j}) - \mu*\varphi_{\sigma}(x)\mspace{-1 mu}\big)$ for $1\mspace{-2 mu} \leq i \mspace{-2 mu} \leq n$, and note that $\sqrt{n}\tv{\hat\mu_n*\varphi_{\sigma}}{\mu*\varphi_{\sigma}} $ $= \| S_{n}/\sqrt{n} \|_{1}/2$. We want to show that if $\| S_{n}/\sqrt{n} \|_{1}/2$ is tight, then 
\begin{align}
\sup_n \E\big[\|S_{n}/\sqrt{n} \|_1\big]=\sup_n\sqrt{n}\EE\big[\tv{\hat\mu_n*\varphi_{\sigma}}{\mu*\varphi_{\sigma}}\big]<\infty.    \notag
\end{align}
 By Hoffmann-J{\o}rgensen's inequality (see \cite[Proposition 6.8]{LT-1991}), we have 
\[
\E\big[\| S_{n} \|_{1}\big] \lesssim \E\left [\max_{1 \le i \le n} \big\| Z_i\big\|_{1}\right] + t_{n,0},
\]
where $t_{n,0} = \inf \big\{ t > 0 : \PP\big( \max_{1 \leq i \leq n}\| S_i \|_{1} > t\big) \leq 1/8\big\}$. The first term on the RHS is bounded by $2$. In addition, by Montgomery-Smith's inequality \cite[Corollary 4]{M-Smith-93}, there exists a universal constant $c$ such that 
\[
t_{n,0} \le \inf \big\{ t > 0 : \PP\big( \| S_{n} \|_{1} > c\,t\big) \le  c \big\}.
\] 
Thus, if $\| S_{n}/\sqrt{n} \|_{1}/2$ is tight (uniformly for all $n$), then $\sup_{n}t_{n,0}/\sqrt{n}$ $ < \infty$, which implies $\sup_{n} \E\big[\|S_{n}/\sqrt{n} \|_1\big] < \infty$, as desired. 

Next, we prove that the uniform boundedness of  $\sqrt{n}\EE\big[\tv{\hat\mu_n*\varphi_{\sigma}}{\mu*\varphi_{\sigma}}\big]$ implies Condition \eqref{ASSUM:TV_null} holds.  Let $k$ be any positive integer. With $Z_i(x)=\varphi_{\sigma}(x-X_{i}) - \mu*\varphi_{\sigma}(x)$ and $\overline{Z}_n=(1/n)\sum_{i=1}^n Z_i$,  Fubini's theorem yields
\begin{align}
\sqrt{n}\,\EE\Big[\tv{\hat\mu_n*\varphi_{\sigma}}{\mu*\varphi_{\sigma}}\Big] &\geq \frac{1}{2} \int_{\RR^{d}} \EE\big[\big(\sqrt{n}\big| \overline{Z}_{n}(x) - \mu*\varphi_{\sigma}(x)\big|\wedge\, k\,\big] dx. \notag
\end{align}
Since $|Z_i(x)| \leq (2\pi \sigma^{2})^{-d/2}$, the CLT implies that for any $x \in \RR^{d}$,
\begin{equation}
 \lim_{n\rightarrow \infty} \E\Big[\big(\sqrt{n}\big| \overline{Z}_{n}(x)- \mu*\varphi_{\sigma}(x)\big|\big) \wedge k\Big] = \E\Big[\big|G_{\mu,\sigma}(x)\big| \wedge k\Big]. \notag
\end{equation}
 Indeed, this follows from the CLT, i.e., $\sqrt{n}\big(\overline{Z}_{n}(x)- \mu*\varphi_{\sigma}(x)\big|\big) \trightarrow{d} N(0,\sigma_x^2)$ with $\sigma_x^2=\mathsf{Var}_{\mu}\big(\varphi_{\sigma}(x-\cdot)\big)$, 
and  the definition of weak convergence 
since $y \mapsto |y| \wedge k$ is bounded (by $k$) and (Lipschitz) continuous.  
Together with Fatou's lemma, we have
\[
\liminf_{n\rightarrow \infty} \sqrt{n}\,\EE\big[\tv{\hat\mu_n*\varphi_{\sigma}}{\mu*\varphi_{\sigma}}\big] \ge
\frac{1}{2} \int_{\RR^{d}}\E[|G_{\mu,\sigma}(x)| \wedge k] \, dx.
\]
Taking $k \to \infty$, we conclude by monotone convergence theorem that 
\begin{align}
 \liminf_{n\rightarrow \infty}\sqrt{n}\E\Big[\tv{\hat\mu_n*\varphi_{\sigma}}{\mu*\varphi_{\sigma}}\Big] &\geq \frac{1}{2} \int_{\RR^{d}} \E\big[\big|G_{\mu,\sigma}(x)\big|\big] \, dx  = \frac{1}{\sqrt{2\pi}}\int_{\RR^{d}} \Big(\mathsf{Var}_{\mu}\big(\varphi_{\sigma}(x-\cdot)\big)\Big)^{\frac 12}\, dx, \notag
\end{align}
where the second equality is because $\E[|W|] = \sqrt{2\E[W^{2}]/\pi}$ for a centered Gaussian variable $W$. This completes the proof of Part $(i)$.

\medskip
\noindent\textbf{Part $\bm{(ii)}$:}
The claim follows from \eqref{eq:TV-twosample-alt}, provided that
\begin{align}
 n^{1/2} \left(\hat\mu_n*\varphi_{\sigma}-\hat\nu_n*\varphi_{\sigma}-\mu*\gamma_{\sigma}+\nu*\gamma_{\sigma}\right) \trightarrow{w} G_{\mu,\sigma}-G_{\nu,\sigma}\quad \mbox{ in }L^1(\RR^d)\notag,
\end{align}
where $G_{\mu,\sigma}$ and $G_{\nu,\sigma}$ are $L^1(\RR^d)$-valued Gaussian random variables. Since $L^1(\RR^d)$ is Polish, arguments similar to those in the proof of Proposition \ref{Prop:GS-KL-limdist}$(iii)$, imply that the above weak convergence holds if
\begin{align}
 n^{1/2} \left(\hat\mu_n*\varphi_{\sigma}-\mu*\gamma_{\sigma}\right) \trightarrow{w} G_{\mu,\sigma}\quad \mbox{ and }\quad \left(\hat\nu_n*\varphi_{\sigma}-\nu*\gamma_{\sigma}\right) \trightarrow{w} G_{\nu,\sigma}\quad \mbox{ in }L^1(\RR^d).\notag
\end{align}
The latter holds under \eqref{ASSUM:TV_alt-2samp} by Theorem \ref{Thm:CLT-in-Lp}(ii) via similar arguments to those presented in the proof of Part $(i)$ above. The proof of the converse claim is again analogous to that in Part $(i)$ with $S_n=\sum_{i=1}^n\varphi_{\sigma}(\cdot-X_{i}) - \varphi_{\sigma}(\cdot-Y_{i})$; details are omitted. This concludes the proof.

\end{proof}

\end{appendices}
\bibliographystyle{IEEEtran}
\bibliography{ref}

\end{document}